\documentclass[12pt]{amsart}
\usepackage{amssymb}
\numberwithin{equation}{section}
\theoremstyle{plain}
\newtheorem{theorem}{Theorem}[section]
\newtheorem{lemma}[theorem]{Lemma}
\newtheorem{proposition}[theorem]{Proposition}
\newtheorem{corollary}[theorem]{Corollary}
\newtheorem{example}[theorem]{Example}
\theoremstyle{definition}
\newtheorem{definition}[theorem]{Definition}

\theoremstyle{remark}
\newtheorem{remark}[theorem]{Remark}
\allowdisplaybreaks
\newcommand{\dt}{\left.\frac{d}{dt}\right|_{t=0}}
\newcommand{\mult}{m}
\newcommand{\con}{k}
\newcommand{\GiE}{K}
\newcommand{\giE}{\mathfrak{k}}
\newcommand{\iso}{F}
\newcommand{\one}{1}

\newcommand{\unit}{e}
\newcommand{\pone}{q}
\newcommand{\ptwo}{r}

\title[Holomorphic multiplier representations]{holomorphic multiplier representations \\for bounded homogeneous domains}
\author{koichi arashi}
\address{K. Arashi: Graduate School of Mathematics, Nagoya University, Chikusa-ku, Nagoya, 464-8602 Japan}
\email{m15005y@math.nagoya-u.ac.jp}
\begin{document}
\keywords{homogeneous bounded domain; Siegel domain; normal $j$-algebra; reproducing kernel; multiplier representation; invariant Hilbert space}
\maketitle
\begin{abstract}
In this paper, we study the unitarizations in the spaces of holomorphic sections of equivariant holomorphic line bundles over a bounded homogeneous domain under the action of a connected algebraic group acting transitively on the domain. We give a complete classification of unitary representations arising from such unitarizations. As an application, we classify all such unitary representations for a specific five-dimensional non-symmetric bounded homogeneous domain.
\end{abstract}

\section{Introduction}
Unitary representations realized in the spaces of the holomorphic sections of equivariant holomorphic line bundles appear in various areas of the representation theory of Lie groups. For instance, we can recall the Borel-Weil theory for compact Lie groups, the holomorphic discrete series and its analytic continuation for Hermite Lie groups, the Bargmann-Fock representation for the Heisenberg group, and the Auslander-Kostant theory for solvable Lie groups. We shall formulate such unitary representations as follows. Let $\mathcal{M}$ be a connected complex manifold, let $\mathrm{Aut}_{hol}(\mathcal{M})$ be the holomorphic automorphism group of $\mathcal{M}$, let $G_0\subset\mathrm{Aut}_{hol}(\mathcal{M})$ be a connected subgroup which acts on $\mathcal{M}$ transitively, and let $L$ be a $G_0$-equivariant holomorphic line bundle over $\mathcal{M}$. We denote by $\Gamma^{hol}(L)$ the space of holomorphic sections of $L$. Let $l$ be the representation of $G_0$ given by 
\begin{equation*}
 l(g)s(z)=gs(g^{-1}z)\quad(g\in G_0,s\in\Gamma^{hol}(L),z\in\mathcal{M}).
\end{equation*}
Let us consider all $G_0$-equivariant holomorphic line bundles $L$ over $\mathcal{M}$ and the following fundamental questions:
\begin{enumerate}
\item[(Q1)] What is the condition that the representation $l$ of $G_0$ is unitarizable?
\item[(Q2)] Which unitarizations are equivalent as unitary representations of $G_0$?
\end{enumerate}
Here we make precise the class of representations we study.
\begin{definition}
We say that the representation $l$ is {\it unitarizable} if there exists a nonzero Hilbert space $\mathcal{H}\subset\Gamma^{hol}(L)$ satisfying the following conditions:
\begin{enumerate}
\item[(i)]the inclusion map $\iota:\mathcal{H}\hookrightarrow \Gamma^{hol}(L)$ is continuous with respect to the open compact topology of $\Gamma^{hol}(L)$,
\item[(ii)] $l(g)\mathcal{H}\subset \mathcal{H}\,(g\in G_0)$ and $\|l(g)s\|=\|s\|\,(g\in G_0, s\in\mathcal{H})$, where $\|\cdot\|$ denotes the norm of $\mathcal{H}$. 
\end{enumerate}
This notion is closely related to the holomorphic induction introduced by Auslander and Kostant. We will mention the relation later. For a unitarizable representation $l$, we call the subrepresentation $(l,\mathcal{H})$ a {\it unitarization} of the representation $(l,\Gamma^{hol}(L))$ of $G_0$. 
\end{definition}
A Hilbert space $\mathcal{H}$ satisfying the condition (i) is a reproducing kernel Hilbert space. The following theorem is known.
\begin{theorem}[{\cite[Theorem 6]{ishi 2011}}, {\cite{kobayashi}}, {\cite{kunze}}]\label{uniqueness of unitarification1}
A Hilbert space giving a unitarization of $l$ is unique if it exists. In particular, the unitarization is irreducible.
\end{theorem}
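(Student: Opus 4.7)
The plan is to reduce uniqueness to uniqueness of the reproducing kernel and then to pin down the kernel using $G_0$-equivariance together with transitivity.

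First, condition (i) forces each evaluation map $\mathcal{H} \ni s \mapsto s(z) \in L_z$ to be continuous, so any such $\mathcal{H}$ is automatically a reproducing kernel Hilbert space, with a kernel $K$ that is a holomorphic section of $L \boxtimes \overline{L}$ on $\mathcal{M} \times \mathcal{M}$. Condition (ii), paired with the defining property of $K$, translates into the $G_0$-equivariance
\begin{equation*}
K(gz, gw) = g \cdot K(z, w) \cdot g^{*} \qquad (g \in G_0,\ z, w \in \mathcal{M}),
\end{equation*}
where $g \cdot$ and $g^{*}$ stand for the induced actions of $g$ on the line bundle fibers of $L$ and $\overline{L}$, respectively.

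Second, I would fix a base point $z_0 \in \mathcal{M}$ with stabilizer $H_0 \subset G_0$. By transitivity, the equivariance propagates the single value $K(z_0, z_0) \in L_{z_0} \otimes \overline{L_{z_0}}$ to the whole kernel; and evaluating at $g \in H_0$ imposes an $H_0$-invariance condition on this value. Because $L_{z_0} \otimes \overline{L_{z_0}}$ is one-dimensional over $\mathbb{C}$, the combination of $H_0$-invariance with the positivity of the diagonal of a reproducing kernel pins $K(z_0, z_0)$ down to a single positive real ray. Thus $K$ is uniquely determined up to a positive scalar, and since an RKHS is determined by its kernel, so is $\mathcal{H}$.

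Irreducibility then follows directly from uniqueness: a nontrivial $G_0$-invariant orthogonal decomposition $\mathcal{H} = \mathcal{H}_1 \oplus \mathcal{H}_2$ would produce two distinct nonzero unitarizations $\mathcal{H}_1$ and $\mathcal{H}_2$ (each inheriting (i) and (ii) from $\mathcal{H}$), contradicting the uniqueness just established. The main technical obstacle I anticipate is the careful formulation of the equivariance in the bundle-valued setting, in particular tracking the fiberwise $G_0$-isomorphisms so that transport from $(z_0, z_0)$ to arbitrary $(z, w)$ is unambiguous, and checking that the character by which $H_0$ acts on $L_{z_0}$ is actually compatible with the existence of an $H_0$-invariant positive value of $K(z_0, z_0)$; the latter compatibility is exactly what links this uniqueness theorem to the existence question (Q1) addressed elsewhere in the paper.
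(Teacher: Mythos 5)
The paper does not prove this theorem itself; it cites it from Ishi, Kobayashi, and Kunze, and the machinery it does record (Lemma \ref{lem:1}, i.e.\ the equivalence of unitarity with the kernel transformation law \eqref{mathcalKgz}) is exactly the reduction you carry out, so your argument is the standard one underlying the cited results and is essentially correct. One step is glossed over: transitivity of $G_0$ together with equivariance propagates $\mathcal{K}(z_0,z_0)$ only to the \emph{diagonal} values $\mathcal{K}(z,z)$, not to the whole kernel; to recover $\mathcal{K}(z,w)$ you must additionally invoke that $\mathcal{K}$ is holomorphic in $z$ and antiholomorphic in $w$, so that it is determined by its restriction to the (totally real) diagonal via the identity theorem --- the same ``analytic continuation'' step the paper uses in the proof of Theorem \ref{LetmGtimes}. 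With that supplied, the kernel is pinned down up to a positive scalar (note $\mathcal{K}(z_0,z_0)>0$ follows from $\mathcal{H}\neq\{0\}$ plus equivariance, and the $H_0$-invariance is what makes the transport along $G_0$-orbits well defined), the subspace $\mathcal{H}$ is therefore unique, and your derivation of irreducibility from uniqueness is correct since each summand of an invariant orthogonal decomposition would inherit conditions (i) and (ii).
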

In this paper, we shall give a complete answer to the questions (Q1) and (Q2) in the case that $\mathcal{M}$ is a bounded homogeneous domain $\mathcal{D}$ and $G_0\subset\mathrm{Aut}_{hol}(\mathcal{D})$ is the identity component $G$ of a real algebraic group. Here it is known \cite[Theorem 3.2]{kaneyuki} that $\mathrm{Aut}_{hol}(\mathcal{D})$ admits a structure of a Lie group and its identity component is isomorphic to the identity component of a linear algebraic group. The identity component of $\mathrm{Aut}_{hol}(\mathcal{D})$, which is denoted by $\mathrm{Aut}_{hol}(\mathcal{D})^o$ is an example of $G$. When $\mathcal{D}$ is symmetric, any parabolic subgroup of $\mathrm{Aut}_{hol}(\mathcal{D})^o$ is also an example of $G$. Now we introduce a notion of Iwasawa subgroup of a Lie group.
\begin{definition}
For a Lie group $G_0$, we call a subgroup $B_0\subset G_0$ an {\it Iwasawa} subgroup of $G_0$ if $B_0$ is a maximal connected real split solvable Lie subgroup of $G_0$.
\end{definition}
It is known that the isotropy subgroup of $\mathrm{Aut}_{hol}(\mathcal{D})^o$ at a point $p\in\mathcal{D}$ is a maximal compact subgroup of $\mathrm{Aut}_{hol}(\mathcal{D})^o$, and in our setting, it follows that an Iwasawa subgroup $B$ of $G$ acts on $\mathcal{D}$ simply transitively (see \cite[Chapter 4, Theorem 4.7]{encyclopedia}). 
\begin{definition}
An analytic function $\mult:G\times \mathcal{D}\rightarrow \mathbb{C}^\times$ is called a {\it multiplier} if the following cocycle condition is satisfied:
\begin{equation*}
\mult(gg',z)=\mult(g,g'z)\mult(g',z)\quad(g,g'\in G, z\in \mathcal{D}).
\end{equation*}
Moreover, a multiplier $\mult$ is called a holomorphic multiplier if $\mult(g,z)$ is holomorphic in $z\in \mathcal{D}$. 
\end{definition}
Let $m:G\times\mathcal{D}\rightarrow\mathbb{C}^\times$ be a holomorphic multiplier. Let $E_m$ be the $G$-equivariant trivial line bundle $\mathcal{D}\times\mathbb{C}$, where the $G$-action on $E_m$ is defined by 
\begin{equation*}
g(z,\zeta)=(gz,m(g,z)\zeta).
\end{equation*}
Since a bounded homogeneous domain is a contractible Stein manifold, every holomorphic line bundle over $\mathcal{D}$ is trivial. Thus there exists a holomorphic multiplier $m:G\times\mathcal{D}\rightarrow\mathbb{C}^\times$ such that $L$ and $E_m$ are isomorphic as $G$-equivariant holomorphic line bundles. 
Let $\mathcal{O}(\mathcal{D})$ denote the space of holomorphic functions on $\mathcal{D}$. We identify $\Gamma^{hol}(E_m)$ with $\mathcal{O}(\mathcal{D})$, and let us denote $T_m$ the representation $l$ for $E_m$. The representation $T_m$ of $G$ is described as
\begin{equation*}
 T_m(g)f(z)=m(g^{-1},z)^{-1}f(g^{-1}z)\quad(f\in\mathcal{O}(\mathcal{D})).
\end{equation*}

The scalar-valued holomorphic discrete series and its analytic continuation is a special case of our object. In this case $\mathcal{D}$ is a bounded symmetric domain and $G$ is a semisimple Lie group which is locally isomorphic to the group $\mathrm{Aut}_{hol}(\mathcal{D})$. Let $\gamma$ be a complex number, let $\mathcal{D}$ be an irreducible bounded symmetric domain, and let $J:\mathrm{Aut}_{hol}(\mathcal{D})^o\times\mathcal{D}\rightarrow\mathbb{C}^\times$ denote the complex Jacobian. Consider the following representation of $\mathrm{Aut}_{hol}(\mathcal{D})^o$ on the space $\mathcal{O}(\mathcal{D})$:
\begin{equation*}
T_{J^{-\gamma}}(g)f(z)=J(g^{-1},z)^\gamma f(g^{-1}z)\quad(g\in\mathrm{Aut}_{hol}(\mathcal{D})^o,f\in\mathcal{O}(\mathcal{D})).
\end{equation*}
To be precise, we should consider $J(g^{-1},z)^\gamma$ as a function defined on $\widetilde{\mathrm{Aut}_{hol}(\mathcal{D})^o}\times\mathcal{D}$, where $\widetilde{\mathrm{Aut}_{hol}(\mathcal{D})^o}$ denotes the universal covering group of $\mathrm{Aut}_{hol}(\mathcal{D})^o$. The unitarizations of the above representations $ T_{J^{-\gamma}}$ are highest weight unitary representations, and the equivalence classes of these unitary representations are determined by their highest weights $\gamma$. On the other hand, not all $ T_{J^{-\gamma}}$ are unitarizable. First we consider the condition that $ T_{J^{-\gamma}}$ has a nontrivial $\widetilde{\mathrm{Aut}_{hol}(\mathcal{D})^o}$-invariant subspace which is given as a weighted Bergman space. The condition is a special case of the Harish-Chandra condition \cite{harishV, harishVI}. More generally, the set of $\gamma$ for which $ T_{J^{-\gamma}}$ is unitarizable is called the Wallach set of $\mathcal{D}$, and is determined by Vergne and Rossi \cite{vergne} and Wallach \cite{wallach}. 

We can consider the same kind of representations for bounded homogeneous domains. Let $\mathcal{D}$ be a (not necessarily symmetric) bounded homogeneous domain. Ishi shows the following theorem.
\begin{theorem}[Ishi, {\cite[Proposition 14]{ishi 2011}}]
Let $\mathcal{H}\subset \mathcal{O}(\mathcal{D})$ be a reproducing kernel Hilbert space. Suppose that $ T_{J^{-\gamma}}(b)\mathcal{H}\subset \mathcal{H}$ for all $b\in B$, and $\| T_{J^{-\gamma}}(b)f\|=\|f\|$ for all $b\in B$ and $f\in\mathcal{O}(\mathcal{D})$. Then we have $ T_{J^{-\gamma}}(g)\mathcal{H}\subset \mathcal{H}$ for all $g\in \widetilde{\mathrm{Aut}_{hol}(\mathcal{D})}$, and $\| T_{J^{-\gamma}}(g)f\|=\|f\|$ for all $g\in \widetilde{\mathrm{Aut}_{hol}(\mathcal{D})}$ and $f\in\mathcal{O}(\mathcal{D})$.
\end{theorem}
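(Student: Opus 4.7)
The plan is to reformulate the hypothesis as a $B$-equivariance of the reproducing kernel $K(z,w)$ of $\mathcal{H}$, to upgrade this to full $\widetilde{\mathrm{Aut}_{hol}(\mathcal{D})}$-equivariance by exploiting the simple transitivity of $B$ together with the compactness of the isotropy at a base point, and then to recover the unitary statement from kernel invariance by the standard correspondence. First, the hypothesis is equivalent to the identity
\begin{equation*}
K(bz,bw)=J(b,z)^{-\gamma}\overline{J(b,w)^{-\gamma}}\,K(z,w),\qquad b\in B,\ z,w\in\mathcal{D}.
\end{equation*}
Fixing a base point $z_0\in\mathcal{D}$ and using that each $z\in\mathcal{D}$ may be written uniquely as $z=b(z)z_0$ with $b(z)\in B$, the specialization $w=z=z_0$ gives $K(z,z)=|J(b(z),z_0)^{-\gamma}|^{2}K(z_0,z_0)$, so $K$ on the diagonal is completely determined by the single positive value $K(z_0,z_0)$.

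For arbitrary $g\in\widetilde{\mathrm{Aut}_{hol}(\mathcal{D})}$, I would decompose $g\,b(z)=b(gz)\,h$ with $h$ in the isotropy subgroup of $z_0$; the cocycle identity then gives
\begin{equation*}
J(b(gz),z_0)=J(g,z)\,J(b(z),z_0)\,J(h^{-1},z_0).
\end{equation*}
Inserting this into the diagonal formula above and using that $h\mapsto |J(h,z_0)^{-\gamma}|^{2}$ is a continuous homomorphism from the compact isotropy subgroup into $\mathbb{R}_{>0}$, hence identically equal to $1$, one obtains
\begin{equation*}
K(gz,gz)=|J(g,z)^{-\gamma}|^{2}\,K(z,z),\qquad z\in\mathcal{D}.
\end{equation*}
To promote this diagonal equality to the full kernel identity, form
\begin{equation*}
F(z,w):=K(gz,gw)-J(g,z)^{-\gamma}\overline{J(g,w)^{-\gamma}}\,K(z,w),
\end{equation*}
which is holomorphic in $z$, antiholomorphic in $w$, and vanishes when $z=w$. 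The standard polarization principle (a function jointly holomorphic in $(z,\bar w)$ is uniquely determined by its restriction to the totally real submanifold $\{(z,\bar z)\}$) forces $F\equiv 0$, so $K$ is invariant under all of $\widetilde{\mathrm{Aut}_{hol}(\mathcal{D})}$, which is equivalent to the desired unitarity on $\mathcal{H}$.

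The main obstacle will be the coherent bookkeeping of branches of $J^{-\gamma}$ on the universal cover $\widetilde{\mathrm{Aut}_{hol}(\mathcal{D})}$: the cocycle, the decomposition $g\,b(z)=b(gz)\,h$, and the compactness/homomorphism argument for the isotropy factor must all be executed with a single consistent lift, using that $B$ is simply connected (being split solvable) to pull back $B$ canonically into $\widetilde{\mathrm{Aut}_{hol}(\mathcal{D})}$. Once such coherent branches are fixed, the pointwise diagonal computation and the polarization principle provide the structural content of the proof.
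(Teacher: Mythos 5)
Your proof follows essentially the same route as the paper's proof of its generalization (Theorem \ref{LetmGtimes}): translate the isometry hypothesis into the transformation law of the reproducing kernel via the standard correspondence (Lemma \ref{lem:1}), extend the diagonal law from $B$ to the whole group using the simple transitivity of $B$ and the decomposition $g\,b(z)=b(gz)h$ with $h$ in the isotropy subgroup, and then polarize (the paper's ``analytic continuation''). The paper packages the middle step as a statement about invariant sections of the equivariant bundle (Lemma \ref{extendm}, using $G=BK$), but that is the same computation you write out explicitly.

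The one step that does not survive as written is the disposal of the isotropy factor. On $\widetilde{\mathrm{Aut}_{hol}(\mathcal{D})}$ the isotropy subgroup at $z_0$ is the universal cover $\widetilde{K}$ of the compact group $K$, and $\widetilde{K}$ is \emph{not} compact (since $K$ has positive-dimensional center), so ``continuous homomorphism from a compact group into $\mathbb{R}_{>0}$, hence trivial'' does not apply to $\tilde h\mapsto|J(\tilde h,z_0)^{-\gamma}|^{2}$. What is true is that $\Re\log J(\tilde h,z_0)=\log|J(h,z_0)|=0$, because $h\mapsto|J(h,z_0)|$ is a continuous homomorphism of the compact group $K$ itself into $\mathbb{R}_{>0}$; hence $|J(\tilde h,z_0)^{-\gamma}|^{2}=e^{2\Im\gamma\cdot\arg J(\tilde h,z_0)}$, which is identically $1$ exactly when $\gamma$ is real (or the character $J(\cdot,z_0)|_{K}$ is trivial). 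For $\Im\gamma\neq 0$ this factor is a nontrivial homomorphism $\widetilde{K}\to\mathbb{R}_{>0}$, and the identity $K(z_0,z_0)=|J(h,z_0)^{-\gamma}|^{2}K(z_0,z_0)$ forced at the fixed point genuinely fails --- already for the upper half-plane and $SL(2,\mathbb{R})^{\sim}$ --- even though the $B$-hypothesis only constrains $\Re\gamma$. So your argument is complete for real $\gamma$, which is the intended setting (the Wallach set is real), and the needed triviality is exactly the condition the paper makes explicit in Theorem \ref{LetmGtimes} through the hypothesis $m(k,p)\in U(\mathcal{V})$; you should either assume $\gamma\in\mathbb{R}$ or route the compactness argument through $K$ rather than $\widetilde{K}$ as above.
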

\begin{theorem}[Ishi, {\cite{ishi 2013}}]
Unitarizations of $ T_{J^{-\gamma}}$ and $ T_{J^{-\gamma'}}$ are equivalent as unitary representations of $\widetilde{\mathrm{Aut}_{hol}(\mathcal{D})^o}$ if and only if $\gamma=\gamma'$.
\end{theorem}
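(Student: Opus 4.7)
The ``if'' direction is immediate: by Theorem~\ref{uniqueness of unitarification1} the unitarization is unique, so equal parameters yield identical representations. For the converse, suppose $U\colon \mathcal{H}_\gamma\to\mathcal{H}_{\gamma'}$ is a unitary intertwiner between the two unitarizations; my plan is to reduce the equality $\gamma=\gamma'$ to the comparison of the minima of a single self-adjoint operator on the two Hilbert spaces.

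First I would record the transformation rule of the reproducing kernel $K_\gamma$. With $\widetilde G := \widetilde{\mathrm{Aut}_{hol}(\mathcal{D})^o}$, transporting a fixed orthonormal basis of $\mathcal{H}_\gamma$ through the $\widetilde G$-action and using unitarity of $T_{J^{-\gamma}}$ gives
\[
K_\gamma(gz,gw) = J(g,z)^{-\gamma}\,\overline{J(g,w)^{-\gamma}}\,K_\gamma(z,w) \qquad (g\in\widetilde G).
\]
Fix a base point $z_0\in\mathcal{D}$ and let $\widetilde K\subset\widetilde G$ be the preimage of the isotropy $K$. Specializing the above to $g=\tilde k\in\widetilde K$ and $w=z_0$, and using $|J(k,z_0)|=1$ on the compact $K$, one finds that $v_\gamma:=K_\gamma(\cdot,z_0)$ spans a $\widetilde K$-invariant line on which $\widetilde K$ acts by the character $\chi_\gamma(\tilde k) = J(k,z_0)^{-\gamma}$, lifted unambiguously via the simply connected $\widetilde K$.

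The main step is to choose $Z_0\in\mathfrak{z}(\mathfrak{k})$ acting as a positive multiple of $i\cdot\mathrm{id}$ on $T_{z_0}^{(1,0)}\mathcal{D}$---this is the Euler element in the Hermitian symmetric case, and the complex-structure element of the normal $j$-algebra on the Iwasawa subalgebra $\mathfrak{b}$ in general---and to prove a ``lowest weight'' description: the self-adjoint operator $i\,dT_{J^{-\gamma}}(Z_0)$ on $\mathcal{H}_\gamma$ attains its minimum, equal to $c\gamma$ for a nonzero real constant $c$ independent of $\gamma$, precisely on the line $\mathbb{C}v_\gamma$. The proof proceeds by linearizing the $K$-action near $z_0$ and expanding in holomorphic Taylor coefficients: the leading behaviour of $dT_{J^{-\gamma}}(Z_0)$ on a degree-$n$ homogeneous part equals $-ic\gamma$ shifted by a non-negative quantity growing with $n$, giving the claimed bound. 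Since $U$ is $\widetilde G$-equivariant, it conjugates $i\,dT_{J^{-\gamma}}(Z_0)$ to $i\,dT_{J^{-\gamma'}}(Z_0)$; these operators then have equal spectra, so their minima $c\gamma$ and $c\gamma'$ coincide and $\gamma=\gamma'$.

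The main obstacle is this spectral step in full generality. For bounded symmetric $\mathcal{D}$ it is classical, coming from the lowest-weight realization of the holomorphic discrete series and its analytic continuation, with $Z_0$ the central Euler element of $\mathfrak{k}$. For a general bounded homogeneous $\mathcal{D}$ one must produce the required $Z_0$ and verify the spectral bound by hand; I would do this by passing to the $B$-model, realizing $\mathcal{H}_\gamma$ as a space of $L^2$-type sections over $B\cong\mathcal{D}$ twisted by the cocycle $J^{-\gamma}|_B$, and using the positive-root decomposition of $\mathfrak{b}$ from the normal $j$-algebra structure to make the spectral bound on $i\,dT_{J^{-\gamma}}(Z_0)$ explicit.
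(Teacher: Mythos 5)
The paper itself gives no proof of this statement (it is quoted from \cite{ishi 2013}); the natural in-paper route is Theorem \ref{main}, which says that equivalence over $G$ forces $\mult(\con,p)=\mult'(\con,p)$ on the isotropy subgroup, i.e.\ $J(\con,p)^{-\gamma}=J(\con,p)^{-\gamma'}$, and since $\con\mapsto J(\con,p)$ is a character of $\widetilde{K}$ with nonzero differential (its differential on $\mathfrak{k}$ is the trace of the isotropy action on $T_p^{1,0}\mathcal{D}$), this gives $\gamma=\gamma'$. Your overall strategy is in the same spirit --- detect $\gamma$ from the isotropy action on the distinguished vector $\mathcal{K}_\gamma(\cdot,p)$ --- but your concrete implementation has a genuine gap. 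The central step is the choice of $Z_0\in\mathfrak{z}(\mathfrak{k})$ acting as a positive multiple of $i\cdot\mathrm{id}$ on $T^{1,0}_{z_0}\mathcal{D}$. No such element exists for a general bounded homogeneous domain; this is special to the symmetric case. The paper's own five-dimensional example in Section \ref{Applicatio} is a counterexample: there $\mathfrak{k}=\mathfrak{z}(\mathfrak{k})=\langle W_\one,W_2\rangle$, and the linearization of $(aW_\one+bW_2)^\#$ at the base point $iI_3$ is $\mathrm{diag}(-2ia,-2ib,0,-ia,-ib)$ in the coordinates $(z_\one,\dots,z_5)$; the $z_3$-eigenvalue vanishes identically, so no element of $\mathfrak{z}(\mathfrak{k})$ acts as a nonzero scalar. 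Your parenthetical identification of $Z_0$ with ``the complex-structure element of the normal $j$-algebra'' is also incorrect: $j$ is an abstract complex structure on $\mathfrak{b}$ and is in general not of the form $\mathrm{ad}(Z_0)$ for any $Z_0\in\mathfrak{g}$ (that it is, is essentially what makes the symmetric case symmetric).

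Your spectral argument could in principle be salvaged with a weaker hypothesis: an element $Z_0\in\mathfrak{z}(\mathfrak{k})$ whose isotropy eigenvalues on $T^{1,0}_p\mathcal{D}$ are $i\lambda_\one,\dots,i\lambda_N$ with all $\lambda_j$ of one (weak) sign and $\sum_j\lambda_j\neq 0$; then the minimum (or maximum) of the spectrum of $i\,dT_{J^{-\gamma}}(Z_0)$ is still $c\gamma$ with $c\neq 0$, attained on $\mathbb{C}\mathcal{K}_\gamma(\cdot,p)$ by \eqref{dTmoverlin}, even if not simply. This weaker property does hold in the example above (take $a=b=1$), but you assert the strong form and prove neither; establishing the weak form for an arbitrary normal $j$-algebra is exactly the nontrivial content you would owe. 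As written, the key step fails, so the proof is incomplete.
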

When $\mathcal{D}$ is an irreducible bounded symmetric domain, every $\mathrm{Aut}_{hol}(\mathcal{D})^o$-equivariant holomorphic line bundle over $\mathcal{D}$ is isomorphic to $E_{J^{-\gamma}}$ for some $\gamma\in\mathbb{C}$. On the other hand, for $G\subsetneq \mathrm{Aut}_{hol}(\mathcal{D})^o$, it can happen that there exists a $G$-equivariant holomorphic line bundle $L$ such that $L$ is not isomorphic to $L_{J^{-\gamma}}$ for any $\gamma\in\mathbb{C}$ as a $G$-equivariant holomorphic line bundle. Moreover, when $\mathcal{D}$ is not symmetric, the same can happen even for $G=\mathrm{Aut}_{hol}(\mathcal{D})^o$ (see Section \ref{Applicatio}). 

As we will see in Section \ref{EXISTENCEO}, we can reduce the question (Q1) for $G$ to the question for $B$. Let $\mathcal{D}$ be a bounded homogeneous domain, and let $L$ be a $G$-equivariant holomorphic line bundle over $\mathcal{D}$.
\begin{theorem}[see Theorem \ref{LetmGtimes}]\label{Letmathcal}
Let $\mathcal{H}\subset \Gamma^{hol}(L)$ be a reproducing kernel Hilbert space. Suppose that $l(b)\mathcal{H}\subset \mathcal{H}$ for all $b\in B$ and $\|l(b)s\|=\|s\|$ for all $b\in B$ and $s\in\Gamma^{hol}(L)$. Then we have $l(g)\mathcal{H}\subset \mathcal{H}$ for all $g\in G$ and $\|l(g)s\|=\|s\|$ for all $g\in G$ and $s\in\Gamma^{hol}(L)$. Namely, the unitarizability as the representation of $B$ implies the one as the representation of $G$.
\end{theorem}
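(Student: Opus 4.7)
The strategy is to reformulate the $G$-unitarizability as an invariance of the reproducing kernel $K$ of $\mathcal{H}$, and to upgrade the given $B$-invariance to $G$-invariance by combining the simple transitivity of $B$ with the sesqui-holomorphicity of $K$.

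First, since $\mathcal{D}$ is a contractible Stein manifold, I would assume (as the excerpt already notes) that $L=E_m$ for a holomorphic multiplier $m:G\times\mathcal{D}\to\mathbb{C}^\times$, identifying $\Gamma^{hol}(L)$ with $\mathcal{O}(\mathcal{D})$ and $l$ with $T_m$. Writing $K(z,w)$ for the reproducing kernel, the $B$-hypothesis is equivalent to the kernel identity $K(bz,bw)=m(b,z)\overline{m(b,w)}K(z,w)$ for $b\in B$. Conversely, once this identity is established for all $g\in G$, a direct computation gives $T_m(g)K(\cdot,w)=\overline{m(g,w)}^{-1}K(\cdot,gw)\in\mathcal{H}$, and a density argument on the span of the $K(\cdot,w)$'s yields both $l(g)\mathcal{H}\subset\mathcal{H}$ and the isometry of $l(g)$ on $\mathcal{H}$. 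The problem thus reduces to extending the kernel identity from $B$ to all of $G$.

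Next I would reduce the kernel identity to its diagonal. Fix a basepoint $z_0\in\mathcal{D}$ and, using simple transitivity, let $b_z\in B$ be the unique element with $b_z z_0=z$; then the $B$-invariance gives $K(z,z)=|m(b_z,z_0)|^2K(z_0,z_0)$. For $g\in G$, write $gb_z=b_{gz}k$ where $k:=b_{gz}^{-1}gb_z$ fixes $z_0$. Two applications of the cocycle relation produce $m(g,z)m(b_z,z_0)=m(b_{gz},z_0)m(k,z_0)$, so the diagonal version $K(gz,gz)=|m(g,z)|^2K(z,z)$ is equivalent to $|m(k,z_0)|=1$ for every $k$ in the isotropy $\mathrm{Stab}_G(z_0)$. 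This is the crux: $\mathrm{Stab}_G(z_0)=G\cap\mathrm{Stab}_{\mathrm{Aut}_{hol}(\mathcal{D})^o}(z_0)$ is closed in the maximal compact isotropy of $\mathrm{Aut}_{hol}(\mathcal{D})^o$, hence compact, and on this compact group the cocycle condition (together with $k'z_0=z_0$) makes $k\mapsto m(k,z_0)$ into a continuous homomorphism into $\mathbb{C}^\times$, whose image is therefore contained in $S^1$.

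Finally, I would promote the diagonal identity to the full off-diagonal one by analytic continuation. The function $F(z,w):=K(gz,gw)-m(g,z)\overline{m(g,w)}K(z,w)$ is holomorphic in $z$ and antiholomorphic in $w$, hence extends to a holomorphic function on the connected complex manifold $\mathcal{D}\times\overline{\mathcal{D}}$; by the previous step it vanishes on the diagonal $\{(z,\bar z):z\in\mathcal{D}\}$, which is a totally real submanifold of maximal dimension there, so $F\equiv 0$. This gives the kernel identity for all $g\in G$ and completes the proof. The essential conceptual step, and the one I expect to be the main obstacle to isolate cleanly, is the observation that compactness of the isotropy subgroup combined with the cocycle restricting to a group homomorphism forces the multiplier to have unit modulus on the stabilizer; once that is in hand, the rest of the argument is routine kernel manipulation plus a standard sesqui-holomorphic extension.
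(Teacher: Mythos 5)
Your proposal is correct and follows essentially the same route as the paper: establish the diagonal kernel identity for $B$, extend it to $G$ via the decomposition $G=BK$ together with the fact that $k\mapsto m(k,p)$ is a continuous homomorphism of the compact isotropy group into $\mathbb{C}^\times$ and hence unimodular, and then pass to the off-diagonal identity by sesquiholomorphic analytic continuation and conclude via the reproducing-kernel characterization (Lemma~\ref{lem:1}). The only difference is presentational: your explicit computation with $gb_z=b_{gz}k$ is exactly the content the paper packages abstractly in Lemma~\ref{extendm} by viewing the diagonal kernel $z\mapsto\mathcal{K}(z,z)$ as a $B$-invariant section of an auxiliary equivariant bundle.
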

We fix a reference point $p\in\mathcal{D}$. Let $L_p$ be the fiber over the point $p$, and let $K$ be the isotropy subgroup of $G$ at $p$. Note that $K$ is a maximal compact connected subgroup of $G$. Concerning the question (Q2), we obtain
\begin{theorem}[see Theorem \ref{main}]\label{LetLkappak1}
Let $L$ and $L'$ be $G$-equivariant holomorphic line bundles over $\mathcal{D}$. Suppose that $\mathcal{H}\subset \Gamma^{hol}(L)$ and $\mathcal{H}'\subset\Gamma^{hol}(L')$ give unitarizations of representations $l$ and $l'$, respectively. Then $(l,\mathcal{H})$ and $(l',\mathcal{H}')$ are equivalent as unitary representations of $G$ if and only if $(l|_B,\mathcal{H})$ and $(l'|_B,\mathcal{H}')$ are equivalent as unitary representations of $B$ and the actions of $K$ on the fibers $L_p$ and $L'_p$ coincide.
\end{theorem}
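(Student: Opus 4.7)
The plan is to use the decomposition $G = BK$ with $B \cap K = \{e\}$, arising from the simple transitivity of $B$ on $\mathcal{D}$ combined with the stabilizer property of $K$ at $p$. The forward direction is by restriction: if $U \colon \mathcal{H} \to \mathcal{H}'$ is a $G$-equivariant unitary isomorphism, then $U|_B$ gives the required $B$-equivalence. For the coincidence of the $K$-actions on $L_p$ and $L'_p$, I will focus on the one-dimensional subspaces $W := K_\mathcal{H}(\cdot, p) L_p \subset \mathcal{H}$ and $W' := K_{\mathcal{H}'}(\cdot, p) L'_p \subset \mathcal{H}'$ arising from the reproducing kernels. These are $K$-invariant and transform under $K$ by the characters $\chi_L$ and $\chi_{L'}$ attached to $L_p$ and $L'_p$. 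The plan is to identify $W$ as the range of the adjoint of the (continuous, $K$-equivariant) evaluation at $p$, hence an intrinsic attribute of the $G$-representation that any $G$-equivariant unitary must carry onto $W'$; the $K$-equivariance of $U$ then forces $\chi_L = \chi_{L'}$.

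For the backward direction, let $U_0 \colon \mathcal{H} \to \mathcal{H}'$ be a $B$-equivariant unitary and $\iota \colon L_p \to L'_p$ a $K$-equivariant isomorphism. Assuming irreducibility of $(l|_B, \mathcal{H})$ (which I expect to follow by combining Theorem \ref{uniqueness of unitarification1} with the $B$-reduction underlying Theorem \ref{Letmathcal}), $U_0$ is unique up to scalar by Schur. I will rescale $U_0$ to $U$ with $U K_\mathcal{H}(\cdot, p) v = K_{\mathcal{H}'}(\cdot, p) \iota(v)$ for all $v \in L_p$. Setting $v_0 := K_\mathcal{H}(\cdot, p) v$, the $K$-equivariance $U l(k) v_0 = l'(k) U v_0$ is immediate using $\chi_L = \chi_{L'}$, since both sides collapse to $\chi_L(k) K_{\mathcal{H}'}(\cdot, p) \iota(v)$. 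To extend to general $l(b) v_0$ in the $B$-cyclic span of $v_0$ (dense in $\mathcal{H}$), I decompose $k b = b' k'$ through $G = BK$ and apply the $B$-equivariance of $U$: both $U l(k) l(b) v_0$ and $l'(k) U l(b) v_0$ reduce to $\chi_L(k') l'(b') K_{\mathcal{H}'}(\cdot, p) \iota(v)$. Continuity and $B$-equivariance then yield $G$-equivariance of $U$.

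The principal obstacle is justifying that the $B$-intertwiner $U_0$ indeed carries $W$ into $W'$, so that the rescaling above is legitimate. Since $B$ does not preserve $W$ (it moves the reference point $p$), a direct Schur argument is not available; the justification must come from a characterization of $W$ intrinsic to the $B$-action, such as a uniqueness property for the $B$-matrix coefficient $b \mapsto \langle l(b) K_\mathcal{H}(\cdot, p) v, K_\mathcal{H}(\cdot, p) v \rangle$, coupled with the normal $j$-algebra structure on $\mathfrak{b}$ developed earlier in the paper. The same structural input is also what I expect to secure the $B$-irreducibility used above.
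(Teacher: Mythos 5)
Your reduction of the ``if'' direction to checking $K$-equivariance on the single cyclic vector $\mathcal{K}_{\mathcal{H}}(\cdot,p)$, followed by the decomposition $kb=b'k'$ in $G=BK$ and density of the $B$-cyclic span, is exactly the paper's Proposition \ref{Thefollowingc} ((ii) $\Rightarrow$ (i)), and the use of Schur's lemma via irreducibility of the $B$-restriction is also as in the paper. However, the step you flag as the ``principal obstacle'' is not merely unjustified; the mechanism you propose for it is false. The unique-up-to-scalar $B$-intertwiner does \emph{not} carry the line $W=\mathbb{C}\,\mathcal{K}_{\mathcal{H}}(\cdot,p)$ onto $W'=\mathbb{C}\,\mathcal{K}_{\mathcal{H}'}(\cdot,p)$ in general: by Lemma \ref{prop:2}, in the Siegel realization one has
\begin{equation*}
\Psi_{\xi,\xi'}\bigl(\mathcal{K}^{\xi}_{(iE,0)}\bigr)(U,V)=C'\,\mathcal{K}^{\xi'}_{(iE,0)}(U,V)\,\Delta_{\xi,\xi'}\Bigl(\tfrac{U-\overline{iE}}{i}\Bigr),
\end{equation*}
and the factor $\Delta_{\xi,\xi'}$ is a nonconstant function whenever $\xi\neq\xi'$, which does occur for $B$-equivalent and even $G$-equivalent pairs (see the classes $\Theta_{B,-}$ and $\Theta_{G,-,n,n'}$ in Section \ref{Applicatio}). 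Since by Schur the only freedom in $U_0$ is a scalar, no rescaling can achieve $U\mathcal{K}_{\mathcal{H}}(\cdot,p)v=\mathcal{K}_{\mathcal{H}'}(\cdot,p)\iota(v)$. For the same reason your forward-direction claim that any $G$-equivariant unitary ``must carry $W$ onto $W'$'' fails: the range of the adjoint of evaluation at $p$ is an attribute of the realization as sections of $L$, not of the abstract representation, and it is genuinely not preserved here.

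What actually has to be proved is that $\Psi_{\xi,\xi'}(\mathcal{K}^{\xi}_{(iE,0)})$ is a $K$-eigenvector with the correct eigenvalue, i.e.\ the functional equation (\ref{diff}) for $\Delta_{\xi,\xi'}$; this is the real content of the theorem and is where the paper spends its effort. It requires the explicit realization $\phi_\xi:\mathcal{L}_\xi\to\mathcal{H}_\xi$ (Theorem \ref{Fouriertra}) and Ishi's intertwiner $\check{\Psi}_{\xi,\xi'}$ (Proposition \ref{Thefollowingm}) to obtain the displayed formula, the transformation law (\ref{chii}), and above all Theorem \ref{Supposethat}: the equality $M(k,p)=M'(k,p)$ on $K\times\{p\}$ propagates to equality of the normalized multipliers on all of $K\times\mathcal{D}$. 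That propagation rests on Theorem \ref{extension} and hence on the Lie-algebraic analysis of Sections \ref{Algebraicp} and \ref{Isotropyre} (Proposition \ref{Forasubalg2}, Lemma \ref{IfXinmathf}, Propositions \ref{b0inv} and \ref{addinv}). Your closing appeal to ``a uniqueness property for the $B$-matrix coefficient'' and ``the normal $j$-algebra structure'' points in the right direction but supplies none of this, so both implications of the theorem remain unproved as written.
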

Now we give a concrete parametrization of the $G$-equivariant holomorphic line bundles $L$ for which the representations $l$ are unitarizable, and we shall give the partition of the parameter set $\Theta(G)$ which corresponds to the equivalence classes of the unitarizations. In other words, the partition gives an answer to the question (Q2), and describes the classification of the unitary representations of $G$ obtained by unitarizations. 

Let $\mathfrak{g}=\mathrm{Lie}(G)$, and let $\mathfrak{g}_-\subset \mathfrak{g}_\mathbb{C}$ be the complex subalgebra defined by
\begin{equation*}
\mathfrak{g}_-=\left\{Z=X+iY\in \mathfrak{g}_\mathbb{C}; \left.\frac{d}{dt}\right|_{t=0}e^{tX}p+i\left.\frac{d}{dt}\right|_{t=0}e^{tY}p\in T_{p}^{0,1}\mathcal{D}\right\}.
\end{equation*}
Let $\mathfrak{k}=\mathrm{Lie}(K)$. Clearly $\mathfrak{k}\subset \mathfrak{g}_-$. By Tirao and Wolf {\cite[Theorem 3.6]{tirao}}, the set of equivalence classes of $G$-equivariant holomorphic line bundles over $\mathcal{D}$ can be identified with the set 
\begin{equation*}
\mathcal{L}(G)=\left\{\theta\in\mathfrak{g}_-^*;\begin{array}{c}\theta \text{ is a complex one-dimensional representation of }\mathfrak{g}_-\\\text{ such that } \theta|_\mathfrak{k} \text{ lifts to a representation of }K\end{array} \right\}. 
\end{equation*}
Let $L$ be a $G$-equivariant holomorphic line bundle corresponding to $\theta\in\mathcal{L}(G)$. There exists a $G$-invariant Hermitian metric on $L$ and we consider the space $\Gamma^2(L)$ of square integrable holomorphic sections of $L$. If $\Gamma^2(L)\neq\{0\}$, then $\Gamma^2(L)$ gives the unitarization, and $(l,\Gamma^2(L))$ is nothing else but the holomorphically induced representation in \cite{Auslander} from a ``polarization" $\mathfrak{g}$ at $\xi\in\mathfrak{g}^*$, where $i\xi|_{\mathfrak{g}_-}=\theta$ (see p. 11). We note that even though $\Gamma^2(L)=\{0\}$, there may exist $\mathcal{H}\neq\{0\}$ giving a unitarization of $l$. Let $\mathfrak{b}=\mathrm{Lie}(B)$. We identify $T_p\mathcal{D}$ with $\mathfrak{b}$. Then a $B$-invariant K\"{a}hler metric on $\mathcal{D}$ defines a normal $j$-algebra $(\mathfrak{b},j,\omega)$ (see \cite[Part III, Lemma 1]{CIME}). Let $\mathfrak{a}$ denote the orthogonal complement of $[\mathfrak{b},\mathfrak{b}]$ in $\mathfrak{b}$ with respect to the inner product $\langle\cdot,\cdot\rangle=\omega([j\cdot,\cdot])$ on $\mathfrak{b}$. Put $\mathfrak{a}_-=\mathfrak{g}_-\cap\mathfrak{a}_\mathbb{C}$. The set of equivalence classes of $B$-equivariant holomorphic line bundles over $\mathcal{D}$ is parametrized by $\mathfrak{a}_-^*$. Let $r=\dim\mathfrak{a}$. For $\varepsilon=(\varepsilon_1,\cdots,\varepsilon_r)\in\{0,1\}^r$, we put $Z(\varepsilon)=\{\underline{\zeta}=(\zeta_1,\cdots, \zeta_r)\in\mathbb{R}^r; \zeta_k=0 \text{ for all }k \text{ such that }\varepsilon_k=1 \}$. Ishi \cite{ishi 1999} gives the subset $\Theta$ of $\mathfrak{a}_-^*$ and the partition 
\begin{equation*}
\Theta=\bigsqcup_{\varepsilon\in\{0,1\}^r}\bigsqcup_{\underline{\zeta}\in Z(\varepsilon)}\Theta(\varepsilon,\underline{\zeta}) 
\end{equation*}
such that a representation $l$ of $B$ is unitarizable if and only if the corresponding parameter belongs to $\Theta$ and unitarizations of $l$ and $l'$ are equivalent if and only if the corresponding parameters belong to the same $\Theta(\varepsilon,\underline{\zeta})$. Combining Theorem \ref{Letmathcal} and Theorem \ref{LetLkappak1} with the results of \cite{ishi 1999, ishi 2011}, we obtain a method of giving a concrete parametrization in question. Let 
\begin{equation*}
\Lambda=\{\lambda\in \mathfrak{z}(\mathfrak{k})^*;i\lambda=d\chi|_{\mathfrak{z}(\mathfrak{k})}\text{ for some one-dimensional representation } \chi \text{ of } K\}. 
\end{equation*} 
For $\varepsilon\in\{0,1\}^r$, $\underline{\zeta}\in Z(\varepsilon)$, and $\lambda\in \Lambda$, we put
\begin{equation*}
\Theta(G,\varepsilon,\underline{\zeta},\lambda)=\{\theta\in\mathcal{L}(G);\theta|_{\mathfrak{a}_-}\in\Theta(\varepsilon,\underline{\zeta}), \theta|_{\mathfrak{z}(\mathfrak{k})}=i\lambda\}.
\end{equation*}
Set
\begin{equation*}
P=\{(\varepsilon,\underline{\zeta},\lambda)\in\{0,1\}^r\times \mathbb{R}^r\times \Lambda; \underline{\zeta}\in Z(\varepsilon), \Theta(G,\varepsilon,\underline{\zeta},\lambda)\neq \emptyset\}.
\end{equation*}
Then the set 
\begin{equation*}
\Theta(G)=\{\theta\in\mathcal{L}(G);\theta|_{\mathfrak{a}_-}\in\Theta\}
\end{equation*}
and the partition 
\begin{equation*}
\Theta(G)=\bigsqcup_{(\varepsilon,\underline{\zeta},\lambda)\in P} \Theta(G,\varepsilon,\underline{\zeta},\lambda)
\end{equation*}
describe the set of equivalence classes $[L]$ of $G$-equivariant holomorphic line bundles such that the representations $l$ of $G$ are unitarizable and the partition of the set corresponding to the unitary equivalence classes of representations $l$ of $G$. In Section \ref{Applicatio}, we see an example of the set $\Theta(G)$ and the partition 
\begin{equation*}
\Theta(G)=\bigsqcup_{(\varepsilon,\underline{\zeta},\lambda)\in P} \Theta(G,\varepsilon,\underline{\zeta},\lambda) 
\end{equation*}
for a five-dimensional non-symmetric bounded homogeneous domain which is biholomorphic to the Siegel domain
\begin{equation*}
\mathcal{D}(\Omega_1)=\left\{U=\left[\begin{array}{ccc}z_\one&0&z_4\\0&z_2&z_5\\z_4&z_5&z_3\end{array}\right]\in \mathrm{Sym}(3,\mathbb{C});\Im U\gg 0\right\},
\end{equation*}
where $G$ is the identity component of the holomorphic automorphism group of the domain.

As a byproduct of the proof of Theorem \ref{LetLkappak1}, we obtain the following theorem.
\begin{theorem}[see Corollary \ref{LetEandEbe}]\label{LetLkappak}
Let $L$ and $L'$ be $G$-equivariant holomorphic line bundles over $\mathcal{D}$. Suppose that the actions of $K$ on the fibers $L_p$ and $L'_p$ coincide. Then $L$ and $L'$ are isomorphic as $K$-equivariant holomorphic line bundles.
\end{theorem}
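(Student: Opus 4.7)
Since $\mathcal{D}$ is a contractible Stein manifold, both $L$ and $L'$ are holomorphically trivial. I would fix holomorphic trivializations and write the $K$-actions by holomorphic multipliers $m,m':K\times\mathcal{D}\to\mathbb{C}^\times$ in the sense of the paper. Under these trivializations the characters of $K$ on $L_p$ and $L'_p$ are $k\mapsto m(k,p)$ and $k\mapsto m'(k,p)$; by hypothesis they agree, so the ratio $\tilde{m}(k,z):=m'(k,z)/m(k,z)$ is a holomorphic multiplier with $\tilde{m}(k,p)=1$ for every $k\in K$. A $K$-equivariant holomorphic isomorphism $L\to L'$ in these trivializations corresponds to a nowhere-vanishing holomorphic function $\varphi:\mathcal{D}\to\mathbb{C}^\times$ satisfying the coboundary relation
\begin{equation*}
\varphi(kz)/\varphi(z)=\tilde{m}(k,z)\qquad(k\in K,\ z\in\mathcal{D}),
\end{equation*}
so the whole problem reduces to constructing such a $\varphi$.

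The next step is to lift $\tilde{m}$ to a global additive cocycle. Since $\mathcal{D}$ is simply connected and $\tilde{m}(k,p)=1$, for each $k\in K$ there is a unique holomorphic branch $\mu(k,\cdot)$ of $\log\tilde{m}(k,\cdot)$ on $\mathcal{D}$ with $\mu(k,p)=0$, and standard monodromy considerations show that $\mu$ is jointly continuous on $K\times\mathcal{D}$ and holomorphic in $z$. Taking logs of the multiplicative cocycle identity for $\tilde{m}$ yields $\mu(k_1k_2,z)=\mu(k_1,k_2z)+\mu(k_2,z)$ up to a locally constant integer multiple of $2\pi i$; evaluating at the $K$-fixed point $z=p$ pins that constant to $0$, so the additive cocycle identity holds on the nose for all $k_1,k_2\in K$ and $z\in\mathcal{D}$.

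Finally I would invoke the compactness and connectedness of $K$ to average. Set $\psi(z):=\int_K\mu(k,z)\,dk$ with respect to normalized Haar measure on $K$; this is holomorphic on $\mathcal{D}$ as an integral of a continuous family of holomorphic functions. Substituting $\mu(k,k_0z)=\mu(kk_0,z)-\mu(k_0,z)$ and using right-invariance of Haar measure on $K$, one obtains
\begin{equation*}
\psi(k_0z)=\int_K\mu(kk_0,z)\,dk-\mu(k_0,z)=\psi(z)-\mu(k_0,z),
\end{equation*}
so that $\varphi(z):=\exp(-\psi(z))$ is holomorphic, nowhere zero, and satisfies $\varphi(k_0z)/\varphi(z)=\exp(\mu(k_0,z))=\tilde{m}(k_0,z)$, which is exactly the required coboundary relation. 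Thus $\varphi$ induces a $K$-equivariant holomorphic isomorphism $L\to L'$.

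The only delicate point is the bookkeeping of the logarithm, namely ensuring that the branch $\mu(k,z)$ can be chosen continuously in $k$ and that the cocycle identity lifts without $2\pi i\mathbb{Z}$ ambiguity. Both rely on the contractibility of $\mathcal{D}$ together with the fact that $p$ is $K$-fixed, which is all that is needed to make the compact-group averaging argument go through.
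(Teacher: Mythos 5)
Your proof is correct, but it takes a genuinely different and considerably more elementary route than the paper. In the paper, Theorem \ref{LetLkappak} is obtained as Corollary \ref{LetEandEbe} of Theorem \ref{Supposethat}, which asserts the much stronger fact that the \emph{normalized} multipliers $M_\theta$ and $M_{\theta'}$ agree identically on $K\times\mathcal{D}(\Omega,Q)$; proving that requires extending the character $d\chi^{\theta-\theta'}$ from $\mathfrak{b}$ to all of $\mathfrak{g}$ by zero on $\mathfrak{k}$ (Theorem \ref{extension}), which in turn rests on the graded Lie-algebra analysis of Sections \ref{Algebraicp} and \ref{Isotropyre} (notably Propositions \ref{forasubalg}, \ref{Forasubalg2}, \ref{b0inv}, and \ref{addinv}) and a lift-and-descend argument through the universal cover of $G$. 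You instead prove exactly the stated corollary by a soft cohomological argument: trivialize both bundles, observe that the ratio $\tilde m=m'/m$ is a $K$-multiplier equal to $1$ at the $K$-fixed point $p$, take the normalized logarithm $\mu(k,\cdot)$ using simple connectivity of $\mathcal{D}$, check that the additive cocycle identity holds exactly by evaluating the $2\pi i\mathbb{Z}$-ambiguity at $z=p$, and average over the compact group $K$ to produce the cobounding function $\varphi=\exp(-\psi)$; combined with Lemma \ref{equivalentline} (which does not require transitivity of the action) this gives the $K$-equivariant isomorphism. Your computation $\psi(k_0z)=\psi(z)-\mu(k_0,z)$ via right-invariance of Haar measure is correct, and the three ingredients you isolate (contractibility of $\mathcal{D}$, the fixed point $p$, compactness of $K$) are exactly what is needed. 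What each approach buys: yours is shorter, avoids the Siegel-domain realization and normal $j$-algebra structure theory entirely, and generalizes to any connected compact group acting holomorphically with a fixed point on a simply connected Stein manifold; the paper's heavier argument is needed anyway because Theorem \ref{Supposethat} — that the cobounding function can be taken to be identically $1$ for the specific representatives $M_\theta$, $M_{\theta'}$ — is what is actually invoked in Proposition \ref{pretheorem} and Theorem \ref{main}, and your averaging construction gives some $\varphi$ but gives no control forcing it to be $K$-invariant, so it cannot substitute for Theorem \ref{Supposethat} in the rest of the paper.
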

Let us explain the organization of this paper. In Section \ref{EXISTENCEO}, we prove Theorem \ref{Letmathcal}. In Section \ref{Normaljalg}, we review the theory of normal $j$-algebras. In Section \ref{Algebraicp}, first we prove Lemma \ref{Letpartial} about a property of the gradation of the Lie algebra $\mathfrak{aut}_{hol}(\mathcal{D})$ of $\mathrm{Aut}_{hol}(\mathcal{D})$ and its bracket relations. After that, Section \ref{Algebraicp} is devoted to the proof of Proposition \ref{Forasubalg2}, that is a generalization of Lemma \ref{Letpartial} in which $\mathfrak{aut}_{hol}(\mathcal{D})$ gets replaced by $\mathfrak{g}$. Proposition \ref{Forasubalg2} plays an important role in the proof of Theorem \ref{Supposethat}, which implies Theorem \ref{LetLkappak} immediately. In Section \ref{Unitaryequ}, we show Theorem \ref{LetLkappak1} using Theorem \ref{Supposethat}. In Section \ref{Applicatio}, we see an example of the set $\Theta(G)$ and the partition \begin{equation*}
\Theta(G)=\bigsqcup_{(\varepsilon,\underline{\zeta},\lambda)\in P} \Theta(G,\varepsilon,\underline{\zeta},\lambda) 
\end{equation*}
for the five-dimensional non-symmetric bounded homogeneous domain mentioned above.

\section{Existence of unitarizations}\label{EXISTENCEO}
Throughout this paper, for a Lie group $G_0$, we denote its Lie algebra by the corresponding Fraktur small letter $\mathfrak{g}_0$.
\subsection{General theory of holomorphic multiplier representations}\label{sec:1.1}
We review the theory of homogeneous holomorphic vector bundles and the theory of holomorphic multiplier representations in \cite{ishi 2011,kobayashi,tirao}.

Let $\mathcal{D}_0$ be a domain in $\mathbb{C}^N$, and let $G_0$ be a Lie group which acts holomorphically on $\mathcal{D}_0$. We assume that the action of $G_0$ on $\mathcal{D}_0$ is analytic, i.e. the map $G_0\times\mathcal{D}_0\ni(g,z)\mapsto gz\in\mathcal{D}_0$ is analytic. Let $\mathcal{V}$ be a finite-dimensional complex vector space.

\begin{definition}
An analytic function $\mult:G_0\times \mathcal{D}_0\rightarrow GL(\mathcal{V})$ is called a {\it multiplier} if the following cocycle condition is satisfied:
\begin{equation*}
\mult(gg',z)=\mult(g,g'z)\mult(g',z)\quad(g,g'\in G_0, z\in \mathcal{D}_0).
\end{equation*}
Moreover, a multiplier $\mult$ is called a holomorphic multiplier if $\mult(g,z)$ is holomorphic in $z\in \mathcal{D}_0$. 
\end{definition}
\begin{remark}
When $\mathcal{V}=\mathbb{C}$, let 
\begin{equation*}
\mathcal{G}=\{\mult:G_0\times\mathcal{D}_0\rightarrow \mathbb{C}^\times;\mult\text{ is a holomorphic multiplier}\}.
\end{equation*}
Pointwise multiplication of holomorphic multipliers gives $\mathcal{G}$ the natural structure of a group. We write the product of two elements $m,m'$ of $\mathcal{G}$ as $mm'$.
\end{remark}

Let $\mult:G_0\times\mathcal{D}_0\rightarrow GL(\mathcal{V})$ be a holomorphic multiplier. Let $T_{\mult}$ be the representation of $G_0$ defined by
\begin{equation*}
T_{\mult}(g)f(z)=\mult(g^{-1},z)^{-1}f(g^{-1}z) \quad(g\in G_0, f\in \mathcal{O}(\mathcal{D}_0,\mathcal{V}),z\in\mathcal{D}_0),
\end{equation*}
where $\mathcal{O}(\mathcal{D}_0,\mathcal{V})$ denotes the space of vector-valued holomorphic functions on $\mathcal{D}_0$. When $\mathcal{V}=\mathbb{C}$, a power of the complex Jacobian $J(g,z)^{-\gamma}\,(g\in G_0, z\in \mathcal{D}_0,\gamma\in\mathbb{Z})$ is an example of a holomorphic multiplier. 
We fix a reference point $p_0\in\mathcal{D}_0$. Let $(\mathfrak{g}_0)_-\subset (\mathfrak{g}_0)_\mathbb{C}$ be the complex subalgebra defined by
\begin{equation}\label{mathfrakg0l}
(\mathfrak{g}_0)_-=\left\{Z=X+iY\in (\mathfrak{g}_0)_\mathbb{C}; \left.\frac{d}{dt}\right|_{t=0}e^{tX}p_0+i\left.\frac{d}{dt}\right|_{t=0}e^{tY}p_0\in T_{p_0}^{0,1}\mathcal{D}_0\right\},
\end{equation}
and let $\theta_m:(\mathfrak{g}_0)_-\rightarrow\mathfrak{gl}(\mathcal{V})$ be the complex linear map given by
\begin{equation*}
\theta_m(Z)=\left.\frac{d}{dt}\right|_{t=0}\mult(e^{tX},p_0)+i\left.\frac{d}{dt}\right|_{t=0}\mult(e^{tY},p_0)\quad(Z=X+iY\in(\mathfrak{g}_0)_-).
\end{equation*}
The smooth map 
\begin{equation*}
F:G_0 \ni g\mapsto m(g,p_0)\in GL(\mathcal{V}) \end{equation*} 
satisfies
\begin{equation*}
(F_*)_g\left(\dt ge^{tX}\right)=\dt m(g,e^{tX}p_0)m(e^{tX},p_0)\quad(g\in G_0,X\in\mathfrak{g}_0).
\end{equation*}
For $X\in\mathfrak{g}_0$, let us use the same symbol $X$ to denote the corresponding left invariant vector field on $G_0$. We extend $(F_*)_g$ to a $\mathbb{C}$-linear map for all $g\in G_0$. At the identity element $e$ of $G_0$, this is a complex-linear map $(F_*)_e:(\mathfrak{g}_0)_\mathbb{C}\rightarrow \mathfrak{gl}(\mathcal{V})$. Then for $Z\in(\mathfrak{g}_0)_-$, we have 
\begin{equation*}
(F_* Z)_{F(g)}=\theta_m(Z)_{F(g)}\quad(g\in G_0).
\end{equation*}
Thus for $Z,Z'\in(\mathfrak{g}_0)_-$, we have
\begin{equation*}\begin{split}
\theta_m([Z,Z'])&=\theta_m([Z,Z'])_e=(F_*)_e[Z,Z']_e
=[\theta_m(Z),\theta_m(Z')]_e
\\&=[\theta_m(Z),\theta_m(Z')].
\end{split}\end{equation*}
We see from the above equation that $\theta_m:(\mathfrak{g}_0)_-\rightarrow\mathfrak{gl}(\mathcal{V})$ is a complex representation of $(\mathfrak{g}_0)_-$. Consider the action of $G_0$ on the trivial bundle $\mathcal{D}_0\times\mathcal{V}$ given by 
\begin{equation}\label{gzzetagzmg}
g(z,v)=(gz,\mult(g,z)v)\quad(g\in G_0,z\in\mathcal{D}_0,v\in\mathcal{V}).
\end{equation}
We denote by $E_{\mult}$ the $G_0$-equivariant holomorphic vector bundle $\mathcal{D}_0\times\mathcal{V}$. 

\begin{lemma}[{\cite[Lemma 1]{ishi 2011}}]\label{equivalentline}
Let $\mult, \mult': G_0\times \mathcal{D}_0\rightarrow GL(\mathcal{V})$ be holomorphic multipliers. Then $E_{\mult}$ and $E_{\mult'}$ are isomorphic as $G_0$-equivariant holomorphic vector bundles if and only if there exists a matrix-valued holomorphic function $f:\mathcal{D}_0\rightarrow GL(\mathcal{V})$ such that
\begin{equation}\label{mult2gzfgz}
\mult'(g,z)=f(gz)\mult(g,z)f(z)^{-1}\quad(g\in G_0, z\in\mathcal{D}_0).
\end{equation}
\end{lemma}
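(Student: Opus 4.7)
The plan is a direct verification from the definitions, standard for trivializations of this sort. An isomorphism $\Phi: E_m \to E_{m'}$ of $G_0$-equivariant holomorphic vector bundles is, in particular, a biholomorphism of the total spaces $\mathcal{D}_0 \times \mathcal{V}$ which covers the identity on $\mathcal{D}_0$ and restricts to a $\mathbb{C}$-linear isomorphism on each fiber. Thus it has the form $\Phi(z,v) = (z, f(z)v)$ for some map $f: \mathcal{D}_0 \to GL(\mathcal{V})$, and holomorphicity of $\Phi$ translates to holomorphicity of $f$.

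Next I would translate the $G_0$-equivariance condition $\Phi \circ g = g \circ \Phi$ for every $g \in G_0$, using the actions given by \eqref{gzzetagzmg} on each side. The left hand side reads $\Phi(gz, m(g,z)v) = (gz, f(gz)m(g,z)v)$, while the right hand side is $g \cdot (z, f(z)v) = (gz, m'(g,z)f(z)v)$. Equating the second components, which must hold for every $v \in \mathcal{V}$, produces precisely \eqref{mult2gzfgz}.

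For the converse, given a holomorphic $f: \mathcal{D}_0 \to GL(\mathcal{V})$ satisfying \eqref{mult2gzfgz}, I would set $\Phi(z,v) = (z, f(z)v)$. This is at once a biholomorphism covering the identity on $\mathcal{D}_0$ and linear on fibers; reversing the preceding computation then shows that it intertwines the two $G_0$-actions, hence defines an isomorphism $E_m \to E_{m'}$ of $G_0$-equivariant holomorphic vector bundles.

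I do not anticipate any genuine obstacle here: the proof is essentially the observation that a bundle isomorphism between two trivial bundles is encoded by a transition-type function, combined with a direct translation of the cocycle condition. The only potentially subtle point is recalling that a $G_0$-equivariant isomorphism of holomorphic vector bundles over $\mathcal{D}_0$ is required, by definition, to cover the identity on $\mathcal{D}_0$, so there is no freedom in the base map to worry about.
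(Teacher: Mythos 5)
Your proof is correct and is the standard direct verification; the paper itself gives no proof, citing Ishi's Lemma 1, whose argument is exactly this translation of a fiberwise-linear isomorphism covering the identity into the relation \eqref{mult2gzfgz}. The one point worth making explicit is that $f(z)^{-1}$ is automatically holomorphic in $z$ (since $\det f$ is holomorphic and nonvanishing), so the inverse map $\Phi^{-1}(z,v)=(z,f(z)^{-1}v)$ is indeed a holomorphic bundle map and nothing further needs to be checked.
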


\begin{definition}
We say that two holomorphic multipliers $\mult,\mult': G_0\times \mathcal{D}_0\rightarrow GL(\mathcal{V})$ are $G_0$-{\it equivalent} if they satisfy \eqref{mult2gzfgz} with some matrix-valued function $f$.
\end{definition}
The next theorem is fundamental for our paper. Let $K_0$ be the isotropy subgroup of $G_0$ at $p_0$.
\begin{theorem}[{\cite[Theorem 3.6]{tirao}}]\label{fundamentalone}
Suppose that the group $G_0$ acts on $\mathcal{D}_0$ transitively. 
Let $\mult,\mult': G_0\times \mathcal{D}_0\rightarrow GL(\mathcal{V})$ be holomorphic multipliers. Then holomorphic vector bundles $E_{\mult}$ and $E_{\mult'}$ are isomorphic as $G_0$-equivariant holomorphic vector bundles if and only if $\theta_{\mult}(Z)=\theta_{\mult'}(Z)$ for all $Z\in(\mathfrak{g}_0)_-$.

\end{theorem}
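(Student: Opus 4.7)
The plan is to prove both directions by using the standard characterization of holomorphic maps in terms of antiholomorphic tangent vectors: a smooth $f:\mathcal{D}_0\to GL(\mathcal{V})$ is holomorphic exactly when its differential annihilates $T^{0,1}\mathcal{D}_0$, and by the very definition of $(\mathfrak{g}_0)_-$ the vector $Z_{p_0}=\dt e^{tX}p_0+i\dt e^{tY}p_0$ lies in $T_{p_0}^{0,1}\mathcal{D}_0$ whenever $Z=X+iY\in(\mathfrak{g}_0)_-$.

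For the necessity direction, suppose $E_\mult\cong E_{\mult'}$. By Lemma \ref{equivalentline} there is a holomorphic $f:\mathcal{D}_0\to GL(\mathcal{V})$ with $\mult'(g,z)=f(gz)\mult(g,z)f(z)^{-1}$. Substituting $g=e^{tX}$, $z=p_0$, differentiating at $t=0$, and combining with the analogous computation for $e^{tY}$ yields
\begin{equation*}
\theta_{\mult'}(Z)=df_{p_0}(Z_{p_0})f(p_0)^{-1}+f(p_0)\theta_\mult(Z)f(p_0)^{-1}.
\end{equation*}
The first summand vanishes by holomorphy of $f$. After normalizing the isomorphism so that $f(p_0)=I$ (which is possible once one fixes compatible trivializations of the two bundles at $p_0$), we conclude $\theta_\mult(Z)=\theta_{\mult'}(Z)$ for all $Z\in(\mathfrak{g}_0)_-$.

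For the sufficiency direction, assume $\theta_\mult=\theta_{\mult'}$ and define
\begin{equation*}
f(g\cdot p_0):=\mult'(g,p_0)\mult(g,p_0)^{-1},\qquad g\in G_0.
\end{equation*}
Well-definedness on $\mathcal{D}_0\cong G_0/K_0$ amounts to the identity $\mult(k,p_0)=\mult'(k,p_0)$ for all $k\in K_0$. This follows from the observation that $\mathfrak{k}_0\subset(\mathfrak{g}_0)_-$ (elements of $\mathfrak{k}_0$ induce the zero vector at $p_0$), so the hypothesis forces $\theta_\mult|_{\mathfrak{k}_0}=\theta_{\mult'}|_{\mathfrak{k}_0}$; integrating then gives equality on $K_0$ (assuming $K_0$ is connected, as is the case in the paper's setting, with a standard extra argument otherwise). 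A direct computation with the cocycle condition then produces the intertwining relation $\mult'(g,z)=f(gz)\mult(g,z)f(z)^{-1}$.

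The main obstacle is verifying that $f$ is holomorphic. At $p_0$ this is immediate: differentiating the defining formula for $f$ along $e^{tX}$ and $e^{tY}$ and using $\mult(\unit,p_0)=\mult'(\unit,p_0)=I$ gives $df_{p_0}(Z_{p_0})=\theta_{\mult'}(Z)-\theta_\mult(Z)=0$ for every $Z\in(\mathfrak{g}_0)_-$, i.e.\ $\bar\partial f|_{p_0}=0$. To propagate this to an arbitrary $z=gp_0$, I would exploit the equivariance $f(gw)=\mult'(g,w)f(w)\mult(g,w)^{-1}$ (a consequence of the cocycle relations and the definition of $f$): differentiating in $w$ at $w=p_0$, and using that $\mult(g,\cdot)$ and $\mult'(g,\cdot)$ are holomorphic in the second variable, the antiholomorphic component of $df_{gp_0}$ is expressed entirely in terms of that of $df_{p_0}$ transported by the $g$-action, which has just been shown to vanish. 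Hence $\bar\partial f\equiv 0$ on $\mathcal{D}_0$, and Lemma \ref{equivalentline} yields the desired bundle isomorphism.
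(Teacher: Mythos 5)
The paper gives no proof of this statement---it is quoted verbatim from Tirao--Wolf---so there is nothing internal to compare against; I can only judge your argument on its own terms. Your sufficiency half is essentially the right argument: $f(gp_0)=\mult'(g,p_0)\mult(g,p_0)^{-1}$ is well defined exactly when $\mult(k,p_0)=\mult'(k,p_0)$ on all of $K_0$; the identity $f(gw)=\mult'(g,w)f(w)\mult(g,w)^{-1}$ transports the vanishing of $\bar\partial f$ from $p_0$ (where it follows from $\theta_\mult=\theta_{\mult'}$ because transitivity makes $(\mathfrak{g}_0)_-\rightarrow T^{0,1}_{p_0}\mathcal{D}_0$ surjective) to every point of the orbit; and Lemma \ref{equivalentline} concludes. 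You should add a word on why $f$ is even smooth (local sections of the submersion $G_0\ni g\mapsto gp_0\in\mathcal{D}_0$). More seriously, the parenthetical ``standard extra argument'' for disconnected $K_0$ does not exist: take $G_0=\mathbb{C}$ acting on $\mathcal{D}_0=\mathbb{C}^\times$ by $z\mapsto e^gz$ and $\mult_\lambda(g,z)=e^{\lambda g}$; then $\theta_{\mult_\lambda}$ vanishes identically on $(\mathfrak{g}_0)_-$ for every $\lambda$, yet $E_{\mult_\lambda}\cong E_{\mult_{\lambda'}}$ forces $\lambda-\lambda'\in\mathbb{Z}$. Connectedness of the isotropy group (guaranteed in the paper's setting, where $\mathcal{D}$ is simply connected and $G$ connected) is a genuine hypothesis, not a technicality to be waved away.

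The substantive gap is in the necessity direction. Your differentiation correctly yields $\theta_{\mult'}(Z)=f(p_0)\,\theta_\mult(Z)\,f(p_0)^{-1}$, but the proposed normalization $f(p_0)=I$ is not available. Replacing $f$ by $f(\cdot)f(p_0)^{-1}$ destroys the relation $\mult'(g,z)=f(gz)\mult(g,z)f(z)^{-1}$, while replacing $f$ by $f(p_0)^{-1}f(\cdot)$ replaces $\mult'$ by a conjugate multiplier and hence conjugates $\theta_{\mult'}$ by the same element; the only genuine freedom is to compose $f$ with an automorphism of $E_\mult$, whose value at $p_0$ must lie in the commutant of $k\mapsto\mult(k,p_0)$, so in general $f(p_0)$ cannot be made the identity. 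In fact the statement with literal equality is false once $\dim\mathcal{V}\geq 2$: for a constant $P\in GL(\mathcal{V})$ the multiplier $\mult'(g,z)=P\mult(g,z)P^{-1}$ gives a bundle isomorphic to $E_\mult$ via $f\equiv P$, while $\theta_{\mult'}=P\theta_\mult P^{-1}$ need not equal $\theta_\mult$. The correct conclusion of the ``only if'' direction is that $\theta_\mult$ and $\theta_{\mult'}$ are \emph{equivalent} representations of $(\mathfrak{g}_0)_-$, which is how the classification should be phrased in the vector-valued case. For $\mathcal{V}=\mathbb{C}$---the only case the paper actually uses this theorem for---conjugation is trivial, the normalization is unnecessary, and your necessity argument closes as written.
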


From now on, we discuss the representation $T_m$ and its unitarizations.
\begin{definition}
We say that the representation $T_{\mult}$ is {\it unitarizable} if there exists a nonzero Hilbert space $\mathcal{H}\subset\mathcal{O}(\mathcal{D}_0,\mathcal{V})$ satisfying the following conditions:
\begin{enumerate}
\item[(i)]the inclusion map $\iota:\mathcal{H}\hookrightarrow \mathcal{O}(\mathcal{D}_0,\mathcal{V})$ is continuous with respect to the open compact topology of $\mathcal{O}(\mathcal{D}_0,\mathcal{V})$,
\item[(ii)] $T_{\mult}(g)\mathcal{H}\subset \mathcal{H}\,(g\in G_0)$ and $\|T_{\mult}(g)f\|=\|f\|\,(g\in G_0, f\in\mathcal{H})$, where $\|\cdot\|$ denotes the norm of $\mathcal{H}$. 
\end{enumerate}
For a unitarizable representation $T_{\mult}$, we call the subrepresentation $(T_{\mult},\mathcal{H})$ a {\it unitarization} of the representation $T_{\mult}$ of $G_0$.
\end{definition}
A Hilbert space $\mathcal{H}$ satisfying the condition (i) is a reproducing kernel Hilbert space. 
\begin{remark}
Let $\mult,\mult':G_0\times\mathcal{D}_0\rightarrow GL(\mathcal{V})$ be holomorphic multipliers. If $\mult$ and $\mult'$ are $G_0$-equivalent and $T_{\mult}$ is unitarizable, then $T_{\mult'}$ is also unitarizable, and the unitarizations are equivalent as unitary representations of $G_0$. 
On the other hand, even though $\mult$ and $\mult'$ are not $G_0$-equivalent, the unitarizations of $T_{\mult}$ and $T_{\mult'}$ can be equivalent as unitary representations of $G_0$ (see Section \ref{Applicatio}). 
\end{remark}

We fix a Hermitian inner product on $\mathcal{V}$. Suppose that a holomorphic multiplier representation $T_{\mult}$ has a unitarization $(T_{\mult},\mathcal{H})$. For $v\in\mathcal{V}$ and $w\in\mathcal{D}_0$, let $\mathcal{K}_{v,w}\in \mathcal{O}(\mathcal{D}_0,\mathcal{V})$ be the function defined by
\begin{equation*}
(f,\mathcal{K}_{w,v})_\mathcal{H}=(f(w),v)_\mathcal{V}\quad(f\in\mathcal{O}(\mathcal{D}_0,\mathcal{V})).
\end{equation*}
Let $\mathcal{K}:\mathcal{D}_0\times\mathcal{D}_0\rightarrow\mathrm{End}(\mathcal{V})$ be the reproducing kernel of $\mathcal{H}$ defined by 
\begin{equation*}
\quad \mathcal{K}(z,w)v=\mathcal{K}_{w,v}(z)\quad(z,w\in\mathcal{D}_0,v\in\mathcal{V}).
\end{equation*}
Then $\mathcal{K}$ satisfies
\begin{equation}\label{mathcalKgz}
\mathcal{K}(gz,gw)=\mult(g,z)\mathcal{K}(z,w)\mult(g,w)^*\quad(z,w\in\mathcal{D}_0, g\in G_0).
\end{equation}
The next lemma shows that the converse also holds.
\begin{lemma}[{\cite[Lemma 5]{ishi 2011}}]\label{lem:1}
Let $\mathcal{H}\subset\mathcal{O}(\mathcal{D}_0,\mathcal{V})$ be a Hilbert space with reproducing kernel $\mathcal{K}$ and let $\mult: G_0\times\mathcal{D}_0\rightarrow GL(\mathcal{V})$ be a holomorphic multiplier. Then 
$(T_{\mult}, \mathcal{H})$ is a unitarization of $T_{\mult}$ if and only if \eqref{mathcalKgz} holds.
\end{lemma}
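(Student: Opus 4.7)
The plan is to transfer everything to the span of the ``evaluation'' kernel functions $\mathcal{K}_{w,v}$, which is dense in $\mathcal{H}$ (if $f \perp \mathcal{K}_{w,v}$ for all $w\in\mathcal{D}_0$ and $v\in\mathcal{V}$, then $f(w)=0$ for all $w$ and thus $f=0$). Throughout I will use the cocycle consequence $m(g^{-1},gz)^{-1}=m(g,z)$, which comes from $m(g^{-1},gz)m(g,z)=m(e,z)=I$.

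For the forward direction, I assume $(T_m,\mathcal{H})$ is a unitarization and compute how $T_m(g)$ moves the reproducing kernel. For any $f\in\mathcal{H}$, the definition of $T_m$ together with the reproducing property gives
\begin{equation*}
(f, T_m(g)\mathcal{K}_{w,v})_{\mathcal{H}} = (T_m(g^{-1})f, \mathcal{K}_{w,v})_{\mathcal{H}} = (m(g,w)^{-1}f(gw),v)_{\mathcal{V}} = (f(gw), m(g,w)^{-*}v)_{\mathcal{V}},
\end{equation*}
where the first equality uses unitarity. This identifies $T_m(g)\mathcal{K}_{w,v} = \mathcal{K}_{gw,\,m(g,w)^{-*}v}$. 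Writing out both sides at a point $z$ using the definition of $T_m$ on the left and the definition of the kernel on the right yields
\begin{equation*}
m(g^{-1},z)^{-1}\mathcal{K}(g^{-1}z,w)v = \mathcal{K}(z,gw)\, m(g,w)^{-*}v,
\end{equation*}
and replacing $z$ by $gz$ and using $m(g^{-1},gz)^{-1}=m(g,z)$ yields the transformation law \eqref{mathcalKgz}.

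For the converse, I assume \eqref{mathcalKgz} and verify the two unitarization conditions. First, applying \eqref{mathcalKgz} with $g^{-1}$ in place of $g$ and $z,gw$ in place of the variables rewrites $\mathcal{K}(g^{-1}z,w)$ so that the factor $m(g^{-1},z)^{-1}$ in the definition of $T_m(g)$ cancels; the computation gives exactly $T_m(g)\mathcal{K}_{w,v}=\mathcal{K}_{gw,\,m(g,w)^{-*}v}$, showing that $T_m(g)$ preserves the dense subspace $\mathcal{H}_0=\mathrm{span}\{\mathcal{K}_{w,v}\}$. Next I check isometry on $\mathcal{H}_0$: for two generating kernels,
\begin{equation*}
(T_m(g)\mathcal{K}_{w,v},\, T_m(g)\mathcal{K}_{w',v'})_{\mathcal{H}} = (\mathcal{K}(gw',gw)\, m(g,w)^{-*}v,\; m(g,w')^{-*}v')_{\mathcal{V}},
\end{equation*}
and substituting $\mathcal{K}(gw',gw)=m(g,w')\mathcal{K}(w',w)m(g,w)^*$ from \eqref{mathcalKgz} collapses all the multiplier factors, leaving $(\mathcal{K}(w',w)v,v')_{\mathcal{V}}=(\mathcal{K}_{w,v},\mathcal{K}_{w',v'})_{\mathcal{H}}$.

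By density and boundedness, $T_m(g)|_{\mathcal{H}_0}$ extends uniquely to an isometry $U_g$ on $\mathcal{H}$. The only subtle point, which is the main thing to verify carefully, is that this abstract extension agrees with the pointwise formula defining $T_m(g)$ on all of $\mathcal{H}$; this follows because convergence in $\mathcal{H}$ implies convergence in the open compact topology of $\mathcal{O}(\mathcal{D}_0,\mathcal{V})$ (condition (i)), and $T_m(g)$ is continuous on the latter space, so the two limits coincide. This simultaneously proves that $T_m(g)\mathcal{H}\subset\mathcal{H}$ and that $T_m(g)$ is isometric on $\mathcal{H}$, completing the verification of (ii). The main technical obstacle is just the careful bookkeeping of the adjoints and inverses of $m$, together with consistent use of the cocycle identity to convert between $m(g^{-1},\cdot)$ and $m(g,\cdot)$.
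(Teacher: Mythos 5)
Your argument is correct. The paper itself gives no proof of this lemma --- it is quoted as Lemma 5 of the cited work of Ishi --- and your proof is essentially the standard one for that result: the forward direction by computing $T_{\mult}(g)\mathcal{K}_{w,v}=\mathcal{K}_{gw,\,\mult(g,w)^{-*}v}$ from unitarity and the reproducing property, and the converse by checking isometry on the dense span of kernel functions and then using condition (i) (continuity of the inclusion into the compact--open topology) to identify the abstract isometric extension with the pointwise formula for $T_{\mult}(g)$; that last identification is indeed the one point that must be made explicit, and you handle it correctly.
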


The next theorem is also fundamental for our paper.
\begin{theorem}[{\cite[Theorem 6]{ishi 2011}}, {\cite{kobayashi}}, {\cite{kunze}}]\label{uniqueness of unitarification}
If $G_0$ acts on $\mathcal{D}_0$ transitively and the map $K_0\ni k\mapsto m(k,p_0)\in GL(\mathcal{V})$ defines an irreducible representation of $K_0$, then a Hilbert space giving a unitarization of $T_{\mult}$ is unique if it exists. In particular, the unitarization is irreducible.
\end{theorem}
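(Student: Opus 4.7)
The plan is to reduce everything to Schur's lemma applied to the given irreducible $K_0$-representation on $\mathcal{V}$, by analyzing the reproducing kernel at the reference point $p_0$.

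Given a unitarization $\mathcal{H}$ with reproducing kernel $\mathcal{K}$, the first step is to study the positive Hermitian operator $A := \mathcal{K}(p_0, p_0) \in \mathrm{End}(\mathcal{V})$. The evaluation map $E: \mathcal{H} \to \mathcal{V}$, $f \mapsto f(p_0)$, is continuous by condition (i). Using the cocycle identity and $kp_0 = p_0$, one checks that $E \circ T_\mult(k) = \mult(k, p_0) \circ E$ for every $k \in K_0$, so $E$ is $K_0$-intertwining. Its image $E(\mathcal{H}) \subset \mathcal{V}$ is therefore $K_0$-invariant, and by irreducibility equals $\{0\}$ or all of $\mathcal{V}$. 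The first option would force $f(gp_0) = 0$ for every $g \in G_0$ and $f \in \mathcal{H}$ (via $T_\mult(g^{-1})\mathcal{H} \subset \mathcal{H}$), hence $\mathcal{H} = \{0\}$ by transitivity, contradicting nontriviality. Thus $E$ is surjective and $A$ is invertible, and specializing \eqref{mathcalKgz} to $g = k \in K_0$ yields $\mult(k, p_0) A \mult(k, p_0)^* = A$.

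For two unitarizations $\mathcal{H}_1, \mathcal{H}_2$ with kernels $\mathcal{K}_i$ and operators $A_i$, this relation rearranges to $\mult(k, p_0)^* = A_i^{-1} \mult(k^{-1}, p_0) A_i$ for each $i$, forcing $A_2 A_1^{-1}$ to commute with $\mult(k, p_0)$ for every $k \in K_0$. Schur's lemma on the finite-dimensional irreducible representation $\mathcal{V}$ then gives $A_2 = c A_1$ for some $c > 0$. To propagate this identity off $(p_0, p_0)$, set $\widetilde{\mathcal{K}} := \mathcal{K}_2 - c\mathcal{K}_1$; it satisfies the transformation rule \eqref{mathcalKgz}, vanishes at $(p_0, p_0)$, and hence vanishes on the whole diagonal of $\mathcal{D}_0$ by transitivity. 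Since $\widetilde{\mathcal{K}}$ is holomorphic in its first argument and antiholomorphic in its second, the standard polarization argument — expand as a local power series in $z$ and $\overline{w}$ and observe that its restriction to $w = z$ determines all coefficients — forces $\widetilde{\mathcal{K}} \equiv 0$. Consequently $\mathcal{K}_2 = c\mathcal{K}_1$, and $\mathcal{H}_1, \mathcal{H}_2$ coincide as subspaces of $\mathcal{O}(\mathcal{D}_0, \mathcal{V})$, with norms related by a positive scalar.

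Irreducibility of $(T_\mult, \mathcal{H})$ is then automatic: any nonzero closed $G_0$-invariant subspace $\mathcal{H}_0 \subset \mathcal{H}$ is itself a reproducing kernel Hilbert space continuously embedded in $\mathcal{O}(\mathcal{D}_0, \mathcal{V})$, hence a unitarization, and therefore coincides with $\mathcal{H}$ by the uniqueness just established. The subtle point in the whole argument is the invertibility of $A$, which is where the irreducibility of $k \mapsto \mult(k, p_0)$ enters essentially; once that is in place, the Schur and polarization steps are routine.
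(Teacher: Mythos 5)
Your proof is correct, and it is essentially the standard argument behind this result: the paper itself gives no proof but simply quotes the theorem from Ishi, Kobayashi, and Kunze, and your chain of reasoning (invertibility of $\mathcal{K}(p_0,p_0)$ via the $K_0$-equivariant evaluation map, Schur's lemma to get $\mathcal{K}_2(p_0,p_0)=c\,\mathcal{K}_1(p_0,p_0)$, the transformation law \eqref{mathcalKgz} plus transitivity to propagate this along the diagonal, and sesquiholomorphic polarization to conclude $\mathcal{K}_2=c\,\mathcal{K}_1$) is exactly the route taken in those references. The one point worth stating explicitly is that "unique" here means unique as a subspace of $\mathcal{O}(\mathcal{D}_0,\mathcal{V})$ with the inner product determined up to a positive scalar, which is precisely what your argument delivers.
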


For any $g \in G_0$, $v\in \mathcal{V}$ and $f\in\mathcal{H}$, we have
\begin{equation*}\begin{split}
(f,T_{\mult}(g)\mathcal{K}(\cdot,p_0)v)&=(T_{\mult}(g^{-1})f,\mathcal{K}(\cdot,p_0)v)=(T_{\mult}(g^{-1})f(p_0),v)
\\&=(\mult(g,p_0)^{-1}f(gp_0),v). 
\end{split}\end{equation*}
The right hand side of the above equation is a $C^\omega$-function of $g\in G_0$. Hence $\mathcal{K}(\cdot,p_0)v$ is a $C^\omega$-vector of the representation $(T_{\mult},\mathcal{H})$. 

Let $\mathcal{V}=\mathbb{C}$. For $X\in\mathfrak{g}_0$ and $z\in\mathcal{D}_0$, we have
\begin{equation*}\begin{split}
dT_m(X)\mathcal{K}_{p_0}(z)&
=\dt T_m(e^{tX})\mathcal{K}_{p_0}(z)
\\&=\dt m(e^{-tX},z)^{-1}\mathcal{K}_{p_0}(e^{-tX}z)
\\&=\dt \overline{m(e^{tX},p_0)^{-1}}\mathcal{K}(z,e^{tX}p_0)
\\&=-\mathcal{K}(z,p_0)\dt \overline{m(e^{tX},p_0)}+\dt \mathcal{K}(z,e^{tX}p_0).
\end{split}\end{equation*}
Thus for $Z=X+iY\in(\mathfrak{g}_0)_-$ and $z\in\mathcal{D}_0$, we have
\begin{equation*}\begin{split}
dT_m(X-iY)\mathcal{K}_{p_0}(z)&=-\mathcal{K}(z,p_0)\dt \overline{m(e^{tX},p_0)+im(e^{tY},p_0)}\\&\quad+\dt \mathcal{K}(z,e^{tX}p_0)-i\dt\mathcal{K}(z,e^{tY}p_0).
\end{split}\end{equation*}
Since $\dt e^{tX}p_0-i\dt e^{tY}p_0\in T_{p_0}^{1,0}\mathcal{D}_0$, it follows that
\begin{equation}\label{dTmoverlin}
dT_m(\overline{Z})\mathcal{K}(\cdot,p_0)=-\overline{\theta_m(Z)}\mathcal{K}(\cdot,p_0)\quad(Z\in(\mathfrak{g}_0)_-),
\end{equation}
where $\overline{X+iY}=X-iY$ for $X,Y\in\mathfrak{g}_0$.
In general, for a unitary representation $(\pi,\mathcal{H}_0)$ of an arbitrary Lie group $G_0$, the moment map $J:\mathcal{P}(\mathcal{H}_0^\infty)\ni [v]
\mapsto J_{[v]}\in\mathfrak{g}_0^*$ is defined by
\begin{equation*}
J_{[v]}(X)=\frac{1}{i}\frac{(d\pi(X)v,v)_{\mathcal{H}_0}}{(v,v)_{\mathcal{H}_0}}\quad(X\in\mathfrak{g}_0).
\end{equation*}
Let $J:\mathcal{P}(\mathcal{H}^\infty)\ni [v]
\mapsto J_{[v]}\in\mathfrak{g}_0^*$ be the moment map of $(T_{\mult},\mathcal{H})$, and we put 
\begin{equation}\label{xiJKcdotpi}
\xi=J_{[\mathcal{K}(\cdot,p_0)]}\in\mathfrak{g}_0^*. 
\end{equation} 
Then by (\ref{dTmoverlin}), we have 
\begin{equation*}
\theta_m(Z)=i\xi(Z)\quad(Z\in (\mathfrak{g}_0)_-).
\end{equation*}

\subsection{The case of bounded homogeneous domains}
A bounded domain is said to be homogeneous if the holomorphic automorphism group acts on the domain transitively. Let $\mathcal{D}\subset \mathbb{C}^N$ be a domain which is biholomorphic to a bounded homogeneous domain. It is well known that the holomorphic automorphism group $\mathrm{Aut}_{hol}(\mathcal{D})$ of $\mathcal{D}$ has a canonical structure of a Lie group from the viewpoint of the group action. 
\begin{definition}
For an arbitrary Lie group $G_0$, we call a maximal connected real split solvable Lie subgroup of $G_0$ an {\it Iwasawa} subgroup.
\end{definition}
Let $\mathrm{Aut}_{hol}(\mathcal{D})^o$ be the identity component of $\mathrm{Aut}_{hol}(\mathcal{D})$. It is known \cite[Theorem 3.2]{kaneyuki} that $\mathrm{Aut}_{hol}(\mathcal{D})^o$ is isomorphic to the identity component of a linear real algebraic Lie group. Let $G$ be the identity component of a real algebraic subgroup of $\mathrm{Aut}_{hol}(\mathcal{D})^o$ which acts on $\mathcal{D}$ transitively. For any linear real algebraic group $G_0$, the identity component $G_0^o$ can be topologically decomposed into the direct product of a maximal compact subgroup of $G_0$ and an Iwasawa subgroup of $G_0$ (see \cite[Chapter 4, Theorem 4.7]{encyclopedia}). We fix a reference point $p\in\mathcal{D}$. It is known that the isotropy subgroup of $\mathrm{Aut}_{hol}(\mathcal{D})^o$ at $p$ is a maximal compact subgroup of $\mathrm{Aut}_{hol}(\mathcal{D})^o$.
The group $G$ contains an Iwasawa subgroup $B$ of $\mathrm{Aut}_{hol}(\mathcal{D})^o$ which acts on $\mathcal{D}$ simply transitively, and hence we can identify $\mathcal{D}$ with $B$. 
Note that the isotropy subgroup $K$ of $G$ at $p$ is connected because $\mathcal{D}$ is simply connected and $G$ is connected. 

In general a bounded homogeneous domain is a contractible Stein manifold. Thus every $G$-equivariant holomorphic line bundle over $\mathcal{D}$ is isomorphic as a $G$-equivariant holomorphic line bundle to $E_{\mult}=\mathcal{D}\times \mathbb{C}$ with some holomorphic multiplier $\mult: G\times \mathcal{D}\rightarrow \mathbb{C}^\times$. For $p\in\mathcal{D}$, let $\mathfrak{g}_-\subset \mathfrak{g}_\mathbb{C}$ be the complex subalgebra defined by \eqref{mathfrakg0l}.
\begin{theorem}[{\cite[Theorem 3.6]{tirao}}]\label{fundamentaltwo}
Let $\theta:\mathfrak{g}_-\rightarrow\mathfrak{gl}(\mathcal{V})$ be a complex representation of $\mathfrak{g}_-$ whose restriction to $\mathfrak{k}$ lifts to a representation of $K$. Then there exists a holomorphic multiplier $\mult: G\times\mathcal{D}\rightarrow GL(\mathcal{V})$ such that $\theta(Z)=\theta_{\mult}(Z)$ for all $Z\in\mathfrak{g}_-$.
\end{theorem}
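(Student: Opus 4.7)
The plan is to realize $\theta$ as the infinitesimal datum of a $G$-equivariant holomorphic vector bundle $L$ over $\mathcal{D}$ by an associated bundle construction, exploit the Stein/contractible nature of $\mathcal{D}$ to obtain a global holomorphic frame of $L$, and read off the multiplier $m$ from the cocycle describing the $G$-action in that frame.

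First I would use the hypothesis that $\theta|_{\mathfrak{k}}$ lifts to a representation $\chi : K \to GL(\mathcal{V})$ to form the smooth $G$-equivariant vector bundle $L := G \times_K \mathcal{V}$ over $G/K \cong \mathcal{D}$, whose smooth sections correspond to smooth maps $f : G \to \mathcal{V}$ with $f(gk) = \chi(k)^{-1} f(g)$. I would then equip $L$ with the $G$-invariant complex structure whose holomorphic sections are exactly those $f$ satisfying
\[
(\widetilde{Z} f)(g) + \theta(Z) f(g) = 0 \qquad (Z \in \mathfrak{g}_-,\ g \in G),
\]
where $\widetilde{Z}$ denotes the $\mathbb{C}$-linear extension to $\mathfrak{g}_\mathbb{C}$ of the appropriate one-sided invariant derivative on $C^\infty(G, \mathcal{V})$. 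This prescription is consistent on $\mathfrak{k}$ because $\theta|_{\mathfrak{k}} = d\chi$, and it is Frobenius/Newlander--Nirenberg integrable because $\mathfrak{g}_-$ is closed under the bracket and $\theta$ is a Lie algebra homomorphism; the induced base complex structure on $\mathcal{D}$ is precisely the one prescribed by $\mathfrak{g}_-$ via \eqref{mathfrakg0l}.

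Next, since $\mathcal{D}$ is a contractible Stein manifold, $L$ is topologically trivial and therefore holomorphically trivial by Grauert's Oka principle (in the line bundle case $\dim\mathcal{V}=1$ this is elementary from the exponential sequence together with the vanishings of $H^1(\mathcal{D},\mathcal{O})$ and $H^2(\mathcal{D},\mathbb{Z})$). Picking a holomorphic frame $\tau : \mathcal{D} \times \mathcal{V} \xrightarrow{\sim} L$ and transporting the left $G$-action on $L$ through $\tau$ yields an action of the form $g \cdot (z, v) = (g z,\, m(g, z) v)$ for a unique real-analytic $m : G \times \mathcal{D} \to GL(\mathcal{V})$. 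The cocycle identity for $m$ is automatic from $G$ acting as a group, and $m(g,\cdot)$ is holomorphic because $\tau$ is. The identity $\theta_m = \theta$ then follows from an infinitesimal computation at $p$: differentiating $g \mapsto m(g, p)$ along $Z \in \mathfrak{g}_-$ recovers $\theta(Z)$ by the very definition of the holomorphic structure on $L$.

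The main obstacle is the integrability verification: one must carefully track left- and right-invariant vector fields on the total space and fix a sign convention matching $\mathfrak{g}_-$ to antiholomorphic vectors at $p$ as in \eqref{mathfrakg0l}. Once the conventions are in place, the computation is routine, using only $[\mathfrak{g}_-, \mathfrak{g}_-] \subset \mathfrak{g}_-$ and $\theta([Z, Z']) = [\theta(Z), \theta(Z')]$. Holomorphic triviality via Grauert is a deep but standard input for this class of base spaces, and is the other essential ingredient that converts the abstract bundle $L$ into the concrete multiplier $m$ demanded by the statement.
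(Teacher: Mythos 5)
The paper offers no proof of this statement: it is imported verbatim from Tirao--Wolf, and the only ingredient the paper itself supplies is the observation (in Section \ref{sec:1.1}) that a bounded homogeneous domain is a contractible Stein manifold, so that every $G$-equivariant holomorphic bundle is of the form $E_{\mult}$. Your reconstruction --- form $G\times_K\mathcal{V}$ from the lift $\chi$ of $\theta|_{\mathfrak{k}}$, endow it with the invariant holomorphic structure determined by $\theta$ on $\mathfrak{g}_-$ (integrable because $\mathfrak{g}_-$ is a subalgebra and $\theta$ a homomorphism, with the $K$-equivariance $\theta\circ\mathrm{Ad}(k)=\chi(k)\theta(\cdot)\chi(k)^{-1}$ coming from connectedness of $K$), trivialize by Grauert, and read off $\mult$ --- is exactly the standard argument behind the cited result combined with that Stein trivialization, so it is the same route. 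One small point to make explicit in the last step: if $F:G\rightarrow GL(\mathcal{V})$ encodes the chosen holomorphic frame, the computation gives $\theta_{\mult}(Z)=F(\unit)^{-1}\theta(Z)F(\unit)$, so you must normalize the frame by a constant linear automorphism so that $F(\unit)=\mathrm{id}$ (harmless, since this preserves holomorphy of the frame) to get $\theta_{\mult}=\theta$ on the nose rather than up to conjugation.
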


We get the following lemma by the decomposition $G=BK$. 
\begin{lemma}\label{extendm}
Let $\mult:G\times\mathcal{D}\rightarrow GL(\mathcal{V})$ be a \textup{(}not necessarily holomorphic\textup{)} multiplier. If a $\mathcal{V}$-valued function $f$ on $\mathcal{D}$ satisfies 
\begin{equation}\label{multconp1q}
m(k,p)f(p)=f(p)\quad(\con\in K)
\end{equation}
and 
\begin{equation}\label{fbzmultbzf}
f(bz)=\mult(b,z)f(z)\quad(b\in B, z\in\mathcal{D}),
\end{equation}
then we have
\begin{equation}\label{fgzmultgzf}
f(gz)=\mult(g,z)f(z)\quad(g\in G, z\in\mathcal{D}).
\end{equation}
\end{lemma}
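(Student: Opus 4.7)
The plan is to exploit the topological decomposition $G=BK$ (recalled in the paragraph before the lemma) together with the simple transitivity of $B$ on $\mathcal{D}$, so that every evaluation of $f$ and of $m$ can be pushed back to the reference point $p$, where the $K$-invariance hypothesis \eqref{multconp1q} becomes available.

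Fix $g\in G$ and $z\in\mathcal{D}$. By simple transitivity of $B$ on $\mathcal{D}$, write $z=b'p$ for a unique $b'\in B$. By the decomposition $G=BK$, write $g=bk$ with $b\in B$ and $k\in K$. The element $kb'$ lies in $G$, so applying the decomposition once more I get $kb'=b''k''$ with $b''\in B$ and $k''\in K$. Since $k''p=p$, this yields
\begin{equation*}
gz=bkb'p=bb''k''p=bb''p.
\end{equation*}
Now I would compute the two sides of \eqref{fgzmultgzf} separately. The left-hand side reduces, by \eqref{fbzmultbzf} applied to $bb''\in B$ and the point $p$, to
\begin{equation*}
f(gz)=f(bb''p)=m(bb'',p)f(p).
\end{equation*}
For the right-hand side, \eqref{fbzmultbzf} gives $f(z)=m(b',p)f(p)$, and two applications of the cocycle condition (together with $k''p=p$) give
\begin{equation*}
m(g,z)m(b',p)=m(gb',p)=m(bb''k'',p)=m(bb'',k''p)m(k'',p)=m(bb'',p)m(k'',p).
\end{equation*}
Hence $m(g,z)f(z)=m(bb'',p)m(k'',p)f(p)$, which collapses to $m(bb'',p)f(p)$ by the hypothesis \eqref{multconp1q} applied to $k''\in K$. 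Comparing the two expressions proves \eqref{fgzmultgzf}.

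There is essentially no obstacle here: the statement is purely formal once the $BK$ decomposition is in hand, and the non-holomorphic setting imposes no extra subtlety since we never differentiate or pass to a limit. The only care needed is to apply the cocycle identity in the correct order, matching the convention $m(gg',z)=m(g,g'z)m(g',z)$ used in the paper.
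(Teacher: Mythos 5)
Your computation is correct: writing $g=bk$, $z=b'p$, and $kb'=b''k''$, both sides of \eqref{fgzmultgzf} collapse to $m(bb'',p)f(p)$, with \eqref{multconp1q} absorbing the extra factor $m(k'',p)$, and every step uses only the cocycle identity and the simple transitivity of $B$. The paper argues differently in form though not in substance: it regards $f$ as a section of the $G$-equivariant bundle $\mathcal{D}\times\mathcal{V}$ and asserts that, because $B$ acts transitively on $\mathcal{D}$, the $B$-invariance \eqref{fbzmultbzf} of the section is equivalent to its $G$-invariance \eqref{fgzmultgzf}; the hypothesis \eqref{multconp1q} (that the value $f(p)$ is fixed by the isotropy action of $K$ on the fiber) is left implicit in that one-line argument, even though it is exactly what is needed to pass from invariance under the $B$-orbit to invariance under all of $G=BK$. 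Your explicit cocycle manipulation is thus the concrete unwinding of the paper's bundle-theoretic shortcut; it is longer but shows precisely where each hypothesis enters, whereas the paper's version buys brevity and the conceptual statement that a $B$-invariant section whose value at $p$ is $K$-fixed is automatically $G$-invariant.
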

\begin{proof}
We consider the $G$-equivariant vector bundle $\mathcal{D}\times\mathcal{V}$, and regard $f$ as a section of the vector bundle. Then \eqref{fbzmultbzf} means that the section $f$ is $B$-invariant under the action of $B$, and \eqref{fgzmultgzf} means that the section $f$ is $G$-invariant under the action of $G$. Therefore, since $B$ acts on $\mathcal{D}$ transitively, \eqref{fbzmultbzf} and \eqref{fgzmultgzf} are equivalent. 
\end{proof}

The following proposition is just an application of the previous lemma.
\begin{theorem}\label{LetmGtimes}
Let $\mult:G\times \mathcal{D}\rightarrow GL(\mathcal{V})$ be a holomorphic multiplier, and let $\mathcal{H}\subset \mathcal{O}(\mathcal{D},\mathcal{V})$ be a reproducing kernel Hilbert space. We fix a Hermitian inner product on $\mathcal{V}$ such that $m(k,p)\in U(\mathcal{V})$ for all $k\in K$. Suppose that the reproducing kernel $\mathcal{K}$ of $\mathcal{H}$ satisfies $\mathcal{K}(p,p)\in \mathrm{Hom}_K(\mathcal{V},\mathcal{V})$, and the representation $T_m$ satisfies $T_{\mult}(b)\mathcal{H}\subset\mathcal{H}\,(b\in B)$ and $\|T_{\mult}(b)f\|=\|f\|\,(b\in B,f\in\mathcal{H})$. Then we have $T_{\mult}(g)\mathcal{H}\subset\mathcal{H}\,(g\in G)$ and $\|T_{\mult}(g)f\|=\|f\|\,(g\in G,f\in\mathcal{H})$.
\end{theorem}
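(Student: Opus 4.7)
The plan is to reduce the statement, via Lemma \ref{lem:1}, to establishing the kernel transformation identity $\mathcal{K}(gz, gw) = m(g, z) \mathcal{K}(z, w) m(g, w)^*$ for all $g \in G$ and all $z, w \in \mathcal{D}$. By Lemma \ref{lem:1}, the hypothesis already provides this identity for every $b \in B$. The key tool for extending it from $B$ to $G$ is Lemma \ref{extendm}, but some care is needed since Lemma \ref{extendm} concerns a $\mathcal{V}$-valued function of a single variable, whereas our identity is about a two-variable kernel taking values in $\mathrm{End}(\mathcal{V})$.

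First I would pass to the diagonal. Define $\tilde{m} : G \times \mathcal{D} \rightarrow GL(\mathrm{End}(\mathcal{V}))$ by $\tilde{m}(g, z) X = m(g, z) X m(g, z)^*$. A short calculation using the cocycle property of $m$ shows that $\tilde{m}$ is a (real-analytic) multiplier with target $GL(\mathrm{End}(\mathcal{V}))$. Set $f : \mathcal{D} \rightarrow \mathrm{End}(\mathcal{V})$ by $f(z) = \mathcal{K}(z, z)$. The $B$-version of the kernel identity, restricted to the diagonal $w = z$, gives $f(bz) = \tilde{m}(b, z) f(z)$, and at the reference point we have $\tilde{m}(k, p) f(p) = m(k, p) \mathcal{K}(p, p) m(k, p)^* = \mathcal{K}(p, p) = f(p)$, because $m(k, p) \in U(\mathcal{V})$ forces $m(k, p)^* = m(k, p)^{-1}$ and the hypothesis $\mathcal{K}(p, p) \in \mathrm{Hom}_K(\mathcal{V}, \mathcal{V})$ says $\mathcal{K}(p, p)$ commutes with $m(k, p)$. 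Lemma \ref{extendm}, applied with the vector space $\mathrm{End}(\mathcal{V})$ and multiplier $\tilde{m}$, then produces the diagonal identity $\mathcal{K}(gz, gz) = m(g, z) \mathcal{K}(z, z) m(g, z)^*$ for every $g \in G$ and $z \in \mathcal{D}$.

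To bootstrap from the diagonal identity to the full kernel identity I would invoke polarization. For each fixed $g \in G$, both $(z, w) \mapsto \mathcal{K}(gz, gw)$ and $(z, w) \mapsto m(g, z) \mathcal{K}(z, w) m(g, w)^*$ are $\mathrm{End}(\mathcal{V})$-valued functions on $\mathcal{D} \times \mathcal{D}$ that are holomorphic in $z$ and anti-holomorphic in $w$. Their difference vanishes on the totally real diagonal $\{(z, z) : z \in \mathcal{D}\}$; expanding this difference around any point $(z_0, z_0)$ as a power series in $z - z_0$ and $\overline{w - z_0}$, the vanishing on $w = z$ forces every coefficient to be zero, so the difference vanishes on $\mathcal{D} \times \mathcal{D}$. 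A final application of Lemma \ref{lem:1} converts the kernel identity for $G$ back into the required statement that $T_m(g) \mathcal{H} \subset \mathcal{H}$ and $\|T_m(g) f\| = \|f\|$ for every $g \in G$.

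The main conceptual obstacle is choosing the correct input for Lemma \ref{extendm}: naive candidates such as $f(z) = \mathcal{K}(z, p) v$ do not fit its hypotheses, because the fixed reference point in the second argument does not transform under $B$ in a way compatible with a single-variable multiplier. The symmetric choice $f(z) = \mathcal{K}(z, z)$ together with the conjugation multiplier $\tilde{m}$ is what makes the setup of Lemma \ref{extendm} apply, and once it does, the polarization step spreads the diagonal identity to the full kernel identity essentially for free.
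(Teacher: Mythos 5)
Your proposal is correct and follows essentially the same route as the paper: pass to the diagonal kernel $\mathcal{K}^d(z)=\mathcal{K}(z,z)$ with the conjugation multiplier $\tilde{m}(g,z)A=m(g,z)\,A\,m(g,z)^*$, apply Lemma \ref{extendm}, and recover the off-diagonal identity by analytic continuation (your polarization step) before invoking Lemma \ref{lem:1}. Your explicit verification of the fixed-point condition $\tilde{m}(k,p)f(p)=f(p)$ via unitarity of $m(k,p)$ and $\mathcal{K}(p,p)\in\mathrm{Hom}_K(\mathcal{V},\mathcal{V})$ is a detail the paper leaves implicit, but otherwise the arguments coincide.
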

\begin{proof}[Proof]
By Lemma \ref{lem:1}, for all $g\in B$, we have
\begin{equation}\label{mathcalKbz}
\mathcal{K}(gz,gz)=\mult(g,z)K(z,z)\mult(g,z)^*\quad( z\in\mathcal{D}).
\end{equation}
Let $\mathcal{K}^d$ be the $\mathrm{End}(\mathcal{V})$-valued function on $\mathcal{D}$ given by $\mathcal{K}^d(z)=\mathcal{K}(z,z)$, and let $\tilde{m}:G\times \mathcal{D}\rightarrow GL(\mathrm{End}(\mathcal{V}))$ be a multiplier defined by 
\begin{equation*}
\tilde{m}(g,z)A=m(g,z)\circ A\circ m(g,z)^*\quad(A\in \mathrm{End}(\mathcal{V})).
\end{equation*}
Applying Lemma \ref{extendm} to $\tilde{m}$ and $\mathcal{K}^d$, we see that \eqref{mathcalKbz} holds for all $g\in G$. By the analytic continuation, the equation
\begin{equation*}
\mathcal{K}(gz,gw)=\mult(g,z)\mathcal{K}(z,w)\mult(g,w)^*\quad( g\in G, z,w\in\mathcal{D})
\end{equation*}
holds. This proves the result by Lemma \ref{lem:1}.
\end{proof}

\begin{remark}
When $\mathcal{V}=\mathbb{C}$, the condition $\mathcal{K}(p,p)\in \mathrm{Hom}_K(\mathcal{V},\mathcal{V})$ in the previous proposition holds automatically.
\end{remark}

\section{Normal $j$-algebras and bounded homogeneous domains}\label{Normaljalg}
In this section, we review the theory of normal $j$-algebras in \cite{datri,miatello,pyatetskii,Rossi,CIME} and explain the relationship between normal $j$-algebras and bounded homogeneous domains. 

For $X\in\mathfrak{aut}_{hol}(\mathcal{D})$, let $X^\#$ denote the vector field on $\mathcal{D}$ given by 
\begin{equation*}
X^\#_z=\dt e^{tX}z\quad(z\in\mathcal{D}).
\end{equation*}
We fix a $B$-invariant K\"{a}hler metric $\langle\langle\cdot,\cdot\rangle\rangle$ on $\mathcal{D}$ such that $\langle\langle j_0X,j_0 Y\rangle\rangle=\langle\langle X,Y\rangle\rangle$ for all vector fields $X,Y$ over $\mathcal{D}$, where $j_0$ denotes the complex structure on $\mathcal{D}$ induced from the one of $\mathbb{C}^N$. For example, $\langle\langle\cdot,\cdot\rangle\rangle$ may be the Bergman metric on $\mathcal{D}$, or if $\mathcal{D}$ is contained in a complex domain $\hat{\mathcal{D}}$ of larger dimension as $B$-submanifold, then we can take $\langle\langle\cdot,\cdot\rangle\rangle$ as the restriction of the Bergman metric of $\hat{\mathcal{D}}$ to $\mathcal{D}$. Let $j$ be the complex structure on $\mathfrak{b}$ given by 
\begin{equation*}
(jX)^\#_p=j_0X^\#_p\quad(X\in\mathfrak{b}),
\end{equation*}
and let $\langle\cdot,\cdot\rangle$ be the inner product on $\mathfrak{b}$ given by
\begin{equation*}
\langle X,Y\rangle=\langle\langle X^\#,Y^\#\rangle\rangle(p)\quad(X,Y\in\mathfrak{b}).
\end{equation*}
By Gindikin, Piatetski-Shapiro, and Vinberg \cite[Part III, Lemma 1]{CIME}, there exists a linear form $\omega\in\mathfrak{b}^*$ such that
\begin{equation*}
\langle X,Y\rangle=\omega([jX,Y])\quad(X,Y\in\mathfrak{b}),
\end{equation*}
and $(\mathfrak{b},j,\omega)$ is a normal $j$-algebra. Namely, $\mathfrak{b}$ is a real split solvable Lie algebra with the equality
\begin{equation}\label{XYjjXyjXjY}
[X,Y]+j[jX,Y]+j[X,jY]=[jX,jY]\quad(X,Y\in\mathfrak{b})
\end{equation} 
and the bilinear form $\langle X,Y\rangle=\omega([jX,Y])\,(X,Y\in\mathfrak{b})$ is a $j$-invariant inner product, that is, 
\begin{equation*}
\langle X,X\rangle>0\quad(X\neq 0\in\mathfrak{b}),
\end{equation*}
\begin{equation*}
\langle jX,jY\rangle=\langle X,Y\rangle \quad(X,Y\in\mathfrak{b}).
\end{equation*}

It is known that $\mathfrak{a}=[\mathfrak{b},\mathfrak{b}]^\perp$ is a Cartan subalgebra of $\mathfrak{b}$. For $\alpha\in\mathfrak{a}^*$, let $\mathfrak{b}_\alpha$ be the root space associated to $\alpha$ given by \begin{equation*}
\mathfrak{b}_\alpha=\{X\in\mathfrak{b};[A,X]=\alpha(A)X\,(A\in\mathfrak{a})\}. 
\end{equation*}
\begin{theorem}[Piatetski-Shapiro, {\cite[Chapter 2, Section 3 and 5]{pyatetskii}}]\label{Forasuitab}
For a suitable basis $A_\one,\cdots, A_r$ of $\mathfrak{a}$, the following assertions hold: if we put $E_k=-jA_k$, then we have $[A_k,E_l]=\delta_{k,l}E_l\,(1\leq k,l\leq r)$, if we denote the dual basis of $A_\one,\cdots, A_r$ by $\alpha_\one,\cdots, \alpha_r\in\mathfrak{a}^*$, then we have
\begin{equation*}
\mathfrak{b}=\mathfrak{b}(0)\oplus\mathfrak{b}(1/2)\oplus\mathfrak{b}(1),
\end{equation*}
where
\begin{equation*}\begin{split}
&\mathfrak{b}(0)=\mathfrak{a}\oplus
\sideset{}{^\oplus}\sum_{1\leq k<l\leq r}\mathfrak{b}_{(\alpha_l-\alpha_k)/2},\quad
\mathfrak{b}(1/2)=\sideset{}{^\oplus}\sum_{1\leq k\leq r}\mathfrak{b}_{\alpha_k/2},
\\&
\mathfrak{b}(1)=\sideset{}{^\oplus}\sum_{1\leq k\leq r}\mathfrak{b}_{\alpha_k}\oplus
\sideset{}{^\oplus}\sum_{1\leq k<l\leq r}\mathfrak{b}_{(\alpha_l+\alpha_k)/2},
\end{split}\end{equation*}
and the equalities $\mathfrak{b}_{\alpha_k}=\mathbb{R}E_k$, $j\mathfrak{b}_{(\alpha_l-\alpha_k)/2}=\mathfrak{b}_{(\alpha_l+\alpha_k)/2}$, and $j\mathfrak{b}_{\alpha_k/2}=\mathfrak{b}_{\alpha_k/2}$ hold. We have the relation \begin{equation*}
[\mathfrak{b}(\gamma),\mathfrak{b}(\gamma')]\subset\mathfrak{b}(\gamma+\gamma')\quad(\gamma,\gamma'=0,1/2,1),
\end{equation*}
where we put $\mathfrak{b}(\gamma)=0$ for $\gamma>1$. 
\end{theorem}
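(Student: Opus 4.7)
The plan is to build the basis $A_1,\dots,A_r$ inductively on $r=\dim\mathfrak{a}$, peeling off a highest root at each stage. The two driving tools are the integrability identity \eqref{XYjjXyjXjY} — which constrains how $j$ permutes root spaces — together with the $j$-invariance and positive-definiteness of $\langle\cdot,\cdot\rangle=\omega([j\cdot,\cdot])$, which controls root multiplicities and rules out unwanted roots.

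First I would collect the ambient structural facts. Because $\mathfrak{a}$ is complementary to the nilradical $[\mathfrak{b},\mathfrak{b}]$ in the real split solvable algebra $\mathfrak{b}$, each $\mathrm{ad}(A)$ with $A\in\mathfrak{a}$ is semisimple with real eigenvalues, giving a simultaneous real root space decomposition $\mathfrak{b}=\mathfrak{a}\oplus\bigoplus_{\alpha\neq 0}\mathfrak{b}_\alpha$, and root spaces for distinct roots are $\langle\cdot,\cdot\rangle$-orthogonal because $\mathrm{ad}(A)$ is skew with respect to $\langle\cdot,\cdot\rangle$. Substituting $X=A\in\mathfrak{a}$ and $Y\in\mathfrak{b}_\beta$ into \eqref{XYjjXyjXjY} yields
\begin{equation*}
\beta(A)\,jY+j[jA,Y]+j[A,jY]=[jA,jY],
\end{equation*}
which, combined with the analogous substitution $A\leftrightarrow jA$, pins down $jY$ up to root spaces of very restricted form and shows that $\mathrm{ad}(jA)$ acts on each $\mathfrak{b}_\beta$ by scalars determined by the root $\beta$.

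The inductive step is then to choose a root $\alpha_r$ of maximal height; a short positivity argument forces $\dim\mathfrak{b}_{\alpha_r}=1$, and we can pick $E_r\in\mathfrak{b}_{\alpha_r}$ so that $A_r:=-jE_r$ lies in $\mathfrak{a}$ with $[A_r,E_r]=E_r$, hence $\alpha_r(A_r)=1$. The $\langle\cdot,\cdot\rangle$-orthogonal complement of $\mathbb{R}A_r\oplus\mathbb{R}E_r$ inside $\mathfrak{b}$ is then a $j$-invariant subalgebra that is again a normal $j$-algebra of rank $r-1$; the induction hypothesis supplies $A_1,\dots,A_{r-1}$ and the corresponding root decomposition, and reassembling using the compatibility identity above reveals that the remaining roots of $\mathfrak{a}$ on $\mathfrak{b}$ can only split off as $(\alpha_l\pm\alpha_k)/2$ with $k<l$, or $\alpha_k/2$, with $j$ sending each $\mathfrak{b}_{(\alpha_l-\alpha_k)/2}$ into $\mathfrak{b}_{(\alpha_l+\alpha_k)/2}$ and preserving each $\mathfrak{b}_{\alpha_k/2}$. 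The gradation $[\mathfrak{b}(\gamma),\mathfrak{b}(\gamma')]\subset\mathfrak{b}(\gamma+\gamma')$ falls out by summing root values.

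I expect the main obstacle to be the classification of admissible roots: showing that the only nonzero roots appearing on $\mathfrak{b}$ are those on the list $\{\alpha_k,\,(\alpha_l\pm\alpha_k)/2,\,\alpha_k/2\}$ and that $j$ intertwines them in exactly the claimed way. Ruling out a priori plausible roots (for instance, $\alpha_k+\alpha_l$ or $3\alpha_k/2$) cannot be done from the integrability identity alone; it requires combining \eqref{XYjjXyjXjY} with the positivity of $\omega([j\cdot,\cdot])$ and the fact that, under $j$, the eigenvalue $1$ of $\mathrm{ad}(A_r)$ on $E_r$ forces matching eigenvalues on other root vectors. This coupling of the complex structure $j$ with the linear form $\omega$ is the core of Piatetski-Shapiro's original argument, and it is where most of the real work lies.
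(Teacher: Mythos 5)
This theorem is not proved in the paper at all: it is quoted verbatim from Piatetski-Shapiro \cite[Chapter 2, Sections 3 and 5]{pyatetskii} (with the orthogonality statement attributed to D'Atri in Remark \ref{Bydatrithe}), so there is no in-paper argument to compare against. Your sketch is an outline of the classical proof, but as written it has concrete gaps beyond the ones you flag yourself.

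First, a structural error: you claim the root spaces are orthogonal ``because $\mathrm{ad}(A)$ is skew with respect to $\langle\cdot,\cdot\rangle$.'' A skew-adjoint operator on a real inner-product space has purely imaginary spectrum, so it cannot have the nonzero real eigenvalues $\alpha(A)$; the correct statement is that $\mathrm{ad}(A)$ is \emph{symmetric} for $\langle\cdot,\cdot\rangle$, and establishing even that (together with the simultaneous diagonalizability of $\mathfrak{a}$, which does not follow merely from $\mathfrak{a}$ being a complement of $[\mathfrak{b},\mathfrak{b}]$) already requires the identity \eqref{XYjjXyjXjY}. Second, and more seriously, your inductive step fails as set up: the $\langle\cdot,\cdot\rangle$-orthogonal complement of $\mathbb{R}A_r\oplus\mathbb{R}E_r$ is \emph{not} a subalgebra, since for instance $[\mathfrak{b}_{(\alpha_r-\alpha_k)/2},\mathfrak{b}_{(\alpha_r+\alpha_k)/2}]\subset\mathfrak{b}_{\alpha_r}=\mathbb{R}E_r$ lands back in the piece you removed. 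The correct rank-$(r-1)$ normal $j$-algebra is the much smaller subalgebra spanned by the root spaces not involving $\alpha_r$ (compare the paper's $\check{\mathfrak{b}}$ in Section \ref{Algebraicp}), and the leftover spaces $\mathfrak{b}_{(\alpha_r\pm\alpha_k)/2}$ and $\mathfrak{b}_{\alpha_r/2}$ are exactly where the content of the theorem lies; they are not recovered by the induction hypothesis. Combined with the unproved assertion that the highest root space is one-dimensional with $-jE_r\in\mathfrak{a}$, and your own acknowledgement that the exclusion of roots such as $\alpha_k+\alpha_l$ or $3\alpha_k/2$ is left undone, the proposal is a plausible roadmap of Piatetski-Shapiro's argument rather than a proof.
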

Following \cite[Chapter 2, Section 5]{pyatetskii}, we introduce the Siegel domain $\mathcal{D}(\Omega,Q)$ on which the group $B$ acts simply transitively as affine automorphisms as follows. Put 
\begin{equation*}
E=E_\one+\cdots +E_r.
\end{equation*}
Let $B(0)$ be the connected Lie subgroup of $B$ with Lie algebra $\mathfrak{b}(0)$, and let $\Omega=\mathrm{Ad}(B(0))E\subset \mathfrak{b}(1)$. Let $Q:(\mathfrak{b}(1/2),j)\times (\mathfrak{b}(1/2),j)\rightarrow \mathfrak{b}(1)_\mathbb{C}$ be the sesquilinear map defined by \begin{equation*}
Q(V,V')=\frac{1}{4}([jV,V']+i[V,V'])\quad(V,V'\in\mathfrak{b}(1/2)).
\end{equation*}
Then $\Omega\subset\mathfrak{b}(1)$ is an open convex cone containing no straight lines, and $B(0)$ acts on $\Omega$ simply transitively. One has $Q(V,V)\in \overline{\Omega}\setminus\{0\}$ for all $V\in\mathfrak{b}(1/2)\setminus\{0\}$. Let
\begin{equation*}
\mathcal{D}(\Omega,Q)=\{(U,V)\in \mathfrak{b}(1)_\mathbb{C}\oplus\mathfrak{b}(1/2):\Im U-Q(V,V)\in\Omega\}.
\end{equation*}
The subgroup $B(0)$ acts on $\mathcal{D}(\Omega,Q)$ by
\begin{equation*}
t_0(U,V)=(\mathrm{Ad}(t_0)U,\mathrm{Ad}(t_0)V)\quad(t_0\in B(0), (U,V)\in\mathcal{D}(\Omega,Q)),
\end{equation*}
and for $U_0\in\mathfrak{b}(1)$ and $V_0\in\mathfrak{b}(1/2)$, the element $\exp(U_0+V_0)$ of $B$ acts on $\mathcal{D}(\Omega,Q)$ by
\begin{equation}\label{expX0W0ZWZ}\begin{split}
\exp(U_0+V_0)(U,V)=(U+U_0+2iQ(V,V_0)+iQ(V_0,V_0),V+V_0)
\\((U,V)\in\mathcal{D}(\Omega,Q)).
\end{split}\end{equation}
Define $\mathcal{C}:\mathcal{D}(\Omega,Q)\rightarrow\mathcal{D}$ by $\mathcal{C}(b(iE,0))=bp\,(b\in B)$. Then the map $\mathcal{C}$ is biholomorphic and is a generalization of the Cayley transform. 
\begin{remark}\label{Bydatrithe}
\begin{enumerate}
\item[(i)] The exponential map $\exp:\mathfrak{b}\rightarrow B$ is bijective (\cite{fujiwara}, Theorem 5.2.16), and we have $B=B(0)\ltimes \exp(\mathfrak{b}(1/2)\oplus\mathfrak{b}(1))$ (see Lemma \ref{LetG0beaco}).
\item[(ii)] By J.E. D'Atri \cite{datri}, the decomposition 
\begin{equation*}
[\mathfrak{b},\mathfrak{b}]=
\sideset{}{^\oplus}\sum_{1\leq k<l\leq r}\mathfrak{b}_{(\alpha_l-\alpha_k)/2}\oplus
\sideset{}{^\oplus}\sum_{1\leq k\leq r}\mathfrak{b}_{\alpha_k/2}
\oplus\sideset{}{^\oplus}\sum_{1\leq k\leq r}\mathfrak{b}_{\alpha_k}\oplus
\sideset{}{^\oplus}\sum_{1\leq k<l\leq r}\mathfrak{b}_{(\alpha_l+\alpha_k)/2}
\end{equation*}
is orthogonal with respect to $\langle\cdot,\cdot \rangle$.
\item[(iii)] The number $r=\dim \mathfrak{a}$ is called the rank of $\mathfrak{b}$.
\item[(iv)] An open convex cone $\Omega_0$ in a finite-dimensional vector space $\mathcal{V}$ is called regular if $\Omega_0$ contains no straight lines, and is called homogeneous if the group
\begin{equation*}
G(\Omega_0)=\{A\in GL(\mathcal{V});A\Omega_0=\Omega_0\}
\end{equation*}
acts on $\Omega_0$ transitively. Thus the open convex cone $\Omega$ in $\mathfrak{b}(1)$ is regular and homogeneous.
\end{enumerate}
\end{remark}

\begin{example}
Let $\pone\geq \ptwo\geq 1$. The domain 
\begin{equation*}
\mathcal{D}_I(\pone,\ptwo)=\{z\in M(\pone,\ptwo;\mathbb{C});\|z\|_{op}<1 \} \end{equation*}
is a bounded symmetric domain of type I, where $\|z\|_{op}$ denotes the operator norm of $z$. Put 
\begin{equation*}\begin{split}
H_\ptwo(\mathbb{C})=\{U\in M_\ptwo(\mathbb{C});U=\overline{{}^tU}\},
\\\mathcal{P}_\ptwo=\{U\in H_\ptwo(\mathbb{C});U\gg 0\}.
\end{split}\end{equation*}
We have the following isomorphisms:
\begin{equation*}\begin{split}
&\mathfrak{b}(1)\simeq H_\ptwo(\mathbb{C}),\quad\Omega\simeq\mathcal{P}_\ptwo,\quad\mathfrak{b}(1/2)\simeq M(\pone-\ptwo,\ptwo;\mathbb{C}),
\end{split}\end{equation*}
and the following domain is biholomorphic to $\mathcal{D}_I(\pone,\ptwo)$:
\begin{equation*}\begin{split}
\mathcal{D}(\mathcal{P}_\ptwo,\mathcal{Q})&\simeq\left\{\left(\begin{array}{c}U\\V\end{array}\right)\in M(q,r;\mathbb{C});\Im U-\mathcal{Q}(V,V)\gg 0\right\},
\end{split}\end{equation*}
where $\mathcal{Q}(V,V')=\tfrac{1}{2}\overline{{}^tV'}V$.
\end{example}

\begin{lemma}\label{LetG0beaco}
Let $G_0$ be a connected and simply connected real split solvable Lie group, let $\exp:\mathfrak{g}_0\rightarrow G_0$ be the exponential map, and let $\mathfrak{h},\mathfrak{h}'\subset\mathfrak{g}_0$ be subalgebras such that $\mathfrak{g}_0=\mathfrak{h}\ltimes \mathfrak{h}'$. Then the subsets $\exp(\mathfrak{h})$ and $\exp(\mathfrak{h}')$ of $G_0$ are connected Lie subgroups of $G_0$.
\end{lemma}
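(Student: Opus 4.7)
The plan is to exploit that on a connected simply connected real split solvable Lie group the exponential map is a global diffeomorphism (Remark \ref{Bydatrithe}(i), which cites Fujiwara Theorem 5.2.16). Since $\mathfrak{h}$ is in particular a linear subspace of $\mathfrak{g}_0$, its image $\exp(\mathfrak{h})$ is automatically an embedded closed submanifold of $G_0$, diffeomorphic to $\mathfrak{h}$; the only thing that has to be verified is that it is closed under the group operation.

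First I would check that any subalgebra of a split solvable Lie algebra is itself split solvable: for $X\in\mathfrak{h}$ the operator $\mathrm{ad}_{\mathfrak{h}}(X)$ is the restriction of $\mathrm{ad}_{\mathfrak{g}_0}(X)$ to the invariant subspace $\mathfrak{h}$, so its eigenvalues lie among those of $\mathrm{ad}_{\mathfrak{g}_0}(X)$, which are real by assumption. Let $\tilde{H}$ denote the connected simply connected Lie group with Lie algebra $\mathfrak{h}$. The cited result, applied to $\tilde{H}$, gives that $\exp_{\tilde{H}}:\mathfrak{h}\to\tilde{H}$ is a diffeomorphism; in particular it is surjective.

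Next I would integrate the inclusion $\iota:\mathfrak{h}\hookrightarrow\mathfrak{g}_0$, which is a Lie algebra homomorphism, to a Lie group homomorphism $\Phi:\tilde{H}\to G_0$; this is possible precisely because $\tilde{H}$ is simply connected. Naturality of the exponential gives $\Phi\circ\exp_{\tilde{H}}=\exp_{G_0}\circ\,\iota$, and combining this equality with the surjectivity of $\exp_{\tilde{H}}$ yields the key identity $\Phi(\tilde{H})=\exp_{G_0}(\mathfrak{h})$. Since the left side is a subgroup of $G_0$, so is the right side; together with the embedded submanifold structure observed above this makes $\exp(\mathfrak{h})$ a connected Lie subgroup of $G_0$. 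The identical argument applied to $\mathfrak{h}'$ settles the other case.

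There is no real obstacle once the diffeomorphism fact of Remark \ref{Bydatrithe}(i) is taken for granted; the only mildly non-trivial point is that split solvability is inherited by subalgebras, which is precisely what allows the theorem of Fujiwara to be applied to the smaller group $\tilde{H}$. Note that the semidirect sum structure $\mathfrak{g}_0=\mathfrak{h}\ltimes\mathfrak{h}'$ is irrelevant to the argument beyond guaranteeing that both $\mathfrak{h}$ and $\mathfrak{h}'$ are subalgebras in the first place.
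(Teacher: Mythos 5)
Your proof is correct, but it follows a genuinely different route from the paper's. The paper's argument hinges on the semidirect decomposition: it invokes Knapp's theorem to build the external semidirect product $H\times_\tau H'$ of the simply connected groups integrating $\mathfrak{h}$ and $\mathfrak{h}'$, identifies this product with $G_0$, concludes that the analytic subgroups $\tilde{H},\tilde{H}'\subset G_0$ are simply connected, and only then applies Fujiwara's theorem to get $\exp(\mathfrak{h})=\tilde{H}$ and $\exp(\mathfrak{h}')=\tilde{H}'$. You instead treat each subalgebra separately: you observe that split solvability passes to subalgebras (restriction of $\mathrm{ad}$ to an invariant subspace keeps the eigenvalues real), apply Fujiwara to the abstract simply connected group $\tilde{H}$ with Lie algebra $\mathfrak{h}$, integrate the inclusion $\mathfrak{h}\hookrightarrow\mathfrak{g}_0$ to a homomorphism $\Phi$, and use naturality of $\exp$ together with surjectivity of $\exp_{\tilde{H}}$ to identify $\exp_{G_0}(\mathfrak{h})$ with the subgroup $\Phi(\tilde{H})$; the global diffeomorphism property of $\exp_{G_0}$ then supplies the manifold structure. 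Your version buys more: it proves the statement for an arbitrary subalgebra of $\mathfrak{g}_0$ (the semidirect structure is indeed irrelevant, as you note), it avoids the appeal to the semidirect-product construction, and it shows in addition that $\exp(\mathfrak{h})$ is a closed embedded submanifold rather than merely an analytic subgroup. The paper's version is shorter on the simple-connectivity bookkeeping because the product decomposition hands it the simply connected subgroups directly.
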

\begin{proof}
Let $H$ and $H'$ be connected and simply connected Lie groups with Lie algebras $\mathfrak{h}$ and $\mathfrak{h}'$, respectively. By {\cite[Theorem 1.125]{Knapp}}, there exists an action $\tau$ of $H$ on $H'$ by automorphisms such that the Lie algebra of the semidirect product $H\times_\tau H'$ is isomorphic to $ \mathfrak{h}\ltimes\mathfrak{h}'$. Let $\tilde{H}$ and $\tilde{H'}$ be the connected Lie subgroups of $G_0$ with Lie algebras $\mathfrak{h}$ and $\mathfrak{h}'$, respectively. Since Lie groups $G_0$ and $H\times_\tau H'$ are isomorphic, the connected Lie subgroups $\tilde{H}$ and $\tilde{H'}$ are simply connected. By {\cite[Theorem 5.2.16]{fujiwara}}, we have $\exp(\mathfrak{h})=\tilde{H}$ and $\exp(\mathfrak{h}')=\tilde{H'}$ since $\mathfrak{h}$ and $\mathfrak{h}'$ are also exponential.
\end{proof}

\section{Algebraic properties of $\mathfrak{g}$}\label{Algebraicp}
\subsection{Holomorphic complete vector fields on Siegel domains}\label{Structuret}\label{Gradingstr}
Let $\mathcal{U}$ be a finite-dimensional vector space over $\mathbb{R}$, let $\Omega_0\subset\mathcal{U}$ be an open regular convex cone, let $\mathcal{V}$ be a finite-dimensional vector space over $\mathbb{C}$, and let $Q_0:\mathcal{V}\times\mathcal{V}\rightarrow \mathcal{U}_\mathbb{C}$ be a $\Omega_0$-positive Hermitian map, that is,
\begin{equation*}
Q_0(v,v)\in\overline{\Omega_0}\setminus\{0\}\quad(v\in\mathcal{V}\setminus\{0\}).
\end{equation*}
The following domain $\mathcal{D}(\Omega_0, Q_0)\subset \mathcal{U}_\mathbb{C}\oplus\mathcal{V}$ is called a Siegel domain:
\begin{equation*}
\mathcal{D}(\Omega_0, Q_0)=\{(u,v)\in\mathcal{U}_\mathbb{C}\oplus\mathcal{V};\Im u-Q_0(v,v)\in\Omega_0\}.
\end{equation*}
Let $\mathfrak{X}=\mathfrak{X}(\mathcal{D}(\Omega_0, Q_0))$ be the space of complete holomorphic vector fields on $\mathcal{D}(\Omega_0, Q_0)$. The map
\begin{equation*}
\mathfrak{aut}_{hol}(\mathcal{D}(\Omega_0, Q_0))\ni X\mapsto X^\#\in\mathfrak{X}
\end{equation*}
is bijective, and we have 
\begin{equation*}
 [X,Y]^\#=[Y^\#,X^\#]\quad(X,Y\in\mathfrak{aut}_{hol}(\mathcal{D}(\Omega_0, Q_0))).
\end{equation*}
For $u_0\in \mathcal{U}$, let $\partial_{u_0}$ be the holomorphic vector field on $\mathcal{D}(\Omega_0, Q_0)$ given by 
\begin{equation*}
\partial_{u_0}(u,v)=(u_0,0)\quad((u,v)\in\mathcal{D}(\Omega_0, Q_0)).
\end{equation*}
Here for every $(u,v)\in\mathcal{D}(\Omega_0,Q)$, we identify the tangent space $T_{(u,v)}\mathcal{D}(\Omega_0, Q_0)$ with $\mathcal{U}_\mathbb{C}\oplus\mathcal{V}$, and we consider a vector filed $X\in\mathfrak{X}$ as a $(\mathcal{U}_\mathbb{C}\oplus\mathcal{V})$-valued function. We denote by $D_X$ the corresponding differential operator
\begin{equation*}
D_X f(u,v)=\dt f((u,v)+tX(u,v)),
\end{equation*}
where $f$ is a vector-valued smooth function on $\mathcal{D}(\Omega_0,Q_0)$. Then we have 
\begin{equation*}
[X,Y]=D_XY-D_YX\quad(X,Y\in\mathfrak{X}).
\end{equation*}
For $v_0\in \mathcal{V}$, let $\tilde{\partial}_{v_0}$ be the holomorphic vector field on $\mathcal{D}(\Omega_0, Q_0)$ given by 
\begin{equation*}
\tilde{\partial}_{v_0}(u,v)= (2iQ_0(v,v_0),v_0)\quad((u,v)\in\mathcal{D}(\Omega_0, Q_0)).
\end{equation*}
For complex endomorphisms $\mathcal{A}\in\mathfrak{gl}(\mathcal{U}_\mathbb{C})$ and $\mathcal{B}\in \mathfrak{gl}(\mathcal{V})$, let $\mathcal{X}(\mathcal{A},\mathcal{B})$ be the holomorphic vector field on $\mathcal{D}(\Omega_0, Q_0)$ given by \begin{equation*}
\mathcal{X}(\mathcal{A},\mathcal{B})(u,v)=(\mathcal{A}u,\mathcal{B}v)\quad((u,v)\in\mathcal{D}(\Omega_0, Q_0)). 
\end{equation*} 
We say $\mathcal{B}\in\mathfrak{gl}(\mathcal{V})$ is associated with $\mathcal{A}\in\mathfrak{gl}(\mathcal{U}_\mathbb{C})$ if the equality
\begin{equation*}
\mathcal{A}Q_0(v,v')=Q_0(\mathcal{B}v,v')+Q_0(v,\mathcal{B}v')\quad(v,v'\in \mathcal{V})
\end{equation*}
holds. Let $\partial$ be the infinitesimal generator of the one-parameter subgroup $\mathcal{D}(\Omega_0,Q_0)\ni (u,v)\mapsto (e^{t}u,e^{t/2}v)\in\mathcal{D}(\Omega_0,Q_0)\,(t\in\mathbb{R})$. Then we have $\partial(u,v)=(u,1/2v)\,((u,v)\in\mathcal{D}(\Omega_0,Q_0))$, that is,
\begin{equation*}
\partial=\mathcal{X}(\mathrm{id}_{\mathcal{U}_\mathbb{C}},\tfrac{1}{2}\mathrm{id}_\mathcal{V}).
\end{equation*}
For $\gamma\in\mathbb{R}$, we put 
\begin{equation*}
{\mathfrak{X}}(\gamma)=\{X\in\mathfrak{X};[\partial,X]=\gamma X\}.\end{equation*}
Let $\mathfrak{g}(\Omega_0)$ denote the Lie algebra of the Lie group $G(\Omega_0)$. 
\begin{theorem}[Kaup, Matsushima, Ochiai, {\cite[Theorem 4 and 5]{kaup}}]\label{kaupth}
The Lie algebra $\mathfrak{X}$ has the following gradation:
\begin{equation*}
\mathfrak{X}=\mathfrak{X}(-1)\oplus\mathfrak{X}(-1/2)\oplus\mathfrak{X}(0)\oplus\mathfrak{X}(1/2)\oplus\mathfrak{X}(1),
\end{equation*}
and the non-positive part $\sum_{\gamma\leq 0}\mathfrak{X}(\gamma)$ is the Lie algebra corresponding to the group of affine automorphisms of $\mathcal{D}(\Omega_0,Q_0)$. One has
\begin{equation*}
\mathfrak{X}(-1)=\{\partial_{u_0};u_0\in \mathcal{U}\},
\end{equation*}
\begin{equation*}
\mathfrak{X}(-1/2)=\{\tilde{\partial}_{v_0};v_0\in \mathcal{V}\},
\end{equation*}
and \begin{equation*}\begin{split}
\mathfrak{X}(0)=\{\mathcal{X}(\mathcal{A},\mathcal{B});\mathcal{A}\in\mathfrak{g}(\Omega_0),\mathcal{B}\in\mathfrak{gl}(\mathcal{V}),\mathcal{B}\textup{ is associated with }\mathcal{A}\}.
\end{split}\end{equation*} 
\end{theorem}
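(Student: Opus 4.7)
The plan is to exploit the scaling automorphism $\phi_t : (u,v) \mapsto (e^t u, e^{t/2} v)$, whose infinitesimal generator is $\partial$, to produce a weight-space decomposition of the finite-dimensional Lie algebra $\mathfrak{X}$ under $\mathrm{ad}(\partial)$. First I would verify by direct computation, using the bracket formula $[X,Y] = D_XY - D_YX$, that $\partial_{u_0}$, $\tilde{\partial}_{v_0}$, and $\mathcal{X}(\mathcal{A},\mathcal{B})$ lie in eigenspaces with eigenvalues $-1$, $-1/2$, and $0$ respectively. For example, $D_\partial\partial_{u_0} = 0$ and $D_{\partial_{u_0}}\partial = \partial_{u_0}$ give $[\partial,\partial_{u_0}] = -\partial_{u_0}$; the other two are equally short. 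This already gives the inclusions $\supset$ in the stated descriptions of $\mathfrak{X}(-1)$, $\mathfrak{X}(-1/2)$, $\mathfrak{X}(0)$.

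Second, I would establish that $\mathrm{ad}(\partial)$ acts diagonalizably on $\mathfrak{X}$ with spectrum contained in $\{-1,-1/2,0,1/2,1\}$. Diagonalizability is automatic since $\phi_t$ acts on the finite-dimensional Lie algebra $\mathfrak{X}$ as a one-parameter group of automorphisms. For the bound on eigenvalues, let $X=(X_1,X_2)\in\mathfrak{X}(\gamma)$. Unwinding $[\partial,X]=\gamma X$ yields the infinitesimal Euler relations $u\,\partial_u X_j + \tfrac{v}{2}\partial_v X_j = (\gamma+a_j)X_j$ with $a_1=1$, $a_2=1/2$; integrating the flow gives $X_1(e^tu,e^{t/2}v)=e^{(\gamma+1)t}X_1(u,v)$ and similarly for $X_2$, so each component is quasi-homogeneous. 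Combined with the key polynomial-boundedness statement that every $X\in\mathfrak{X}$ is polynomial in $(u,v)$ with $X_1$ of total weighted degree at most $2$ and $X_2$ of total weighted degree at most $3/2$, the admissible weights are forced to be the five listed values.

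The main obstacle will be the polynomial boundedness step, which is the technical heart of the Kaup--Matsushima--Ochiai theorem. A standard argument invokes the Bergman metric on $\mathcal{D}(\Omega_0,Q_0)$: any $X\in\mathfrak{X}$ integrates to a one-parameter group of Bergman isometries, so its real part is a Killing vector field. The Bergman kernel of a Siegel domain has an explicit form in terms of $\Im u - Q_0(v,v)$, and substituting this into the Killing equation reduces the analysis, via separation of the holomorphic and antiholomorphic variables, to a finite-order algebraic system whose solutions are exactly the polynomial vector fields prescribed.

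Finally, to identify each weight space exactly, it suffices to match the quasi-homogeneous polynomial data with the condition that $X$ be tangent to $\mathcal{D}(\Omega_0,Q_0)$. In $\mathfrak{X}(-1)$ the only admissible bidegrees give $X_1$ constant and $X_2\equiv 0$, yielding $\partial_{u_0}$. In $\mathfrak{X}(-1/2)$ the component $X_2$ must be a constant $v_0\in\mathcal{V}$ and $X_1$ complex-linear in $v$; the tangency to the face $\Im u - Q_0(v,v)=0$ couples them by forcing $X_1 = 2iQ_0(v,v_0)$, i.e. $X = \tilde{\partial}_{v_0}$. In $\mathfrak{X}(0)$ one has $X_1 = \mathcal{A}u$ and $X_2 = \mathcal{B}v$ linear, and the same tangency condition becomes the associated relation $\mathcal{A}Q_0(v,v') = Q_0(\mathcal{B}v,v') + Q_0(v,\mathcal{B}v')$, while the requirement that the flow preserve $\Omega_0$ itself yields $\mathcal{A}\in\mathfrak{g}(\Omega_0)$. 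The claim that $\sum_{\gamma\le 0}\mathfrak{X}(\gamma)$ is the Lie algebra of the affine automorphism group then follows by inspection: the flows of the listed generators are all affine in $(u,v)$, whereas any element of $\mathfrak{X}(1/2)\cup\mathfrak{X}(1)$ is genuinely nonlinear in $(u,v)$ by the bidegree constraints.
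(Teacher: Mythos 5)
This statement is not proved in the paper at all: Theorem \ref{kaupth} is imported verbatim from Kaup--Matsushima--Ochiai \cite{kaup} (see also Satake \cite[Ch.\ V]{satake}), so there is no in-paper argument to compare yours against. Judged on its own terms, your sketch reproduces the correct overall architecture of the classical proof (grade $\mathfrak{X}$ by the $\mathrm{ad}(\partial)$-eigenvalue, show the eigenvalues lie in $\{-1,-1/2,0,1/2,1\}$, then identify each eigenspace), and your eigenvalue computations for $\partial_{u_0}$, $\tilde{\partial}_{v_0}$, $\mathcal{X}(\mathcal{A},\mathcal{B})$ are right. But two steps as written have real gaps.

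First, the claim that diagonalizability of $\mathrm{ad}(\partial)$ is ``automatic since $\phi_t$ acts as a one-parameter group of automorphisms'' is false: a one-parameter group $e^{t\,\mathrm{ad}(\partial)}$ of automorphisms of a finite-dimensional Lie algebra can perfectly well have a non-semisimple generator (take $\mathrm{ad}$ of a nilpotent element). Semisimplicity of $\mathrm{ad}(\partial)$ with real spectrum is a \emph{consequence} of the polynomiality and degree bound, not a prerequisite for it, so your logical order is inverted; and that polynomiality statement --- which you correctly identify as the technical heart --- is only gestured at via the Bergman--Killing equation, not carried out. Second, in the identification of $\mathfrak{X}(0)$ you assert $X_1=\mathcal{A}u$, but a weighted-degree-one holomorphic component in the $\mathcal{U}_{\mathbb{C}}$-direction may also contain a term quadratic in $v$ (two factors of weight $1/2$); ruling this out is part of the content of the theorem and needs an argument (similarly one must show $\mathcal{A}$ preserves the real form $\mathcal{U}$ before ``the flow preserves $\Omega_0$'' can be read as $\mathcal{A}\in\mathfrak{g}(\Omega_0)$). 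As it stands the proposal is a plausible roadmap rather than a proof; since the result is classical and cited, the honest options are either to cite \cite{kaup} as the paper does or to fill in the degree-bound lemma in detail.
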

We denote by $\mathcal{D}(\Omega_0)$ the tube domain $\{u\in\mathcal{U}_\mathbb{C};\Im u\in\Omega_0\}$, which is a special case of the Siegel domain with $\mathcal{V}=0$ and $Q_0=0$, and for $\mathcal{A}\in\mathfrak{gl}(\mathcal{U}_\mathbb{C})$, let $\mathcal{X}(\mathcal{A})$ be the holomorphic vector field on $\mathcal{D}(\Omega_0)$ given by $\mathcal{X}(\mathcal{A})(u)=\mathcal{A}u\,(u\in\mathcal{D}(\Omega_0))$. Then we see that
\begin{equation*}
\mathfrak{X}(\mathcal{D}(\Omega_0))(0)=\{\mathcal{X}(A);\mathcal{A}\in\mathfrak{g}(\Omega_0)\}. 
\end{equation*}
We have the following formulas (see {\cite[Chapter V, \S 1]{satake}}):
\begin{equation}\label{XABUAU}
[\mathcal{X}(\mathcal{A},\mathcal{B}),\partial_{u_0}]=-\partial_{\mathcal{A}{u_0}},
\end{equation}
\begin{equation}\label{XABVBV}
[\mathcal{X}(\mathcal{A},\mathcal{B}),\tilde{\partial}_v]=-\tilde{\partial}_{\mathcal{B}v},
\end{equation}
\begin{equation}\label{XABABXAABB}
[\mathcal{X}(\mathcal{A},\mathcal{B}),\mathcal{X}(\mathcal{A}',\mathcal{B}')]=-\mathcal{X}([\mathcal{A},\mathcal{A}'],[\mathcal{B},\mathcal{B}']).
\end{equation}

Next we see explicit descriptions of $\mathfrak{X}(1/2)$ and $\mathfrak{X}(1)$. 
\begin{proposition}[Satake, {\cite[Chapter V, Proposition 2.1]{satake}}]\label{Everyeleme}
Every element of $\mathfrak{X}(1/2)$ is uniquely written as
\begin{equation}\label{mathcalYPh}
\mathcal{Y}_{\Phi,c}(u,v)=(2iQ_0(v,\Phi(\overline{u})),\Phi(u)+c(v,v))\quad((u,v)\in\mathcal{D}(\Omega_0, Q_0))
\end{equation}
with a $\mathbb{C}$-linear map $\Phi:\mathcal{U}_\mathbb{C}\rightarrow \mathcal{V}$ and a symmetric $\mathbb{C}$-bilinear map $c:\mathcal{V}\times\mathcal{V}\rightarrow \mathcal{V}$ which satisfy the following conditions:
\begin{flalign}\label{PhiV0Umaps}
&\text{for each }v_0\in\mathcal{V},\text{ the linear map}&\tag{Y1}
\end{flalign}
\begin{equation*}
\Phi_{v_0}:\mathcal{U}\ni u\mapsto \Im Q_0(\Phi(u),v_0)\in\mathcal{U}
\end{equation*}
belongs to $\mathfrak{g}(\Omega_0)$,
\begin{equation}\label{HVcVV2iHPhiHVVVquadVVinmathcalV}
Q_0(c(v',v'),v)=2iQ_0(v',\Phi(Q_0(v,v')))\quad(v,v'\in\mathcal{V}).\tag{Y2}
\end{equation}
Conversely, for any pair $(\Phi,c)$ satisfying $(\ref{PhiV0Umaps})$ and $(\ref{HVcVV2iHPhiHVVVquadVVinmathcalV})$, the vector field $\mathcal{Y}_{\Phi,c}$ given by $(\ref{mathcalYPh})$ belongs to $\mathfrak{X}(1/2)$.
\end{proposition}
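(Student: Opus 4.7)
My plan is to exploit the grading to pin down the polynomial shape of $X\in\mathfrak{X}(1/2)$, then use brackets with the already-known graded pieces $\mathfrak{X}(-1)$ and $\mathfrak{X}(-1/2)$ from Theorem~\ref{kaupth} both to couple the two components of $X$ and to extract the identities (Y1) and (Y2). First, write $X=A\,\partial_u+B\,\partial_v$ with $A$ holomorphic and $\mathcal{U}_\mathbb{C}$-valued, $B$ holomorphic and $\mathcal{V}$-valued. The eigenvalue equation $[\partial,X]=\tfrac{1}{2}X$ together with $\partial=\mathcal{X}(\mathrm{id}_{\mathcal{U}_\mathbb{C}},\tfrac{1}{2}\mathrm{id}_\mathcal{V})$ forces $A(e^tu,e^{t/2}v)=e^{3t/2}A(u,v)$ and $B(e^tu,e^{t/2}v)=e^tB(u,v)$. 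Since the Kaup--Matsushima--Ochiai structure theory underlying Theorem~\ref{kaupth} makes every complete holomorphic vector field on $\mathcal{D}(\Omega_0,Q_0)$ polynomial of degree at most two, the weight constraint forces $A$ to be $\mathbb{C}$-bilinear in $(u,v)$ and $B(u,v)=\Phi(u)+c(v,v)$ for some $\mathbb{C}$-linear $\Phi:\mathcal{U}_\mathbb{C}\to\mathcal{V}$ and symmetric $\mathbb{C}$-bilinear $c:\mathcal{V}\times\mathcal{V}\to\mathcal{V}$.

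Next, for $u_0\in\mathcal{U}$ the bracket $[\partial_{u_0},X]$ equals $(u_0\!\cdot\!\partial_uA)\,\partial_u+\Phi(u_0)\,\partial_v$, and by graded consistency must lie in $\mathfrak{X}(-1/2)=\{\tilde\partial_{v_0}\}$. Matching with $\tilde\partial_{v_0}=2iQ_0(v,v_0)\partial_u+v_0\partial_v$ determines $v_0=\Phi(u_0)$ and, on $\mathcal{U}\times\mathcal{V}$, $A(u_0,v)=2iQ_0(v,\Phi(u_0))$; the $\mathbb{C}$-bilinearity of $A$ then upgrades this to $A(u,v)=2iQ_0(v,\Phi(\bar u))$ on all of $\mathcal{U}_\mathbb{C}$. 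An analogous computation of $[\tilde\partial_{v_0},X]$, which must take the shape $\mathcal{X}(\mathcal{A},\mathcal{B})$ of $\mathfrak{X}(0)$, yields $\mathcal{A}$ as a real multiple of $\Phi_{v_0}$ after invoking the Hermitian symmetry of $Q_0$; the required membership $\mathcal{A}\in\mathfrak{g}(\Omega_0)$ is precisely condition (Y1). Expanding the induced $\mathcal{B}$ in terms of $c$ and $\Phi$ and imposing the relation that $\mathcal{B}$ be associated with $\mathcal{A}$ then unpacks, after a short sesquilinear manipulation, into (Y2).

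For the converse, given $(\Phi,c)$ satisfying (Y1) and (Y2), define $\mathcal{Y}_{\Phi,c}$ by the displayed formula; the weight identity $[\partial,\mathcal{Y}_{\Phi,c}]=\tfrac{1}{2}\mathcal{Y}_{\Phi,c}$ is immediate from the polynomial degrees. The main obstacle I anticipate is proving completeness of $\mathcal{Y}_{\Phi,c}$. My strategy is to compute $[\mathcal{Y}_{\Phi,c},\partial_{u_0}]$ and $[\mathcal{Y}_{\Phi,c},\tilde\partial_{v_0}]$ directly from the formula and verify, using (Y1) and (Y2) as the precise algebraic input, that they land respectively in $\mathfrak{X}(-1/2)$ and in $\{\mathcal{X}(\mathcal{A},\mathcal{B})\}$. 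One then observes that the $\mathbb{R}$-span of $\mathcal{Y}_{\Phi,c}$ together with $\mathfrak{X}(-1)\oplus\mathfrak{X}(-1/2)\oplus\mathbb{R}\partial$ and the finitely many $\mathcal{X}(\mathcal{A},\mathcal{B})$ thus produced closes into a finite-dimensional real Lie algebra of holomorphic vector fields on $\mathcal{D}(\Omega_0,Q_0)$; this algebra integrates to a local group of biholomorphisms, and invariance of the domain (via the affine action of the known piece and invariance of $\Omega_0$ under $G(\Omega_0)^o$) promotes the local flow of $\mathcal{Y}_{\Phi,c}$ to a global one, giving completeness.
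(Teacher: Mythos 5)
First, note that the paper offers no proof of this proposition at all: it is quoted verbatim from Satake \cite[Chapter V, Proposition 2.1]{satake}, so there is no in-paper argument to compare yours against. Your necessity direction is essentially the standard derivation and is sound in outline: the weight computation for $[\partial,X]=\tfrac12 X$, the reduction to bidegree $(1,1)$ and $\{(1,0),(0,2)\}$ via polynomiality of degree $\le 2$, and the identification of the first component by bracketing with $\partial_{u_0}$ and matching against $\tilde{\partial}_{\Phi(u_0)}$ all check out. One inaccuracy: (Y2) does not come from imposing that $\mathcal{B}$ be associated with $\mathcal{A}$. If you actually expand $[\tilde{\partial}_{v_0},X]$ you find that its first component contains, besides the $u$-linear part $4\Phi_{v_0}(u)$, the terms $-4Q_0(v,\Phi(Q_0(v_0,v)))-2iQ_0(c(v,v),v_0)$, which are quadratic in $v$; requiring the bracket to have the linear shape $\mathcal{X}(\mathcal{A},\mathcal{B})$ forces these to cancel, and that cancellation \emph{is} (Y2) (with $v'=v$, $v=v_0$, plus polarization). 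The associated-pair relation is a further consequence, not the source of (Y2). This is a local misattribution inside an otherwise correct computation, so I would not call it fatal.

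The genuine gap is in the converse, specifically completeness of $\mathcal{Y}_{\Phi,c}$. Two steps are asserted rather than proved. First, the claim that the span of $\mathcal{Y}_{\Phi,c}$, the affine part $\sum_{\gamma\le 0}\mathfrak{X}(\gamma)$, and the produced $\mathcal{X}(\mathcal{A},\mathcal{B})$'s ``closes into a finite-dimensional real Lie algebra'' is not automatic: weight counting alone permits, e.g., a bracket of two weight-$1/2$ polynomial fields to have first component of bidegree $(0,4)$ and second of bidegree $(0,3)$, and $[\mathcal{X}(\mathcal{A},\mathcal{B}),\mathcal{Y}_{\Phi,c}]$ produces new pairs $(\Phi',c')$ not in your original span. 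Establishing closure requires precisely the bracket identities of Satake's Lemmas in Chapter V, \S 2, i.e., a nontrivial chunk of the theorem you are trying to prove. Second, even granting a finite-dimensional Lie algebra $\mathfrak{h}$ of holomorphic vector fields containing $\mathcal{Y}_{\Phi,c}$, a local action does not upgrade to a global one for free: not every holomorphic vector field on a (bounded or unbounded) domain is complete, and ``invariance of $\Omega_0$ under $G(\Omega_0)^o$'' only controls the affine part of $\mathfrak{h}$, not the flow of the weight-$1/2$ piece. The standard completions of this step either exhibit the one-parameter group of $\mathcal{Y}_{\Phi,c}$ explicitly, or conjugate by a Cayley-type transform to a bounded realization and invoke a completeness criterion there; some such input must be supplied, and as written your argument does not contain it.
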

Let $e\in\Omega_0$. As we shall see at the end of this subsection, every vector field $\mathcal{Y}_{\Phi,c}$ is uniquely determined by the vector $\Phi(e)\in\mathcal{V}$, so that $\mathcal{Y}_{\Phi,c}$ will be also written as $\mathcal{Y}_\Phi$.
\begin{proposition}[Satake, {\cite[Chapter V, Proposition 2.2]{satake}}]
Every element of $\mathfrak{X}(1)$ is uniquely written as 
\begin{equation}\label{mathcalZab}
\mathcal{Z}_{a,b}(u,v)=(a(u,u),b(u,v))\quad((u,v)\in\mathcal{D}(\Omega_0, Q_0))
\end{equation}
with a symmetric $\mathbb{R}$-bilinear map $a:\mathcal{U}\times \mathcal{U}\rightarrow \mathcal{U}$ \textup{(}which we extend to a $\mathbb{C}$-bilinear map $a:\mathcal{U}_\mathbb{C}\times\mathcal{U}_\mathbb{C}\rightarrow\mathcal{U}_\mathbb{C}$\textup{)} and a $\mathbb{C}$-bilinear map $b:\mathcal{U}_\mathbb{C}\times \mathcal{V}\rightarrow \mathcal{V}$ which satisfy the following conditions: 
\begin{flalign}\label{mathcalAU0}
&\text{for each }u_0\in\mathcal{U},\text{ the linear map}&\tag{Z1}
\end{flalign}
\begin{equation*}
\mathcal{A}_{u_0}:\mathcal{U}\ni u\mapsto a(u_0,u)\in\mathcal{U}
\end{equation*}
belongs to $\mathfrak{g}(\Omega_0)$, 
\begin{flalign}\label{mathcalBU0}
&\textit{for any }u_0\in\mathcal{U}, \textit{ the linear map }&\tag{Z2}
\end{flalign}
\begin{equation*}\mathcal{B}_{u_0}:\mathcal{V}\ni v\mapsto \tfrac{1}{2}b(u_0,v)\in\mathcal{V}
\end{equation*}
is associated with $\mathcal{A}_{u_0}$, and $\Im\mathrm{\,tr\,}\mathcal{B}_{u_0}=0 $,
\begin{flalign}\label{UmapstoImH}
&\text{for any }v,v'\in\mathcal{V}, \text{ the linear map }&\tag{Z3}
\end{flalign}
\begin{align*}
\mathcal{U}\ni u\mapsto \Im Q_0(b(u,v),v')\in\mathcal{U}
\end{align*}
belongs to $\mathfrak{g}(\Omega_0)$,
\begin{equation*}\label{HVbHVVVHbH}
Q_0(b(Q_0(v'',v'),v''),v)=Q_0(v'',b(Q_0(v,v''),v'))\quad(v,v',v''\in\mathcal{V}). \tag{Z4}
\end{equation*}
Conversely, for any pair $(a,b)$ satisfying $(\ref{mathcalAU0})$, $(\ref{mathcalBU0})$, $(\ref{UmapstoImH})$, and $(\ref{HVbHVVVHbH})$, the vector field $\mathcal{Z}_{a,b}$ given by $(\ref{mathcalZab})$ belongs to $\mathfrak{X}(1)$.
\end{proposition}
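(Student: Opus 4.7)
The plan is to combine the grading $[\partial, X] = X$ with the bracket relations of $X \in \mathfrak{X}(1)$ against the explicit elements of $\mathfrak{X}(-1)$ and $\mathfrak{X}(-1/2)$, using Theorem \ref{kaupth} and Proposition \ref{Everyeleme} to pin down the form of $X$.

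Writing $X(u,v) = (X_1(u,v), X_2(u,v))$ in holomorphic coordinates, the identity $[\partial, X] = X$ with $\partial(u,v) = (u, v/2)$ forces $X_1$ to be a sum of monomials of bidegree $(i,j)$ in $(u,v)$ with $i + j/2 = 2$, and $X_2$ of bidegree $(i,j)$ with $i + j/2 = 3/2$. I then compute $[\partial_{u_0}, X] = D_{\partial_{u_0}} X$, the directional $u$-derivative of $X$. By the grading this lies in $\mathfrak{X}(0)$, and Theorem \ref{kaupth} forces $\partial_{u_0} X_1$ to be linear in $u$ and independent of $v$ (which already eliminates the bidegree $(1,2)$ term from $X_1$) and $\partial_{u_0} X_2$ to be linear in $v$ and independent of $u$. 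Integrating in $u_0$ yields
\[
X_1(u,v) = a(u,u) + P(v), \qquad X_2(u,v) = b(u,v) + S(v),
\]
with $a$ symmetric $\mathbb{C}$-bilinear, $b$ $\mathbb{C}$-bilinear, $P$ homogeneous of degree $4$ in $v$, and $S$ homogeneous of degree $3$ in $v$; this also yields (Z1) and the associated-pair part of (Z2).

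Next I compute $[\tilde{\partial}_{v_0}, X]$ using $\tilde{\partial}_{v_0}(u,v) = (2iQ_0(v,v_0), v_0)$; by the grading this lies in $\mathfrak{X}(1/2)$ and thus, by Proposition \ref{Everyeleme}, equals some $\mathcal{Y}_{\Phi,c}$. Matching the two holomorphic vector fields bidegree by bidegree yields a system of polynomial identities: the bidegree $(1,0)$ component of the $\mathcal{V}$-valued part identifies $\Phi$ with $u\mapsto b(u,v_0)$; the bidegree $(0,2)$ component determines $c$ in terms of $b$ and $S$; the bidegree $(0,3)$ components of the $\mathcal{U}_\mathbb{C}$-valued part, upon varying $v_0$ and invoking the $\Omega_0$-positivity of $Q_0$, force $P \equiv 0$ and $S \equiv 0$; and the bidegree $(1,1)$ component of the $\mathcal{U}_\mathbb{C}$-valued part produces the bilinear identities encoding (Z3) and (Z4). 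The trace normalization $\Im \mathrm{tr}\, \mathcal{B}_{u_0} = 0$ in (Z2) comes from the additional requirement that the corresponding real vector field $X^{\#}$ on the real manifold $\mathcal{D}(\Omega_0, Q_0)$ generate a flow by real diffeomorphisms. Uniqueness of $(a,b)$ follows from $X_1(u,0) = a(u,u)$ and $(\partial_v X_2)(u,0) = b(u, \cdot)$.

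The main obstacle is the converse direction: given $(a,b)$ satisfying (Z1)--(Z4), one must show that $\mathcal{Z}_{a,b}$ is \emph{complete}, so as to lie in $\mathfrak{X}$ rather than just in the abstract Lie algebra of holomorphic vector fields on $\mathcal{D}(\Omega_0, Q_0)$. I would handle this by constructing from $(a,b)$ an explicit one-parameter family of rational (generalized Cayley-type) transformations of $\mathcal{D}(\Omega_0, Q_0)$ whose infinitesimal generator at $t=0$ equals $\mathcal{Z}_{a,b}$, and then using (Z1)--(Z4) --- which encode the compatibility of $(a,b)$ with the cone $\Omega_0$ and with the Hermitian map $Q_0$ --- to verify that these transformations preserve the Siegel domain for all $t\in\mathbb{R}$.
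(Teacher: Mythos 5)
The paper does not actually prove this proposition: it is quoted from Satake \cite[Chapter V, Proposition 2.2]{satake} and used as a black box, so there is no internal proof to compare against. Judged on its own terms, your necessity argument is the standard one and is essentially sound. Homogeneity under $\mathrm{ad}(\partial)$ pins down the weighted bidegrees of the two components; bracketing with $\partial_{u_0}$ against the explicit description of $\mathfrak{X}(0)$ in Theorem~\ref{kaupth} kills the bidegree-$(1,2)$ term of the first component and yields (Z1) and the associated-pair part of (Z2); and bracketing with $\tilde{\partial}_{v_0}$ against Proposition~\ref{Everyeleme} works as you say: the degree-$(0,3)$ matching equates a $v_0$-linear expression with a $v_0$-antilinear one, so both vanish and $P\equiv S\equiv 0$ by nondegeneracy of $Q_0$, while the identification $\Phi=b(\cdot,v_0)$ turns (Y1) into (Z3) and (Y2) into (Z4).

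Two genuine gaps remain, and they are exactly where the content of the proposition lies. First, your justification of $\Im\,\mathrm{tr}\,\mathcal{B}_{u_0}=0$ is vacuous: every element of $\mathfrak{X}$ is by definition a complete holomorphic vector field, so ``generating a flow by real diffeomorphisms'' is not an additional requirement and selects nothing. The trace condition cannot follow from the graded bracket relations at all; note that $\mathfrak{X}(0)$ itself contains $\partial'=\mathcal{X}(0,i\,\mathrm{id}_{\mathcal{V}})$, whose $\mathcal{B}$ has $\Im\,\mathrm{tr}\,\mathcal{B}=\dim\mathcal{V}\neq 0$, so the mere membership $[\partial_{u_0},X]\in\mathfrak{X}(0)$ places no constraint on $\Im\,\mathrm{tr}\,\mathcal{B}_{u_0}$. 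This condition has to be extracted from a genuinely global input about the automorphisms $\exp(tX)$, not from commutators with $\sum_{\gamma\le 0}\mathfrak{X}(\gamma)$. Second, the converse --- that (Z1)--(Z4) force $\mathcal{Z}_{a,b}$ to be \emph{complete} --- is only announced as a plan; for a general, possibly non-symmetric Siegel domain there is no ready-made family of rational transformations to write down, and constructing one and verifying that it preserves $\mathcal{D}(\Omega_0,Q_0)$ for all real times is the hard half of the statement. As it stands, the proposal carries out the routine half and defers precisely the two difficult points.
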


\begin{example}
Let
\begin{equation*}\begin{split}
\mathcal{D}(\mathcal{P}_\ptwo,\mathcal{Q})&\simeq\left\{\left(\begin{array}{c}U\\V\end{array}\right)\in M(q,r;\mathbb{C});\Im U-\mathcal{Q}(V,V)\gg 0\right\},
\end{split}\end{equation*}
where $\mathcal{Q}(V,V')=\tfrac{1}{2}\overline{{}^tV'}V$. We put
for $\Phi\in M(\pone-\ptwo,\ptwo;\mathbb{C})$
\begin{equation*}
\mathcal{Y}_{\Phi}(U,V)=(2i\mathcal{Q}(V,\Phi \overline{{}^tU}),\Phi U+V\overline{{}^t\Phi}Vi).
\end{equation*}
Then we have
\begin{equation*}
\mathfrak{X}(1/2)=\{\mathcal{Y}_{\Phi};\Phi\in M(\pone-\ptwo,\ptwo;\mathbb{C})\}.
\end{equation*}
We put for $a\in H_r(\mathbb{C})$
\begin{equation*}
 \mathcal{Z}_a(U,V)=(UaU,VaU).
\end{equation*}
Then we have 
\begin{equation*}
\mathfrak{X}(1)=\{\mathcal{Z}_a;a\in H_r(\mathbb{C})\}.
\end{equation*}
\end{example}

\begin{lemma}[Satake, {\cite[Chapter V, \S2]{satake}}]
The following hold:
\begin{equation}\label{partialUYP}
[\partial_{u},\mathcal{Y}_\Phi]=\tilde{\partial}_{\Phi(u)},
\end{equation}
\begin{equation}\label{tildepartialVYPhiXAB}
[\tilde{\partial}_v,\mathcal{Y}_\Phi]=\mathcal{X}(\mathcal{A},\mathcal{B}),
\end{equation}
where $\mathcal{A}$ and $\mathcal{B}$ are given by $\mathcal{A}=4\Phi_v$ and $\mathcal{B}:\mathcal{V}\ni v'\mapsto 2i\Phi(Q_0(v',v))+2c(v,v')\in\mathcal{V}$. Moreover we have
\begin{equation}\label{partialUZa}
[\partial_u,\mathcal{Z}_{a,b}]=2\mathcal{X}(\mathcal{A}_u,\mathcal{B}_u).
\end{equation}
\end{lemma}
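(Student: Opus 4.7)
The plan is to verify all three identities by direct computation from the formula $[X, Y] = D_X Y - D_Y X$ applied to the explicit expressions for $\partial_u$, $\tilde{\partial}_v$, $\mathcal{X}(\mathcal{A}, \mathcal{B})$, $\mathcal{Y}_{\Phi,c}$, and $\mathcal{Z}_{a,b}$ introduced earlier in the section. The first and third identities are essentially bookkeeping: since $\partial_u$ is a constant vector field in the coordinates $(U, V)\in\mathcal{U}_\mathbb{C}\oplus\mathcal{V}$, the terms $D_{\mathcal{Y}_\Phi}\partial_u$ and $D_{\mathcal{Z}_{a,b}}\partial_u$ vanish. For \eqref{partialUYP} I would differentiate $\mathcal{Y}_{\Phi,c}$ along $(u, 0)$, use $\bar u = u$ for $u\in\mathcal{U}$, and read off $(2iQ_0(V,\Phi(u)), \Phi(u)) = \tilde{\partial}_{\Phi(u)}$. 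For \eqref{partialUZa} I would differentiate $\mathcal{Z}_{a,b}$ along $(u, 0)$; the symmetry of $a$ produces the factor $2$, and the definitions of $\mathcal{A}_u$ and $\mathcal{B}_u$ identify the result with $2\mathcal{X}(\mathcal{A}_u,\mathcal{B}_u)$.

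The substantive step is \eqref{tildepartialVYPhiXAB}. The computation of $D_{\tilde{\partial}_v}\mathcal{Y}_\Phi$ requires care with the conjugation $U\mapsto\bar U$ appearing in the first component of $\mathcal{Y}_{\Phi,c}$ and with the antilinearity of $Q_0$ in its second slot. Differentiating $\Phi(\overline{U + 2it\, Q_0(V,v)})$ at $t=0$ yields $-2i\,\Phi(Q_0(v,V))$ after using $\overline{Q_0(V,v)}=Q_0(v,V)$; feeding this into the antilinear slot of $Q_0$ produces a quadratic-in-$V$ term $-4\,Q_0(V,\Phi(Q_0(v,V)))$. The complementary piece $D_{\mathcal{Y}_\Phi}\tilde{\partial}_v$ contributes $-2i\,Q_0(c(V,V),v)$ to the first component, which by condition \eqref{HVcVV2iHPhiHVVVquadVVinmathcalV} equals $+4\,Q_0(V,\Phi(Q_0(v,V)))$; these two terms cancel exactly. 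What remains in the first component is $2i\,Q_0(v,\Phi(\bar U)) - 2i\,Q_0(\Phi(U),v)$, and for $U\in\mathcal{U}$ the Hermitian identity $\overline{Q_0(w,w')}=Q_0(w',w)$ collapses this to $4\,\Im Q_0(\Phi(U),v) = 4\Phi_v(U) = \mathcal{A}U$; extension to $U \in \mathcal{U}_\mathbb{C}$ follows by $\mathbb{C}$-linearity since the expression is holomorphic in $U$. The second component is a direct chain-rule computation using only the symmetry of $c$, and yields $2i\,\Phi(Q_0(V,v))+2c(v,V) = \mathcal{B}V$ with no further input needed.

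The main obstacle is sign bookkeeping. Three independent factors of $i$ enter the first component of the second bracket: one from the coefficient $2i$ defining $\tilde{\partial}_v$, one from antilinearity of $Q_0$ evaluated on the purely imaginary tangent direction $2it\,Q_0(V,v)$, and one from the factor $2i$ in condition \eqref{HVcVV2iHPhiHVVVquadVVinmathcalV}. The identity holds precisely because the two $Q_0(V,\Phi(Q_0(v,V)))$ contributions carry opposite signs once all three of these have been tracked correctly; mis-handling any one of them would leave an uncancelled quadratic term. This is what makes \eqref{tildepartialVYPhiXAB} the nontrivial identity among the three, while \eqref{partialUYP} and \eqref{partialUZa} are essentially immediate.
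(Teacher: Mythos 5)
Your computation is correct: all three brackets follow from $[X,Y]=D_XY-D_YX$ applied to the explicit vector fields, the cancellation of the quadratic terms $\pm 4\,Q_0(V,\Phi(Q_0(v,V)))$ via condition \eqref{HVcVV2iHPhiHVVVquadVVinmathcalV} is exactly right, and the identification of the residual linear part with $4\Phi_v=\mathcal{A}$ (first on $\mathcal{U}$, then on $\mathcal{U}_\mathbb{C}$ by $\mathbb{C}$-linearity) is sound. The paper gives no proof of this lemma, deferring to Satake, and your direct verification is essentially the same computation carried out there.
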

We fix a reference point $(ie,0)\in\mathcal{D}(\Omega_0, Q_0)$. Next we see a description of the subalgebra \begin{equation*}
 \mathfrak{X}_{(ie,0)}=\{X\in\mathfrak{X};X(ie,0)=0\}. 
\end{equation*}
Let $\partial'$ be the element of $\mathfrak{X}(0)$ given by
\begin{equation*}
\partial'(u,v)=(0,iv)\quad((u,v)\in\mathcal{D}(\Omega_0, Q_0)),
\end{equation*}
and let $\psi_{e}:\mathfrak{X}(1/2)\rightarrow \mathfrak{X}(-1/2)$, and $\varphi_{e}:\mathfrak{X}(1)\rightarrow\mathfrak{X}(-1)$ be linear maps given by
\begin{equation*}
\psi_{e}=\mathrm{ad}(\partial')\mathrm{ad}(\partial_{e})|_{\mathfrak{X}(1/2)}, 
\end{equation*}
and
\begin{equation*}
\varphi_{e}=\frac{1}{2}\mathrm{ad}(\partial_{e})^2|_{\mathfrak{X}(1)},
\end{equation*}
respectively. Put 
\begin{equation*}
\mathfrak{m}=\{X+\psi_{e}(X);X\in\mathfrak{X}(1/2)\},
\end{equation*}
\begin{equation*}
\mathfrak{m}'=\{X+\varphi_{e}(X); X\in\mathfrak{X}(1)\}.
\end{equation*}
\begin{theorem}[Kaup, Matsushima, Ochiai, {\cite[Theorem 6]{kaup}}]\label{Xie0mathca}
\begin{equation*}
\mathfrak{X}_{(ie,0)}=(\mathfrak{X}_{(ie,0)}\cap\mathfrak{X}(0))+\mathfrak{m}'+\mathfrak{m}.
\end{equation*} 
\end{theorem}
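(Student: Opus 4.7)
The plan is to decompose each $X \in \mathfrak{X}_{(ie,0)}$ along the gradation from Theorem \ref{kaupth} and to show that its components of degree $-1$ and $-1/2$ are completely determined by its components of degree $1$ and $1/2$ through the maps $\varphi_e$ and $\psi_e$. Using Theorem \ref{kaupth}, Proposition \ref{Everyeleme}, and the explicit description of $\mathfrak{X}(1)$, I would write
\[
X = \partial_{u_0} + \tilde\partial_{v_0} + \mathcal{X}(\mathcal{A},\mathcal{B}) + \mathcal{Y}_\Phi + \mathcal{Z}_{a,b},
\]
and evaluate at $(ie,0)$ to obtain $X(ie,0) = (u_0 + i\mathcal{A}e - a(e,e),\; v_0 + i\Phi(e))$.

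First I would check that both $\mathfrak{m}'$ and $\mathfrak{m}$ are already contained in $\mathfrak{X}_{(ie,0)}$. Iterating \eqref{partialUZa} together with \eqref{XABUAU} yields $\varphi_e(\mathcal{Z}_{a,b}) = \partial_{a(e,e)}$, which cancels $\mathcal{Z}_{a,b}(ie,0) = (-a(e,e),0)$ at the reference point. Likewise \eqref{partialUYP} and a short direct computation of $[\partial',\tilde\partial_{w}]$ give $\psi_e(\mathcal{Y}_\Phi) = \tilde\partial_{-i\Phi(e)}$, which cancels $\mathcal{Y}_\Phi(ie,0) = (0, i\Phi(e))$.

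The key step is a reality argument on $X(ie,0)=0$. Since $u_0$ and $a(e,e)$ lie in the real vector space $\mathcal{U}$ while $\mathcal{A} \in \mathfrak{g}(\Omega_0)$ is a real endomorphism of $\mathcal{U}$ extended $\mathbb{C}$-linearly to $\mathcal{U}_\mathbb{C}$, the vector $i\mathcal{A}e$ sits in $i\mathcal{U}$. Splitting $\mathcal{U}_\mathbb{C} = \mathcal{U} \oplus i\mathcal{U}$ forces $\mathcal{A}e = 0$ and $u_0 = a(e,e)$; the second component gives $v_0 = -i\Phi(e)$. Consequently
\[
X - \bigl(\mathcal{Z}_{a,b} + \varphi_e(\mathcal{Z}_{a,b})\bigr) - \bigl(\mathcal{Y}_\Phi + \psi_e(\mathcal{Y}_\Phi)\bigr) = \mathcal{X}(\mathcal{A},\mathcal{B}),
\]
which lies in $\mathfrak{X}_{(ie,0)} \cap \mathfrak{X}(0)$ because $\mathcal{A}e = 0$. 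This yields the required decomposition; moreover, since the three summands live in mutually disjoint graded components, the sum is in fact direct.

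The main obstacle I expect is not conceptual but computational: the maps $\varphi_e$ and $\psi_e$ must be extracted carefully from the explicit bracket formulas \eqref{XABUAU}--\eqref{partialUZa}, and one has to keep track of the fact that $Q_0$ is $\mathbb{C}$-linear in the first slot but conjugate-linear in the second when evaluating $[\partial',\tilde\partial_w]$. Once these identities are in place, the reality dichotomy on $\mathcal{U}_\mathbb{C}$ does the remaining conceptual work.
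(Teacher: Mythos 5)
The paper does not prove this statement at all: it is imported verbatim from Kaup--Matsushima--Ochiai \cite[Theorem 6]{kaup}, so there is no in-paper argument to compare against. Your blind proof is a correct, self-contained verification, and it is essentially the canonical one. The computations check out: $X(ie,0)=(u_0+i\mathcal{A}e-a(e,e),\,v_0+i\Phi(e))$ follows from the explicit formulas (note $\mathcal{Z}_{a,b}(ie,0)=(a(ie,ie),b(ie,0))=(-a(e,e),0)$ and $\mathcal{Y}_{\Phi,c}(ie,0)=(0,i\Phi(e))$ since $c(0,0)=0$ and $Q_0(0,\cdot)=0$); iterating \eqref{partialUZa} and \eqref{XABUAU} indeed gives $\varphi_e(\mathcal{Z}_{a,b})=\tfrac12[\partial_e,[\partial_e,\mathcal{Z}_{a,b}]]=\partial_{\mathcal{A}_e e}=\partial_{a(e,e)}$; and $\psi_e(\mathcal{Y}_\Phi)=\tilde{\partial}_{-i\Phi(e)}$ is exactly \eqref{psiYPhitil}, which the paper quotes from Satake, and which you could also have invoked directly instead of recomputing $[\partial',\tilde{\partial}_w]$ via \eqref{XABVBV} (only $\mathbb{R}$-linearity of $w\mapsto\tilde{\partial}_w$ is needed there, consistent with your remark about the sesquilinearity of $Q_0$). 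The reality dichotomy is the right key point: $u_0,\,a(e,e)\in\mathcal{U}$ and $\mathcal{A}\in\mathfrak{g}(\Omega_0)$ is a real endomorphism extended $\mathbb{C}$-linearly, so $u_0-a(e,e)=0$ and $\mathcal{A}e=0$ follow from splitting $\mathcal{U}_\mathbb{C}=\mathcal{U}\oplus i\mathcal{U}$, after which the residual degree-zero piece $\mathcal{X}(\mathcal{A},\mathcal{B})$ vanishes at $(ie,0)$ and lies in $\mathfrak{X}_{(ie,0)}\cap\mathfrak{X}(0)$. Your added observation that the sum is direct (the three summands occupy the disjoint graded blocks $\{0\}$, $\{\pm 1\}$, $\{\pm 1/2\}$, and each of $\mathfrak{m}$, $\mathfrak{m}'$ is graph-like over its positive part) is also correct and slightly sharpens the cited statement. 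What your approach buys is independence from the external reference at the cost of relying on the explicit descriptions of $\mathfrak{X}(1/2)$ and $\mathfrak{X}(1)$, which the paper in any case already imports from Satake.
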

We note that $\varphi_{e}$ and $\psi_{e}$ are injective (see {\cite[p. 211$-$212]{satake}}),
and by {\cite[p. 215]{satake}}, we have
\begin{equation}\label{psiYPhitil}
\psi_{e}(\mathcal{Y}_\Phi)=\tilde{\partial}_{-i\Phi(e)}.
\end{equation}

\subsection{Relationship between $\mathfrak{X}(1/2)$ and $\mathfrak{X}(1)$}\label{Algebraicp12}
First we prove the following lemma on the relationship between $\mathfrak{X}(1/2)$ and $\mathfrak{X}(1)$.
\begin{lemma}\label{Letpartial}
Let $\partial_{u_0}\in\mathfrak{X}(-1)$. If $[\partial_{u_0},\mathfrak{X}(1)]=\{0\}$, then we have $[\partial_{u_0},\mathfrak{X}(1/2)]=\{0\}$.
\end{lemma}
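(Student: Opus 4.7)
The plan is to reduce the desired vanishing to a non-degeneracy statement via the Jacobi identity, and then establish that non-degeneracy using the structure of the Siegel domain.

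By (\ref{partialUZa}), the hypothesis $[\partial_{u_0},\mathfrak{X}(1)]=\{0\}$ is equivalent to $a(u_0,u)=0$ for all $u\in\mathcal{U}$ and $b(u_0,v)=0$ for all $v\in\mathcal{V}$, whenever $\mathcal{Z}_{a,b}\in\mathfrak{X}(1)$; and by (\ref{partialUYP}), the sought conclusion is $\Phi(u_0)=0$ for every $\mathcal{Y}_\Phi\in\mathfrak{X}(1/2)$. Fix $\mathcal{Y}_\Phi\in\mathfrak{X}(1/2)$ and $\mathcal{Z}_{a,b}\in\mathfrak{X}(1)$. Since the gradation forces $[\mathcal{Y}_\Phi,\mathcal{Z}_{a,b}]\in\mathfrak{X}(3/2)=\{0\}$, the Jacobi identity combined with the hypothesis yields
\[
0=[[\partial_{u_0},\mathcal{Y}_\Phi],\mathcal{Z}_{a,b}]=[\tilde{\partial}_{\Phi(u_0)},\mathcal{Z}_{a,b}],
\]
so $\tilde{\partial}_{\Phi(u_0)}\in\mathfrak{X}(-1/2)$ centralizes every element of $\mathfrak{X}(1)$.

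Next, I would compute $[\tilde{\partial}_v,\mathcal{Z}_{a,b}]$ explicitly from $[X,Y]=D_XY-D_YX$. A direct calculation shows that this bracket lies in $\mathfrak{X}(1/2)$, and matching with the parametrization in Proposition \ref{Everyeleme} identifies it with $\mathcal{Y}_{\Phi',c'}$ where $\Phi'(z_1)=b(z_1,v)$ (with $c'$ determined accordingly). Since $\mathcal{Y}_{\Phi',c'}$ is determined by $\Phi'(e)$ (by the injectivity of $\psi_{e}$; note also that $\Phi'\equiv 0$ forces $c'\equiv 0$ via (\ref{HVcVV2iHPhiHVVVquadVVinmathcalV}) together with the non-degeneracy of $Q_0$), the vanishing $[\tilde{\partial}_{\Phi(u_0)},\mathcal{Z}_{a,b}]=0$ is equivalent to $\Phi'\equiv 0$, and so we obtain $b(z_1,\Phi(u_0))=0$ for every $z_1\in\mathcal{U}_\mathbb{C}$ and every valid $(a,b)$.

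It remains to deduce $\Phi(u_0)=0$ from this. This amounts to a non-degeneracy statement for the bilinear maps $b$ associated with $\mathfrak{X}(1)$: the common ``right kernel'' $\bigcap_{(a,b)}\{v\in\mathcal{V}:b(\cdot,v)\equiv 0\}$ must be trivial. This is the main obstacle in the proof, and I would handle it by appealing to the structural fact that $\mathrm{Aut}_{hol}(\mathcal{D}(\Omega_0,Q_0))$ acts transitively on $\mathcal{D}(\Omega_0,Q_0)$: passing to the bounded realization via the Cayley transform, the ``inversion'' automorphism furnishes a specific $\mathcal{Z}_{a_0,b_0}\in\mathfrak{X}(1)$ for which the linear map $v\mapsto b_0(e,v)$ is injective on $\mathcal{V}$, forcing $\Phi(u_0)=0$. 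The remainder of the proof is a purely formal application of the Jacobi identity together with the explicit bracket formulas of the previous subsection.
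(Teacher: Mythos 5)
There is a genuine gap at the step you yourself flag as ``the main obstacle.'' Your reduction via the Jacobi identity is fine: since $[\mathcal{Y}_\Phi,\mathcal{Z}_{a,b}]\in\mathfrak{X}(3/2)=\{0\}$ and $[\partial_{u_0},\mathcal{Z}_{a,b}]=0$, you correctly get $[\tilde{\partial}_{\Phi(u_0)},\mathfrak{X}(1)]=\{0\}$, and the explicit bracket computation does give $b(\cdot,\Phi(u_0))\equiv 0$ for every $\mathcal{Z}_{a,b}\in\mathfrak{X}(1)$. But the non-degeneracy you then need --- triviality of the common right kernel of the maps $b$ --- is not available by the route you propose. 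Lemma \ref{Letpartial} is stated for an \emph{arbitrary} Siegel domain $\mathcal{D}(\Omega_0,Q_0)$ as set up in Section \ref{Gradingstr}; the group $\mathrm{Aut}_{hol}(\mathcal{D}(\Omega_0,Q_0))$ need not act transitively, and even for the homogeneous non-symmetric domains that are the whole point of this paper there is no ``inversion'' automorphism: the existence of such a symmetry at a point is essentially equivalent to the symmetric case. For non-symmetric domains $\mathfrak{X}(1)$ is a proper, often small (possibly zero) subspace of what it is in the symmetric case --- compare Section \ref{Applicatio}, where $\mathfrak{X}(1)$ for $\mathcal{D}(\Omega_1)$ is two-dimensional rather than the six dimensions of the ambient Siegel upper half-space --- so no element $\mathcal{Z}_{a_0,b_0}$ with $v\mapsto b_0(e,v)$ injective can be produced this way, and indeed when $\mathfrak{X}(1)=\{0\}$ your hypothesis on the right kernel is vacuously violated.

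The paper's proof sidesteps this entirely by manufacturing the needed element of $\mathfrak{X}(1)$ from the given $\mathcal{Y}_\Phi$ itself: it sets $\mathcal{Z}_{a,b}=[\mathcal{Y}_\Phi,\mathcal{Y}_{i\Phi}]$ (using that $\mathfrak{X}(1/2)$ is stable under $\Phi\mapsto i\Phi$ by Proposition \ref{Everyeleme}), for which Satake's formula gives $a(u,u)=4Q_0(\Phi(u),\Phi(u))$. Then $[\partial_{u_0},\mathcal{Z}_{a,b}]=2\mathcal{X}(\mathcal{A}_{u_0},\mathcal{B}_{u_0})=0$ forces $a(u_0,u_0)=4Q_0(\Phi(u_0),\Phi(u_0))=0$, and the $\Omega_0$-positivity of $Q_0$ yields $\Phi(u_0)=0$. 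Note that the decisive information comes from the $\mathcal{U}$-component $a$ together with positivity of $Q_0$, not from a non-degeneracy property of the $\mathcal{V}$-components $b$; if you want to keep your Jacobi-identity framework, you would still need to feed this particular $\mathcal{Z}_{a,b}$ into it, at which point the paper's direct argument is shorter.
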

\begin{proof}
Let $\mathcal{Y}_\Phi\in \mathfrak{X}(1/2)$. Then $\mathcal{Y}_{i\Phi}\in\mathfrak{X}(1/2)$ by Proposition \ref{Everyeleme}. We put $\mathcal{Z}_{a,b}=[\mathcal{Y}_\Phi,\mathcal{Y}_{i\Phi}]$. Then the equality 
\begin{equation*} 
a(u,u)=4Q_0(\Phi(u),\Phi(u))\quad(u\in \mathcal{U})
\end{equation*}
holds (see {\cite[Chapter V, Lemma 2.5]{satake}}). It follows from (\ref{partialUZa}) that $0=[\partial_{u_0},\mathcal{Z}_{a,b}]=2\mathcal{X}(\mathcal{A}_{u_0},\mathcal{B}_{u_0})$. Hence
\begin{equation*}
0=\mathcal{A}_{u_0}(u_0)=a(u_0,u_0)=4Q_0(\Phi(u_0),\Phi(u_0)).
\end{equation*}
By the $\Omega_0$-positivity of $Q_0$, we get $\Phi(u_0)=0$. Using (\ref{partialUYP}), we get
\begin{equation*}
[\partial_{u_0},\mathcal{Y}_\Phi]=\tilde{\partial}_{\Phi(u_0)}=0.
\end{equation*}
\end{proof}

\begin{lemma}\label{ForXABinma}
Let $\mathcal{X}(\mathcal{A},\mathcal{B})\in\mathfrak{X}(0)$, and let $\mathcal{Y}_{\Phi,c}\in\mathfrak{X}(1/2)$. We define a $\mathbb{C}$-linear map $\Phi':\mathcal{U}_\mathbb{C}\rightarrow \mathcal{V}$ and a $\mathbb{C}$-bilinear map $c':\mathcal{V}\times\mathcal{V}\rightarrow\mathcal{V}$ by $\mathcal{Y}_{\Phi',c'}=[\mathcal{Y}_\Phi,\mathcal{X}(\mathcal{A},\mathcal{B})]$. Then the followings hold:
\begin{enumerate}
\item[$(\mathrm{i})$]
$\Phi'(u)=-\Phi(\mathcal{A}u)+\mathcal{B}\Phi(u)\quad(u\in \mathcal{U}_\mathbb{C})$,
\item[$(\mathrm{ii})$]
$c'(v,v)=\mathcal{B}c(v,v)-2c(\mathcal{B}v,v)\quad(v\in\mathcal{V})$.
\end{enumerate}
\end{lemma}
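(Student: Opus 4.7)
The plan is to compute the bracket $[\mathcal{Y}_{\Phi,c},\mathcal{X}(\mathcal{A},\mathcal{B})]$ directly from the formula $[X,Y]=D_XY-D_YX$ stated for vector fields in Section~\ref{Structuret}, and then read off $\Phi'$ and $c'$ by matching the result against the canonical form \eqref{mathcalYPh}. Since $\mathcal{X}(\mathcal{A},\mathcal{B})(u,v)=(\mathcal{A}u,\mathcal{B}v)$ is linear in $(u,v)$, its directional derivative in any direction $(u_1,v_1)$ is simply $(\mathcal{A}u_1,\mathcal{B}v_1)$. Substituting $(u_1,v_1)=\mathcal{Y}_{\Phi,c}(u,v)$ gives
\[
D_{\mathcal{Y}_{\Phi,c}}\mathcal{X}(\mathcal{A},\mathcal{B})(u,v)=\bigl(2i\mathcal{A}Q_0(v,\Phi(\overline{u})),\,\mathcal{B}\Phi(u)+\mathcal{B}c(v,v)\bigr).
\]

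Next, I would differentiate $\mathcal{Y}_{\Phi,c}(u,v)=(2iQ_0(v,\Phi(\overline{u})),\Phi(u)+c(v,v))$ along $(\mathcal{A}u,\mathcal{B}v)$. For the second component this produces $\Phi(\mathcal{A}u)+c(\mathcal{B}v,v)+c(v,\mathcal{B}v)=\Phi(\mathcal{A}u)+2c(\mathcal{B}v,v)$ by symmetry of $c$. For the first component I use that $\mathcal{A}$, being the complexification of a real endomorphism in $\mathfrak{g}(\Omega_0)\subset\mathfrak{gl}(\mathcal{U})$, commutes with complex conjugation, so $\overline{\mathcal{A}u}=\mathcal{A}\overline{u}$, and the $\mathbb{C}$-linearity of $\Phi$ yields
\[
D_{\mathcal{X}(\mathcal{A},\mathcal{B})}\mathcal{Y}_{\Phi,c}(u,v)=\bigl(2iQ_0(\mathcal{B}v,\Phi(\overline{u}))+2iQ_0(v,\Phi(\mathcal{A}\overline{u})),\,\Phi(\mathcal{A}u)+2c(\mathcal{B}v,v)\bigr).
\]

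Subtracting, the second component of $[\mathcal{Y}_{\Phi,c},\mathcal{X}(\mathcal{A},\mathcal{B})]$ is exactly
\[
\bigl(\mathcal{B}\Phi(u)-\Phi(\mathcal{A}u)\bigr)+\bigl(\mathcal{B}c(v,v)-2c(\mathcal{B}v,v)\bigr),
\]
and matching this expression summand-by-summand with $\Phi'(u)+c'(v,v)$ immediately yields both~(i) and~(ii). To close the argument I would verify consistency with the first component: $2iQ_0(v,\Phi'(\overline{u}))$ must equal $2i\mathcal{A}Q_0(v,\Phi(\overline{u}))-2iQ_0(\mathcal{B}v,\Phi(\overline{u}))-2iQ_0(v,\Phi(\mathcal{A}\overline{u}))$. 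With $\Phi'(\overline{u})=\mathcal{B}\Phi(\overline{u})-\Phi(\mathcal{A}\overline{u})$, this reduces to the identity
\[
\mathcal{A}Q_0(v,\Phi(\overline{u}))=Q_0(\mathcal{B}v,\Phi(\overline{u}))+Q_0(v,\mathcal{B}\Phi(\overline{u})),
\]
which is precisely the association condition between $\mathcal{A}$ and $\mathcal{B}$ (Theorem~\ref{kaupth}) applied to $v$ and $\Phi(\overline{u})$.

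The only real subtlety in the computation is the correct handling of the conjugation inside $\Phi(\overline{u})$ when taking directional derivatives, and noticing that the consistency of the first component forces precisely the $\mathcal{A}$-$\mathcal{B}$ association relation; once those two points are settled, the rest is a routine unwinding of definitions. The proof proposed above contains no nontrivial obstacle beyond this bookkeeping.
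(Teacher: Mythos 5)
Your computation is correct and is essentially the paper's own proof: both compute $[\mathcal{Y}_{\Phi,c},\mathcal{X}(\mathcal{A},\mathcal{B})]=D_{\mathcal{Y}_{\Phi,c}}\mathcal{X}(\mathcal{A},\mathcal{B})-D_{\mathcal{X}(\mathcal{A},\mathcal{B})}\mathcal{Y}_{\Phi,c}$ and read off $(\mathrm{i})$ and $(\mathrm{ii})$ from the second component. Your added consistency check on the first component (reducing to the $\mathcal{A}$--$\mathcal{B}$ association relation) is not in the paper but is a harmless, correct extra.
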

\begin{proof}
For $(u,v)\in\mathcal{D}(\Omega_0, Q_0)$, we have
\begin{equation*}\begin{split}
[&\mathcal{Y}_{\Phi,c},\mathcal{X}(\mathcal{A},\mathcal{B})](u,v)=(2i\mathcal{A}Q_0(v,\Phi(\overline{u})),\mathcal{B}(\Phi(u)+c(v,v)))\\&\quad-(2iQ_0(v,\Phi(\overline{\mathcal{A}u}))+2iQ_0(\mathcal{B}v,\Phi(\overline{u})),\Phi(\mathcal{A}u)+2c(\mathcal{B}v,v))\in\mathcal{U}_\mathbb{C}\oplus\mathcal{V}.
\end{split}\end{equation*}
We see from this expression that the image of $[\mathcal{Y}_{\Phi,c},\mathcal{X}(\mathcal{A},\mathcal{B})](u,v)$ under the 2nd projection $\mathcal{U}_\mathbb{C}\oplus\mathcal{V}\ni(u,v)\mapsto v\in\mathcal{V}$ is equal to
\begin{equation*}
\mathcal{B}\Phi(u)-\Phi(\mathcal{A}u)+\mathcal{B}c(v,v)-2c(\mathcal{B}v,v),
\end{equation*}
which is same as $\Phi'(u)+c'(v,v)$.
\end{proof}

\begin{lemma}\label{ForZabazzp}
Let $\mathcal{Z}_{a,b}\in\mathfrak{X}(1)$, and let $\mathcal{X}(\mathcal{A},\mathcal{B})\in\mathfrak{X}(0)$. We define $\mathbb{C}$-bilinear maps $a':\mathcal{U}_\mathbb{C}\times\mathcal{U}_\mathbb{C}\rightarrow\mathcal{U}_\mathbb{C}$ and $b':\mathcal{U}_\mathbb{C}\times\mathcal{V}\rightarrow \mathcal{V}$ by $\mathcal{Z}_{a',b'}=[\mathcal{Z}_{a,b},\mathcal{X}(\mathcal{A},\mathcal{B})]$. Then the followings hold:
\begin{enumerate}
\item[$(\mathrm{i})$]
$a'(u,u)=\mathcal{A}a(u,u)-2a(\mathcal{A}u,u)\quad(u\in\mathcal{U}_\mathbb{C})$,
\item[$(\mathrm{ii})$]
$b'(u,v)=\mathcal{B}b(u,v)-b(\mathcal{A}u,v)-b(u,\mathcal{B}v)\quad(u\in\mathcal{U}_\mathbb{C},v\in\mathcal{V})$.
\end{enumerate}
\end{lemma}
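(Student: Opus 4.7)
The plan is to mimic the proof of Lemma \ref{ForXABinma} directly: compute the Lie bracket $[\mathcal{Z}_{a,b},\mathcal{X}(\mathcal{A},\mathcal{B})]$ using the formula $[X,Y]=D_XY-D_YX$ recalled earlier in Section \ref{Structuret}, and then compare the two components of the result against the general form $\mathcal{Z}_{a',b'}(u,v)=(a'(u,u),b'(u,v))$.

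First I would compute $D_{\mathcal{Z}_{a,b}}\mathcal{X}(\mathcal{A},\mathcal{B})$. Since $\mathcal{X}(\mathcal{A},\mathcal{B})$ is linear in $(u,v)$, differentiating at $t=0$ in the direction $\mathcal{Z}_{a,b}(u,v)=(a(u,u),b(u,v))$ simply evaluates $\mathcal{X}(\mathcal{A},\mathcal{B})$ on that tangent vector, giving
\begin{equation*}
D_{\mathcal{Z}_{a,b}}\mathcal{X}(\mathcal{A},\mathcal{B})(u,v)=\bigl(\mathcal{A}a(u,u),\,\mathcal{B}b(u,v)\bigr).
\end{equation*}
Next I would compute $D_{\mathcal{X}(\mathcal{A},\mathcal{B})}\mathcal{Z}_{a,b}$ by differentiating $\mathcal{Z}_{a,b}(u+t\mathcal{A}u,v+t\mathcal{B}v)$ at $t=0$; using the symmetry of $a$ and the bilinearity of $b$, this yields
\begin{equation*}
D_{\mathcal{X}(\mathcal{A},\mathcal{B})}\mathcal{Z}_{a,b}(u,v)=\bigl(2a(\mathcal{A}u,u),\,b(\mathcal{A}u,v)+b(u,\mathcal{B}v)\bigr).
\end{equation*}

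Subtracting, the first component of $[\mathcal{Z}_{a,b},\mathcal{X}(\mathcal{A},\mathcal{B})](u,v)$ is $\mathcal{A}a(u,u)-2a(\mathcal{A}u,u)$ and the second component is $\mathcal{B}b(u,v)-b(\mathcal{A}u,v)-b(u,\mathcal{B}v)$. Identifying these with $a'(u,u)$ and $b'(u,v)$ respectively gives (i) and (ii). The step requiring a small amount of care, though not really an obstacle, is the factor of $2$ in (i), which arises because $a$ enters $\mathcal{Z}_{a,b}$ through the diagonal $a(u,u)$ so both arguments contribute on differentiation; once one writes $a(u+t\mathcal{A}u,u+t\mathcal{A}u)$ and uses symmetry of $a$, the factor appears immediately.
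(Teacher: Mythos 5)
Your proposal is correct and follows essentially the same route as the paper: the paper also computes $[\mathcal{Z}_{a,b},\mathcal{X}(\mathcal{A},\mathcal{B})](u,v)=(\mathcal{A}a(u,u),\mathcal{B}b(u,v))-(2a(\mathcal{A}u,u),b(\mathcal{A}u,v)+b(u,\mathcal{B}v))$ via the formula $[X,Y]=D_XY-D_YX$ and reads off $a'$ and $b'$ from the two components. Your explicit justification of the factor $2$ from the symmetry of $a$ is a detail the paper leaves implicit.
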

\begin{proof}
For $(u,v)\in\mathcal{D}(\Omega_0, Q_0)$, we have
\begin{equation*}\begin{split}
[\mathcal{Z}&_{a,b},\mathcal{X}(\mathcal{A},\mathcal{B})](u,v)\\&=(\mathcal{A}a(u,u),\mathcal{B}b(u,v))-(2a(\mathcal{A}u,u),b(\mathcal{A}u,v)+b(u,\mathcal{B}v))\\&=(\mathcal{A}a(u,u)-2a(\mathcal{A}u,u),\mathcal{B}b(u,v)-b(\mathcal{A}u,v)-b(u,\mathcal{B}v))\in\mathcal{U}_\mathbb{C}\oplus\mathcal{V},
\end{split}\end{equation*}
which is same as $(a'(u,u),b'(u,v))$.
\end{proof}

\begin{proposition}\label{Whenr1fora}
Assume that $\dim\,\mathcal{U}=1$ and that $\mathfrak{f}$ is a subalgebra of $\mathfrak{X}$ which contains $\mathfrak{X}(-1/2)$ and $\partial$. Then for any $\mathcal{Y}_{\Phi}\in{\mathfrak{f}}$, we have $\mathcal{Y}_{i\Phi}\in{\mathfrak{f}}$.
\end{proposition}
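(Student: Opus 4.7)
The plan is to produce $\mathcal{Y}_{i\Phi}$ inside $\mathfrak{f}$ by a double bracket built from $\mathcal{Y}_\Phi$ and a suitable $\tilde{\partial}_v\in\mathfrak{X}(-1/2)\subset\mathfrak{f}$. The starting observation is that $\dim\mathcal{U}=1$ forces $\Omega_0$ to be a half-line $\mathbb{R}_{>0}e$, so $Q_0(w,w)\in\mathbb{R}_{\geq 0}e$ for every $w\in\mathcal{V}$; I would then write $Q_0(w,w')=h(w,w')e$ for the scalar Hermitian form $h$ on $\mathcal{V}$, which is positive definite, and set $\lambda:=h(\Phi(e),\Phi(e))$.

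The case $\Phi(e)=0$ gives $\mathcal{Y}_\Phi=0$ by the parametrization remark following Proposition \ref{Everyeleme}, so I assume $v:=\Phi(e)\neq 0$, hence $\lambda>0$. Using formula (\ref{tildepartialVYPhiXAB}), I would compute $[\tilde{\partial}_v,\mathcal{Y}_\Phi]=\mathcal{X}(\mathcal{A},\mathcal{B})\in\mathfrak{f}$ with $\mathcal{A}=4\Phi_v$ and $\mathcal{B}(v')=2i\Phi(Q_0(v',v))+2c(v,v')$. Since $Q_0(\Phi(e),\Phi(e))=\lambda e$ is real, $\Phi_v(e)=\Im Q_0(\Phi(e),v)=0$; and because $\mathcal{U}=\mathbb{R}e$ is one-dimensional, this forces $\Phi_v=0$, so $\mathcal{A}=0$. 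Bracketing back against $\mathcal{Y}_\Phi$, Lemma \ref{ForXABinma}(i) yields $[\mathcal{Y}_\Phi,\mathcal{X}(0,\mathcal{B})]=\mathcal{Y}_{\Phi'}\in\mathfrak{f}$ with $\Phi'(u)=\mathcal{B}\Phi(u)$.

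Evaluating at $u=e$ gives $\Phi'(e)=2i\Phi(Q_0(\Phi(e),\Phi(e)))+2c(\Phi(e),\Phi(e))=2i\lambda\Phi(e)+2c(\Phi(e),\Phi(e))$. The technical core is to show $c(\Phi(e),\Phi(e))=2i\lambda\Phi(e)$: I would apply (Y2) with $v'=\Phi(e)$ and arbitrary $v\in\mathcal{V}$, use the $\mathbb{C}$-linearity of $\Phi$ on the one-dimensional $\mathcal{U}_\mathbb{C}=\mathbb{C}e$ to compute $\Phi(Q_0(v,\Phi(e)))=h(v,\Phi(e))\Phi(e)$, then employ the antilinearity of $Q_0$ in the second slot together with the Hermitian symmetry $\overline{h(v,\Phi(e))}=h(\Phi(e),v)$ to reach $h(c(\Phi(e),\Phi(e)),v)=2i\lambda h(\Phi(e),v)$ for every $v$, and finally invoke nondegeneracy of $h$. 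Substituting gives $\Phi'(e)=6i\lambda\Phi(e)$, and since $\Phi'$ and $i\Phi$ are both $\mathbb{C}$-linear on $\mathcal{U}_\mathbb{C}=\mathbb{C}e$ with matching values at $e$, $\Phi'=6i\lambda\Phi$.

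To conclude, I would note that $\mathcal{Y}_{r\Phi}=r\mathcal{Y}_\Phi$ for every $r\in\mathbb{R}$, read off directly from formula (\ref{mathcalYPh}) once one observes (from (Y2)) that the accompanying $c$ depends $\mathbb{R}$-linearly on $\Phi$; hence $\mathcal{Y}_{\Phi'}=\mathcal{Y}_{6\lambda\cdot(i\Phi)}=6\lambda\,\mathcal{Y}_{i\Phi}$, and dividing by $6\lambda>0$ places $\mathcal{Y}_{i\Phi}$ in $\mathfrak{f}$. The main obstacle is the identification $c(\Phi(e),\Phi(e))=2i\lambda\Phi(e)$ via (Y2): one has to thread the sesquilinearity of $Q_0$ through the right-hand side so that the two factors of $i$ combine correctly with the Hermitian symmetry, and I expect this bookkeeping to be the only delicate step.
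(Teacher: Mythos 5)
Your proposal is correct and follows essentially the same route as the paper: bracket $\mathcal{Y}_\Phi$ with $\tilde{\partial}_{\Phi(e)}\in\mathfrak{X}(-1/2)$ to get $\mathcal{X}(\mathcal{A},\mathcal{B})\in\mathfrak{f}$ with $\mathcal{A}e=0$, bracket back to get $\mathcal{Y}_{\Phi'}$, and use (Y2) to identify $c(\Phi(e),\Phi(e))=2i\lambda\Phi(e)$ and hence $\Phi'(e)=6i\lambda\Phi(e)$ with $\lambda>0$. The only (immaterial) difference is the last step, where the paper deduces $\mathcal{Y}_{\Phi'}=6\lambda\,\mathcal{Y}_{i\Phi}$ from the injectivity of $\psi_e$, while you argue via the one-dimensionality of $\mathcal{U}_\mathbb{C}$ and the fact that $c$ is determined $\mathbb{R}$-linearly by $\Phi$; both rest on the same uniqueness statement.
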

\begin{proof}
When $\Phi(e)=0$, we have $\psi_e(\mathcal{Y}_\Phi)=\tilde{\partial}_{-i\Phi(e)}=0$\,(\ref{psiYPhitil}). Since $\psi_e$ is injective, we have $\mathcal{Y}_\Phi=0$. Thus $\Phi=0$ and the result follows. In what follows, we assume that $\Phi(e)\neq 0$. We define $\mathbb{C}$-linear maps $\mathcal{A}\in\mathfrak{gl}(\mathcal{U}_\mathbb{C}),\mathcal{B}\in\mathfrak{gl}(\mathcal{V})$, and $\Phi':\mathcal{U}_\mathbb{C}\rightarrow\mathcal{V}$ by \begin{equation*}
\mathcal{X}(\mathcal{A},\mathcal{B})=[ \tilde{\partial}_{\Phi(e)},\mathcal{Y}_{\Phi}],\quad \mathcal{Y}_{\Phi'}=[\mathcal{Y}_{\Phi},\mathcal{X}(\mathcal{A},\mathcal{B})].
\end{equation*}
By assumption, we have $\mathcal{X}(\mathcal{A},\mathcal{B})\in\mathfrak{f}$ and $\mathcal{Y}_{\Phi'}\in{\mathfrak{f}}$. Our goal is to prove that $\Phi'(e)=Ci\Phi(e)$ with some constant $C\in\mathbb{R}\setminus\{0\}$. Indeed, if the equation $\Phi'(e)=Ci\Phi(e)$ holds, then we have
\begin{equation*}
\psi_e(\mathcal{Y}_{\Phi'})=\tilde{\partial}_{-i\Phi'(e)}=\tilde{\partial}_{C\Phi(e)}=C\tilde{\partial}_{\Phi(e)}=C\psi_e(\mathcal{Y}_{i\Phi})=\psi_e(C\mathcal{Y}_{i\Phi}),
\end{equation*} 
and since $\psi_e$ is injective, $C\mathcal{Y}_{i\Phi}=\mathcal{Y}_{\Phi'}\in{\mathfrak{f}}$. Put $v_0=\Phi(e)$. By (\ref{tildepartialVYPhiXAB}), we have
\begin{equation*}
\mathcal{A}e=4\Im Q_0(v_0,\Phi(e))=4\Im Q_0(v_0,v_0)=0.
\end{equation*}
Since $\mathcal{U}=\mathbb{R}e$, we can define a Hermitian form $q_0$ on $\mathcal{V}$ by 
\begin{equation*}
 Q_0(v,v')=q_0(v,v')e\quad(v,v'\in\mathcal{V}). 
\end{equation*} 
Using (\ref{HVcVV2iHPhiHVVVquadVVinmathcalV}), for any $v \in \mathcal{V}$, we have
\begin{equation*}\begin{split}
Q_0&(c(v_0,v_0),v)=2iQ_0(v_0,\Phi(Q_0(v,v_0)))=2iQ_0(v_0,q_0(v,v_0)\Phi(e))
\\&=2iQ_0(v_0,q_0(v,v_0)v_0)=2iq_0(v_0,v)Q_0(v_0,v_0)=Q_0(2iq_0(v_0,v_0)v_0,v).
\end{split}\end{equation*} 
Hence 
$c(v_0,v_0)=2iq_0(v_0,v_0)v_0$. Using (\ref{tildepartialVYPhiXAB}), we get
\begin{equation*}\begin{split}
\mathcal{B}v_0&=2i\Phi(Q_0(v_0,v_0))+2c(v_0,v_0)=2iq_0(v_0,v_0)\Phi(e)+4iq_0(v_0,v_0)v_0\\&=6iq_0(v_0,v_0)v_0.
\end{split}\end{equation*}
Thanks to Lemma \ref{ForXABinma} (i), we obtain
\begin{equation*}\begin{split}
\Phi'(e)=-\Phi(\mathcal{A}e)+\mathcal{B}\Phi(e)=\mathcal{B}v_0=6iq_0(v_0,v_0)v_0=6q_0(v_0,v_0)i\Phi(e)
\end{split}\end{equation*} 
with $6q_0(v_0,v_0)\neq 0$.
\end{proof}

\subsection{Vector fields on homogeneous Siegel domains}
Consider the action of the group $B$ on the domain $\mathcal{D}(\Omega,Q)$ as in Section \ref{Normaljalg}. Let $\mathfrak{X}=\mathfrak{X}(\mathcal{D}(\Omega,Q))$ be the space of complete holomorphic vector fields on $\mathcal{D}(\Omega,Q)$. For a subspace $\mathcal{W}\subset \mathfrak{aut}_{hol}\mathcal{D}(\Omega,Q)$, let $\mathcal{W}^\#=\{X^\#;X\in \mathcal{W}\}\subset \mathfrak{X}$. Now we have
\begin{equation*}
T^\#=\mathcal{X}(\mathrm{ad}(T)|_{\mathfrak{b}(1)_\mathbb{C}},\mathrm{ad}(T)|_{\mathfrak{b}(1/2)})\quad(T\in\mathfrak{b}(0)). 
\end{equation*} 
Thus $\partial=(jE)^\#\in\mathfrak{b}^\#$. By (\ref{expX0W0ZWZ}), we also have
\begin{equation*}
U^\#=\partial_U\quad(U\in\mathfrak{b}(1)),
\end{equation*}
\begin{equation*}
V^\#=\tilde{\partial}_{V}\quad(V\in\mathfrak{b}(1/2)).
\end{equation*}
From these expressions, we see that
\begin{equation*}
\mathfrak{b}(1)^\#=\mathfrak{X}(-1),\quad \mathfrak{b}(1/2)^\#=\mathfrak{X}(-1/2),\quad\mathfrak{b}(0)^\#\subset\mathfrak{X}(0).
\end{equation*}
Note that there is a natural action of $G$ on $\mathcal{D}(\Omega,Q)$ which is given as the transfer of the action of $G$ on $\mathcal{D}$ by means of the biholomorphic map $\mathcal{C}$. We also have the $B$-invariant metric $\langle\langle\cdot,\cdot\rangle\rangle$ on $\mathcal{D}(\Omega,Q)$ which is the transfer of the metric $\langle\langle\cdot,\cdot\rangle\rangle$ on $\mathcal{D}$. We denote by $\nabla$ the Levi-Civita connection on $(\mathcal{D}(\Omega,Q),\langle\langle\cdot,\cdot\rangle\rangle)$. We define a map $\tilde{\nabla}:\mathfrak{b}\times\mathfrak{b}\rightarrow \mathfrak{b}$ by \begin{equation}\label{dtexpttild}
\dt\exp(t\tilde{\nabla}_{X}Y)(iE,0)=(\nabla_{X^\#}Y^\#)_{(iE,0)}.
\end{equation} 
Then we have
\begin{equation}\label{2langletild}
-2\langle \tilde{\nabla}_{X}{Y},Z\rangle=\langle[X,Y],Z\rangle-\langle[Z,X],Y\rangle-\langle X,[Z,Y]\rangle\quad(X,Y,Z\in\mathfrak{b}).
\end{equation}
We see from the above equation that $\tilde{\nabla}_{X}Y=\tilde{\nabla}_{Y}X$ for all $X,Y\in\mathfrak{b}(1)$. 

\begin{lemma}\label{For1krtild}
Let $1\leq k\leq r$, and let $X\in\mathfrak{b}(1)$. Then \begin{equation*}
\tilde{\nabla}_{E_{k}}X=j\mathrm{ad} (A_k)X.
\end{equation*} 
\end{lemma}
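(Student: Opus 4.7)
The plan is to use the Koszul formula (\ref{2langletild}), which characterizes $\tilde{\nabla}_{E_k}X$ as the unique element of $\mathfrak{b}$ whose inner product against an arbitrary $Z\in\mathfrak{b}$ is determined by the right-hand side. Thus it suffices to show that $T:=j\,\mathrm{ad}(A_k)X$ satisfies
\begin{equation*}
-2\langle T,Z\rangle=\langle[E_k,X],Z\rangle-\langle[Z,E_k],X\rangle-\langle E_k,[Z,X]\rangle
\end{equation*}
for every $Z\in\mathfrak{b}$. Since both sides depend linearly on $X\in\mathfrak{b}(1)$ and on $Z\in\mathfrak{b}$, one may work root-space by root-space; the orthogonality of distinct root spaces (Remark \ref{Bydatrithe}(ii)) makes the bulk of the cases vanish automatically.

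First I would translate everything into the linear form $\omega$ via $\langle Y,W\rangle=\omega([jY,W])$. Using $j^2=-\mathrm{id}$ and the identities $jA_k=E_k$ together with $jE_k=A_k$ from $E_k=-jA_k$, the left-hand side becomes $2\,\omega([[A_k,X],Z])$, while the three terms on the right become $\omega([j[E_k,X],Z])$, $-\omega([j[Z,E_k],X])$, and $-\omega([A_k,[Z,X]])$. Applying the Jacobi identity to the last term, $[A_k,[Z,X]]=[[A_k,Z],X]+[Z,[A_k,X]]$, cancels one copy of $\omega([[A_k,X],Z])$ against the desired $2\,\omega([[A_k,X],Z])$ and leaves a three-term identity relating the remaining pieces. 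This is where the $j$-algebra axiom (\ref{XYjjXyjXjY}) enters: setting its first argument equal to $A_k$ (so that $jA_k=-E_k$) gives the bracket identity
\begin{equation*}
[A_k,Y]-j[E_k,Y]+j[A_k,jY]=-[E_k,jY],
\end{equation*}
which, combined with the $j$-symmetry of the inner product (equivalent to $\omega([Y,jW])=-\omega([jY,W])$), converts the remaining two Koszul terms into the required form.

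The main obstacle will be the sign bookkeeping: handling the interplay between $E_k=-jA_k$, the swapping of $\mathfrak{b}_{(\alpha_l-\alpha_k)/2}$ and $\mathfrak{b}_{(\alpha_l+\alpha_k)/2}$ under $j$, and the reshuffling of $\omega\circ[j\cdot,\cdot]$ to $\omega\circ[\cdot,j\cdot]$. A parallel, more pedestrian route — which I would fall back on if the abstract manipulation becomes too opaque — is to decompose $X$ into its $\mathbb{R}E_l$ component and its $\mathfrak{b}_{(\alpha_l+\alpha_m)/2}$ components (Theorem \ref{Forasuitab}) and verify the identity by evaluating both sides on each root space of $Z$. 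In that approach, orthogonality of root spaces and the explicit action $[A_k,\mathfrak{b}_\beta]=\beta(A_k)\mathfrak{b}_\beta$ immediately kills almost every pairing, leaving only a handful of terms such as the sample case $X=Z=E_k$ where the claim reduces to $\tilde{\nabla}_{E_k}E_k=A_k$, verifiable directly from $\langle A_k,A_k\rangle=\langle E_k,E_k\rangle$.
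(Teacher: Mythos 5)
Your proposal is correct, and its primary route is genuinely different from the paper's. The paper proves the lemma by the ``pedestrian'' method you list as a fallback: it fixes an orthogonal basis $(E_{ml}^{\kappa})$ of the root spaces $\mathfrak{b}_{(\alpha_m+\alpha_l)/2}$ normalized by $[jE_{ml}^{\kappa},E_{ml}^{\kappa}]=E_m$, expands $X$ in that basis, evaluates both sides of \eqref{2langletild} against $jE_l$ and $jE_{lm}^{\kappa}$ using root-space orthogonality, and disposes of the remaining components of $\tilde{\nabla}_{E_k}X$ by the grading. Your abstract route does close up, and is worth preferring since it avoids all structure constants: after the Jacobi cancellation, the residual identity to prove is $\omega([[A_k,X],Z])+\omega([[A_k,Z],X])+\omega([j[Z,E_k],X])=0$; substituting $Y=-jZ$ into your bracket identity and applying $j$ gives $j[Z,E_k]=-j[A_k,jZ]-[E_k,jZ]-[A_k,Z]$, whose last summand cancels the middle term; the pair $\omega([[A_k,X],Z])=\langle[A_k,X],jZ\rangle$ and $\omega([j[A_k,jZ],X])=\langle[A_k,jZ],X\rangle$ then cancel because $\mathrm{ad}(A_k)$ is self-adjoint for $\langle\cdot,\cdot\rangle$ (it is scalar on each summand of the orthogonal decomposition in Remark \ref{Bydatrithe}(ii)); and the leftover $\omega([[E_k,jZ],X])$ vanishes by the grading $[\mathfrak{b}(\gamma),\mathfrak{b}(\gamma')]\subset\mathfrak{b}(\gamma+\gamma')$ in Theorem \ref{Forasuitab}, since $[E_k,jZ]$ lands in $\mathfrak{b}(1)\cup\{0\}$ and brackets of $\mathfrak{b}(1)$ with $\mathfrak{b}(1)$ vanish. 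These last two steps are the only points your sketch leaves implicit and should be written out; also fix the typo $jA_k=E_k$ (it must be $jA_k=-E_k$, as you in fact use when deriving the bracket identity). What each approach buys: yours is basis-free and makes clear that only the $j$-algebra axiom, the Koszul formula, and the orthogonal grading are needed; the paper's is longer but entirely explicit and mechanical.
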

\begin{proof}
For $1\leq l< m\leq r$, put $n_{ml}=\dim\mathfrak{b}_{(\alpha_{m}+\alpha_{l})/2}\geq 0$. We take an orthogonal basis $(E_{ml}^{\kappa})_{\kappa=1}^{n_{ml}}$ of $\mathfrak{b}_{(\alpha_m+\alpha_{l})/2}$ such that \begin{equation*}
 [jE_{ml}^{\kappa},E_{ml}^{\kappa}]=E_{m}.
\end{equation*}
For $1\leq l\leq r$, put $C_{l}=\langle E_{l},E_{l}\rangle$. Then for $1\leq l<m\leq r$ and $1\leq \kappa\leq n_{ml}$, we have
\begin{equation*}
\langle E_{ml}^{\kappa},E_{ml}^{\kappa}\rangle=\omega(E_{m})=\omega([jE_{m},E_{m}])=C_{m}.
\end{equation*}
For $X=\sum_{1\leq m'\leq r}x_{m'}E_{m'}+\sum_{1\leq k'<l'\leq r}\sum_{1\leq \lambda\leq n_{l'k'}}x_{l'k'}^{\lambda}E_{l'k'}^{\lambda}$ and $1\leq l\leq r$, we have
\begin{equation*}\begin{split}
2&\langle \tilde{\nabla}_{E_k}X,jE_l\rangle=\langle[jE_l,E_k],X\rangle+\langle E_k,[jE_l,X]\rangle
\\&=\langle \delta_{kl}E_k,X\rangle+\textstyle\left\langle E_k,\frac{1}{2}\sum_{k'=1}^{l-1}\sum_{\lambda=1}^{n_{lk'}} x_{lk'}^{\lambda}E_{lk'}^{\lambda}+x_lE_l+\frac{1}{2}\sum_{l'=l+1}^r\sum_{\lambda=1}^{n_{l'l}} x_{l'l}^{\lambda}E_{l'l}^{\lambda}\right\rangle
\\&=2\delta_{kl}C_kx_l.
\end{split}\end{equation*} 
On the other hand, we have 
\begin{equation*}\begin{split}
\langle&\mathrm{ad}(A_k)X,E_l\rangle \\&=\textstyle\left\langle\frac{1}{2}\sum_{k'=1}^{k-1}\sum_{\lambda=1}^{n_{kk'}} x_{kk'}^{\lambda}E_{kk'}^{\lambda}+x_kE_k+\frac{1}{2}\sum_{l'=k+1}^r\sum_{\lambda=1}^{n_{l'k}} x_{l'k}^{\lambda}E_{l'k}^{\lambda},E_l\right\rangle\\&=\delta_{kl}C_kx_k. 
\end{split}
\end{equation*}
For $1\leq m<l\leq r$ and $1\leq \kappa\leq n_{lm}$, we have
\begin{equation*}\begin{split}
2\langle\tilde{\nabla}_{E_k}X,jE_{lm}^{\kappa}\rangle&=\langle[jE_{lm}^{\kappa},E_k],X\rangle+\langle E_k,[jE_{lm}^{\kappa},X]\rangle
\\&=\langle \delta_{mk}E_{lm}^{\kappa},X\rangle+\langle E_k,[jE_{lm}^{\kappa},x_{lm}^{\kappa}E_{lm}^{\kappa}]\rangle
\\&=\langle \delta_{mk}E_{lm}^{\kappa},X\rangle+\langle E_k,x_{lm}^{\kappa}E_l\rangle
\\&=\delta_{mk}C_lx_{lm}^{\kappa}+\delta_{kl}C_{l}x_{lm}^{\kappa}.
\end{split}\end{equation*}
On the other hand, we have
\begin{equation*}\begin{split}
\langle &\mathrm{ad}(A_k)X,E_{lm}^{\kappa}\rangle\\&=
\textstyle\left\langle\frac{1}{2}\sum_{k'=1}^{k-1}\sum_{\lambda=1}^{n_{kk'}}x_{kk'}^{\lambda}E_{kk'}^{\lambda}+x_kE_k+\frac{1}{2}\sum_{l'=k+1}^r\sum_{\lambda=1}^{n_{l'k}}x_{l'k}^{\lambda}E_{l'k}^{\lambda},E_{lm}^{\kappa}\right\rangle
\\&=\frac{1}{2}(\delta_{mk}C_lx_{lm}^{\kappa}+\delta_{kl}C_lx_{lm}^{\kappa}).
\end{split}\end{equation*}
Therefore we get 
\begin{equation*}
\langle \mathrm{ad}(A_k)X,Y\rangle=2\langle \tilde{\nabla}_{E_k}X,jY\rangle\quad(Y\in\mathfrak{b}(1)). 
\end{equation*}
Moreover, we see from (\ref{2langletild}) that
\begin{equation}
\langle \tilde{\nabla}_{E_k}X,Y\rangle=0\quad(Y\in\mathfrak{b}(1)\oplus\mathfrak{b}(1/2)).
\end{equation} 
The proof is complete.
\end{proof}

\begin{lemma}\label{ForF}
Let $1\leq k\leq r$, and let $\mathcal{A}\in\mathfrak{g}(\Omega)$. Then 
\begin{equation*}
\mathcal{A}E_k\in\sideset{}{^\oplus}\sum_{1\leq m\leq r}\mathfrak{b}_{(\alpha_k+\alpha_m)/2}.
\end{equation*}
\end{lemma}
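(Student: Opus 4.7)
The plan is to equip $\mathfrak{g}(\Omega)$ with the $\mathbb{R}$-gradation induced by the semisimple element $D_k := \mathrm{ad}(A_k)|_{\mathfrak{b}(1)} \in \mathfrak{g}(\Omega)$, and to rule out the two extreme eigenspaces using the regularity of $\Omega$. Since $A_k \in \mathfrak{b}(0)$, the identity $T^\# = \mathcal{X}(\mathrm{ad}(T)|_{\mathfrak{b}(1)_\mathbb{C}}, \mathrm{ad}(T)|_{\mathfrak{b}(1/2)})$ recorded earlier gives $D_k \in \mathfrak{g}(\Omega)$. By Theorem~\ref{Forasuitab}, $D_k$ acts on $\mathfrak{b}(1)$ with spectrum $\{0,1/2,1\}$: it acts as $1$ on $\mathfrak{b}_{\alpha_k}$, as $1/2$ on $V_{1/2} := \bigoplus_{m \neq k}\mathfrak{b}_{(\alpha_k+\alpha_m)/2}$, and as $0$ on the complementary summand $V_0$. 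Hence $\mathrm{ad}(D_k)$ is semisimple on $\mathfrak{gl}(\mathfrak{b}(1))$ with eigenvalues in $\{-1,-1/2,0,1/2,1\}$, and the $\mathrm{ad}(D_k)$-stable subalgebra $\mathfrak{g}(\Omega)$ inherits a gradation $\mathfrak{g}(\Omega) = \bigoplus_s \mathfrak{g}(\Omega)[s]$.

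The key step, which I expect to be the main obstacle because it is where the geometry of $\Omega$ truly enters, is the vanishing $\mathfrak{g}(\Omega)[\pm 1] = 0$. For $\mathcal{A} \in \mathfrak{g}(\Omega)[-1]$, the commutation $[D_k,\mathcal{A}] = -\mathcal{A}$ forces $\mathcal{A}$ to vanish on $V_{1/2}$ and on $V_0$ (their images would lie in absent eigenspaces of $D_k$), while $\mathcal{A}(\mathfrak{b}_{\alpha_k}) \subset V_0$; therefore $\mathcal{A}^2 = 0$ and $e^{t\mathcal{A}}E = E + t\,\mathcal{A}E_k$ for all $t \in \mathbb{R}$. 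Since $e^{t\mathcal{A}}$ preserves $\Omega$, the affine line $E + \mathbb{R}\cdot\mathcal{A}E_k$ lies in $\Omega$; regularity of $\Omega$ (Remark~\ref{Bydatrithe}(iv)) then forces $\mathcal{A}E_k = 0$, whence $\mathcal{A} = 0$. The case $\mathcal{A} \in \mathfrak{g}(\Omega)[1]$ is parallel: one again gets $\mathcal{A}^2 = 0$, but now $\mathcal{A}X$ depends only on the $V_0$-component $X_0$ of $X$, with image $\mathcal{A}X_0 \in \mathfrak{b}_{\alpha_k}$. Applying the line argument to $X \in \Omega$ whose $V_0$-projection ranges over the image of $\Omega$ in $V_0$---an open set, hence a spanning subset of $V_0$---gives $\mathcal{A}|_{V_0} = 0$, and therefore $\mathcal{A} = 0$.

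With $\mathfrak{g}(\Omega) = \mathfrak{g}(\Omega)[-1/2]\oplus\mathfrak{g}(\Omega)[0]\oplus\mathfrak{g}(\Omega)[1/2]$ established, decompose any $\mathcal{A} \in \mathfrak{g}(\Omega)$ as $\mathcal{A} = \mathcal{A}_{-1/2}+\mathcal{A}_0+\mathcal{A}_{1/2}$. Because $E_k$ is a $D_k$-eigenvector of eigenvalue $1$, each $\mathcal{A}_s E_k$ is a $D_k$-eigenvector of eigenvalue $1+s$; hence $\mathcal{A}_{-1/2}E_k \in V_{1/2}$, $\mathcal{A}_0 E_k \in \mathfrak{b}_{\alpha_k}$, and $\mathcal{A}_{1/2}E_k = 0$ (since $3/2$ is not in the spectrum of $D_k$). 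Summing, $\mathcal{A}E_k \in \mathfrak{b}_{\alpha_k}\oplus V_{1/2} = \bigoplus_{1\leq m\leq r}\mathfrak{b}_{(\alpha_k+\alpha_m)/2}$, which is precisely the claim.
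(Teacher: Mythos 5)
Your proof is correct, but it follows a genuinely different route from the paper's. The paper splits $\mathfrak{g}(\Omega)=\mathfrak{g}_E(\Omega)\oplus\{\mathrm{ad}(X);X\in\mathfrak{b}(0)\}$, disposes of the $\mathrm{ad}(\mathfrak{b}(0))$ part via the normal $j$-algebra identity \eqref{XYjjXyjXjY}, and handles the isotropy part $\mathfrak{g}_E(\Omega)$ by Riemannian geometry: $\mathcal{X}(\mathcal{A})$ generates isometries of the Bergman metric on the tube domain $\mathcal{D}(\Omega)$, hence commutes with the Levi-Civita connection, and Lemma \ref{For1krtild} converts this into the symmetry relation $[A_k,\mathcal{A}E_l]=-[A_l,\mathcal{A}E_k]$, from which a root-space bookkeeping argument concludes. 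You instead grade $\mathfrak{g}(\Omega)$ by $\mathrm{ad}(D_k)$ with $D_k=\mathrm{ad}(A_k)|_{\mathfrak{b}(1)}$ and kill the weight-$(\pm1)$ pieces by exponentiating the resulting square-zero operators and invoking the absence of affine lines in the regular cone $\Omega$; the conclusion is then pure eigenvalue bookkeeping (and in fact only $\mathfrak{g}(\Omega)[-1]=0$ is needed, since $\mathcal{A}_{1}E_k$ and $\mathcal{A}_{1/2}E_k$ land in eigenspaces of $D_k$ that are absent anyway). I checked the individual steps — $D_k\in\mathfrak{g}(\Omega)$, semisimplicity of $\mathrm{ad}(D_k)$ on $\mathfrak{gl}(\mathfrak{b}(1))$ with spectrum in $\{0,\pm1/2,\pm1\}$, $\mathcal{A}^2=0$ in the extreme pieces, $e^{t\mathcal{A}}E=E+t\mathcal{A}E_k$, and the spanning of $V_0$ by the open projection of $\Omega$ — and they all hold. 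Your argument buys independence from the metric machinery: it needs neither the Bergman metric, nor the connection computation of Lemma \ref{For1krtild}, nor the decomposition through $\mathfrak{g}_E(\Omega)$, only regularity of the cone and the root decomposition of Theorem \ref{Forasuitab}. The paper's route is less self-contained here but builds infrastructure ($\tilde{\nabla}$, the identity $[A_k,\mathcal{A}E_l]=-[A_l,\mathcal{A}E_k]$, orthogonality of $\mathcal{A}_{E_1}$) that is reused later, e.g.\ in the proof of Lemma \ref{ForZabinma}.
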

\begin{proof}
The connected Lie subgroup of $B$ with the Lie algebra $\mathfrak{b}(0)\oplus\mathfrak{b}(1)$ is an Iwasawa subgroup of $\mathrm{Aut}_{hol}(\mathcal{D}(\Omega))$, and we have 
\begin{equation*}
\mathfrak{g}(\Omega)=\mathfrak{g}_{E}(\Omega)\oplus \{\mathrm{ad}(X);X\in\mathfrak{b}(0)\},
\end{equation*}
where $\mathfrak{g}_E(\Omega)$ is the Lie algebra of the Lie group 
\begin{equation*}
G_E(\Omega)=\{A\in G(\Omega);AE=E\}. 
\end{equation*}
The result for $\mathcal{A}=\mathrm{ad}(X)$ with $X\in\mathfrak{b}(0)$ follows from (\ref{XYjjXyjXjY}). Let $\mathcal{A}\in\mathfrak{g}_E(\Omega)$. By (\ref{XABUAU}), we have $[\mathcal{X}(\mathcal{A}),E_k^\#]=-(\mathcal{A}E_k)^\#$. Let $\langle\langle\cdot,\cdot\rangle\rangle'$ be the Bergman metric on $\mathcal{D}(\Omega)$, and let $\nabla'$ denote the connection on $(\mathcal{D}(\Omega),\langle\langle\cdot,\cdot\rangle\rangle')$. Then we also have the map $\tilde{\nabla'}:(\mathfrak{b}(0)\oplus\mathfrak{b}(1))\times(\mathfrak{b}(0)\oplus\mathfrak{b}(1))\rightarrow\mathfrak{b}(0)\oplus\mathfrak{b}(1)$, which is defined by (\ref{dtexpttild}). Let $1\leq l\leq r$ and $l\neq k$. Since $\mathcal{X}(\mathcal{A})$ generates isometries of $\mathcal{D}(\Omega)$, we have
\begin{equation}\label{mathcalXma}\begin{split}
[\mathcal{X}(\mathcal{A}),\nabla'_{E_l^\#}E_k^\#]&=\nabla'_{[\mathcal{X}(\mathcal{A}),E_l^\#]}E_k^\#+\nabla'_{E_l^\#}[\mathcal{X}(\mathcal{A}),E_k^\#]
\\&=-\nabla'_{(\mathcal{A}E_l)^\#}{E_k^\#}-\nabla'_{E_l^\#}(\mathcal{A}E_k)^\#. 
\end{split}\end{equation}
By Lemma \ref{For1krtild}, we have $(\nabla'_{E_l^\#}E_k^\#)_{iE}=0$. By looking at the value of (\ref{mathcalXma}) at $iE\in\mathcal{D}(\Omega)$, we have
\begin{equation}\label{0tildenabl}
0=\tilde{\nabla'}_{\mathcal{A}E_l}{E_k}+\tilde{\nabla'}_{E_l}\mathcal{A}E_k=\tilde{\nabla'}_{E_k}{\mathcal{A}E_l}+\tilde{\nabla'}_{E_l}\mathcal{A}E_k.
\end{equation}
We remark that the equation (\ref{0tildenabl}) can be seen from \cite{dorfmeister 1979} and \cite{dorfmeister 1982}.
By Lemma \ref{For1krtild}, 
we have 
\begin{equation*}
[A_k,\mathcal{A}E_l]=-[A_l,\mathcal{A}E_k]\in\mathfrak{b}. 
\end{equation*}
Thus $[A_k,\mathcal{A}E_l]$ belongs to both $\sum_{1\leq m\leq r}^\oplus\mathfrak{b}_{(\alpha_k+\alpha_m)/2}$ and $\sum_{1\leq m\leq r}^\oplus\mathfrak{b}_{(\alpha_l+\alpha_m)/2}$, and hence we obtain
\begin{equation}\label{AkmathcalA}
[A_l,\mathcal{A}E_k]\in\mathfrak{b}_{(\alpha_l+\alpha_k)/2}\quad(l\neq k).
\end{equation}
If $\textstyle\mathcal{A}E_k\notin\sum_{1\leq m\leq r}^\oplus\mathfrak{b}_{(\alpha_k+\alpha_m)/2}$, then $\mathcal{A}E_k$ can be written as $\mathcal{A}E_k=X+Y$ with $X\in\mathfrak{b}_{(\alpha_{l'}+\alpha_{k'})/2}\setminus\{0\}$ and $Y\in\sum_{\substack{1\leq k''\leq l''\leq r\\(k'',l'')\neq (k',l')}}^\oplus\mathfrak{b}_{(\alpha_{l''}+\alpha_{k''})/2}$ for some $k'\neq k$ and $l'\neq k$ satisfying $1\leq k'\leq l'\leq r$. Then
$[A_{l'},X+Y]=CX+[A_{l'},Y]$ with $C=1/2$ or $1$. Thus $[A_{l'},\mathcal{A}E_k]\notin \mathfrak{b}_{(\alpha_l+\alpha_k)/2}$, which contradicts (\ref{AkmathcalA}). Hence it follows that $\mathcal{A}E_k\in\sum_{1\leq m\leq r}^\oplus\mathfrak{b}_{(\alpha_k+\alpha_m)/2}$. 
\end{proof}
\begin{lemma}\label{ForV}
Let $V\in\mathfrak{b}(1/2)$, and let $1\leq k\leq r$. If $[\mathfrak{b}(1/2),V]\subset\sum_{1\leq m\leq r}^\oplus\mathfrak{b}_{(\alpha_k+\alpha_{m})/2}$, then $V\in\mathfrak{b}_{\alpha_k/2}$.
\end{lemma}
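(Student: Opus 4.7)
The plan is to decompose $V=\sum_{l=1}^r V_l$ according to the root space decomposition $\mathfrak{b}(1/2)=\bigoplus_{l=1}^r\mathfrak{b}_{\alpha_l/2}$ from Theorem \ref{Forasuitab}, and show by contradiction that $V_l=0$ for every $l\neq k$. The idea is to pick a clever test vector $V'\in\mathfrak{b}(1/2)$ for which the bracket $[V',V]$ has a nonzero component in some root space $\mathfrak{b}_{\alpha_l}$ with $l\neq k$, forcing a violation of the hypothesis.

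Assume $V_l\neq 0$ for some $l\neq k$. Since Theorem \ref{Forasuitab} also gives $j\mathfrak{b}_{\alpha_l/2}=\mathfrak{b}_{\alpha_l/2}$, the element $jV_l$ lies in $\mathfrak{b}(1/2)$, and applying the hypothesis with $V'=jV_l$ yields
\begin{equation*}
[jV_l,V]=\sum_{l''=1}^r[jV_l,V_{l''}]\in\sum_{m=1}^r\mathfrak{b}_{(\alpha_k+\alpha_m)/2}.
\end{equation*}
The $l''=l$ summand lies in $[\mathfrak{b}_{\alpha_l/2},\mathfrak{b}_{\alpha_l/2}]\subset\mathfrak{b}_{\alpha_l}$, while the other summands lie in the distinct root spaces $\mathfrak{b}_{(\alpha_l+\alpha_{l''})/2}$. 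Because the root spaces are in direct sum and $\alpha_1,\ldots,\alpha_r$ are linearly independent (being a dual basis), $\alpha_l\neq(\alpha_k+\alpha_m)/2$ for any $m$ when $l\neq k$. Projecting the above inclusion onto $\mathfrak{b}_{\alpha_l}$ thus forces $[jV_l,V_l]=0$.

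To obtain the contradiction, I would then invoke the $\Omega$-positivity of the sesquilinear map $Q$ defined in Section \ref{Normaljalg}. Since $[V_l,V_l]=0$ trivially, one has
\begin{equation*}
Q(V_l,V_l)=\tfrac{1}{4}\bigl([jV_l,V_l]+i[V_l,V_l]\bigr)=0,
\end{equation*}
which contradicts $Q(V_l,V_l)\in\overline{\Omega}\setminus\{0\}$. Hence $V_l=0$ for all $l\neq k$, and $V\in\mathfrak{b}_{\alpha_k/2}$. I do not expect any serious obstacle: the entire argument reduces to the correct choice of test vector $V'=jV_l$ and a careful bookkeeping of root space indices. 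The substantive input is the $\Omega$-positivity of $Q$, which is what prevents a nontrivial $V_l\in\mathfrak{b}_{\alpha_l/2}$ from bracketing trivially with $jV_l$.
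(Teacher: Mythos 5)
Your proof is correct and takes essentially the same route as the paper: decompose $V$ into root components, bracket with the test vector $jV_l$ for an offending index $l\neq k$, and use the fact that $[jV_l,V_l]\neq 0$ (which the paper asserts directly and you derive from the $\Omega$-positivity of $Q$) to contradict the hypothesis. The only difference is that you make explicit the justification for $[jV_l,V_l]\neq 0$, which the paper leaves implicit.
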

\begin{proof}
Let $V=\sum_{1\leq m\leq r}V_{m}$ with $V_{m}\in\mathfrak{b}_{\alpha_{m}/2}$, and suppose that $V\notin \mathfrak{b}_{\alpha_k/2}$. Then there exists $1\leq m_0\leq r$ such that $m_0\neq k$ and $V_{m_0}\neq 0$. We have
\begin{equation*}
[jV_{m_0},V_{m_0}]\in\mathfrak{b}_{\alpha_{m_0}}\setminus\{0\}.
\end{equation*}
Thus $[jV_{m_0}, V]\notin \sum_{1\leq m\leq r}^\oplus\mathfrak{b}_{(\alpha_k+\alpha_{m})/2}$.
\end{proof}

\begin{lemma}\label{foranyY}
Let $\mathcal{Y}_\Phi\in\mathfrak{X}(1/2)$. Then the followings hold:
\begin{enumerate}
 \item [$(\mathrm{i})$]
$\Phi(E_k)\in\mathfrak{b}_{\alpha_k/2}\quad(1\leq k\leq r)$,
\item[$(\mathrm{ii})$]
$[A_k^\#,[A_l^\#,\mathcal{Y}_\Phi]]=0\quad(1\leq k\leq r, 1\leq l\leq r, k\neq l)$.
\end{enumerate}
\end{lemma}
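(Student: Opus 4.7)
The strategy is to deduce (i) from the structural condition (Y1) via Lemmas \ref{ForF} and \ref{ForV}, and then obtain (ii) by a direct computation using Lemma \ref{ForXABinma} together with (i) and the fact that an element of $\mathfrak{X}(1/2)$ is determined by its $\Phi$-value at $E$.

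For part (i), I would fix $v_0 \in \mathcal{V} = \mathfrak{b}(1/2)$ and invoke (Y1) to see that the linear map $\Phi_{v_0}: u \mapsto \Im Q_0(\Phi(u),v_0)$ lies in $\mathfrak{g}(\Omega)$. Specialising to $u = E_k$ and applying Lemma \ref{ForF} yields
\begin{equation*}
\Phi_{v_0}(E_k) \in \sideset{}{^\oplus}\sum_{1\leq m\leq r} \mathfrak{b}_{(\alpha_k+\alpha_m)/2}.
\end{equation*}
From the defining formula $Q_0(V,V') = \tfrac{1}{4}([jV,V'] + i[V,V'])$ for $V,V'\in\mathfrak{b}(1/2)$, the imaginary part in $\mathcal{U} = \mathfrak{b}(1)$ is $\tfrac{1}{4}[\Phi(E_k),v_0]$. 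Since $v_0\in\mathfrak{b}(1/2)$ is arbitrary, this gives $[\mathfrak{b}(1/2),\Phi(E_k)] \subset \sideset{}{^\oplus}\sum_m \mathfrak{b}_{(\alpha_k+\alpha_m)/2}$, and Lemma \ref{ForV} concludes $\Phi(E_k)\in\mathfrak{b}_{\alpha_k/2}$.

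For part (ii), write $A_l^\# = \mathcal{X}(\mathrm{ad}(A_l)|_{\mathfrak{b}(1)_\mathbb{C}},\mathrm{ad}(A_l)|_{\mathfrak{b}(1/2)})$ and apply Lemma \ref{ForXABinma}(i) to obtain $[A_l^\#,\mathcal{Y}_\Phi] = \mathcal{Y}_{\Psi,c'}$ with $\Psi(u) = \Phi([A_l,u]) - [A_l,\Phi(u)]$ for a suitable $c'$. Evaluating at $u=E_m$ uses $[A_l,E_m] = \delta_{lm}E_m$, while by (i) we have $\Phi(E_m)\in\mathfrak{b}_{\alpha_m/2}$, so $[A_l,\Phi(E_m)] = \tfrac{1}{2}\delta_{lm}\Phi(E_m)$; hence $\Psi(E_m) = \tfrac{1}{2}\delta_{lm}\Phi(E_m)$. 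Repeating the computation once more with $A_k$ and using the same bracket relation (noting $\Psi(E_l) = \tfrac{1}{2}\Phi(E_l)\in\mathfrak{b}_{\alpha_l/2}$), every surviving term carries a factor $\delta_{km}\delta_{lm}$ or $\delta_{kl}$, all of which vanish when $k\neq l$. The resulting vector field $\mathcal{Y}_{\Psi',c''} = [A_k^\#,[A_l^\#,\mathcal{Y}_\Phi]]$ therefore satisfies $\Psi'(E_m)=0$ for every $m$, whence $\Psi'(E)=0$ for $E = E_1+\cdots+E_r$. By the remark following Proposition \ref{Everyeleme}, an element of $\mathfrak{X}(1/2)$ is uniquely determined by its value at $e=E$, so $[A_k^\#,[A_l^\#,\mathcal{Y}_\Phi]]=0$.

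The only real obstacle is bookkeeping: verifying the sign conventions of Lemma \ref{ForXABinma} and invoking the injectivity $\mathcal{Y}_{\Phi,c}\mapsto \Phi(e)$ to pass from vanishing on $E$ to vanishing of the whole vector field. Once (i) is in hand, step (ii) is a mechanical two-fold application of the same bracket formula.
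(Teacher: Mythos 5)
Your proposal is correct and follows essentially the same route as the paper: part (i) is obtained exactly as in the paper from (Y1) together with Lemmas \ref{ForF} and \ref{ForV}, and part (ii) is a two-fold application of Lemma \ref{ForXABinma}(i) combined with (i) and the fact that an element of $\mathfrak{X}(1/2)$ is determined by $\Phi(E)$ (injectivity of $\psi_E$). The only cosmetic difference is that the paper evaluates the double bracket directly at $E$ and reduces (ii) to $[A_l,\Phi(E_k)]=0$, whereas you track the Kronecker deltas at each $E_m$ separately; the content is identical.
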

\begin{proof}
Let $1\leq k\leq r$, $1\leq l\leq r$, and $k\neq l$. We define $\mathbb{C}$-linear maps $\Phi^{l},\Phi^{lk}:\mathfrak{b}(1)_\mathbb{C}\rightarrow\mathfrak{b}(1/2)$ by \begin{equation*}
\mathcal{Y}_{\Phi^{l}}=[\mathcal{Y}_\Phi,A_l^\#],\quad\mathcal{Y}_{\Phi^{lk}}=[\mathcal{Y}_{\Phi^{l}},A_k^\#]. 
\end{equation*}
By Lemma \ref{ForXABinma} (i), we have
\begin{equation*}\begin{split}
&\Phi^{lk}(E)
\\&=-\Phi^{l}([A_k,E])+[A_k ,\Phi^{l}(E)]\\&=-(-\Phi([A_l,[A_k,E]])+[A_l,\Phi([A_k,E])])+[A_k,-\Phi([A_l,E])+[A_l,\Phi(E)]]
\\&=-[A_l,\Phi([A_k,E])]-[A_k,\Phi([A_l,E])]=-[A_l,\Phi(E_k)]-[A_k,\Phi(E_l)].
\end{split}\end{equation*}
Thus to prove (ii), it is enough to show that
\begin{equation}\label{AlPhiEk0}
[A_l,\Phi(E_k)]=0.
\end{equation}
For any $V\in\mathfrak{b}(1/2)$, we have \begin{equation*}
[V,\Phi(E_k)]=-4\Im Q(\Phi(E_k),V)=-4\Phi_V(E_k).
\end{equation*}
Since $\Phi_V\in\mathfrak{g}(\Omega)$, Lemma \ref{ForF} implies that
\begin{equation*}
[V,\Phi(E_k)]\in\sideset{}{^\oplus}\sum_{1\leq m\leq r}\mathfrak{b}_{(\alpha_k+\alpha_m)/2}\quad(V\in\mathfrak{b}(1/2)).
\end{equation*}
Thus Lemma \ref{ForV} shows that $\Phi(E_k)\in\mathfrak{b}_{\alpha_k/2}$. Hence (\ref{AlPhiEk0}) holds, and the proof is complete.
\end{proof}

Put 
\begin{equation*}
\mathfrak{f}=\mathfrak{g}^\#,\quad \mathfrak{f}(\gamma)=\{X\in\mathfrak{f}:[\partial,X]=\gamma X\}\quad(\gamma\in\mathbb{R}). 
\end{equation*}
\begin{lemma}\label{mathfrakzm}
The center $\mathfrak{z}(\mathfrak{f})$ of $\mathfrak{f}$ is trivial.
\end{lemma}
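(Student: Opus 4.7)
The plan is to exhibit any central element as living in the degree-zero piece $\mathfrak{f}(0)$ and then eliminate it by bracketing against the negative graded pieces, both of which are already contained in $\mathfrak{f}$.

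First I would set up a grading on $\mathfrak{f}$ itself. Since $B\subset G$ we have $\mathfrak{b}^\#\subset\mathfrak{f}$, which supplies the three crucial facts $\partial=(jE)^\#\in\mathfrak{f}$, $\mathfrak{X}(-1)=\mathfrak{b}(1)^\#\subset\mathfrak{f}$, and $\mathfrak{X}(-1/2)=\mathfrak{b}(1/2)^\#\subset\mathfrak{f}$, as recorded in Section \ref{Algebraicp}. Because $\mathrm{ad}(\partial)$ is diagonalizable on $\mathfrak{X}$ with eigenspaces $\mathfrak{X}(\gamma)$, and $\mathfrak{f}$ is stable under $\mathrm{ad}(\partial)$, the eigenspace decomposition restricts to a grading $\mathfrak{f}=\bigoplus_{\gamma}\mathfrak{f}(\gamma)$ with $\mathfrak{f}(\gamma)=\mathfrak{f}\cap\mathfrak{X}(\gamma)$.

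Next, given $X\in\mathfrak{z}(\mathfrak{f})$, the condition $[\partial,X]=0$ combined with this decomposition forces $X\in\mathfrak{f}(0)\subset\mathfrak{X}(0)$. By Theorem \ref{kaupth} I may then write $X=\mathcal{X}(\mathcal{A},\mathcal{B})$ with $\mathcal{A}\in\mathfrak{g}(\Omega)$ and $\mathcal{B}\in\mathfrak{gl}(\mathcal{V})$ associated with $\mathcal{A}$. Bracketing $X$ with an arbitrary $\partial_u\in\mathfrak{X}(-1)\subset\mathfrak{f}$ and applying (\ref{XABUAU}) yields $\partial_{\mathcal{A}u}=0$ for every $u\in\mathcal{U}=\mathfrak{b}(1)$, hence $\mathcal{A}=0$; similarly, bracketing with $\tilde{\partial}_v\in\mathfrak{X}(-1/2)\subset\mathfrak{f}$ and applying (\ref{XABVBV}) yields $\mathcal{B}v=0$ for every $v\in\mathcal{V}=\mathfrak{b}(1/2)$, hence $\mathcal{B}=0$. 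Therefore $X=\mathcal{X}(0,0)=0$, and the center is trivial.

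I do not anticipate any real obstacle. The only substantive point is that both $\mathfrak{X}(-1)$ and $\mathfrak{X}(-1/2)$ lie inside $\mathfrak{f}$, which is immediate from $B\subset G$ together with the identification of $\mathfrak{b}(1)$ and $\mathfrak{b}(1/2)$ with the translation and fiber-translation vector fields on the Siegel domain recorded at the start of Section \ref{Algebraicp}; once that is in hand, the argument is just a two-line application of the bracket formulas (\ref{XABUAU}) and (\ref{XABVBV}).
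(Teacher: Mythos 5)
Your argument is correct and is essentially the paper's own proof: both place a central element in $\mathfrak{f}(0)$ via $[\partial,X]=0$, write it as $\mathcal{X}(\mathcal{A},\mathcal{B})$, and kill $\mathcal{A}$ and $\mathcal{B}$ by bracketing against $\mathfrak{X}(-1)=\mathfrak{b}(1)^\#$ and $\mathfrak{X}(-1/2)=\mathfrak{b}(1/2)^\#$ using \eqref{XABUAU} and \eqref{XABVBV}. The extra care you take in justifying the grading of $\mathfrak{f}$ and the inclusion of the negative pieces is sound but not a different route.
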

\begin{proof}
Let $X\in\mathfrak{z}(\mathfrak{f})$. Then we have $X\in\mathfrak{f}(0)$ since $[\partial, X]=0$. Put $X=\mathcal{X}(\mathcal{A},\mathcal{B})\in\mathfrak{f}(0)$. By (\ref{XABUAU}), for any $U\in\mathfrak{b}(1)$, we have
\begin{equation*}
0=[\mathcal{X}(\mathcal{A},\mathcal{B}),\partial_U]=-\partial_{\mathcal{A}U}.
\end{equation*}
Thus $\mathcal{A}=0$, and also $\mathcal{B}=0$ by (\ref{XABVBV}). Now we see that 
\begin{equation*}
X=\mathcal{X}(\mathcal{A},\mathcal{B})=0.
\end{equation*} 
\end{proof}

We shall extend the result of Proposition \ref{Whenr1fora} to the case of bounded homogeneous domains of arbitrary ranks by induction (see Proposition \ref{forasubalg}). From now on, we assume that $r\geq 2$. We define subalgebras $\check{\mathfrak{b}}\subset \mathfrak{b}$ and $\check{\mathfrak{f}}\subset \mathfrak{f}$ by 
\begin{equation*}\begin{split}
\check{\mathfrak{b}}=
\sideset{}{^\oplus}\sum_{2\leq k\leq r}\langle A_k\rangle&\oplus
\sideset{}{^\oplus}\sum_{2\leq k<l\leq r}\mathfrak{b}_{(\alpha_l-\alpha_k)/2}\oplus
\sideset{}{^\oplus}\sum_{2\leq k\leq r}\mathfrak{b}_{\alpha_k/2}\\&\oplus
\sideset{}{^\oplus}\sum_{2\leq k\leq r}\mathfrak{b}_{\alpha_k}\oplus
\sideset{}{^\oplus}\sum_{2\leq k<l\leq r}\mathfrak{b}_{(\alpha_l+\alpha_k)/2}
\end{split}\end{equation*}
and 
\begin{equation*}
\check{\mathfrak{f}}=\{X\in\mathfrak{f};[X,A_\one^\#]=[X,E_\one^\#]=0\}.
\end{equation*}
Put \begin{equation*}
\check{\mathfrak{f}}(\gamma)=\check{\mathfrak{f}}\cap\mathfrak{f}(\gamma)\quad(\gamma\in\mathbb{R}),
\end{equation*} 
\begin{equation*}
\check{\mathfrak{b}}(\gamma)=\check{\mathfrak{b}}\cap\mathfrak{b}(\gamma)\quad(\gamma=0,1/2,1).
\end{equation*} 
Then $\check{\mathfrak{b}}$ is a normal $j$-algebra of rank $r-1$.
Define
\begin{equation*}
\check{\Omega}=\exp (\check{\mathfrak{b}}(0))(E_2+\cdots +E_r),
\end{equation*}
\begin{equation*}
\mathcal{D}(\check{\Omega},\check{Q})=\{(U,V)\in\check{\mathfrak{b}}(1)_\mathbb{C}\oplus\check{\mathfrak{b}}(1/2);\Im U-Q(V,V)\in\check{\Omega}\}.
\end{equation*}
According to Lemma \ref{Omega1E1Om} below, the following inclusion holds:
\begin{equation*}
iE_\one+\mathcal{D}(\check{\Omega},\check{Q})\subset\mathcal{D}(\Omega,Q). 
\end{equation*}

\begin{lemma}\label{Omega1E1Om}
$\check{\Omega}+E_\one=\Omega\cap(\check{\mathfrak{b}}(1)+E_\one)$.
\end{lemma}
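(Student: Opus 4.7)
The plan is to prove the two inclusions separately. For the forward inclusion $\check{\Omega}+E_\one\subset\Omega\cap(\check{\mathfrak{b}}(1)+E_\one)$, I would first verify that $[\check{\mathfrak{b}}(0),E_\one]=0$. This reduces to the observations $[A_k,E_\one]=0$ for $k\ge 2$ (which is immediate from $[A_k,E_l]=\delta_{kl}E_l$) and $[\mathfrak{b}_{(\alpha_l-\alpha_k)/2},E_\one]=0$ for $2\le k<l\le r$, the latter because the weight $\alpha_\one+(\alpha_l-\alpha_k)/2$ fails to be a root of $\mathfrak{b}$. Hence for every $t'\in\exp(\check{\mathfrak{b}}(0))$,
\begin{equation*}
\mathrm{Ad}(t')E=E_\one+\mathrm{Ad}(t')(E_2+\cdots+E_r)\in (E_\one+\check{\Omega})\cap\Omega,
\end{equation*}
and since $\check{\Omega}\subset\check{\mathfrak{b}}(1)$, this yields the inclusion.

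For the reverse inclusion, the plan is to exploit the decomposition $\mathfrak{b}(0)=\check{\mathfrak{b}}(0)\ltimes N_\one$, where
\begin{equation*}
N_\one:=\mathbb{R}A_\one\oplus \bigoplus_{2\le l\le r}\mathfrak{b}_{(\alpha_l-\alpha_\one)/2}.
\end{equation*}
A direct root-sum check (of the sort carried out in the proof of Lemma \ref{ForF}) shows that $N_\one$ is an ideal of $\mathfrak{b}(0)$. Invoking Lemma \ref{LetG0beaco} twice, I obtain a global product decomposition $B(0)=\exp(\check{\mathfrak{b}}(0))\cdot\exp(\mathbb{R}A_\one)\cdot\exp(V)$ with $V=\bigoplus_{l\ge 2}\mathfrak{b}_{(\alpha_l-\alpha_\one)/2}$, so any $b_0\in B(0)$ with $\mathrm{Ad}(b_0)E\in E_\one+\check{\mathfrak{b}}(1)$ factors as $b_0=t'\exp(sA_\one)\exp(Y)$ for unique $t'\in\exp(\check{\mathfrak{b}}(0))$, $s\in\mathbb{R}$ and $Y\in V$.

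Next I analyze the action on $\mathfrak{b}(1)$ using the Peirce-type splitting $\mathfrak{b}(1)=\mathbb{R}E_\one\oplus V_{1/2}\oplus V_0$, with $V_{1/2}=\bigoplus_{l\ge 2}\mathfrak{b}_{(\alpha_l+\alpha_\one)/2}$ and $V_0=\check{\mathfrak{b}}(1)$. Three short weight computations give $\mathrm{ad}_Y(\mathbb{R}E_\one)\subset V_{1/2}$, $\mathrm{ad}_Y(V_{1/2})\subset V_0$ and $\mathrm{ad}_Y(V_0)=0$, and show that $\mathrm{Ad}(t')$ preserves each summand and fixes $E_\one$. Since $\mathrm{ad}_Y^3=0$, expanding and collecting terms yields
\begin{equation*}
\mathrm{Ad}(b_0)E=\mathrm{Ad}(t')\bigl(e^sE_\one+e^{s/2}[Y,E_\one]+\tfrac{1}{2}[Y,[Y,E_\one]]+(E_2+\cdots+E_r)\bigr),
\end{equation*}
with summands in $\mathbb{R}E_\one$, $V_{1/2}$, $V_0$ and $V_0$ respectively. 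The hypothesis $\mathrm{Ad}(b_0)E\in E_\one+V_0$ therefore forces $e^s=1$ and $[Y,E_\one]=0$. Applying the normal $j$-algebra axiom \eqref{XYjjXyjXjY} to the pair $(Y,E_\one)$, together with $[jY,E_\one]=0$ (since both $jY$ and $E_\one$ lie in the abelian piece $\mathfrak{b}(1)$) and the easy computations $[Y,jE_\one]=\tfrac{1}{2}Y$, $[jY,jE_\one]=-\tfrac{1}{2}jY$, produces the identity $[Y,E_\one]=-jY$. Thus $Y=0$, and consequently $\mathrm{Ad}(b_0)E=E_\one+\mathrm{Ad}(t')(E_2+\cdots+E_r)\in E_\one+\check{\Omega}$.

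The main obstacle will be the careful enumeration of root sums needed to establish the ideal property of $N_\one$, the three Peirce-preservation statements for $\mathrm{ad}_Y$, and the key identity $[Y,E_\one]=-jY$; all of these are routine but must be assembled together. Conceptually the lemma is the normal-$j$-algebra analogue of the elementary linear-algebra fact that a block matrix of the form $\mathrm{diag}(1,X)$ lies in $\mathcal{P}_r$ if and only if $X$ lies in $\mathcal{P}_{r-1}$.
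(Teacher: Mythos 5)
Your argument is correct, but it is a genuinely different route from the paper's: the paper disposes of this lemma in one line by citing Proposition 2.5 of the reference \cite{ishi 2000} on homogeneous cones, whereas you reconstruct the statement from scratch inside the normal $j$-algebra formalism. Your two key ingredients both check out. For the forward inclusion, $[\check{\mathfrak{b}}(0),E_\one]=0$ holds because $\alpha_\one+(\alpha_l-\alpha_k)/2$ with $2\le k<l$ is never a root. For the reverse inclusion, the factorization $B(0)=\exp(\check{\mathfrak{b}}(0))\exp(\mathbb{R}A_\one)\exp(V)$ is legitimate via Lemma \ref{LetG0beaco} (with $V$ abelian, since $(\alpha_l+\alpha_m)/2-\alpha_\one$ is never a root), the grading of $\mathrm{Ad}(b_0)E$ by the $\alpha_\one$-coefficient of the weights is preserved by $\mathrm{Ad}(t')$, and the identity $[Y,E_\one]=-jY$ follows correctly from \eqref{XYjjXyjXjY} together with $[jY,E_\one]=0$, $[Y,jE_\one]=[Y,A_\one]=\tfrac12 Y$ and $[jY,A_\one]=-\tfrac12 jY$; this is what upgrades the vanishing of the $V_{1/2}$-component to $Y=0$. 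What your approach buys is self-containedness: everything is expressed in the root data of Theorem \ref{Forasuitab} already set up in Section \ref{Normaljalg}, at the cost of a page of routine weight bookkeeping. What the paper's citation buys is brevity, and it places the lemma in its natural context: the cited proposition is a general statement about the block (Peirce-type) decomposition of a homogeneous cone, of which your computation is the special case of splitting off the first diagonal block --- exactly the $\mathrm{diag}(1,X)$ heuristic you mention at the end.
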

\begin{proof}
It follows from \cite[Proposition 2.5]{ishi 2000}.
\end{proof}

\begin{lemma}\label{ForY12andZ}
\begin{enumerate}
\item[$\textup{(i)}$] Let $\mathcal{Y}_\Phi\in\check{\mathfrak{X}}(1/2)$. Then 
\begin{equation*}
\mathcal{Y}_\Phi(iE_\one+U,V)\in\check{\mathfrak{b}}(1)_\mathbb{C}\oplus\check{\mathfrak{b}}(1/2)\quad(U\in\check{\mathfrak{b}}(1)_\mathbb{C}, V\in\check{\mathfrak{b}}(1/2)).
\end{equation*}
\item[$\textup{(ii)}$] 
An element $\mathcal{Y}_\Phi$ of ${\mathfrak{X}}(1/2)$ belongs to $\check{\mathfrak{X}}(1/2)$ if and only if $[\mathcal{Y}_\Phi,A_\one^\#]=0$.
\end{enumerate}
\end{lemma}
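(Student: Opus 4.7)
The plan is to exploit the explicit description of elements of $\mathfrak{X}(1/2)$ given by Proposition \ref{Everyeleme}, applied both to the ambient Siegel domain $\mathcal{D}(\Omega,Q)$ and to the smaller one $\mathcal{D}(\check{\Omega},\check{Q})$, and to combine this with the root-space analysis already carried out in Lemma \ref{foranyY} and Lemma \ref{ForXABinma}.

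For (i), an element $\mathcal{Y}_{\Phi,c} \in \check{\mathfrak{X}}(1/2)$ corresponds via Proposition \ref{Everyeleme} applied to $\mathcal{D}(\check{\Omega},\check{Q})$ to a pair where $\Phi$ is $\mathbb{C}$-linear from $\check{\mathfrak{b}}(1)_{\mathbb{C}}$ into $\check{\mathfrak{b}}(1/2)$ and $c$ is symmetric $\mathbb{C}$-bilinear on $\check{\mathfrak{b}}(1/2)$ with values in $\check{\mathfrak{b}}(1/2)$. Substituting $u = iE_\one + U$ and $v = V$ into the defining formula \eqref{mathcalYPh}, the $E_\one$-direction does not appear in the domain of $\Phi$, so the second component collapses to $\Phi(U) + c(V,V) \in \check{\mathfrak{b}}(1/2)$. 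The first component is $2iQ(V, \Phi(\bar U))$, and since both $V$ and $\Phi(\bar U)$ lie in $\check{\mathfrak{b}}(1/2)$, the required inclusion follows from the definition $Q(v,v') = \tfrac{1}{4}([jv,v'] + i[v,v'])$ together with the root-space bracket relations $[\mathfrak{b}_{\alpha_k/2},\mathfrak{b}_{\alpha_l/2}] \subset \mathfrak{b}_{(\alpha_k+\alpha_l)/2}$ and the $j$-invariance $j\mathfrak{b}_{\alpha_k/2} = \mathfrak{b}_{\alpha_k/2}$ furnished by Theorem \ref{Forasuitab}, all of which preserve the $\check{}$-part.

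For (ii), I would compute $[\mathcal{Y}_\Phi, A_\one^\#]$ via Lemma \ref{ForXABinma} with $\mathcal{A} = \mathrm{ad}(A_\one)|_{\mathfrak{b}(1)_{\mathbb{C}}}$ and $\mathcal{B} = \mathrm{ad}(A_\one)|_{\mathfrak{b}(1/2)}$. For the ($\Rightarrow$) direction, $\mathrm{ad}(A_\one)$ annihilates $\check{\mathfrak{b}}(1)_{\mathbb{C}}$ and $\check{\mathfrak{b}}(1/2)$, and the hypothesis $\mathcal{Y}_\Phi \in \check{\mathfrak{X}}(1/2)$ forces $\Phi(E_\one) = 0$ together with $\Phi(\check{\mathfrak{b}}(1)_{\mathbb{C}}) \subset \check{\mathfrak{b}}(1/2)$ and $c$ valued in $\check{\mathfrak{b}}(1/2)$, so both $\Phi'$ and $c'$ of Lemma \ref{ForXABinma} vanish. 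For the ($\Leftarrow$) direction, assume $[\mathcal{Y}_\Phi, A_\one^\#] = 0$. By Lemma \ref{foranyY}(i), $\Phi(E_\one) \in \mathfrak{b}_{\alpha_\one/2}$, which is an $\mathrm{ad}(A_\one)$-eigenspace with eigenvalue $1/2$; setting $u = E_\one$ in Lemma \ref{ForXABinma}(i) then gives $0 = \Phi'(E_\one) = -\Phi(E_\one) + \tfrac{1}{2}\Phi(E_\one) = -\tfrac{1}{2}\Phi(E_\one)$, so $\Phi(E_\one) = 0$. Running the same calculation on the remaining root-space summands of $\mathfrak{b}(1)_{\mathbb{C}}$ (in particular on $\mathfrak{b}_{(\alpha_l+\alpha_\one)/2}$, where $\mathcal{A}$ acts by $1/2$) and using Lemma \ref{ForXABinma}(ii) on $c$ forces $\Phi$ and $c$ to be supported and valued in the $\check{}$-subspaces.

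The principal obstacle lies in this ($\Leftarrow$) direction: extracting the full conclusion $\mathcal{Y}_\Phi \in \check{\mathfrak{X}}(1/2)$ from the single equation $[\mathcal{Y}_\Phi, A_\one^\#] = 0$ requires careful book-keeping of how the eigenvalues of $\mathrm{ad}(A_\one)$ distribute across the root-space decompositions of $\mathfrak{b}(1)$ and $\mathfrak{b}(1/2)$, and then invoking the uniqueness part of Proposition \ref{Everyeleme} (together with the fact, already recorded after that proposition, that $\mathcal{Y}_{\Phi,c}$ is determined by $\Phi(e)$) to verify that the restricted data indeed define an element of $\mathfrak{X}(\mathcal{D}(\check{\Omega},\check{Q}))(1/2)$.
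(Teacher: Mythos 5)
There is a genuine gap, and it is a circularity in part (i). An element of $\check{\mathfrak{X}}(1/2)$ is a vector field on the \emph{ambient} domain $\mathcal{D}(\Omega,Q)$ commuting with $A_\one^\#$ and $E_\one^\#$; its data $(\Phi,c)$ from Proposition \ref{Everyeleme} are a priori a map $\Phi:\mathfrak{b}(1)_\mathbb{C}\rightarrow\mathfrak{b}(1/2)$ on the \emph{full} spaces and a bilinear map $c$ valued in the full $\mathfrak{b}(1/2)$. You open (i) by declaring that $\Phi$ is defined on $\check{\mathfrak{b}}(1)_\mathbb{C}$ with values in $\check{\mathfrak{b}}(1/2)$ and $c$ is valued in $\check{\mathfrak{b}}(1/2)$, "via Proposition \ref{Everyeleme} applied to $\mathcal{D}(\check{\Omega},\check{Q})$" --- but the identification of $\check{\mathfrak{X}}(1/2)$ with vector fields on the smaller domain is exactly what Lemma \ref{thefollowing} establishes \emph{afterwards}, using (i). These containments are the substance of (i) and must be derived. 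The paper derives them from the single hypothesis $[\mathcal{Y}_\Phi,A_\one^\#]=0$: by Lemma \ref{ForXABinma}, this says $\Phi([A_\one,U])=[A_\one,\Phi(U)]$ and $[A_\one,c(V,V)]=2c([A_\one,V],V)$, and since $\mathrm{ad}(A_\one)$ has eigenvalues in $\{0,1/2\}$ on $\mathfrak{b}(1/2)$, applying the first identity to $U=E$ yields $\Phi(E_\one)=[A_\one,\Phi(E_\one)]$, forcing $\Phi(E_\one)=0$, while the $0$-eigenspaces are preserved, giving $\Phi(\check{\mathfrak{b}}(1)_\mathbb{C})\subset\check{\mathfrak{b}}(1/2)$ and $c(V,V)\in\check{\mathfrak{b}}(1/2)$ for $V\in\check{\mathfrak{b}}(1/2)$. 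You do gesture at precisely this eigenvalue bookkeeping, but only inside your discussion of (ii), where it is not what is needed.

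In (ii) the forward direction is immediate from the definition of $\check{\mathfrak{X}}$ (as the commutant of $A_\one^\#$ and $E_\one^\#$, in parallel with $\check{\mathfrak{f}}$), and the only nontrivial content is that $[\mathcal{Y}_\Phi,A_\one^\#]=0$ already implies $[\mathcal{Y}_\Phi,E_\one^\#]=0$. Your derivation of $\Phi(E_\one)=0$ via Lemma \ref{foranyY}(i) and Lemma \ref{ForXABinma}(i) is correct (and a legitimate alternative to the paper's eigenvalue-$1$ argument), but you never take the final step $[E_\one^\#,\mathcal{Y}_\Phi]=\tilde{\partial}_{\Phi(E_\one)}=0$ from \eqref{partialUYP}, which is the conclusion the lemma actually asserts; instead you pursue the check-support of $\Phi$ and $c$, which belongs to (i). The ingredients of a correct proof are all present in your write-up, but they are distributed between the two parts in a way that leaves (i) resting on an unproved identification and (ii) without its punchline.
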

\begin{proof}
(i) Let $\mathcal{Y}_\Phi\in\check{\mathfrak{X}}(1/2)$. By Lemma \ref{ForXABinma} (i), we have \begin{equation*}
\Phi([A_\one,U])=[A_\one,\Phi(U)]\quad(U\in\mathfrak{b}(1)_\mathbb{C}). 
\end{equation*}
Thus
\begin{equation*}
\Phi([A_\one,E])=\Phi([A_\one,[A_\one,E]])=[A_\one,\Phi([A_\one,E])],
\end{equation*}
which implies 
\begin{equation}\label{PhiE1PhiA1}
 \Phi(E_\one)=\Phi([A_\one,E])=0. 
\end{equation} 
Let $U\in\check{\mathfrak{b}}(1)_\mathbb{C}$, and let $V\in\check{\mathfrak{b}}(1/2)$. Lemma \ref{ForXABinma} (i) shows that $[A_\one,\Phi(U)]=0$, and hence
\begin{equation}\label{PhiZinmath}
\Phi(U)\in\check{\mathfrak{b}}(1/2).
\end{equation}
By (\ref{PhiE1PhiA1}) and (\ref{PhiZinmath}), we see that
\begin{equation}\label{2itildeQPh}
2iQ(V,\Phi(-iE_\one+\overline{U}))=2iQ(V,\Phi(\overline{U}))\in\check{\mathfrak{b}}(1)_\mathbb{C}.
\end{equation}
On the other hand, from Lemma \ref{ForXABinma} (ii), it follows that
$[A_\one,c(V,V)]=0$. Thus $c(V,V)\in\check{\mathfrak{b}}(1/2)$. 
We see from from $(\ref{PhiZinmath})$ and (\ref{2itildeQPh}) that
\begin{equation*}
 \Phi(iE_\one+U)+c(V,V)\in\check{\mathfrak{b}}(1/2),
\end{equation*}
which proves (i).

(ii) Let $\mathcal{Y}_\Phi\in\mathfrak{X}(1/2)$, and suppose that $[\mathcal{Y}_\Phi,A_\one^\#]=0$. Then we see from (\ref{partialUYP}) and (\ref{PhiE1PhiA1}) that
\begin{equation*}
[E_\one^\#,\mathcal{Y}_\Phi]=(\Phi(E_\one))^\#=0.
\end{equation*}
This proves (ii).
\end{proof}

To simplify some of the notation, we abbreviate $\psi_E:\mathfrak{X}(1/2)\rightarrow \mathfrak{X}(-1/2)$ and $\varphi_E:\mathfrak{X}(1)\rightarrow \mathfrak{X}(-1)$ as $\psi$ and $\varphi$, respectively. 
\begin{lemma}\label{thefollowing}
Consider $\mathcal{D}(\check{\Omega},\check{Q})$ as a complex submanifold of $\mathcal{D}(\Omega,Q)$. Then for $\mathcal{Y}_\Phi\in\check{\mathfrak{X}}(1/2)$, we have $\mathcal{Y}_\Phi|_{iE_\one+\check{\mathfrak{b}}(1)_\mathbb{C}\oplus \check{\mathfrak{b}}(1/2)}\in \mathfrak{X}(\mathcal{D}(\check{\Omega},\check{Q}))(1/2)$, and the map $\check{\mathfrak{X}}(1/2)\ni \mathcal{Y}_\Phi\mapsto \mathcal{Y}_\Phi|_{iE_\one+\check{\mathfrak{b}}(1)_\mathbb{C}\oplus \check{\mathfrak{b}}(1/2)}\in\mathfrak{X}(\mathcal{D}(\check{\Omega},\check{Q}))(1/2)$ is injective.
\end{lemma}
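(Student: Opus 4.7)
The plan is to verify, for an arbitrary $\mathcal{Y}_\Phi \in \check{\mathfrak{X}}(1/2)$, three things in turn: that the restriction to $iE_\one + \mathcal{D}(\check{\Omega},\check{Q})$ is a well-defined complete holomorphic vector field on the submanifold (identified with $\mathcal{D}(\check{\Omega},\check{Q})$ via the translation $(U,V)\mapsto(iE_\one+U,V)$); that this restriction has degree $1/2$ with respect to the intrinsic grading operator $\check{\partial}$ of $\mathcal{D}(\check{\Omega},\check{Q})$; and that the restriction map is injective.

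For the first point, Lemma \ref{ForY12andZ}(i) already provides that $\mathcal{Y}_\Phi(iE_\one+U,V)\in\check{\mathfrak{b}}(1)_\mathbb{C}\oplus\check{\mathfrak{b}}(1/2)$ for all $U\in\check{\mathfrak{b}}(1)_\mathbb{C}$ and $V\in\check{\mathfrak{b}}(1/2)$, i.e.\ $\mathcal{Y}_\Phi$ is tangent to the submanifold. Since $\mathcal{Y}_\Phi$ is complete on $\mathcal{D}(\Omega,Q)$, its global flow preserves the submanifold, and the restriction is therefore a complete holomorphic vector field on $\mathcal{D}(\check{\Omega},\check{Q})$.

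For the grading I would introduce the auxiliary holomorphic vector field $\partial-\partial_{iE_\one}$ on $\mathcal{D}(\Omega,Q)$. Extending (\ref{partialUYP}) $\mathbb{C}$-linearly and using $\Phi(E_\one)=0$ (from (\ref{PhiE1PhiA1})),
\begin{equation*}
[\partial_{iE_\one},\mathcal{Y}_\Phi]=i[\partial_{E_\one},\mathcal{Y}_\Phi]=i\tilde{\partial}_{\Phi(E_\one)}=0,
\end{equation*}
so $[\partial-\partial_{iE_\one},\mathcal{Y}_\Phi]=\tfrac{1}{2}\mathcal{Y}_\Phi$. A direct computation from $\partial(u,v)=(u,v/2)$ gives $(\partial-\partial_{iE_\one})(iE_\one+U,V)=(U,V/2)$, which is exactly the value of $\check{\partial}$ at $(U,V)$. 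Thus $\partial-\partial_{iE_\one}$ restricts along the submanifold to the pushforward of $\check{\partial}$; since both vector fields in the bracket are tangent to the submanifold, the identity descends to $[\check{\partial},\mathcal{Y}_\Phi|]=\tfrac{1}{2}\mathcal{Y}_\Phi|$, so $\mathcal{Y}_\Phi|\in\mathfrak{X}(\mathcal{D}(\check{\Omega},\check{Q}))(1/2)$. For injectivity, suppose $\mathcal{Y}_\Phi|=0$. Since $E_2+\cdots+E_r\in\check{\Omega}$, the point $(iE,0)=(iE_\one+i(E_2+\cdots+E_r),0)$ lies in $iE_\one+\mathcal{D}(\check{\Omega},\check{Q})$, and the $\mathcal{V}$-component of $\mathcal{Y}_\Phi(iE,0)$ equals $\Phi(iE)=i\Phi(E)$; hence $\Phi(E)=0$. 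By (\ref{psiYPhitil}) and the injectivity of $\psi$, we conclude $\mathcal{Y}_\Phi=0$.

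The main technical point will be the grading identification: the natural grading $\partial$ of $\mathcal{D}(\Omega,Q)$ does not preserve the submanifold $iE_\one+\mathcal{D}(\check{\Omega},\check{Q})$ (its flow shifts $iE_\one$ to $e^t iE_\one$), so one must correct $\partial$ by a holomorphic vector field — here the constant field $\partial_{iE_\one}$ — that both makes the sum tangent to the submanifold and commutes with $\mathcal{Y}_\Phi$, the latter being exactly where the vanishing $\Phi(E_\one)=0$ coming from Lemma \ref{ForY12andZ} is used.
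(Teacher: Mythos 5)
Your proof is correct and follows essentially the same route as the paper: tangency and completeness come from Lemma \ref{ForY12andZ}(i) together with the flow of $\mathcal{Y}_\Phi$, and injectivity from evaluating at $(iE,0)$ and invoking the injectivity of $\psi$ via \eqref{psiYPhitil}. The only (harmless) variation is in the grading step: the paper replaces $\partial=(A_\one+\cdots+A_r)^\#$ by $(A_2+\cdots+A_r)^\#$, using that $[A_\one^\#,\mathcal{Y}_\Phi]=0$ by the definition of $\check{\mathfrak{X}}$, whereas you replace it by $\partial-\partial_{iE_\one}$, using $\Phi(E_\one)=0$; both correctors are tangent to the affine slice and restrict there to $\check{\partial}$, so your argument works equally well.
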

\begin{proof}
First by Lemma \ref{Omega1E1Om}, the following equality holds:
\begin{equation*}
iE_\one+\mathcal{D}(\check{\Omega},\check{Q})=\mathcal{D}(\Omega,Q)\cap (iE_\one+\check{\mathfrak{b}}(1)_\mathbb{C}\oplus\check{\mathfrak{b}}(1/2)). 
\end{equation*} Let $\mathcal{Y}_\Phi\in\check{\mathfrak{X}}(1/2)$, and let $X$ be the element of $\mathfrak{aut}_{hol}(\mathcal{D}(\Omega,Q))$ such that $X^\#=\mathcal{Y}_\Phi$. Let $y:\mathbb{R}\rightarrow \mathrm{Aut}_{hol}(\mathcal{D}(\Omega,Q))$ denote the one-parameter subgroup of $\mathrm{Aut}_{hol}(\mathcal{D}(\Omega,Q))$ given by $y(t)=\exp(tX)\,(t\in\mathbb{R})$. Then $y$ preserves $\mathcal{D}(\Omega,Q)\cap(iE_\one+\check{\mathfrak{b}}(1)_\mathbb{C}\oplus\check{\mathfrak{b}}(1/2))$ by Lemma \ref{ForY12andZ}. Thus
$\mathcal{Y}_\Phi|_{iE_\one+\check{\mathfrak{b}}(1)_\mathbb{C}\oplus \check{\mathfrak{b}}(1/2)}\in\mathfrak{X}(\mathcal{D}(\check{\Omega},\check{Q}))$. Let $\partial$ be the vector field on $\mathcal{D}(\Omega,Q)$ defined in Section \ref{Gradingstr}. Since 
\begin{equation*}
[(A_2+\cdots +A_r)^ \#, \mathcal{Y}_\Phi]=[(A_\one+\cdots +A_r)^\#,\mathcal{Y}_\Phi]=[\partial, \mathcal{Y}_\Phi]=\tfrac{1}{2}\mathcal{Y}_\Phi,
\end{equation*}
we have $\mathcal{Y}_\Phi|_{iE_\one+\check{\mathfrak{b}}(1)_\mathbb{C}\oplus \check{\mathfrak{b}}(1/2)}\in\mathfrak{X}(\mathcal{D}(\check{\Omega},\check{Q}))(1/2)$. It remains to show that the map $\check{\mathfrak{X}}(1/2)\ni \mathcal{Y}_\Phi\mapsto \mathcal{Y}_\Phi|_{iE_\one+\check{\mathfrak{b}}(1)_\mathbb{C}\oplus \check{\mathfrak{b}}(1/2)}\in\mathfrak{X}(\mathcal{D}(\check{\Omega},\check{Q}))(1/2)$ is injective. Suppose that $\mathcal{Y}_\Phi|_{iE_\one+\check{\mathfrak{b}}(1)_\mathbb{C}\oplus \check{\mathfrak{b}}(1/2)}=0$. The image of $\mathcal{Y}_\Phi(iE,0)$ under the projection $\mathfrak{b}(1)_\mathbb{C}\oplus\mathfrak{b}(1/2)\ni(U,V)\mapsto V\in\mathfrak{b}(1/2)$ is $i\Phi(E)$, which is equal to $0$. Hence we see from (\ref{psiYPhitil}) that $\psi(\mathcal{Y}_\Phi)=\tilde{\partial}_{-i\Phi(E)}=0$. Since $\psi$ is injective, we obtain $\mathcal{Y}_\Phi=0$.
\end{proof}
\begin{lemma}\label{ForZabinma}
Let $\mathcal{Z}_{a,b}\in\check{\mathfrak{X}}(1)$. Then 
\begin{equation*}
\mathcal{Z}_{a,b}(iE_\one+U,V)\in\check{\mathfrak{b}}(1)_\mathbb{C}\oplus\check{\mathfrak{b}}(1/2)\quad(U\in\check{\mathfrak{b}}(1)_\mathbb{C}, V\in\check{\mathfrak{b}}(1/2)). 
\end{equation*}
\end{lemma}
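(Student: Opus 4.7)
The plan is to mimic the approach used for Lemma \ref{ForY12andZ}, but working with Lemma \ref{ForZabazzp} and the bracket formula \eqref{partialUZa} in place of Lemma \ref{ForXABinma} and \eqref{partialUYP}. The first observation is that, in view of the grading from Theorem \ref{Forasuitab}, the subspace $\check{\mathfrak{b}}(1)$ equals the kernel of $\mathrm{ad}(A_\one)$ on $\mathfrak{b}(1)$, and $\check{\mathfrak{b}}(1/2)$ equals the kernel of $\mathrm{ad}(A_\one)$ on $\mathfrak{b}(1/2)$. Therefore the conclusion of the lemma is equivalent to showing that the two components of $\mathcal{Z}_{a,b}(iE_\one+U,V)$ are killed by $\mathrm{ad}(A_\one)$.

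Next I would exploit the condition $[E_\one^\#,\mathcal{Z}_{a,b}]=0$. Since $E_\one^\#=\partial_{E_\one}$, formula \eqref{partialUZa} gives $2\mathcal{X}(\mathcal{A}_{E_\one},\mathcal{B}_{E_\one})=0$, so $a(E_\one,u)=0$ for all $u\in\mathfrak{b}(1)$ and $b(E_\one,v)=0$ for all $v\in\mathfrak{b}(1/2)$. By the $\mathbb{C}$-bilinear extension of $a$ and $b$, this reduces
\begin{equation*}
\mathcal{Z}_{a,b}(iE_\one+U,V)=\bigl(a(iE_\one+U,iE_\one+U),\,b(iE_\one+U,V)\bigr)=(a(U,U),\,b(U,V)).
\end{equation*}

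Then I would use $[A_\one^\#,\mathcal{Z}_{a,b}]=0$. Since $A_\one^\#=\mathcal{X}(\mathrm{ad}(A_\one)|_{\mathfrak{b}(1)_\mathbb{C}},\mathrm{ad}(A_\one)|_{\mathfrak{b}(1/2)})$, Lemma \ref{ForZabazzp} applied with $\mathcal{A}=\mathrm{ad}(A_\one)|_{\mathfrak{b}(1)_\mathbb{C}}$ and $\mathcal{B}=\mathrm{ad}(A_\one)|_{\mathfrak{b}(1/2)}$ yields the two identities
\begin{equation*}
[A_\one,a(u,u)]=2a([A_\one,u],u),\qquad [A_\one,b(u,v)]=b([A_\one,u],v)+b(u,[A_\one,v]).
\end{equation*}
For $U\in\check{\mathfrak{b}}(1)_\mathbb{C}$ and $V\in\check{\mathfrak{b}}(1/2)$ we have $[A_\one,U]=0$ and $[A_\one,V]=0$, so both right-hand sides vanish, giving $a(U,U)\in\check{\mathfrak{b}}(1)_\mathbb{C}$ and $b(U,V)\in\check{\mathfrak{b}}(1/2)$. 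This is exactly what the lemma asserts.

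There is no real obstacle; the argument is essentially a bookkeeping exercise combining the two centralizing conditions. The only point deserving care is the eigenspace characterization of $\check{\mathfrak{b}}(1)$ and $\check{\mathfrak{b}}(1/2)$, which follows immediately from the root-space description in Theorem \ref{Forasuitab} and the fact that on $\mathfrak{b}(1)$ resp. $\mathfrak{b}(1/2)$ the operator $\mathrm{ad}(A_\one)$ acts on $\mathfrak{b}_{(\alpha_l+\alpha_k)/2}$ as multiplication by $(\delta_{l,\one}+\delta_{k,\one})/2$ resp. on $\mathfrak{b}_{\alpha_k/2}$ as $\delta_{k,\one}/2$.
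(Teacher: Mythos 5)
Your proof is correct, but it takes a genuinely shorter route than the paper at the key step. Both arguments use Lemma \ref{ForZabazzp} with $A_\one^\#=\mathcal{X}(\mathrm{ad}(A_\one)|_{\mathfrak{b}(1)_\mathbb{C}},\mathrm{ad}(A_\one)|_{\mathfrak{b}(1/2)})$ to get the $\mathrm{ad}(A_\one)$-equivariance identities for $a$ and $b$, and both then read off $a(U,U)\in\check{\mathfrak{b}}(1)_\mathbb{C}$ and $b(U,V)\in\check{\mathfrak{b}}(1/2)$ from the eigenspace description of $\check{\mathfrak{b}}$. The difference is in how the cross terms involving $iE_\one$ are killed. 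You invoke the second defining condition $[\mathcal{Z}_{a,b},E_\one^\#]=0$ of $\check{\mathfrak{X}}$ together with \eqref{partialUZa} to conclude $\mathcal{A}_{E_\one}=0$ and $\mathcal{B}_{E_\one}=0$ outright, which instantly gives $a(E_\one,\cdot)\equiv 0$ and $b(E_\one,\cdot)\equiv 0$. The paper never uses the $E_\one^\#$ condition: it deduces $b(iE_\one,V)=0$ and $a(iE_\one,iE_\one)=0$ from eigenvalue considerations alone, but to get $a(iE_\one,U)=0$ for $U\in\check{\mathfrak{b}}(1)_\mathbb{C}$ it must combine the eigenvalue-$1$ constraint $a(iE_\one,U)\in(\mathfrak{b}_{\alpha_\one})_\mathbb{C}$ with Lemma \ref{ForF} (to show $\mathcal{A}_{E_\one}E=0$, hence $\mathcal{A}_{E_\one}\in\mathfrak{g}_E(\Omega)$) and the skew-symmetry of $\mathcal{A}_{E_\one}$ with respect to $\langle\cdot,\cdot\rangle'$. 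So the paper's proof establishes the slightly stronger fact that $[\mathcal{Z}_{a,b},A_\one^\#]=0$ alone already forces the conclusion (the degree-$1$ analogue of Lemma \ref{ForY12andZ}(ii)), at the cost of a heavier argument, whereas yours is a clean bookkeeping proof under the full definition of $\check{\mathfrak{X}}$. The one point you should make explicit is that $\check{\mathfrak{X}}$ is indeed cut out by \emph{both} conditions $[X,A_\one^\#]=[X,E_\one^\#]=0$ (the paper only writes this definition for $\check{\mathfrak{f}}$, but Lemma \ref{ForY12andZ}(ii) confirms this is the intended meaning); your shortcut depends entirely on having the $E_\one^\#$ condition available.
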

\begin{proof}
By Lemma \ref{ForZabazzp}, for $U\in\mathfrak{b}(1)_\mathbb{C}$ and $V\in\mathfrak{b}(1/2)$, we have
\begin{equation}\label{[A_\one,a(U,U)]=2a([A_\one,U],U)}
[A_\one,a(U,U)]=2a([A_\one,U],U)
\end{equation}
and
\begin{equation}\label{[A_\one,b(U,V)]=b(U,[A_\one,V])+b([A_\one,U],V)}
[A_\one,b(U,V)]=b([A_\one,U],V)+b(U,[A_\one,V]).
\end{equation}
Put $U=iE_\one$. Then (\ref{[A_\one,a(U,U)]=2a([A_\one,U],U)}) becomes
\begin{equation*}
[A_\one,a(iE_\one,iE_\one)]=2a(iE_\one,iE_\one).
\end{equation*}
Hence we have 
\begin{equation}\label{aiE1iE10}
a(iE_\one,iE_\one)=0.
\end{equation}
Let $U\in\check{\mathfrak{b}}(1)_\mathbb{C}$, and let $V\in\check{\mathfrak{b}}(1/2)$. Then (\ref{[A_\one,a(U,U)]=2a([A_\one,U],U)}) gives
\begin{equation}\label{A1aZZ0}
[A_\one,a(U,U)]=0.
\end{equation}
From (\ref{[A_\one,a(U,U)]=2a([A_\one,U],U)}), (\ref{aiE1iE10}), and (\ref{A1aZZ0}), it follows that
\begin{equation*}\begin{split}
2[A_\one,a(iE_\one,U)]&=[A_\one,a(iE_\one+U,iE_\one+U)]\\&=2a([A_\one,iE_\one+U],iE_\one+U)\\&=2a(iE_\one,U).
\end{split}\end{equation*}
Thus $a(iE_\one,U)\in({\mathfrak{b}_{\alpha_\one}})_\mathbb{C}$. At the same time, for $k\neq 1$, we have $a(E_\one,E_k)=\mathcal{A}_{E_\one}E_k\in\sum_{1\leq m\leq r}^\oplus\mathfrak{b}_{(\alpha_m+\alpha_k)/2}$ by Lemma \ref{ForF}, and hence $\mathcal{A}_{E_\one}E_k=0$. Thus we get $\mathcal{A}_{E_\one}E=0$, which implies $\mathcal{A}_{E_\one}\in\mathfrak{g}_E(\Omega)$. Hence the map $\mathcal{A}_{E_\one}:\mathfrak{b}(1)\rightarrow \mathfrak{b}(1)$ is an orthogonal transformation with respect to the inner product $\langle\cdot, \cdot\rangle'$ on $\mathfrak{b}(0)\oplus\mathfrak{b}(1)$ defined by
\begin{equation*}
\langle X,Y\rangle'=\langle\langle X^\#,Y^\#\rangle\rangle'_{iE}\quad(X,Y\in\mathfrak{b}(0)\oplus\mathfrak{b}(1)).
\end{equation*}
Thus the equality
\begin{equation}\label{aE1ZAE1Z0}
a(E_\one,U)=\mathcal{A}_{E_\one}U=0
\end{equation}
follows from the equation $\mathcal{A}_{E_\one}^2U=0$. By (\ref{aiE1iE10}), (\ref{A1aZZ0}), and (\ref{aE1ZAE1Z0}), we get
\begin{equation}\label{aiE1checkU}
a(iE_\one+U,iE_\one+U)=a(U,U)+2a(iE_\one, U)=a(U,U)\in\check{\mathfrak{b}}(1)_\mathbb{C}.
\end{equation}
On the other hand, by (\ref{[A_\one,b(U,V)]=b(U,[A_\one,V])+b([A_\one,U],V)}), we have
\begin{equation}\label{A1biE1ZWbi}
[A_\one,b(iE_\one+U,V)]=b(iE_\one,V)
\end{equation}
and
\begin{equation}\label{A1bZW0}
[A_\one,b(U,V)]=0.
\end{equation}
Now (\ref{A1biE1ZWbi}) implies $b(iE_\one,V)=0$, and (\ref{A1bZW0}) implies $b(U,V)\in\check{\mathfrak{b}}(1/2)$. Thus 
\begin{equation}\label{biE1checkU}
b(iE_\one+U,V)=b(U,V)\in\check{\mathfrak{b}}(1/2).
\end{equation}
We see from (\ref{aiE1checkU}) and (\ref{biE1checkU}) that (i) follows.
\end{proof}

\begin{lemma}\label{forx=-1}
Consider $\mathcal{D}(\check{\Omega},\check{Q})$ as a complex submanifold of $\mathcal{D}(\Omega,Q)$. Then for any $\gamma\in\{-1,-1/2,0,1/2,1\}$ and $X\in\check{\mathfrak{X}}(\gamma)$, we have 
\begin{equation*}
X|_{iE_\one+\check{\mathfrak{b}}(1)_\mathbb{C}\oplus \check{\mathfrak{b}}(1/2)}\in\mathfrak{X}(\mathcal{D}(\check{\Omega},\check{Q}))(\gamma). 
\end{equation*}
\end{lemma}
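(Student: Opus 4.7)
The plan is to verify the statement grade by grade. Two of the five grades are already essentially covered by earlier lemmas: for $\gamma = 1/2$ the assertion is exactly Lemma \ref{thefollowing}, and for $\gamma = 1$ the tangency $\mathcal{Z}_{a,b}(iE_\one + U, V) \in \check{\mathfrak{b}}(1)_\mathbb{C} \oplus \check{\mathfrak{b}}(1/2)$ is provided by Lemma \ref{ForZabinma}. So the main task is the three remaining cases $\gamma = -1, -1/2, 0$, after which the completeness on $\mathcal{D}(\check{\Omega},\check{Q})$ and the verification of the grading can be handled uniformly across all five cases.

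For $\gamma = -1$, an element of $\check{\mathfrak{X}}(-1)$ has the form $\partial_{u_0}$ with $u_0 \in \mathfrak{b}(1)$. Since there are no grades below $-1$ in $\mathfrak{X}$, the relation $[E_\one^\#, \partial_{u_0}] = 0$ is automatic; by (\ref{XABUAU}), $[A_\one^\#, \partial_{u_0}] = -\partial_{[A_\one, u_0]} = 0$ forces $u_0$ into the zero-eigenspace of $\mathrm{ad}(A_\one)$ on $\mathfrak{b}(1)$, and inspection of the root decomposition identifies this zero-eigenspace with $\check{\mathfrak{b}}(1)$. For $\gamma = -1/2$, the situation is analogous: $[E_\one^\#, \tilde{\partial}_{v_0}]$ lies in the vanishing grade $-3/2$ part, and (\ref{XABVBV}) applied to $A_\one^\#$ forces $v_0 \in \check{\mathfrak{b}}(1/2)$. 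One then checks that $\tilde{\partial}_{v_0}(iE_\one + U, V) = (2iQ(V, v_0), v_0)$ lies in $\check{\mathfrak{b}}(1)_\mathbb{C} \oplus \check{\mathfrak{b}}(1/2)$, using that $j$ preserves $\check{\mathfrak{b}}(1/2)$ and that $\check{\mathfrak{b}}$ is a subalgebra, so $Q(V, v_0) = \tfrac{1}{4}([jV, v_0] + i[V, v_0]) \in \check{\mathfrak{b}}(1)_\mathbb{C}$. For $\gamma = 0$, write $X = \mathcal{X}(\mathcal{A}, \mathcal{B})$; by (\ref{XABUAU}) the vanishing of $[E_\one^\#, X] = -\partial_{\mathcal{A} E_\one}$ gives $\mathcal{A} E_\one = 0$, while (\ref{XABABXAABB}) applied to $[A_\one^\#, X]$ says that $\mathcal{A}$ commutes with $\mathrm{ad}(A_\one)|_{\mathfrak{b}(1)_\mathbb{C}}$ and $\mathcal{B}$ with $\mathrm{ad}(A_\one)|_{\mathfrak{b}(1/2)}$. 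Since $\check{\mathfrak{b}}(1)_\mathbb{C}$ and $\check{\mathfrak{b}}(1/2)$ are precisely the zero-eigenspaces of these operators, $\mathcal{A}$ and $\mathcal{B}$ preserve them, so $X(iE_\one + U, V) = (\mathcal{A} U, \mathcal{B} V) \in \check{\mathfrak{b}}(1)_\mathbb{C} \oplus \check{\mathfrak{b}}(1/2)$.

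Once tangency is known in every grade, the remainder proceeds exactly as in the proof of Lemma \ref{thefollowing}. The one-parameter flow $\exp(tX)$, being the flow of a complete vector field tangent to the affine subspace $iE_\one + \check{\mathfrak{b}}(1)_\mathbb{C} \oplus \check{\mathfrak{b}}(1/2)$, preserves this subspace; by Lemma \ref{Omega1E1Om} its intersection with $\mathcal{D}(\Omega,Q)$ equals $iE_\one + \mathcal{D}(\check{\Omega},\check{Q})$, so the restriction is a complete holomorphic vector field on $\mathcal{D}(\check{\Omega},\check{Q})$. For the grade, note that $(A_2 + \cdots + A_r)^\# = \partial - A_\one^\#$ acts on $\check{\mathfrak{b}}(1)_\mathbb{C} \oplus \check{\mathfrak{b}}(1/2)$ as $(U,V) \mapsto (U, \tfrac{1}{2}V)$, coinciding on that slice with the Euler-type vector field defining the grading of $\mathfrak{X}(\mathcal{D}(\check{\Omega},\check{Q}))$. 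Since $[A_\one^\#, X] = 0$ by definition of $\check{\mathfrak{X}}$ and $[\partial, X] = \gamma X$, we obtain $[(A_2 + \cdots + A_r)^\#, X] = \gamma X$, which descends to the slice and confirms the grade.

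The most delicate point is the $\gamma = 0$ case, where the centralizer conditions in $\check{\mathfrak{X}}$ must be translated, via (\ref{XABUAU}) and (\ref{XABABXAABB}), into commutation of $\mathcal{A}, \mathcal{B}$ with the grading induced by $\mathrm{ad}(A_\one)$, after which one recognises $\check{\mathfrak{b}}(1)_\mathbb{C}$ and $\check{\mathfrak{b}}(1/2)$ as the corresponding zero-eigenspaces; the other tangency checks are brief, and the completeness/grade step is a direct reprise of Lemma \ref{thefollowing}.
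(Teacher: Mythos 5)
Your proposal is correct and follows essentially the same route as the paper: the grades $-1$, $-1/2$, $0$ are handled by translating $[X,A_\one^\#]=[X,E_\one^\#]=0$ through \eqref{XABUAU}, \eqref{XABVBV}, \eqref{XABABXAABB} into the statement that the coefficients lie in (or preserve) $\check{\mathfrak{b}}(1)_\mathbb{C}$ and $\check{\mathfrak{b}}(1/2)$, while $\gamma=1/2$ and $\gamma=1$ are delegated to Lemma \ref{thefollowing} and Lemma \ref{ForZabinma}, with completeness and the grading obtained exactly as in Lemma \ref{thefollowing}. The only cosmetic difference is that you run the flow-preservation and Euler-field argument uniformly over all five grades, whereas the paper reads off completeness and grade directly from the explicit restricted formulas in the nonpositive grades; both are sound.
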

\begin{proof}
Let $U_0\in\mathfrak{b}(1)$, and suppose $[\partial_{U_0},A_\one^\#]=0$. Then $[A_\one,U_0]=0$ by (\ref{XABUAU}), and we have $U_0\in\check{\mathfrak{b}}(1)$. Since
\begin{equation*}
\partial_{U_0}(iE_\one+U,V)=(U_0,0)\quad(U\in\check{\mathfrak{b}}(1)_\mathbb{C}, V\in\check{\mathfrak{b}}(1/2)),
\end{equation*}
the result for $\gamma=-1$ follows. Let $V_0\in\mathfrak{b}(1/2)$, and suppose $[\tilde{\partial}_{V_0},A_\one^\#]=0$. Then $[A_\one,V_0]=0$ by (\ref{XABVBV}), and $V_0\in\check{\mathfrak{b}}(1/2)$. Since
\begin{equation*}
\tilde{\partial}_{V_0}(iE_\one+U,V)=(2iQ(V,V_0),V_0)\quad(U\in\check{\mathfrak{b}}(1)_\mathbb{C}, V\in\check{\mathfrak{b}}(1/2)),
\end{equation*}
the result for $\gamma=-1/2$ follows. 
Let $\mathcal{X}(\mathcal{A},\mathcal{B})\in\mathfrak{X}(0)$, and suppose $[\mathcal{X}(\mathcal{A},\mathcal{B}),A_\one^\#]=0$. Then we see from (\ref{XABABXAABB}) that 
\begin{equation*}
\mathcal{A}\mathrm{ad}(A_\one)=\mathrm{ad}(A_\one)\mathcal{A}\,\text{ and }\,\mathcal{B}\mathrm{ad}(A_\one)=\mathrm{ad}(A_\one)\mathcal{B}.
\end{equation*}
Hence we have
\begin{equation*}
[A_\one,\mathcal{A}U]=[A_\one, \mathcal{B}V]=0\quad(U\in\check{\mathfrak{b}}(1)_\mathbb{C}, V\in\check{\mathfrak{b}}(1/2)). 
\end{equation*}
Thus $\mathcal{A}U\in\check{\mathfrak{b}}(1)_\mathbb{C}, \mathcal{B}V\in\check{\mathfrak{b}}(1/2)$ for all $U\in\check{\mathfrak{b}}(1)_\mathbb{C}$ and $V\in\check{\mathfrak{b}}(1/2)$. Now suppose $[\mathcal{X}(\mathcal{A},\mathcal{B}),E_\one^\#]=0$. Then $\mathcal{A}E_\one=0$ by (\ref{XABUAU}). We have
\begin{equation*}
\mathcal{X}(\mathcal{A},\mathcal{B})(iE_\one+U,V)=(\mathcal{A}(iE_\one+U),\mathcal{B}V)=(\mathcal{A}U,\mathcal{B}V)\quad(U\in\check{\mathfrak{b}}(1)_\mathbb{C}, V\in\check{\mathfrak{b}}(1/2)),
\end{equation*}
which shows the assertion for $\gamma=0$. We have shown the result for $\gamma=1/2$ in Lemma \ref{thefollowing}. From Lemma \ref{ForZabinma} and the same arguments as in Lemma \ref{thefollowing}, the result for $\gamma=1$ follows. This completes the proof.
\end{proof}

\begin{remark}\label{Theequalit}
The equality 
\begin{equation*}
\check{\mathfrak{f}}=\check{\mathfrak{f}}(-1)\oplus\check{\mathfrak{f}}(-1/2)\oplus\check{\mathfrak{f}}(0)\oplus\check{\mathfrak{f}}(1/2)\oplus\check{\mathfrak{f}}(1)
\end{equation*}
shows that for any $X\in\check{\mathfrak{X}}$, we have $X|_{iE_\one+\check{\mathfrak{b}}(1)_\mathbb{C}\oplus \check{\mathfrak{b}}(1/2)}\in\mathfrak{X}(\mathcal{D}(\check{\Omega},\check{Q}))$, and the map
\begin{equation*}
\check{\mathfrak{X}}\ni X \mapsto X|_{iE_\one+\check{\mathfrak{b}}(1)_\mathbb{C}\oplus \check{\mathfrak{b}}(1/2)}\in \mathfrak{X}(\mathcal{D}(\check{\Omega},\check{Q}))
\end{equation*}
defines a Lie algebra homomorphism.
\end{remark}
\begin{proposition}\label{forasubalg}
For any $\mathcal{Y}_\Phi\in\mathfrak{f}$, one has $ \mathcal{Y}_{j\Phi}\in\mathfrak{f}$.
\end{proposition}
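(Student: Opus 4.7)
The plan is to argue by induction on the rank $r$ of the normal $j$-algebra $\mathfrak{b}$. For the base case $r=1$, Proposition \ref{Whenr1fora} applies directly: since $\mathfrak{f}=\mathfrak{g}^{\#}\supset\mathfrak{b}^{\#}$, it contains both $\mathfrak{X}(-1/2)=\mathfrak{b}(1/2)^{\#}$ and $\partial=(jE_{\one})^{\#}=A_{\one}^{\#}$.

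For $r\geq 2$, the first task is to decompose the given $\mathcal{Y}_{\Phi}\in\mathfrak{f}\cap\mathfrak{X}(1/2)$ along individual root indices. Lemma \ref{foranyY}(ii) says that the operators $\mathrm{ad}(A_{k}^{\#})$ commute on $\mathfrak{X}(1/2)$ and satisfy $\mathrm{ad}(A_{k}^{\#})\,\mathrm{ad}(A_{l}^{\#})=0$ whenever $k\neq l$; combined with the grading identity $\sum_{k}\mathrm{ad}(A_{k}^{\#})=\mathrm{ad}(\partial)|_{\mathfrak{X}(1/2)}=\tfrac{1}{2}\mathrm{id}$, the joint spectral decomposition produces $\mathfrak{X}(1/2)=\bigoplus_{k=1}^{r}V_{k}$, where $V_{k}$ is the joint eigenspace on which only $\mathrm{ad}(A_{k}^{\#})$ acts nontrivially (as $\tfrac{1}{2}$). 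Each projector $P_{k}:=2\,\mathrm{ad}(A_{k}^{\#})$ preserves $\mathfrak{f}$ (since $A_{k}^{\#}\in\mathfrak{b}^{\#}\subset\mathfrak{f}$), so $\mathcal{Y}_{\Phi^{(k)}}:=P_{k}\mathcal{Y}_{\Phi}$ lies in $V_{k}\cap\mathfrak{f}$ and $\mathcal{Y}_{\Phi}=\sum_{k}\mathcal{Y}_{\Phi^{(k)}}$. It therefore suffices to produce $\mathcal{Y}_{j\Phi^{(k)}}\in\mathfrak{f}$ for each $k$ separately.

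For each fixed $k$, I would pick any $l\neq k$ and run the analogue of the ``$\check{\phantom{a}}$''-construction of Section \ref{Algebraicp}, with the index $l$ in place of $1$; this parallel is legitimate because Theorem \ref{Forasuitab} treats $A_{1},\ldots,A_{r}$ symmetrically. Write $\check{\mathfrak{b}}^{(l)}$, $\mathcal{D}(\check{\Omega}^{(l)},\check{Q}^{(l)})$, $\check{\mathfrak{f}}^{(l)}:=\{X\in\mathfrak{f};[X,A_{l}^{\#}]=[X,E_{l}^{\#}]=0\}$, and the restriction homomorphism $\rho^{(l)}$ (Remark \ref{Theequalit}) for the corresponding rank-$(r-1)$ objects. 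The element $\mathcal{Y}_{\Phi^{(k)}}$ lies in $\check{\mathfrak{f}}^{(l)}$: the condition $[A_{l}^{\#},\mathcal{Y}_{\Phi^{(k)}}]=0$ follows from the eigenspace definition of $V_{k}$, while $[E_{l}^{\#},\mathcal{Y}_{\Phi^{(k)}}]=\tilde{\partial}_{\Phi^{(k)}(E_{l})}=0$ follows from \eqref{partialUYP} together with $\Phi^{(k)}(E_{l})=0$ for $l\neq k$. Applying the inductive hypothesis at rank $r-1$ to $\rho^{(l)}(\check{\mathfrak{f}}^{(l)})\subset\mathfrak{X}(\mathcal{D}(\check{\Omega}^{(l)},\check{Q}^{(l)}))$, which contains the image of the rank-$(r-1)$ normal $j$-algebra $\check{\mathfrak{b}}^{(l)}$, delivers $\rho^{(l)}(\mathcal{Y}_{j\Phi^{(k)}})\in\rho^{(l)}(\check{\mathfrak{f}}^{(l)})$; the injectivity of $\rho^{(l)}$ on $\check{\mathfrak{X}}^{(l)}(1/2)$ (the analogue of Lemma \ref{thefollowing}) then lifts this back to $\mathcal{Y}_{j\Phi^{(k)}}\in\check{\mathfrak{f}}^{(l)}\subset\mathfrak{f}$.

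The main obstacle I anticipate is that the inductive hypothesis, as the proposition is literally stated, requires $\mathfrak{f}$ to be of the form $\mathfrak{g}^{\#}$ for some connected algebraic subgroup acting transitively on the smaller domain, whereas the restricted algebra $\rho^{(l)}(\check{\mathfrak{f}}^{(l)})$ need not have that explicit form. The natural remedy is to strengthen the statement being inducted upon into the abstract form ``for every Lie subalgebra $\mathfrak{f}$ of $\mathfrak{X}(\mathcal{D}(\Omega,Q))$ containing $\mathfrak{b}^{\#}$ and every $\mathcal{Y}_{\Phi}\in\mathfrak{f}$, one has $\mathcal{Y}_{j\Phi}\in\mathfrak{f}$''---a property which Proposition \ref{Whenr1fora} already enjoys in the rank-one case. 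The stated proposition is then the specialisation $\mathfrak{f}=\mathfrak{g}^{\#}$.
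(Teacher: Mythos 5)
Your overall architecture matches the paper's up to a point: the base case is Proposition \ref{Whenr1fora}, the induction must indeed be run on the strengthened statement ``any subalgebra $\mathfrak{f}\subset\mathfrak{X}$ containing $\mathfrak{b}^{\#}$'' (the paper does this implicitly, since it applies the inductive hypothesis to $R(\check{\mathfrak{f}})$, which is not of the form $(\mathfrak{g}')^{\#}$), and your operators $P_k=2\,\mathrm{ad}(A_k^{\#})$ are genuinely commuting idempotents summing to the identity on $\mathfrak{X}(1/2)$ by Lemma \ref{foranyY}(ii) and $\partial=(A_\one+\cdots+A_r)^{\#}$. In fact the paper's splitting $\mathcal{Y}_\Phi=\mathcal{Y}_{\Phi+2\Phi'}-2\mathcal{Y}_{\Phi'}$ with $\mathcal{Y}_{\Phi'}=[\mathcal{Y}_\Phi,A_\one^{\#}]$ is exactly the coarser decomposition $(\mathrm{id}-P_\one)\mathcal{Y}_\Phi+P_\one\mathcal{Y}_\Phi$, and the first summand is handled by the $\check{\phantom{a}}$-construction and induction just as you propose.

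The gap is in how you dispose of the remaining components. You send $\mathcal{Y}_{\Phi^{(k)}}$ into the rank-$(r-1)$ problem obtained by deleting an index $l\neq k$, asserting that the $\check{\phantom{a}}$-construction works for any deleted index ``because Theorem \ref{Forasuitab} treats $A_1,\ldots,A_r$ symmetrically.'' It does not: the root decomposition is triangular in the indices ($\mathfrak{b}_{(\alpha_l-\alpha_k)/2}$ occurs only for $k<l$), and the whole chain Lemma \ref{Omega1E1Om} $\Rightarrow$ Lemma \ref{thefollowing} $\Rightarrow$ Lemma \ref{forx=-1} $\Rightarrow$ Remark \ref{Theequalit} is set up, and in the case of Lemma \ref{Omega1E1Om} proved only by citation to \cite[Proposition 2.5]{ishi 2000}, for deleting the index $1$, which corresponds to passing to a distinguished ``corner'' of the homogeneous cone; deleting a middle index is not covered by what is proved in the paper and would need its own verification of $\check{\Omega}^{(l)}+E_l=\Omega\cap(\check{\mathfrak{b}}^{(l)}(1)+E_l)$. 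This is precisely the point the paper's proof is engineered to avoid: it only ever deletes index $1$, and the component $\mathcal{Y}_{\Phi'}=P_\one$-part, which cannot be pushed into $\check{\mathfrak{f}}$, is handled instead by a direct computation — one shows $V_0=\Phi'(E)\in\mathfrak{b}_{\alpha_\one/2}$, $c'(V_0,V_0)=2jq(V_0,V_0)V_0$, and $[\mathcal{Y}_{\Phi'},[\tilde{\partial}_{V_0},\mathcal{Y}_{\Phi'}]]=\mathcal{Y}_{\Phi''}$ with $\Phi''(E)=6q(V_0,V_0)\,j\Phi'(E)$ and $q(V_0,V_0)\neq0$, whence $\mathcal{Y}_{j\Phi'}\in\mathfrak{f}$. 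This rank-one-style computation is the entire second half of the paper's proof and is absent from your proposal; without it, or without a genuine proof that the $\check{\phantom{a}}$-construction is available for an arbitrary deleted index, your induction does not close.
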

\begin{proof}
We show the assertion by induction on rank $r$ of the normal $j$-algebra. For the case $r=1$, we have shown the assertion in Proposition \ref{Whenr1fora}. Let $r\geq 2$. We define a $\mathbb{C}$-linear map $\Phi':\mathfrak{b}(1)_\mathbb{C}\rightarrow\mathfrak{b}(1/2)$ by 
\begin{equation*}
\mathcal{Y}_{\Phi'}=[\mathcal{Y}_\Phi,A_\one^\#].
\end{equation*}
By Lemma \ref{foranyY} (ii), we have
\begin{equation*}
[\mathcal{Y}_{\Phi'},A_k^\#]=[[\mathcal{Y}_\Phi,A_\one^\#],A_k^\#]=0\quad(2\leq k\leq r).
\end{equation*}
Thus
\begin{equation*}\begin{split}
[\mathcal{Y}_{\Phi+2\Phi'},A_\one^\#]&=\mathcal{Y}_{\Phi'}+[\mathcal{Y}_{2\Phi'},A_\one^\#]=\mathcal{Y}_{\Phi'}+[\mathcal{Y}_{2\Phi'},(A_\one+\cdots+A_r)^\#]\\&=\mathcal{Y}_{\Phi'}+[\mathcal{Y}_{2\Phi'},\partial]=\mathcal{Y}_{\Phi'}-\mathcal{Y}_{\Phi'}=0.
\end{split}\end{equation*}
From Lemma \ref{ForY12andZ}, it follows that $\mathcal{Y}_{\Phi+2\Phi'}\in\check{\mathfrak{f}}$. We denote by $R$ the Lie algebra homomorphism in Remark \ref{Theequalit}. 
Since $(\check{\mathfrak{b}})^\#\subset \check{\mathfrak{f}}$, one has $R((\check{\mathfrak{b}})^\#)\subset R(\check{\mathfrak{f}})\subset\mathfrak{X}(\mathcal{D}(\check{\Omega},\check{Q}))$. By the inductive hypothesis and the equality $R(\mathcal{Y}_{\Phi+2\Phi'})=\mathcal{Y}_{({\Phi+2\Phi'}|_{\check{\mathfrak{b}}(1)_\mathbb{C}+\check{\mathfrak{b}}(1/2)})}$, we get \begin{equation*}
\mathcal{Y}_{(j({\Phi+2\Phi'})|_{\check{\mathfrak{b}}(1)_\mathbb{C}+\check{\mathfrak{b}}(1/2)})}\in R(\check{\mathfrak{f}}). 
\end{equation*}
Hence we see from Lemma \ref{thefollowing} that
\begin{equation*}
\mathcal{Y}_{j(\Phi+2\Phi')}\in\check{\mathfrak{f}}.
\end{equation*} 
Clearly
\begin{equation*}
\mathcal{Y}_{j\Phi}=\mathcal{Y}_{j(\Phi+2\Phi')}-2\mathcal{Y}_{j\Phi'}.
\end{equation*}
Thus in order to show that $Y_{j\Phi}\in \mathfrak{f}$, it suffices to show that
\begin{equation}\label{YiPsi1inmathfrakg12}\mathcal{Y}_{j\Phi'}\in\mathfrak{f}.
\end{equation}
Next we prove \eqref{YiPsi1inmathfrakg12}. Let $c':\mathfrak{b}(1/2)\times\mathfrak{b}(1/2)\rightarrow \mathfrak{b}(1/2)$ be the $\mathbb{C}$-bilinear map such that $\mathcal{Y}_{\Phi'}=\mathcal{Y}_{\Phi',c'}$. We define $\mathbb{C}$-linear maps $\Phi'':\mathfrak{b}(1)_\mathbb{C}\rightarrow \mathfrak{b}(1/2)$, $\mathcal{A}\in\mathfrak{gl}(\mathfrak{b}(1)_\mathbb{C})$, and $\mathcal{B}\in\mathfrak{gl}(\mathfrak{b}(1/2))$ by 
\begin{equation*}
\mathcal{X}(\mathcal{A},\mathcal{B})=[\tilde{\partial}_{\Phi'(E)},\mathcal{Y}_{\Phi'}],\,\text{ and }\,\mathcal{Y}_{\Phi''}=[\mathcal{Y}_{\Phi'},\mathcal{X}(\mathcal{A},\mathcal{B})].
\end{equation*} 
The inclusion relation $\mathfrak{b}^\#\subset \mathfrak{f}$ shows that $\mathcal{Y}_{\Phi''}\in\mathfrak{f}$. Put $V_0=\Phi'(E)$. If $V_0=0$, then $\mathcal{Y}_{\Phi'}=0$, and it follows that $\Phi'=0$. Thus $0=\mathcal{Y}_{j\Phi'}\in\mathfrak{f}$. In what follows, we assume that $V_0\neq 0$. Lemma \ref{ForXABinma} (i) shows that $\Phi''(E)=-\Phi'(\mathcal{A}E)+\mathcal{B}\Phi'(E)$. And by (\ref{tildepartialVYPhiXAB}), we have
\begin{equation}\label{AE4Psi1V_0E4}
\mathcal{A}E=4\Phi'_{V_0}(E)=4\Im Q(V_0,V_0)=0
\end{equation}
and
\begin{equation}\label{BPsi1E2iPs}
\mathcal{B}\Phi'(E)=2j\Phi'(Q(V_0,V_0))+2c'(V_0,V_0).
\end{equation}
From Lemma \ref{ForXABinma} (i) and Lemma \ref{foranyY} (ii), it follows that
\begin{equation*}
V_0=\Phi'(E)=-\Phi([A_\one,E])+[A_\one,\Phi(E)]=-\tfrac{1}{2}\Phi(E_\one)\in\mathfrak{b}_{\alpha_\one/2}.
\end{equation*}
Thus we see from (\ref{HVcVV2iHPhiHVVVquadVVinmathcalV}) that 
\begin{equation*}\begin{split}
Q(c'(V_0,V_0),V)=2iQ(V_0,\Phi'(Q(V,V_0)))\in\Bigl(\sideset{}{^\oplus}\sum_{1\leq m\leq r}\mathfrak{b}_{(\alpha_m+\alpha_\one)/2}\Bigr)_\mathbb{C}\\(V\in\mathfrak{b}(1/2)).
\end{split}\end{equation*}
By Lemma \ref{ForV}, we have $c'(V_0,V_0)\in\mathfrak{b}_{\alpha_\one/2}$. We define a Hermitian form $q$ on $\mathfrak{b}_{\alpha_\one/2}$ by
\begin{equation*}
Q(W,W')=q(W,W')E_\one\quad(W,W'\in\mathfrak{b}_{\alpha_\one/2}). 
\end{equation*}
By (\ref{HVcVV2iHPhiHVVVquadVVinmathcalV}), for any $V\in\mathfrak{b}_{\alpha_\one/2}$, we have
\begin{equation*}\begin{split}
Q(c'(V_0,V_0),V)&=2iQ(V_0,\Phi'(Q(V,V_0)))=2iQ(V_0,q(V,V_0)\Phi'(E))
\\&=2iQ(V_0,q(V,V_0)V_0)=2iq(V_0,V)Q(V_0,V_0)
\\&=Q(2jq(V_0,V_0)V_0,V).
\end{split}\end{equation*}
Thus we get \begin{equation}\label{c1V0V02jqV}
 c'(V_0,V_0)=2jq(V_0,V_0)V_0.
\end{equation} 
From (\ref{AE4Psi1V_0E4}), (\ref{BPsi1E2iPs}), and (\ref{c1V0V02jqV}), it follows that
\begin{equation*}\begin{split}
\Phi''(E)&=\mathcal{B}\Phi'(E)=2j\Phi'(Q(V_0,V_0))+4jq(V_0,V_0)V_0=6jq(V_0,V_0)V_0
\\&=6jq(V_0,V_0)\Phi'(E)
\end{split}\end{equation*}
with $q(V_0,V_0)\neq 0$. Thus \eqref{YiPsi1inmathfrakg12} holds, and the proof is complete.
\end{proof}

\begin{proposition}\label{Forasubalg2}
Let $\partial_{U_0}\in\mathfrak{X}(-1)$. If $[\partial_{U_0},\mathfrak{f}(1)]=0$, then one has $[\partial_{U_0},\mathfrak{f}(1/2)]=0$.
\end{proposition}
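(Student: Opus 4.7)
The plan is to mimic the proof of Lemma \ref{Letpartial} but replace the step ``$\mathcal{Y}_\Phi\in\mathfrak{X}(1/2)\Rightarrow \mathcal{Y}_{i\Phi}\in\mathfrak{X}(1/2)$'' with the subalgebra-preserving statement of Proposition \ref{forasubalg}, namely ``$\mathcal{Y}_\Phi\in\mathfrak{f}\Rightarrow\mathcal{Y}_{j\Phi}\in\mathfrak{f}$''. This is the only structural obstruction one faces in generalizing from $\mathfrak{X}$ to the subalgebra $\mathfrak{f}=\mathfrak{g}^\#$, and it has already been overcome in the preceding subsection. Recall that the complex structure on $\mathcal{V}=\mathfrak{b}(1/2)$ coming from the ambient $\mathbb{C}$-vector space is precisely $j$, so $j\Phi$ in the sense of Proposition \ref{forasubalg} is the same as $i\Phi$ in the sense of Proposition \ref{Everyeleme}.

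First I would take an arbitrary $\mathcal{Y}_\Phi\in\mathfrak{f}(1/2)$. Since $\mathfrak{f}(1/2)\subset \mathfrak{X}(1/2)$, it is of the form $\mathcal{Y}_{\Phi,c}$ as in Proposition \ref{Everyeleme}. By Proposition \ref{forasubalg}, $\mathcal{Y}_{j\Phi}\in\mathfrak{f}$; since it also lies in $\mathfrak{X}(1/2)$, we conclude $\mathcal{Y}_{j\Phi}\in\mathfrak{f}(1/2)$. Consequently
\begin{equation*}
[\mathcal{Y}_\Phi,\mathcal{Y}_{j\Phi}]\in \mathfrak{f}(1).
\end{equation*}

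Next I would invoke the formula from \cite[Chapter V, Lemma 2.5]{satake} already cited in the proof of Lemma \ref{Letpartial}: writing $[\mathcal{Y}_\Phi,\mathcal{Y}_{j\Phi}]=\mathcal{Z}_{a,b}$, one has
\begin{equation*}
a(u,u)=4Q(\Phi(u),\Phi(u))\quad(u\in\mathfrak{b}(1)_\mathbb{C}).
\end{equation*}
The hypothesis $[\partial_{U_0},\mathfrak{f}(1)]=0$ yields $[\partial_{U_0},\mathcal{Z}_{a,b}]=0$, and by (\ref{partialUZa}) this equals $2\mathcal{X}(\mathcal{A}_{U_0},\mathcal{B}_{U_0})$. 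In particular $\mathcal{A}_{U_0}=0$, whence
\begin{equation*}
0=\mathcal{A}_{U_0}(U_0)=a(U_0,U_0)=4Q(\Phi(U_0),\Phi(U_0)).
\end{equation*}
The $\Omega$-positivity of $Q$ forces $\Phi(U_0)=0$, and using (\ref{partialUYP}) we obtain
\begin{equation*}
[\partial_{U_0},\mathcal{Y}_\Phi]=\tilde{\partial}_{\Phi(U_0)}=0,
\end{equation*}
which is the required vanishing.

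The key non-routine ingredient was the preservation of $\mathfrak{f}$ under $\Phi\mapsto j\Phi$, i.e.\ Proposition \ref{forasubalg}; everything else is a direct transcription of the argument for Lemma \ref{Letpartial}. Consequently I do not expect an additional obstacle in this final step beyond carefully noting that the bracket $[\mathcal{Y}_\Phi,\mathcal{Y}_{j\Phi}]$ indeed lives in $\mathfrak{f}(1)$, so that the hypothesis applies.
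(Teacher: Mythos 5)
Your proposal is correct and is essentially identical to the paper's proof, which likewise just reruns the argument of Lemma \ref{Letpartial} with $\mathfrak{X}(\gamma)$ replaced by $\mathfrak{f}(\gamma)$ and cites Proposition \ref{forasubalg} for the one nontrivial step $\mathcal{Y}_{j\Phi}\in\mathfrak{f}(1/2)$. Your identification of $j\Phi$ with $i\Phi$ and your check that $[\mathcal{Y}_\Phi,\mathcal{Y}_{j\Phi}]\in\mathfrak{f}(1)$ are exactly the points the paper leaves implicit.
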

\begin{proof}
We replace $\mathfrak{X}(\gamma)$ by $\mathfrak{f}(\gamma)$ for $\gamma=-1,1/2,1$ in the proof of Lemma \ref{Letpartial}. To complete the proof, it is enough to show that $\mathcal{Y}_{j\Phi}\in\mathfrak{f}(1/2)$. Hence the result follows from Proposition \ref{forasubalg}.
\end{proof}

\section{Unitary equivalences among the unitarizable representations}\label{Unitaryequ}
\subsection{Unitary equivalences among representations of $B$}\label{Constructi}
We take $(iE,0)\in\mathcal{D}(\Omega,Q)$ as a reference point of $\mathcal{D}(\Omega,Q)$. Let $M: G\times \mathcal{D}(\Omega,Q)\rightarrow\mathbb{C}^\times$ be a holomorphic multiplier. Set $\mathfrak{b}_-=\mathfrak{g}_-\cap \mathfrak{b}_\mathbb{C}$.
\begin{proposition}[Rossi and Vergne, {\cite[Proposition 4.21]{Rossi}}]
Let $\tau:\mathfrak{b}\rightarrow \mathfrak{b}_-$ be the $\mathbb{R}$-linear map defined by 
\begin{equation*}
\tau(U+V+T)=(V+ijV)/2+T+ijT\quad(U\in\mathfrak{b}(1),V\in\mathfrak{b}(1/2),T\in\mathfrak{b}(0)).
\end{equation*}
Then $\tau$ is a Lie algebra homomorphism, and if $\tau$ is extended to a $\mathbb{C}$-linear map $\tau:\mathfrak{b}_\mathbb{C}\rightarrow\mathfrak{b}_\mathbb{C}$, then we have $\tau|_{\mathfrak{b}_-}=\mathrm{id}_{\mathfrak{b}_-}$. 
\end{proposition}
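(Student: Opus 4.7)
The plan is to verify the homomorphism and identity properties of $\tau$ by unwinding the definition and repeatedly applying the normal $j$-algebra identity \eqref{XYjjXyjXjY} together with the gradation $\mathfrak{b}=\mathfrak{b}(0)\oplus\mathfrak{b}(1/2)\oplus\mathfrak{b}(1)$. First I would pin down $\mathfrak{b}_-$ concretely: since $B$ acts simply transitively on $\mathcal{D}(\Omega,Q)$, the orbit map at $(iE,0)$ identifies $\mathfrak{b}$ with $T_{(iE,0)}\mathcal{D}(\Omega,Q)$, and by the very definition of $j$ in Section \ref{Normaljalg} this identification carries $j$ to the ambient complex structure. The $(-i)$-eigenspace of $j_\mathbb{C}$ on $\mathfrak{b}_\mathbb{C}$ therefore matches $T^{0,1}_{(iE,0)}\mathcal{D}(\Omega,Q)$, giving
\begin{equation*}
\mathfrak{b}_-=\{W+ijW;\,W\in\mathfrak{b}\}.
\end{equation*}
Writing $\tau(U+V+T)=P(X)+ijP(X)$ with $P:\mathfrak{b}\to\mathfrak{b}$ the $\mathbb{R}$-linear map $U+V+T\mapsto T+V/2$ makes it transparent that $\tau(\mathfrak{b})\subset\mathfrak{b}_-$.

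For the homomorphism property, I would expand
\begin{equation*}
[\tau(X),\tau(Y)]=[P(X),P(Y)]-[jP(X),jP(Y)]+i\bigl([P(X),jP(Y)]+[jP(X),P(Y)]\bigr),
\end{equation*}
apply \eqref{XYjjXyjXjY} to $(P(X),P(Y))$ to rewrite the real part as $-j\alpha$ with $\alpha=[P(X),jP(Y)]+[jP(X),P(Y)]$, so that $[\tau(X),\tau(Y)]=(i-j)\alpha$. Equating this with $\tau([X,Y])=P([X,Y])+ijP([X,Y])$, both real and imaginary parts reduce to the single identity
\begin{equation*}
P([X,Y])=-j\alpha.
\end{equation*}
Writing $X=U_1+V_1+T_1,\,Y=U_2+V_2+T_2$ and decomposing both sides along the gradation, many terms drop out because $\mathfrak{b}(\gamma)=0$ for $\gamma\geq 3/2$; the $\mathfrak{b}(1/2)$-matching then follows from \eqref{XYjjXyjXjY} applied to one factor in $\mathfrak{b}(0)$ and the other in $\mathfrak{b}(1/2)$, and the $\mathfrak{b}(0)$-matching uses \eqref{XYjjXyjXjY} for $(T_1,T_2)$ together with $[jT_1,jT_2]\in\mathfrak{b}(2)=0$.

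The key obstacle is the one non-trivial cross-term $\tfrac{1}{4}\bigl([V_1,jV_2]+[jV_1,V_2]\bigr)$ appearing in the $\mathfrak{b}(1)$-part of $\alpha$, whose vanishing is needed for the $\mathfrak{b}(0)$-matching. It is forced by a degree argument: \eqref{XYjjXyjXjY} applied to $V_1,V_2\in\mathfrak{b}(1/2)$ gives
\begin{equation*}
[V_1,V_2]-[jV_1,jV_2]=-j\bigl([jV_1,V_2]+[V_1,jV_2]\bigr),
\end{equation*}
whose left side lies in $\mathfrak{b}(1)$ and whose right side lies in $\mathfrak{b}(0)$, so both must vanish separately. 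Finally, to verify $\tau|_{\mathfrak{b}_-}=\mathrm{id}$ after $\mathbb{C}$-linear extension, take $Z=X+ijX\in\mathfrak{b}_-$ with $X=U+V+T$; since $j$ swaps $\mathfrak{b}(0)$ and $\mathfrak{b}(1)$ and preserves $\mathfrak{b}(1/2)$, a direct computation using $j^2=-\mathrm{id}$ simplifies $\tau(X)+i\tau(jX)$ to $X+ijX$.
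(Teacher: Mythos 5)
Your proof is correct. Note that the paper itself gives no argument for this proposition: it is imported verbatim from Rossi--Vergne \cite[Proposition 4.21]{Rossi}, so there is no in-paper proof to compare against, and what you have written is a legitimate self-contained verification. The ingredients you use are exactly the right ones. The identification $\mathfrak{b}_-=\{W+ijW;\,W\in\mathfrak{b}\}$ is justified as you say, since the orbit map at the base point intertwines $j$ with the ambient complex structure and $W+ijW$ spans the $(-i)$-eigenspace of $j_{\mathbb{C}}$; writing $\tau(X)=P(X)+ijP(X)$ with $P(U+V+T)=T+V/2$ then reduces the homomorphism property, via \eqref{XYjjXyjXjY} applied to $(P(X),P(Y))$ and $j^2=-\mathrm{id}$, to the single real identity $P([X,Y])=-j\bigl([P(X),jP(Y)]+[jP(X),P(Y)]\bigr)$, whose real and imaginary incarnations are indeed equivalent. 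The graded bookkeeping is sound: the $\mathfrak{b}(1/2)$-component follows from \eqref{XYjjXyjXjY} with one argument in $\mathfrak{b}(0)$ and one in $\mathfrak{b}(1/2)$ (all the discarded brackets land in $\mathfrak{b}(3/2)=0$), and the $\mathfrak{b}(0)$-component needs both $[jT_1,jT_2]\in\mathfrak{b}(2)=0$ and the vanishing of the cross-term $[V_1,jV_2]+[jV_1,V_2]$. Your degree-separation argument for the latter --- the left side of $[V_1,V_2]-[jV_1,jV_2]=-j\bigl([jV_1,V_2]+[V_1,jV_2]\bigr)$ lies in $\mathfrak{b}(1)$ while the right side lies in $\mathfrak{b}(0)$, so both vanish --- is exactly the standard trick and is where the real content of the lemma sits. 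Finally, the computation $\tau(X)+i\tau(jX)=X+ijX$ goes through because $j$ interchanges $\mathfrak{b}(0)$ and $\mathfrak{b}(1)$ and preserves $\mathfrak{b}(1/2)$, so the $\mathfrak{b}(1)$-component $U$ that $\tau$ discards from $X$ is recovered (as $jU+ij(jU)$ contributes $ijU+U$ after multiplication by $i$) from the $\mathfrak{b}(0)$-component of $jX$. I see no gaps.
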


Put $\theta=\theta_M\in(\mathfrak{g}_-)^*$. 
\begin{theorem}[Ishi, {\cite[Theorem 12]{ishi 2011}}]
Let $\chi^\theta$ be the function on $B$ defined by
\begin{equation}\label{chithetaex}
\chi^\theta(\exp X)=e^{\theta\circ \tau(X)}\quad(X\in\mathfrak{b}).
\end{equation}
Then the function
\begin{equation*}
B\times\mathcal{D}(\Omega,Q)\ni(b,(U,V))\mapsto\chi^\theta(b)\in\mathbb{C}^\times
\end{equation*}
is a holomorphic multiplier and is $B$-equivalent to $M$.
\end{theorem}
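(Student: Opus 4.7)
The plan is to split the argument into two parts. First, I would verify that $(b,(U,V))\mapsto\chi^\theta(b)$ defines a holomorphic multiplier on $B\times\mathcal{D}(\Omega,Q)$; second, I would invoke Theorem \ref{fundamentaltwo} to deduce its $B$-equivalence with $M$.

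For the existence of $\chi^\theta$ as a group character, I observe that $\theta\circ\tau:\mathfrak{b}\rightarrow\mathbb{C}$ is a Lie algebra homomorphism, being the composition of the Lie algebra homomorphism $\tau$ (by the preceding proposition) with the one-dimensional representation $\theta$ of $\mathfrak{g}_-$ restricted to $\mathfrak{b}_-$. By Remark \ref{Bydatrithe}(i), the exponential map $\exp:\mathfrak{b}\rightarrow B$ is bijective, so $B$ is diffeomorphic to a vector space and in particular simply connected. Consequently $\theta\circ\tau$ integrates uniquely to a Lie group homomorphism $\chi^\theta:B\rightarrow\mathbb{C}^\times$ satisfying $\chi^\theta(\exp X)=e^{\theta\circ\tau(X)}$. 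Since $\chi^\theta(b)$ carries no dependence on $(U,V)$, the function $(b,(U,V))\mapsto\chi^\theta(b)$ is trivially holomorphic in $(U,V)$, and the multiplier cocycle condition reduces to the homomorphism property $\chi^\theta(bb')=\chi^\theta(b)\chi^\theta(b')$.

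For the $B$-equivalence, I apply Theorem \ref{fundamentaltwo} with $G_0=B$: two holomorphic multipliers are $B$-equivalent precisely when they induce the same representation of $\mathfrak{b}_-$. Unwinding the definition of $\theta_{\chi^\theta}$, for $Z=X+iY\in\mathfrak{b}_-$ I compute
\begin{equation*}
\theta_{\chi^\theta}(Z)=\dt e^{t\theta\circ\tau(X)}+i\dt e^{t\theta\circ\tau(Y)}=\theta(\tau(X))+i\theta(\tau(Y))=\theta(\tau(Z)),
\end{equation*}
extending $\tau$ and $\theta$ $\mathbb{C}$-linearly in the last step. Since $\tau|_{\mathfrak{b}_-}=\mathrm{id}_{\mathfrak{b}_-}$ by the preceding proposition and $\theta=\theta_M$ by hypothesis, this yields $\theta_{\chi^\theta}(Z)=\theta_M(Z)$ on all of $\mathfrak{b}_-$. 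The $B$-equivalence of $\chi^\theta$ with $M$ then follows from Theorem \ref{fundamentaltwo}.

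The only nontrivial point is the construction of $\chi^\theta$ as a well-defined character of $B$: integrating the Lie algebra homomorphism $\theta\circ\tau$ to the group level requires simple connectivity of $B$, which is supplied by the bijectivity of $\exp$ in Remark \ref{Bydatrithe}(i). Beyond this, the argument reduces to a direct calculation hinging on $\tau|_{\mathfrak{b}_-}=\mathrm{id}_{\mathfrak{b}_-}$.
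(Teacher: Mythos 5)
Your argument is correct. Note, however, that the paper itself offers no proof of this statement: it is quoted verbatim from Ishi \cite[Theorem 12]{ishi 2011}, so there is no internal proof to compare against. Your reconstruction is the natural one and, as far as I can tell, matches the standard argument: $\theta\circ\tau$ is a Lie algebra homomorphism into the abelian algebra $\mathbb{C}$, it integrates to a character of $B$ because $\exp:\mathfrak{b}\rightarrow B$ is bijective (hence $B$ is simply connected), the cocycle condition degenerates to the homomorphism property for a $z$-independent multiplier, and the $B$-equivalence follows from $\theta_{\chi^\theta}=\theta\circ\tau|_{\mathfrak{b}_-}=\theta|_{\mathfrak{b}_-}=\theta_M|_{\mathfrak{b}_-}$ together with the infinitesimal equivalence criterion. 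One small correction: the criterion ``two holomorphic multipliers are $B$-equivalent iff they induce the same representation of $\mathfrak{b}_-$'' is Theorem \ref{fundamentalone} (Tirao--Wolf) combined with Lemma \ref{equivalentline}, applied with $G_0=B$ acting transitively and trivial isotropy; Theorem \ref{fundamentaltwo} is the existence statement, which is not what you need here. With that reference fixed, the proof stands.
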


By Lemma \ref{equivalentline}, there exists a holomorphic function $f: \mathcal{D}(\Omega,Q)\rightarrow \mathbb{C}^\times$ such that the equality 
\begin{equation*}
\chi^{\theta}(b)=f(b(U,V))M(b,(U,V))f(U,V)^{-1}\quad(b\in B, (U,V)\in\mathcal{D}(\Omega,Q))
\end{equation*}
holds. We define a holomorphic multiplier $M_\theta:G\times\mathcal{D}(\Omega,Q)\rightarrow \mathbb{C}^\times$ by 
\begin{equation*}
M_\theta(g,(U,V))=f(g(U,V))M(g,(U,V))f(U,V)^{-1}\quad(g\in G, (U,V)\in\mathcal{D}(\Omega,Q)).
\end{equation*}
Then holomorphic multipliers $M$ and $M_\theta$ are $G$-equivalent by Lemma \ref{equivalentline}. Now we see that
\begin{equation*}
\dt M (e^{tX},(iE,0))=\dt M_{\theta}(e^{tX},(iE,0))\quad(X\in\mathfrak{k}).
\end{equation*}
Since the maps $K\ni k\mapsto M(k,(iE,0))\in\mathbb{C}^\times$ and $K\ni k\mapsto M_{\theta}(k,(iE,0))\in\mathbb{C}^\times$ define one-dimensional representations of $K$, we have
\begin{equation*}
M(k,(iE,0))=M_{\theta}(k,(iE,0))\quad(k\in \GiE). 
\end{equation*} 
Clearly, we also have 
\begin{equation*}
M_\theta(b,(U,V))=\chi^\theta(b)\quad(b\in B).
\end{equation*}
Now we assume that the representation $T_M$ is unitarizable. Let $\xi\in\mathfrak{g}^*$ be the linear form given by \eqref{xiJKcdotpi}. We denote the unitarization of the representation $\chi^{i\xi}$ by $(\chi^{i\xi},\mathcal{H}_\xi)$, and we denote the reproducing kernel of $\mathcal{H}_\xi$ by $\mathcal{K}^\xi$.

We consider the unitary representations $(T_{\chi^{i\xi'}},\mathcal{H}_{\xi'})$ which are obtained by holomorphic multipliers $M':G\times\mathcal{D}(\Omega,Q)\rightarrow\mathbb{C}^\times$. We review the construction of the intertwining operators among the representations $(T_{\chi^{i\xi'}},\mathcal{H}_{\xi'})$ of $B$ in \cite{ishi 1999, ishi 2001}. The group $B(0)$ acts on $\mathfrak{b}(1)^*$ by 
\begin{equation*}
\langle U,t_0\ell\rangle=\langle t_0^{-1}U,\ell\rangle\quad(U\in\mathfrak{b}(1), t_0\in B(0), \ell\in\mathfrak{b}(1)^*). 
\end{equation*} 

\begin{theorem}[Ishi, \cite{ishi 1999}]
There exists a unique $\mathrm{Ad}(B(0))$-orbit $\mathcal{O}_\xi^*\subset \mathfrak{b}(1)^*$ and a unique measure $d\nu_\xi$ on $\mathcal{O}_\xi^*$ such that 
\begin{equation*}\label{dnuxutFchi}
d\nu_{\xi}(t_0\ell)=|\chi^{\xi}(t_0)|^2d\nu_{\xi}(\ell)\quad(\ell\in\mathcal{O}_\xi^*, t_0\in B(0)),
\end{equation*}
\begin{equation*}
\int_{\mathcal{O}_\xi^*}e^{-\langle U,\ell\rangle}\,d\nu_\xi(\ell)<\infty\text{ for all } U\in\Omega.
\end{equation*}
If $\chi^{i\xi}$ and $\chi^{i\xi'}$ define equivalent unitarizations, then $\mathcal{O}_\xi^*=\mathcal{O}_{\xi'}^*$.
\end{theorem}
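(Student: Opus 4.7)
The plan is to realize $\mathcal{H}_\xi$ concretely as an $L^2$-space on a coadjoint orbit via a Laplace-transform construction, adapting the framework of Rossi--Vergne. First I would exploit the action of the abelian subgroup $N_0=\exp(\mathfrak{b}(1))\subset B$: from the definition of $\tau$ in the preceding proposition and formula (\ref{chithetaex}), the character $\chi^{i\xi}$ is trivial on $N_0$, so $T_{\chi^{i\xi}}(\exp U_0)$ acts on $\mathcal{H}_\xi$ by pure translations $f(U,V)\mapsto f(U-U_0,V)$. Since $N_0$ is abelian and acts by unitaries, Stone's theorem yields a direct-integral decomposition of $\mathcal{H}_\xi$ over the unitary dual $\mathfrak{b}(1)^*$ of $N_0$.

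A Paley--Wiener-type argument comes next: because every $f\in\mathcal{H}_\xi$ is holomorphic on the tube-like region $\{\Im U - Q(V,V)\in\Omega\}$, the spectral measure is forced to be supported in the closed dual cone $\Omega^*\subset\mathfrak{b}(1)^*$. This gives a representation
\begin{equation*}
f(U,V)=\int_{\Omega^*}e^{i\langle U,\ell\rangle}\,\widehat{f}(\ell,V)\,d\nu_\xi(\ell),
\end{equation*}
after which I would impose the unitarity of the $B(0)$-action. Since $B(0)$ acts on $\mathfrak{b}(1)^*$ by the coadjoint action and the multiplier $\chi^{i\xi}(t_0)$ appears with modulus $|\chi^\xi(t_0)|$, a change of variables in the above integral forces the quasi-invariance $d\nu_\xi(t_0\ell)=|\chi^\xi(t_0)|^2\,d\nu_\xi(\ell)$.

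The support of $d\nu_\xi$ is then a $B(0)$-invariant subset of $\Omega^*$, hence a disjoint union of $\mathrm{Ad}(B(0))$-orbits. Irreducibility of $(T_{\chi^{i\xi}},\mathcal{H}_\xi)$ (Theorem \ref{uniqueness of unitarification1}) reduces the support to a single orbit $\mathcal{O}_\xi^*$, yielding existence and uniqueness of the orbit. Uniqueness of $d\nu_\xi$, once normalized by the value of $\mathcal{K}^\xi$ at the base point, follows from the transitivity of $B(0)$ on $\mathcal{O}_\xi^*$ together with the covariance law. The integrability condition $\int_{\mathcal{O}_\xi^*}e^{-\langle U,\ell\rangle}\,d\nu_\xi(\ell)<\infty$ for $U\in\Omega$ is the Laplace-transform reformulation of the finiteness of the diagonal reproducing kernel $\mathcal{K}^\xi((iE+iU,0),(iE+iU,0))$.

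For the last assertion, any unitary intertwiner $\mathcal{H}_\xi\to\mathcal{H}_{\xi'}$ of $B$-representations commutes with the $N_0$-action, so it induces an isomorphism of the associated spectral decompositions; this forces $\mathcal{O}_\xi^*$ and $\mathcal{O}_{\xi'}^*$ to coincide as subsets of $\mathfrak{b}(1)^*$. The main technical obstacle is the handling of the partial Fourier/Laplace transform in the presence of the variable $V\in\mathfrak{b}(1/2)$, where $Q$ couples the two directions in a nontrivial way; this is precisely where the $B(0)$-quasi-invariance of $d\nu_\xi$ is used to eliminate the apparent $V$-dependence and close the argument.
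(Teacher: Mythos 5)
The paper imports this theorem from \cite{ishi 1999} without proof, so there is no in-paper argument to compare against; judged on its own, your sketch follows the correct (Rossi--Vergne/Ishi) strategy: spectral decomposition over the abelian normal subgroup $\exp\mathfrak{b}(1)$, Paley--Wiener localization of the spectral measure to $\overline{\Omega^*}$, $B(0)$-covariance, and irreducibility. The existence part and the final assertion (a $B$-intertwiner commutes with the translations, hence identifies the spectral supports) are essentially right. The serious gap is the uniqueness of the \emph{orbit}. Your argument produces $\mathcal{O}_\xi^*$ as the support of the spectral measure of the given Hilbert space $\mathcal{H}_\xi$ and then proves uniqueness of the relatively invariant measure \emph{on that orbit}; but the two displayed conditions are intrinsic to the pair $(\mathcal{O},\nu)$ and make no reference to $\mathcal{H}_\xi$, so the theorem also asserts that no \emph{other} orbit carries a relatively invariant Radon measure with multiplier $|\chi^\xi|^2$ and convergent Laplace transform. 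Ruling that out is the combinatorial heart of \cite{ishi 1999}: $\overline{\Omega^*}$ decomposes into the $2^r$ orbits $\mathcal{O}_\varepsilon^*$ ($\varepsilon\in\{0,1\}^r$), a relatively invariant measure on $\mathcal{O}_\varepsilon^*$ with prescribed multiplier exists only if that multiplier restricts on the stabilizer of a base point to a specific modular ratio, and these conditions for distinct $\varepsilon$ are mutually exclusive once positivity and Laplace integrability are imposed. Relatedly, your step ``irreducibility reduces the support to a single orbit'' silently uses that the orbit decomposition is finite (or at least countably separated): a $B(0)$-invariant support only yields a splitting of $\mathcal{H}_\xi$ into orthogonal invariant subspaces after one knows the orbit space is standard.

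A second, more technical gap is the $V$-direction. The quasi-invariance of $d\nu_\xi$ does not by itself ``eliminate the $V$-dependence'': the translations \eqref{expX0W0ZWZ} by $\exp(U_0+V_0)$ couple $U$ and $V$ through $Q$, and what is actually needed is the Stone--von Neumann analysis of the restriction of $T_{\chi^{i\xi}}$ to the Heisenberg-type subgroup $\exp(\mathfrak{b}(1/2)\oplus\mathfrak{b}(1))$, whose irreducible representations with central character $\ell$ are realized on the Fock spaces $\mathcal{F}_\ell$ attached to the Hermitian forms $Q_\ell$ appearing in Theorem \ref{Fouriertra}. Only after this fiberwise identification (with the normalization $\|1\|_{\mathcal{F}_\ell}=1$) is the scalar measure $d\nu_\xi$ itself, rather than merely its measure class, pinned down --- via the reproducing-kernel identity $\mathcal{K}^\xi\bigl((U,0),(iE,0)\bigr)=\int_{\mathcal{O}_\xi^*}e^{i\langle U+iE,\ell\rangle}\,d\nu_\xi(\ell)$ and the injectivity of the Laplace transform --- since otherwise any positive density could be shifted between the measure and the fiber norms.
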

In \cite{ishi 1999}, $\mathcal{O}_\xi^*$ and $d\nu_\xi$ are written as $\mathcal{O}_\varepsilon^*$ and $d\mathcal{R}_{\Re s^*}^*$, respectively. The dual cone $\Omega^*\subset\mathfrak{b}(1)^*$ of $\Omega$ is defined by
\begin{equation*}
\Omega^*=\{\ell\in\mathfrak{b}(1)^*;\langle U,\ell\rangle> 0\text{ for all }U\in\overline{\Omega}\backslash\{0\}\}.
\end{equation*}
For $\ell\in\overline{\Omega^*}$, let $Q_\ell$ be the Hermitian form on $\mathfrak{b}(1/2)$ given by 
\begin{equation*}
Q_\ell(V,V')=\langle 2Q(V,V'),\ell\rangle\quad(V,V'\in\mathfrak{b}(1/2)).
\end{equation*} 
Then $Q_\ell$ is positive definite. Let 
\begin{equation*}
N_\ell=\{V\in\mathfrak{b}(1/2);Q_\ell(V,V)=0\}, 
\end{equation*}
and let $\mathcal{F}_\ell$ be the space of holomorphic functions $F$ on $\mathfrak{b}(1/2)$ such that
\begin{enumerate}
 \item[(i)]$F(V+V')=F(V)$ for all $V\in \mathfrak{b}(1/2)$ and $V'\in N_\ell$,
\item[(ii)]$\|F\|_{\mathcal{F}_\ell}^2=\int_{\mathfrak{b}(1/2)/N_\ell} |F(V)|^2e^{-Q_\ell(V,V)}\,d\mu_\ell([V])<\infty$, 
\end{enumerate}
where $d\mu_\ell$ denotes the Lebesgue measure on $\mathfrak{b}(1/2)/N_\ell$ normalized in such a way that 
\begin{equation*}
\|1\|_{\mathcal{F}_\ell}=1. 
\end{equation*}
Let $\mathcal{L}_{\xi}$ be the function space consists of all equivalence classes of measurable functions $f$ on $\mathcal{O}_\xi^*\times \mathfrak{b}(1/2)$ such that 
\begin{enumerate}
\item[(i)]
$f(\ell,\cdot)\in\mathcal{F}_\ell$ for almost all $\ell\in\mathcal{O}_\xi^*$ with respect to the measure $d\nu_{\xi}$,
\item[(ii)]
$\|f\|_{\mathcal{L}_\xi}^2=\int_{\mathcal{O}^*_\xi}\|f(\ell,\cdot)\|_{\mathcal{F}_\ell}^2d\nu_{\xi}(\ell)<\infty$.
\end{enumerate}
\begin{theorem}[Ishi, {\cite[Theorem 4.10]{ishi 1999}}]\label{Fouriertra}
The map $\phi_{\xi}:\mathcal{L}_{\xi}\rightarrow\mathcal{H}_{\xi}$ defined by
\begin{equation*}
\phi_{\xi} f(U,V)=\int_{\mathcal{O}_\xi^*}e^{i\langle U,\ell\rangle}f(\ell,V)\,d\nu_{\xi}(\ell)\quad((U,V)\in\mathcal{D}(\Omega,Q))
\end{equation*}
gives a Hilbert space isomorphism.
\end{theorem}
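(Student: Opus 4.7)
The plan is to realize $\mathcal{H}_\xi$ as a generalized Laplace–Fock transform of $\mathcal{L}_\xi$, following the Bargmann–Paley–Wiener template. The guiding observation is that $\mathcal{L}_\xi$ is, fiberwise in $\ell\in\mathcal{O}_\xi^*$, a Fock space $\mathcal{F}_\ell$ with reproducing kernel $K_\ell(V,V')=e^{Q_\ell(V,V')}$, while $\mathcal{H}_\xi$ carries a reproducing kernel $\mathcal{K}^\xi$ on $\mathcal{D}(\Omega,Q)$ that is uniquely determined by the covariance relation of Lemma \ref{lem:1} with respect to the multiplier $\chi^{i\xi}$ (cf.\ Theorem \ref{uniqueness of unitarification1}). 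So the natural candidate identity to keep in mind is
\[
\mathcal{K}^\xi((U,V),(U',V'))=\int_{\mathcal{O}_\xi^*}e^{i\langle U-\overline{U'},\ell\rangle+Q_\ell(V,V')}\,d\nu_\xi(\ell),
\]
and most of the work consists of unpacking this identity as a Plancherel theorem between $\mathcal{L}_\xi$ and $\mathcal{H}_\xi$.

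First I would verify that $\phi_\xi f$ is a well-defined holomorphic function on $\mathcal{D}(\Omega,Q)$. The Fock reproducing property gives $|f(\ell,V)|\le\|f(\ell,\cdot)\|_{\mathcal{F}_\ell}\,e^{Q_\ell(V,V)/2}$, while $|e^{i\langle U,\ell\rangle}|\,e^{Q_\ell(V,V)/2}=e^{-\langle\Im U-Q(V,V),\ell\rangle}$; since $\Im U-Q(V,V)\in\Omega$ and the integrability hypothesis on $d\nu_\xi$ guarantees $\int_{\mathcal{O}_\xi^*}e^{-\langle U_0,\ell\rangle}\,d\nu_\xi(\ell)<\infty$ for every $U_0\in\Omega$, Cauchy–Schwarz against $d\nu_\xi$ shows that the defining integral converges absolutely and locally uniformly on $\mathcal{D}(\Omega,Q)$, hence gives a holomorphic function.

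The next step is isometry. I would split $U=U_0+iU_1$ with $U_0\in\mathfrak{b}(1)$ and $U_1\in\mathfrak{b}(1)$ satisfying $U_1-Q(V,V)\in\Omega$, and use the fact that translation by $U_0$ is a unitary part of the $B$-action on $\mathcal{H}_\xi$ to integrate the $U_0$-variable by the classical Plancherel theorem on $\mathfrak{b}(1)$. This reduces $\|\phi_\xi f\|_{\mathcal{H}_\xi}^2$ to an iterated integral over $\mathcal{O}_\xi^*\times(\mathfrak{b}(1/2)/N_\ell)$; the quasi-invariance relation $d\nu_\xi(t_0\ell)=|\chi^\xi(t_0)|^2\,d\nu_\xi(\ell)$ is exactly what is needed to cancel the Jacobian coming from the $B(0)$-action on the cone, and the Fock weight $e^{-Q_\ell(V,V)}\,d\mu_\ell([V])$ emerges from integrating out the $U_1$-direction against $e^{-2\langle U_1-Q(V,V),\ell\rangle}$. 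After the normalizations of $d\nu_\xi$ and $d\mu_\ell$ are matched, one obtains $\|\phi_\xi f\|_{\mathcal{H}_\xi}^2=\|f\|_{\mathcal{L}_\xi}^2$.

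Finally, for surjectivity I would define the natural $B$-action on $\mathcal{L}_\xi$ (translation in $U_0$ becomes modulation in $\ell$, and the $B(0)$-action becomes the geometric action on $\mathcal{O}_\xi^*$ twisted by $\chi^{i\xi}$) and verify by direct computation that $\phi_\xi$ intertwines it with $T_{\chi^{i\xi}}$; then the closed image of $\phi_\xi$ is a nonzero $B$-invariant closed subspace of $\mathcal{H}_\xi$, and Theorem \ref{uniqueness of unitarification1} forces it to equal $\mathcal{H}_\xi$. The main obstacle I anticipate is the bookkeeping in Step~2, namely fixing the precise normalizations of $d\nu_\xi$ on the coadjoint orbit and of $d\mu_\ell$ on $\mathfrak{b}(1/2)/N_\ell$ so that no stray constant appears; this amounts to showing that the partial Laplace transform of $d\nu_\xi$ along $\Omega$ reproduces the $B$-covariant kernel $\mathcal{K}^\xi$, which is essentially the existence-and-uniqueness statement of the orbit measure cited immediately before Theorem \ref{Fouriertra} and which feeds back into identifying the two normalizations.
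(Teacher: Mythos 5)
This theorem is not proved in the paper: it is imported verbatim from Ishi's 1999 article, and the only traces of its proof visible here are the identities \eqref{PhixifZWfk} and \eqref{kUVkZWmath} recorded immediately after the statement, together with the uniqueness theorem (Theorem \ref{uniqueness of unitarification1}) they are meant to feed into. Measured against that intended argument, your Step 1 (absolute and locally uniform convergence via the Fock bound $|f(\ell,V)|\le\|f(\ell,\cdot)\|_{\mathcal{F}_\ell}e^{Q_\ell(V,V)/2}$ and Cauchy--Schwarz against $d\nu_\xi$) and your Step 3 (surjectivity of a nonzero bounded $B$-intertwiner from irreducibility of the unitarization) are correct.

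The gap is in Step 2. You propose to compute $\|\phi_\xi f\|_{\mathcal{H}_\xi}^2$ by Plancherel in the real direction $U_0\in\mathfrak{b}(1)$ and then integrate out the imaginary direction. This presupposes that the norm of $\mathcal{H}_\xi$ is given by integrating $|F(U,V)|^2$ against a weight on $\mathcal{D}(\Omega,Q)$, which fails for exactly the parameters this paper cares about: when $\mathcal{O}_\xi^*$ is a lower-dimensional $\mathrm{Ad}(B(0))$-orbit, $d\nu_\xi$ is singular with respect to Lebesgue measure on $\mathfrak{b}(1)^*$, the function $U_0\mapsto\phi_\xi f(U_0+iU_1,V)$ is not square-integrable, and $\mathcal{H}_\xi$ is not a weighted Bergman space (the introduction stresses that $\Gamma^2(L)$ may vanish while the unitarization is nonzero). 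So the iterated integral you want to reduce to does not converge in general, and there is no ``Jacobian cancellation'' to perform. The correct replacement --- and the actual content of the cited proof --- is precisely the reproducing-kernel identity you relegate to normalization bookkeeping at the end: one shows $\phi_\xi f(U,V)=(f|k_{(U,V)})_{\mathcal{L}_\xi}$ with $k_{(U_0,V_0)}(\ell,V)=e^{-i\langle\overline{U_0},\ell\rangle}e^{Q_\ell(V,V_0)}$, that the family $k_{(U,V)}$ is total in $\mathcal{L}_\xi$ (this gives injectivity, which you never address), and that the transported kernel $(k_{(U',V')}|k_{(U,V)})_{\mathcal{L}_\xi}=\Delta_\xi\bigl(\tfrac{U-\overline{U'}}{i}-2Q(V,V')\bigr)$ satisfies the covariance relation \eqref{mathcalKgz} for the multiplier $\chi^{i\xi}$, using the quasi-invariance of $d\nu_\xi$ for the $B(0)$-part and the behavior of the Fock kernels under $\exp(\mathfrak{b}(1)\oplus\mathfrak{b}(1/2))$. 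Lemma \ref{lem:1} then says the image is a unitarization of $T_{\chi^{i\xi}}$, and Theorem \ref{uniqueness of unitarification1} identifies it isometrically with $\mathcal{H}_\xi$. That kernel computation is the theorem, not a constant-matching afterthought.
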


We define a unitary representation $\check{T}_{\chi^{i\xi}}$ of $B$ on $\mathcal{L}_\xi$ by 
\begin{equation*}
\phi_\xi(\check{T}_{\chi^{i\xi}}(b)f)=T_{\chi^{i\xi}}(b)\phi_\xi(f)\quad(b\in B, f\in \mathcal{L}_\xi). 
\end{equation*}
For $(U_0,V_0)\in\mathcal{D}(\Omega,Q)$, let
\begin{equation*}
k_{(U_0,V_0)}(\ell,V)=e^{-i\langle \overline{U_0}, \ell\rangle}e^{Q_\ell(V,V_0)}\quad((\ell,V)\in\mathcal{O}_\xi^*\times\mathfrak{b}(1/2)).
\end{equation*}
Then $k_{(U_0,V_0)}\in\mathcal{L}_{\xi}$, and we have the following equalities (see {\cite[p. 450]{ishi 1999}}):
\begin{equation}\label{PhixifZWfk}
\phi_{\xi} f(U,V)=(f|k_{(U,V)})_{\mathcal{L}_{\xi}}\quad((U,V)\in\mathcal{D}(\Omega,Q), f\in\mathcal{L}_{\xi}),
\end{equation}
\begin{equation}\label{kUVkZWmath}\begin{split}
(k_{(U',V')}|k_{(U,V)})_{\mathcal{L}_{\xi}}=\mathcal{K}^{\xi}((U,V),(U',V'))
\\((U,V),(U',V')\in\mathcal{D}(\Omega,Q)).
\end{split}\end{equation}

Suppose that the unitarizations of $T_{\chi^{i\xi}}$ and $T_{\chi^{i\xi'}}$ are equivalent as unitary representations of $B$. As in {\cite[p. 541]{ishi 2001}}, we fix a function $\Upsilon\neq 0$ on $\mathcal{O}_{\xi}^*$, which is also a function on $\mathcal{O}_{\xi'}^*$, such that
\begin{equation*}
\Upsilon(t_0\ell)=\overline{\chi^{i\xi}(t_0)\chi^{-i\xi'}(t_0)}\Upsilon(\ell)\quad(t_0\in B(0),\ell\in\mathcal{O}_{\xi}^*).
\end{equation*}
\begin{proposition}[Ishi, {\cite[Proposition 4.5]{ishi 2001}}]\label{Thefollowingm}
There exists a nonzero constant $C$ such that the following map $\check{\Psi}_{\xi,\xi'}:\mathcal{L}_{\xi}\rightarrow\mathcal{L}_{\xi'}$ gives the intertwining operator between the unitary representations $(\check{T}_{\chi^{i\xi}},\mathcal{L}_{\xi})$ and $(\check{T}_{\chi^{i\xi'}},\mathcal{L}_{\xi'})$ of $B$:
\begin{equation*}
\check{\Psi}_{\xi,\xi'}f(\ell,V)=C\Upsilon(\ell)f(\ell,\,V)\quad(f\in\mathcal{L}_{\xi},(\ell,V)\in \mathcal{O}_{\xi}^*\times\mathfrak{b}(1/2)).
\end{equation*}
\end{proposition}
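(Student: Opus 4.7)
The plan is to transport the problem via the isomorphism $\phi_\xi$ of Theorem \ref{Fouriertra} to the ``spectral'' realization $\mathcal{L}_\xi$, write down the transferred action $\check{T}_{\chi^{i\xi}}$ explicitly, and then check that fiberwise multiplication by $\Upsilon(\ell)$ intertwines the two sides. Irreducibility of the unitarization will then guarantee that any intertwiner is a scalar multiple of the one constructed.

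First, I would compute $\check{T}_{\chi^{i\xi}}(b)$ on the three types of generators of $B = B(0) \ltimes \exp(\mathfrak{b}(1/2) \oplus \mathfrak{b}(1))$. Substituting the explicit action \eqref{expX0W0ZWZ} into $T_{\chi^{i\xi}}(b) \phi_\xi(f) = \phi_\xi(\check{T}_{\chi^{i\xi}}(b)f)$, one finds that for $b = \exp U_0$ with $U_0 \in \mathfrak{b}(1)$ the operator multiplies $f(\ell,V)$ by the $\ell$-dependent phase $e^{-i\langle U_0, \ell\rangle}$; for $b = \exp V_0$ with $V_0 \in \mathfrak{b}(1/2)$ it combines a translation $V \mapsto V - V_0$ with a factor built from $Q_\ell(V, V_0)$; and for $t_0 \in B(0)$ it is the change of variables $f(\ell, V) \mapsto f(t_0^{-1}\ell, t_0^{-1}V)$ multiplied by the scalar $\chi^{i\xi}(t_0)/|\chi^{\xi}(t_0)|^{2}$, the denominator coming from the quasi-invariance $d\nu_\xi(t_0\ell)=|\chi^{\xi}(t_0)|^2 d\nu_\xi(\ell)$ after change of variable. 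The crucial point is that the first two families have no $\xi$-dependent $\ell$-factor: the $\ell$- and $V$-dependence is governed purely by the geometry of $\mathcal{O}_\xi^*$ and $Q_\ell$, which are shared with the primed model via $\mathcal{O}_\xi^*=\mathcal{O}_{\xi'}^*$.

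Next, I would verify the intertwining case by case. For the $\exp \mathfrak{b}(1)$ and $\exp \mathfrak{b}(1/2)$ families the check is immediate, since multiplication by the fiberwise-constant function $\Upsilon(\ell)$ commutes with multiplication by an $\ell$-dependent phase and with $V$-translations composed with $Q_\ell$-factors. The substantive case is $t_0\in B(0)$, where the intertwining condition reduces to
\begin{equation*}
\Upsilon(\ell)\,\frac{\chi^{i\xi}(t_0)}{|\chi^{\xi}(t_0)|^{2}}\;=\;\Upsilon(t_0^{-1}\ell)\,\frac{\chi^{i\xi'}(t_0)}{|\chi^{\xi'}(t_0)|^{2}}.
\end{equation*}
Using $\chi^{\theta}(\exp T)=e^{\theta\circ\tau(T)}$ with $\tau(T)=T+ijT$ for $T\in\mathfrak{b}(0)$, one expands the ratio of scalar factors in terms of $\xi$ and $\xi'$ and sees that this identity is precisely the defining covariance $\Upsilon(t_0\ell)=\overline{\chi^{i\xi}(t_0)\chi^{-i\xi'}(t_0)}\,\Upsilon(\ell)$ of $\Upsilon$, rewritten with $\ell$ replaced by $t_0^{-1}\ell$. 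To ensure that multiplication by $\Upsilon$ really maps $\mathcal{L}_\xi$ into $\mathcal{L}_{\xi'}$ for a suitable $C$, I would compare the two measures $|\Upsilon(\ell)|^2\,d\nu_\xi(\ell)$ and $d\nu_{\xi'}(\ell)$ on the common orbit $\mathcal{O}_\xi^*$: both are $B(0)$-quasi-invariant with the same Radon-Nikodym cocycle, by equivariance of $\Upsilon$ combined with the cocycle of $d\nu_\xi$, and $B(0)$ acts transitively, so the two measures are proportional. Setting $|C|^2$ equal to this ratio makes $\check{\Psi}_{\xi,\xi'}$ an isometry up to a phase; $C$ is nonzero because $\Upsilon$ is not identically zero, and by Theorem \ref{uniqueness of unitarification1} applied to $B$, any other intertwiner differs from this one by a scalar.

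The main obstacle I anticipate is the bookkeeping around the two characters $\chi^{i\xi}$ and $\chi^{\xi}$ on $B(0)$: from $\chi^{\theta}(\exp T)=e^{\theta\circ\tau(T)}$ one reads off $|\chi^{i\xi}(\exp T)|^{2}=e^{-2\xi(jT)}$ while $|\chi^{\xi}(\exp T)|^{2}=e^{2\xi(T)}$, so the two ``moduli'' track orthogonal components of $\xi$ with respect to the decomposition $T+ijT$. Reconciling the resulting exponents against the covariance of $\Upsilon$ requires carefully unpacking the $\mathbb{C}$-linear extension of $\xi$ through $\tau$ and tracking real versus imaginary parts, and it is exactly at this step that the identity collapses neatly because $\Upsilon$ was tailored for the job; once that scalar identity is in place, the remaining pieces of the argument are purely formal consequences of irreducibility and measure-theoretic transitivity.
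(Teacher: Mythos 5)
The paper does not prove this proposition: it is imported verbatim from Ishi's 2001 paper (\cite[Proposition 4.5]{ishi 2001}) and used as a black box, so there is no in-paper proof to compare your argument against. Judged on its own, your reconstruction follows what is essentially the standard route --- transfer the representation through $\phi_\xi$, compute the transferred action on the generators coming from $\exp\mathfrak{b}(1)$, $\exp\mathfrak{b}(1/2)$ and $B(0)$, and check that multiplication by $\Upsilon$ intertwines --- and your formulas for the transferred operators are correct (for instance, $\chi^{i\xi}$ is trivial on $\exp(\mathfrak{b}(1/2)\oplus\mathfrak{b}(1))$ because $\theta\circ\tau$ kills $[\mathfrak{b},\mathfrak{b}]$, so those two families indeed carry no $\xi$-dependence beyond the shared orbit $\mathcal{O}_\xi^*=\mathcal{O}_{\xi'}^*$).

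The one place where your argument is asserted rather than closed is exactly the step you flag at the end. After substituting the covariance of $\Upsilon$, the $B(0)$-intertwining condition does not collapse to a tautology; it reduces to the residual scalar identity $|\chi^{i\xi}(t_0)|^{2}\,|\chi^{\xi}(t_0)|^{-2}=|\chi^{i\xi'}(t_0)|^{2}\,|\chi^{\xi'}(t_0)|^{-2}$ for $t_0\in B(0)$, and the same identity is what makes the two measures in your unitarity argument have matching quasi-invariance cocycles. By your own computation the two moduli track different components of $\xi$ (namely $|\chi^{i\xi}(\exp T)|^{2}=e^{-2\xi(jT)}$ versus $|\chi^{\xi}(\exp T)|^{2}=e^{2\xi(T)}$), so this identity is \emph{not} a consequence of the covariance of $\Upsilon$ alone; it has to come from the compatibility tying $d\nu_\xi$ to $\Delta_\xi$, i.e.\ from $\Delta_\xi(t_0E)=\int_{\mathcal{O}_\xi^*}e^{-\langle t_0E,\ell\rangle}\,d\nu_\xi(\ell)=|\chi^{\xi}(t_0)|^{2}\Delta_\xi(E)$ combined with $\Delta_\xi(t_0E)=|\chi^{i\xi}(t_0)|^{2}$, which forces $|\chi^{i\xi}|=|\chi^{\xi}|$ on $B(0)$ and makes both sides of the residual identity trivial. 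You should make that reconciliation explicit; as written, the proof leans on ``the identity collapses neatly'' at precisely the only nontrivial point.
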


Let $\Delta_{\xi}$ and $\Delta_{\xi,\xi'}$ be the functions on $\Omega$ define by
\begin{equation*}
\Delta_{\xi}(t_0E)=|\chi^{i\xi}(t_0)|^2,\quad \Delta_{\xi,\xi'}(t_0E)=\overline{\chi^{i\xi}(t_0)\chi^{-i\xi'}(t_0)}\quad(t_0\in B(0)).
\end{equation*}
\begin{proposition}[Ishi, {\cite[Corollary 2.5 and Proposition 4.6]{ishi 1999}}]
Two functions $\Delta_{\xi}$ and $\Delta_{\xi,\xi'}$ extend to functions on $\Omega+i\mathfrak{b}(1)$ holomorphically, and for $(U,V),(U',V')\in\mathcal{D}(\Omega,Q)$, we have
\begin{equation*}
\mathcal{K}^{\xi}((U,V),(U',V'))=\Delta_{\xi}\left(\tfrac{U-\overline{U'}}{i}-2Q(V,V')\right).
\end{equation*}
\end{proposition}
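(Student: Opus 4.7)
The plan is to derive the kernel identity first, and then read off the holomorphic extension of $\Delta_\xi$ (and of $\Delta_{\xi,\xi'}$) from the resulting Laplace transform representation.

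First I would use the Hilbert space isomorphism $\phi_\xi:\mathcal{L}_\xi\to \mathcal{H}_\xi$ together with \eqref{kUVkZWmath} to reduce the computation of $\mathcal{K}^\xi((U,V),(U',V'))$ to the explicit inner product $(k_{(U',V')}\,|\,k_{(U,V)})_{\mathcal{L}_\xi}$. Substituting the formula for $k_{(U_0,V_0)}$, applying Fubini on $\mathcal{O}_\xi^\ast \times \mathfrak{b}(1/2)/N_\ell$, and using that the map $V\mapsto e^{Q_\ell(\cdot,V)}$ realizes the reproducing kernel of the Fock space $\mathcal{F}_\ell$ (so that the inner $V$-integral evaluates to $e^{Q_\ell(V,V')}$), I obtain
\begin{equation*}
\mathcal{K}^\xi((U,V),(U',V'))=\int_{\mathcal{O}_\xi^\ast} e^{i\langle U,\ell\rangle}\,e^{-i\langle\overline{U'},\ell\rangle}\,e^{Q_\ell(V,V')}\,d\nu_\xi(\ell).
\end{equation*}
Since $Q_\ell(V,V')=\langle 2Q(V,V'),\ell\rangle$, the exponent can be rewritten as $-\langle W,\ell\rangle$ with $W=\tfrac{U-\overline{U'}}{i}-2Q(V,V')$.

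Next I would check that $W\in\Omega+i\mathfrak{b}(1)$, so that the integral makes sense as a Laplace transform value. A direct computation shows
\begin{equation*}
\Re W=\Im U+\Im U'-2\Re Q(V,V'),
\end{equation*}
and the Hermitian identity $Q(V-V',V-V')=Q(V,V)+Q(V',V')-2\Re Q(V,V')$ rewrites this as
\begin{equation*}
\Re W=(\Im U-Q(V,V))+(\Im U'-Q(V',V'))+Q(V-V',V-V')\in \Omega+\Omega+\overline{\Omega}\subset\Omega,
\end{equation*}
where the first two summands are in $\Omega$ by the Siegel domain condition and the third is in $\overline{\Omega}$ by $\Omega$-positivity of $Q$.

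For the holomorphic extension of $\Delta_\xi$, I would define $\tilde{\Delta}_\xi(W):=\int_{\mathcal{O}_\xi^\ast} e^{-\langle W,\ell\rangle}\,d\nu_\xi(\ell)$ on the tube $\Omega+i\mathfrak{b}(1)$; absolute convergence and holomorphicity on the tube follow from the integrability condition $\int e^{-\langle U,\ell\rangle}d\nu_\xi(\ell)<\infty$ for $U\in\Omega$ together with a standard Morera argument. Evaluating at $W=t_0 E$, changing variables $\ell=t_0\ell'$ and using the transformation rule of $d\nu_\xi$ yields $\tilde{\Delta}_\xi(t_0 E)=|\chi^{i\xi}(t_0)|^{2}\tilde{\Delta}_\xi(E)$. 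Normalising $d\nu_\xi$ so that $\tilde{\Delta}_\xi(E)=1$ identifies $\tilde{\Delta}_\xi|_\Omega=\Delta_\xi$; by real analyticity the two holomorphic extensions agree on the whole tube. Combining with the previous paragraph gives the kernel formula.

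The same scheme handles $\Delta_{\xi,\xi'}$: using the function $\Upsilon$ from Proposition \ref{Thefollowingm} and its equivariance under $B(0)$, the Laplace-type transform $W\mapsto\int_{\mathcal{O}_\xi^\ast} e^{-\langle W,\ell\rangle}\Upsilon(\ell)\,d\nu_\xi(\ell)$ (up to a suitable normalising factor determined by the identities $\chi^{i\xi}\overline{\chi^{i\xi}}=|\chi^{i\xi}|^2$ etc.) is holomorphic on the tube by the same absolute-convergence estimate, and a change of variables $\ell\mapsto t_0^{-1}\ell$ together with the transformation rules for $\Upsilon$ and $d\nu_\xi$ reproduces $\overline{\chi^{i\xi}(t_0)\chi^{-i\xi'}(t_0)}$ on $\Omega$, identifying it with $\Delta_{\xi,\xi'}$. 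The main technical obstacle I expect is the careful juggling of the characters and the verification that the Jacobian factor from the transformation of $d\nu_\xi$ combines with the transformation rule for $\Upsilon$ to give exactly the prescribed boundary values on $\Omega$; the Fock-space and cone-positivity computations, by contrast, are routine once set up.
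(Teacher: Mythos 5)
This proposition is quoted from Ishi's 1999 paper and the present paper supplies no proof of its own, so your argument has to stand on its own merits. The part of your plan concerning the kernel identity and the extension of $\Delta_\xi$ is sound: reducing $\mathcal{K}^\xi((U,V),(U',V'))$ to $(k_{(U',V')}\mid k_{(U,V)})_{\mathcal{L}_\xi}$ via \eqref{kUVkZWmath}, evaluating the inner $V$-integral by the reproducing property of $\mathcal{F}_\ell$, rewriting the exponent as $-\langle W,\ell\rangle$ with $W=\tfrac{U-\overline{U'}}{i}-2Q(V,V')$, checking $\Re W\in\Omega$ (your appeal to Remark \ref{Wenotethat} for $\Omega+\overline{\Omega}\subset\Omega$ is exactly what is needed), and identifying $\int_{\mathcal{O}_\xi^*}e^{-\langle\cdot,\ell\rangle}\,d\nu_\xi$ with $\Delta_\xi$ on $\Omega$ by relative invariance under the transitive $B(0)$-action all work, modulo the normalization of $d\nu_\xi$, which is bookkeeping.

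The treatment of $\Delta_{\xi,\xi'}$ has a genuine gap. The transform $W\mapsto\int_{\mathcal{O}_\xi^*}e^{-\langle W,\ell\rangle}\Upsilon(\ell)\,d\nu_\xi(\ell)$ does \emph{not} take the value $\overline{\chi^{i\xi}(t_0)\chi^{-i\xi'}(t_0)}$ at $W=t_0E$: the substitution $\ell=t_0\ell'$ produces the Jacobian factor $|\chi^{i\xi}(t_0)|^2$ from $d\nu_\xi$ in addition to the factor $\overline{\chi^{i\xi}(t_0)\chi^{-i\xi'}(t_0)}$ from $\Upsilon$, so on $\Omega$ the transform equals a constant times $\Delta_\xi\cdot\Delta_{\xi,\xi'}$ (this product is precisely what appears in the proof of Lemma \ref{prop:2}), and the discrepancy is a nonconstant function, not a ``normalising factor.'' To extract a holomorphic extension of $\Delta_{\xi,\xi'}$ from this you must divide by the extension of $\Delta_\xi$, which requires knowing that $\Delta_\xi$ is zero-free on all of $\Omega+i\mathfrak{b}(1)$ --- a nontrivial fact that a Laplace-transform representation does not provide by itself, since the Laplace transform of a positive measure is positive on the real cone but may vanish at complex points of the tube. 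Note also that $\Delta_{\xi,\xi'}$ is in general not the Laplace transform of any positive measure, because $\overline{\chi^{i\xi}\chi^{-i\xi'}}$ is an arbitrary complex character of $B(0)$; the standard route (and the content of the cited Corollary 2.5) is that every such relatively invariant function is a product of complex powers of the basic relative invariants $\Delta_1,\dots,\Delta_r$ of $\Omega$, each of which extends holomorphically and without zeros to the tube. Some input of this kind is indispensable for the extension of $\Delta_{\xi,\xi'}$, and your sketch does not supply it.
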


\subsection{Unitary equivalences among representations of $G$}\label{Intertwini}
Suppose that the unitarizations $(T_{M_{i\xi}},\mathcal{H}_{\xi})$ and $(T_{M_{i\xi'}},\mathcal{H}_{\xi'})$ are equivalent as unitary representations of $B$. By Schur's lemma and the decomposition $G=BK$, the unitarizations are equivalent as unitary representations of $G$ if and only if the intertwining operator between representations $(T_{M_{i\xi}}|_B,\mathcal{H}_{\xi})$ and $(T_{M_{i\xi'}}|_B,\mathcal{H}_{\xi'})$ preserves the actions of $\GiE$. From this point of view, we get the equation (\ref{diff}) in Proposition \ref{Thefollowingc} below which determines whether the unitarizations are equivalent as unitary representations of $G$.

From now on, we assume that $(T_{M_{i\xi}}|_B,\mathcal{H}_{\xi})$ and $(T_{M_{i\xi'}}|_B,\mathcal{H}_{\xi'})$ are equivalent as unitary representations of $B$. Let $\Psi_{\xi,\xi'}=\phi_{\xi'}\circ\check{\Psi}_{\xi,\xi'}\circ\phi_{\xi}^{-1}:\mathcal{H}_{\xi}\rightarrow\mathcal{H}_{\xi'}$.
\begin{lemma}\label{prop:2}
There exists a nonzero constant $C'$ such that 
\begin{equation*}
\begin{split}
\Psi_{\xi,\xi'}(\mathcal{K}_{(iE,0)}^{\xi})(U,V)&=C'\mathcal{K}_{(iE,0)}^{\xi'}(U,V)\Delta_{\xi,\xi'}\left(\frac{U-\overline{iE}}{i}\right)\quad((U,V)\in\mathcal{D}(\Omega,Q)).
\end{split}
\end{equation*}
\end{lemma}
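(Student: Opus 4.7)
The strategy is to push $\mathcal{K}^{\xi}_{(iE,0)}$ through the factorization $\Psi_{\xi,\xi'}=\phi_{\xi'}\circ\check\Psi_{\xi,\xi'}\circ\phi_\xi^{-1}$ and identify the resulting integral with a Laplace transform. First, combining \eqref{PhixifZWfk} with the reproducing property of $\mathcal{K}^{\xi}$ shows that $\phi_\xi(k_{(U_0,V_0)})=\mathcal{K}^{\xi}_{(U_0,V_0)}$ for every $(U_0,V_0)\in\mathcal{D}(\Omega,Q)$, so in particular $\phi_\xi^{-1}(\mathcal{K}^{\xi}_{(iE,0)})=k_{(iE,0)}$. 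Because $\overline{iE}=-iE$ and $V_0=0$, the explicit formula reduces to $k_{(iE,0)}(\ell,V)=e^{-\langle E,\ell\rangle}$, independent of $V$. Proposition~\ref{Thefollowingm} then gives $\check\Psi_{\xi,\xi'}(k_{(iE,0)})(\ell,V)=C\,\Upsilon(\ell)\,e^{-\langle E,\ell\rangle}$.

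Applying $\phi_{\xi'}$ produces
\[
\Psi_{\xi,\xi'}(\mathcal{K}^{\xi}_{(iE,0)})(U,V)=C\int_{\mathcal{O}_{\xi'}^*}e^{-\langle E-iU,\ell\rangle}\Upsilon(\ell)\,d\nu_{\xi'}(\ell)=:C\,F(E-iU),
\]
with $F(W):=\int_{\mathcal{O}_{\xi'}^*}e^{-\langle W,\ell\rangle}\Upsilon(\ell)\,d\nu_{\xi'}(\ell)$ on the tube $\Omega+i\mathfrak{b}(1)$. To evaluate $F$ I would exploit its $B(0)$-equivariance: for $t_0\in B(0)$, the substitution $\ell=t_0\ell'$ combined with the invariance $\langle t_0 E,t_0\ell'\rangle=\langle E,\ell'\rangle$, the quasi-invariance $d\nu_{\xi'}(t_0\ell')=|\chi^{\xi'}(t_0)|^2d\nu_{\xi'}(\ell')$, and the transformation law $\Upsilon(t_0\ell')=\overline{\chi^{i\xi}(t_0)\chi^{-i\xi'}(t_0)}\Upsilon(\ell')$ yields
\[
F(t_0E)=\overline{\chi^{i\xi}(t_0)\chi^{-i\xi'}(t_0)}\,|\chi^{\xi'}(t_0)|^2\,F(E).
\]
Next I would identify $|\chi^{\xi'}(t_0)|^2=\Delta_{\xi'}(t_0E)$, which comes from applying the same substitution to the Laplace-transform formula $\Delta_{\xi'}(W)=\int_{\mathcal{O}_{\xi'}^*}e^{-\langle W,\ell\rangle}d\nu_{\xi'}(\ell)$ (itself the $V=V'=0,U'=iE$ specialization of the reproducing kernel identity) and using $\Delta_{\xi'}(E)=1$. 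Hence $F(t_0E)=F(E)\,\Delta_{\xi'}(t_0E)\,\Delta_{\xi,\xi'}(t_0E)$, and since $B(0)$ acts simply transitively on $\Omega$, this determines $F$ on $\Omega$; by holomorphic extension the identity persists on $\Omega+i\mathfrak{b}(1)$.

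Specializing to $W=E-iU=(U-\overline{iE})/i$ and invoking the $V$-independent formula $\mathcal{K}^{\xi'}_{(iE,0)}(U,V)=\Delta_{\xi'}((U-\overline{iE})/i)$ yields the asserted identity with $C'=C\cdot F(E)$. The main subtlety is the bookkeeping between the closely related characters $\chi^{\xi'}$ and $\chi^{i\xi'}$ required to identify $|\chi^{\xi'}(t_0)|^2$ with $\Delta_{\xi'}(t_0E)$, together with justifying the convergence and holomorphic extension of $F$ to the tube domain; once these are settled the remainder is a direct chase of definitions.
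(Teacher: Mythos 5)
Your proposal follows essentially the same route as the paper's proof: identify $\phi_\xi^{-1}(\mathcal{K}^{\xi}_{(iE,0)})=k_{(iE,0)}$ via \eqref{PhixifZWfk} and \eqref{kUVkZWmath}, apply Proposition~\ref{Thefollowingm} and Theorem~\ref{Fouriertra} to get the Laplace-transform expression, evaluate it on $\Omega$ by writing $U_0+E=t_0E$ and using the quasi-invariance of $d\nu_{\xi'}$ together with the transformation law of $\Upsilon$, and finish by analytic continuation. The points you flag (the $\chi^{\xi'}$ versus $\chi^{i\xi'}$ bookkeeping and the holomorphic extension) are handled in the paper by the definition $\Delta_{\xi'}(t_0E)=|\chi^{i\xi'}(t_0)|^2$ and by citation to Ishi's results, so no new argument is needed.
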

\begin{proof}
Put $\mathcal{K}'=\Psi_{\xi,\xi'}(\mathcal{K}_{(iE,0)}^{\xi})$. By (\ref{PhixifZWfk}) and (\ref{kUVkZWmath}), for $(U,V)\in\mathcal{D}(\Omega,Q)$, we have
\begin{equation*}
\phi_{\xi}(k_{(iE,0)})(U,V)=(k_{(iE,0)}|k_{(U,V)})_{\mathcal{L}_{\xi}}=\mathcal{K}^{\xi}((U,V),(iE,0))=\mathcal{K}_{(iE,0)}^{\xi}(U,V).
\end{equation*}
Thus 
\begin{equation*}
\mathcal{K}'=\phi_{{\xi'}}\circ\check{\Psi}_{{\xi},{\xi'}}(k_{(iE,0)}).
\end{equation*}
Hence by Theorem \ref{Fouriertra} and Proposition \ref{Thefollowingm}, we have
\begin{equation*}\begin{split}
\mathcal{K}'(U,V)&=C\int_{\mathcal{O}_{\xi'}^*}e^{i\langle U,\ell\rangle}k_{(iE,0)}(\ell,V)\Upsilon(\ell)\,d\nu_{\xi'}(\ell)
\\&=C\int_{\mathcal{O}_{\xi'}^*}e^{i\langle U-\overline{iE},\ell\rangle}\Upsilon(\ell)\,d \nu_{\xi'}(\ell).
\end{split}\end{equation*}
When $(U,V)=(iU_0,V)$ with $U_0\in \Omega$, we see that
\begin{equation*}
i\langle U-\overline{iE},\ell\rangle=i\langle iU_0+iE,\ell\rangle=-\langle U_0+E, \ell\rangle.
\end{equation*}
It follows that $U_0+E\in\Omega$ from $U_0-Q(V,V)\in\Omega$ and $Q(V,V)\in\overline{\Omega}$ (see Remark \ref{Wenotethat}). Thus there exists $t_0\in B(0)$ such that $t_0E=U_0+E$. Then 
\begin{equation*}\begin{split}
\mathcal{K}'(iU_0,V)&=C\int_{\mathcal{O}_{\xi'}^*}e^{-\langle t_0\cdot E,\ell\rangle}\Upsilon(\ell)\,d\nu_{\xi'}(\ell)
\\&=C|\chi^{i\xi'}(t_0)|^2\int_{\mathcal{O}_{\xi'}^*}e^{-\langle E, \ell\rangle}\Upsilon(t_0\ell)\,d\nu_{\xi'}(\ell)
\\&=C|\chi^{i\xi'}(t_0)|^2\overline{\chi^{i\xi}(t_0)\chi^{-i\xi'}(t_0)}\int_{\mathcal{O}_{\xi'}^*} e^{-\langle E,\ell\rangle}\Upsilon(\ell)\,d\nu_{\xi'}(\ell)
\\&=C'\Delta_{\xi'}(U_0+E)\Delta_{\xi,\xi'}(U_0+E)
\\&=C'\Delta_{\xi'}\left(\frac{iU_0+iE}{i}\right)\Delta_{\xi,\xi'}\left(\frac{iU_0+iE}{i}\right),
\end{split}\end{equation*} 
where we put $C'=C\int_{\mathcal{O}_{\xi'}^*} e^{-\langle E,\ell\rangle}\Upsilon(\ell)\,d\nu_{\xi'}(\ell)$. By the analytic continuation, we have
\begin{equation*}\begin{split}
\mathcal{K}'(U,V)=C'\mathcal{K}_{(iE,0)}^{{\xi'}}(U,V)\Delta_{\xi,\xi'}\left(\frac{U-\overline{iE}}{i}\right)\quad((U,V)\in\mathcal{D}(\Omega,Q)).
\end{split}\end{equation*}
\end{proof}
\begin{remark}\label{Wenotethat}
Let $\Omega_0\subset \mathbb{R}^{N_0}$ be an open convex cone, let $v\in \Omega_0$, and let $v'\in\overline{\Omega_0}$. Then it follows that $v+v'\in \Omega_0$. Indeed, let $B_\epsilon(v)$ be an open ball of radius $\epsilon >0$ centered at $v$ satisfying $B_\epsilon(v)\subset \Omega_0$. Then we have $v_0+v'\in\overline{\Omega_0}$ for all $v_0\in B_\epsilon(v)$. Hence we obtain $v+v'\in \mathrm{Int}(\overline{\Omega_0})$. It is known that for a convex set $S_0\subset \mathbb{R}^{N_0}$, the equality $\mathrm{Int}(\overline{S_0})=\mathrm{Int}(S_0)$ holds. Thus we have $v+v'\in \Omega_0$ since $\Omega_0$ is a convex set.
\end{remark}

If the unitarizations $(T_{M_{i\xi}},\mathcal{H}_{\xi})$ and $(T_{M_{i\xi'}},\mathcal{H}_{\xi'})$ are equivalent as unitary representations of $G$, then the equality
\begin{equation}\label{eq:1}
T_{M_{i\xi'}}(\con)\Psi_{\xi,\xi'}(\mathcal{K}_{(iE,0)}^{\xi})=\Psi_{\xi,\xi'}(T_{M_{i\xi}}(\con)\mathcal{K}_{(iE,0)}^{\xi})\quad(\con\in \GiE)
\end{equation}
holds. The converse is also true as we shall see in the next proposition. In what follows, we put $(U(g),V(g))=g(U,V)$ for $g\in G$ and $(U,V)\in\mathcal{D}(\Omega,Q)$.
\begin{proposition}\label{Thefollowingc}
The following are equivalent:
\begin{enumerate}
 \item[$(\mathrm{i})$]
the unitarizations of $T_{M}$ and $T_{M'}$ are equivalent as unitary representations of $G$,
\item[$(\mathrm{ii})$]
$(\ref{eq:1})$ holds,
\item[$(\mathrm{iii})$] the following equality holds:
\begin{equation}\label{diff}\begin{split}
\Delta_{{\xi},{\xi'}}\left(\dfrac{U(\con^{-1})-\overline{iE}}{i}\right)=MM'^{-1}(\con,(iE,0))
\Delta_{{\xi},{\xi'}}\left(\frac{U-\overline{iE}}{i}\right)
\\(\con\in \GiE, (U,V)\in\mathcal{D}(\Omega, Q)).
\end{split}\end{equation}
\end{enumerate}\end{proposition}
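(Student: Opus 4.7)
The plan is to prove \textup{(i)} $\Rightarrow$ \textup{(ii)} $\Rightarrow$ \textup{(i)} together with the direct equivalence \textup{(ii)} $\Leftrightarrow$ \textup{(iii)}. The implication \textup{(i)} $\Rightarrow$ \textup{(ii)} is immediate by applying the $G$-intertwining identity to the vector $\mathcal{K}_{(iE,0)}^\xi$ and to $k\in K\subset G$.

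For \textup{(ii)} $\Rightarrow$ \textup{(i)}, the key structural observation is that $\mathcal{K}_{(iE,0)}^\xi$ is a cyclic vector for the $B$-action on $\mathcal{H}_\xi$. Indeed, the transformation rule $\mathcal{K}^\xi(gz,gw)=M(g,z)\mathcal{K}^\xi(z,w)\overline{M(g,w)}$ yields
\begin{equation*}
T_M(b)\mathcal{K}_{(iE,0)}^\xi=\overline{M(b,(iE,0))}^{-1}\mathcal{K}_{b(iE,0)}^\xi\quad(b\in B),
\end{equation*}
and since $B$ acts transitively on $\mathcal{D}(\Omega,Q)$, the span of the $B$-orbit of $\mathcal{K}_{(iE,0)}^\xi$ contains all reproducing vectors $\mathcal{K}_{(U,V)}^\xi$ and is therefore dense in $\mathcal{H}_\xi$. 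Given \textup{(ii)}, I would verify $T_{M'}(k)\Psi_{\xi,\xi'}=\Psi_{\xi,\xi'} T_M(k)$ on this dense set: for each $b\in B$, use the decomposition $G=BK$ to write $kb=b'k'$ with $b'\in B$, $k'\in K$, then combine the $B$-intertwining property of $\Psi_{\xi,\xi'}$ with \textup{(ii)} applied to $k'$ to obtain
\begin{equation*}
T_{M'}(k)\Psi_{\xi,\xi'} T_M(b)\mathcal{K}_{(iE,0)}^\xi=T_{M'}(b')\Psi_{\xi,\xi'}T_M(k')\mathcal{K}_{(iE,0)}^\xi=\Psi_{\xi,\xi'}T_M(kb)\mathcal{K}_{(iE,0)}^\xi,
\end{equation*}
and conclude by continuity.

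For the equivalence \textup{(ii)} $\Leftrightarrow$ \textup{(iii)}, I compute both sides of \textup{(ii)} explicitly. The right-hand side uses that at the $K$-fixed point the cocycle $k\mapsto M(k,(iE,0))$ is a unitary character, so the reproducing kernel identity collapses to $T_M(k)\mathcal{K}_{(iE,0)}^\xi=\overline{M(k^{-1},(iE,0))}\,\mathcal{K}_{(iE,0)}^\xi$; the left-hand side is evaluated by applying the $T_{M'}$-formula together with the explicit description of $\Psi_{\xi,\xi'}(\mathcal{K}_{(iE,0)}^\xi)$ supplied by Lemma \ref{prop:2}. Setting the two resulting expressions equal cancels the common factor $C'\mathcal{K}_{(iE,0)}^{\xi'}(U,V)$ and produces the identity
\begin{equation*}
\Delta_{\xi,\xi'}\!\left(\tfrac{U(k^{-1})-\overline{iE}}{i}\right)=\overline{M(k^{-1},(iE,0))M'(k^{-1},(iE,0))^{-1}}\,\Delta_{\xi,\xi'}\!\left(\tfrac{U-\overline{iE}}{i}\right).
\end{equation*}
Using the unitarity $|M(k,(iE,0))|=|M'(k,(iE,0))|=1$ together with the group-homomorphism property at the fixed point, the complex conjugate factor reduces to $MM'^{-1}(k,(iE,0))$, which is exactly \textup{(iii)}; every step is reversible, so \textup{(iii)} $\Rightarrow$ \textup{(ii)} as well.

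The main obstacle is the bookkeeping in the last step: correctly tracking the conjugates and the inversion of $k$ in the multiplier, and verifying that the unitarity of the $K$-character is genuinely what is needed to turn the complex conjugate $\overline{M(k^{-1},\cdot)M'(k^{-1},\cdot)^{-1}}$ into $MM'^{-1}(k,\cdot)$. The cyclicity argument in \textup{(ii)} $\Rightarrow$ \textup{(i)} is conceptually the heart of the proof but is short once the $G=BK$ decomposition is invoked.
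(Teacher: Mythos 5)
Your proposal is correct and follows essentially the same route as the paper: the $G=BK$ decomposition with $kb=b'k'$ applied to the dense span of $T_M(b)\mathcal{K}^{\xi}_{(iE,0)}$ for (ii)$\Rightarrow$(i), and Lemma \ref{prop:2} together with the reproducing-kernel transformation law (plus unitarity of the $K$-character at the fixed point) for (ii)$\Leftrightarrow$(iii). The only cosmetic difference is that the paper handles the Schur's-lemma reduction of (i) to the specific intertwiner $\Psi_{\xi,\xi'}$ in the remark preceding the proposition rather than inside the proof.
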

\begin{proof}
First we show that (i) and (ii) are equivalent. Thanks to the remark preceding Proposition \ref{Thefollowingc}, it is enough to show that (ii) implies (i). We suppose that (\ref{eq:1}) holds. Let $b\in B$, and let $\con\in \GiE$. Then we can write $kb=b'k'$ with $b'\in B$ and $k'\in \GiE$, and we have
\begin{equation*}\begin{split}
\Psi&_{\xi,\xi'}(T_{M_{i\xi}}(\con)T_{M_{i\xi}}(b)\mathcal{K}_{(iE,0)}^{\xi})
=\Psi_{\xi,\xi'}(T_{M_{i\xi}}(b')T_{M_{i\xi}}(k')\mathcal{K}_{(iE,0)}^{\xi})
\\&=T_{M_{i\xi'}}(b')\Psi_{\xi,\xi'}(T_{M_{i\xi}}(k')\mathcal{K}_{(iE,0)}^{\xi})
=T_{M_{i\xi'}}(b')T_{M_{i\xi'}}(k')\Psi_{\xi,\xi'}(\mathcal{K}_{(iE,0)}^{\xi})\\&
=T_{M_{i\xi'}}(\con)T_{M_{i\xi'}}(b)\Psi_{\xi,\xi'}(\mathcal{K}_{(iE,0)}^{\xi})=T_{M_{i\xi'}}(\con)\Psi_{\xi,\xi'}(T_{M_{i\xi}}(b)\mathcal{K}_{(iE,0)}^{\xi}).
\end{split}\end{equation*}
Now $\Psi_{\xi,\xi'}$ is continuous, and the subspace of $\mathcal{H}_{\xi}$ generated by $T_{M_{i\xi}}(b)\mathcal{K}_{(iE,0)}^{\xi}\,(b\in B)$ is dense in $\mathcal{H}_{\xi}$. Thus
\begin{equation*}
\Psi_{\xi,\xi'}(T_{M_{i\xi}}(\con)f)=T_{M_{i\xi'}}(\con)\Psi_{\xi,\xi'}(f)\quad(\con\in \GiE, f\in\mathcal{H}_{\xi}),
\end{equation*}
which implies $(T_{M_{i\xi}},\mathcal{H}_{\xi})$ and $(T_{M_{i\xi'}},\mathcal{H}_{\xi'})$ are equivalent as unitary representations of $G$. Thus (i) follows. Next we show that (ii) and (iii) are equivalent. 
By Lemma \ref{prop:2} and the transformation law of the reproducing kernel, for $\con\in \GiE$, we have
\begin{equation*}\begin{split}
{C'}^{-1}&T_{M_{i\xi'}}(\con)\Psi_{{\xi},{\xi'}}(\mathcal{K}_{(iE,0)}^{\xi})(U,V)
\\&
=M_{i\xi'}(\con^{-1},(U,V))^{-1}\mathcal{K}_{(iE,0)}^{{\xi'}}(U(\con^{-1}),V(\con^{-1}))\Delta_{{\xi},{\xi'}}\left(\frac{U(\con^{-1})-\overline{iE}}{i}\right)\\
&=M'(\con,(iE,0))\mathcal{K}_{(iE,0)}^{{\xi'}}(U,V)\Delta_{{\xi},{\xi'}}\left(\frac{U(\con^{-1})-\overline{iE}}{i}\right).
\end{split}
\end{equation*}
For $\con\in \GiE$, we also have
\begin{equation*}\begin{split}
{C'}^{-1}&\Psi_{\xi,\xi'}(
T_{M_{i\xi}}(\con)\mathcal{K}_{(iE,0)}^{\xi}
)(U,V)={C'}^{-1}\Psi_{\xi,\xi'}(M(\con,(iE,0))\mathcal{K}_{(iE,0)}^{\xi})(U,V)
\\&=M(k,(iE,0))\mathcal{K}_{(iE,0)}^{{\xi'}}(U,V)\Delta_{{\xi},{\xi'}}\left(\frac{U-\overline{iE}}{i}\right).
\end{split}\end{equation*}
Thus (\ref{eq:1}) holds if and only if 
\begin{equation*}\begin{split}
\Delta_{{\xi},{\xi'}}\left(\frac{U(\con^{-1})-\overline{iE}}{i}\right)=MM'^{-1}(\con,(iE,0))\Delta_{{\xi},{\xi'}}\left(\frac{U-\overline{iE}}{i}\right)
\\(\con\in \GiE, (U,V)\in\mathcal{D}(\Omega,Q)).
\end{split}\end{equation*}
\end{proof}

\subsection{Isotropy representation}\label{Isotropyre}
In this subsection, we shall consider the isotropy representation $\rho: \GiE\rightarrow GL(\mathfrak{g}/{\giE})$. We identify $\mathfrak{b}$ with $\mathfrak{g}/\giE$ by the map $\mathfrak{b}\ni X\mapsto X+\giE\in\mathfrak{g}/\giE$. We denote by $\omega'\in\mathfrak{b}^*$ the Koszul form on $\mathfrak{b}$ which is defined by 
\begin{equation*}
\omega'(X)=\mathrm{tr}_\mathfrak{b}(\mathrm{ad}(jX)-j\mathrm{ad}(X))\quad(X\in\mathfrak{b}).
\end{equation*}
Then $(\mathfrak{b},j,\omega')$ is a normal $j$-algebra. Put
\begin{equation*}
\langle X,Y\rangle'=\omega'([jX,Y])\quad(X,Y\in\mathfrak{b}),
\end{equation*}
and let $\langle\cdot,\cdot \rangle''$ be the Hermitian form on $\mathfrak{b}$ given by 
\begin{equation*}
\langle X,Y\rangle''=\langle X,Y\rangle'+i\langle X,jY \rangle'\quad(X,Y\in\mathfrak{b}). 
\end{equation*}
Then we can regard $\mathfrak{b}$ as a complex Hilbert space, and $\rho$ is a unitary representation of $K$. Define
\begin{equation*}
 \mathfrak{b}_{triv}=\{X\in\mathfrak{b};[X,\giE]\subset\giE\}.
\end{equation*}
\begin{lemma}\label{IfXinmathf}
For any $X\in\mathfrak{b}_{triv}$, we have $[X,\giE]=\{0\}$.
\end{lemma}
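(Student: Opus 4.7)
I will work in the Siegel domain realization via the Cayley transform, so $p$ is identified with $(iE,0)$, and $\mathfrak{k}^\#=\mathfrak{f}\cap\mathfrak{X}_{(iE,0)}$, where $\mathfrak{f}=\mathfrak{g}^\#$ is graded by $\mathrm{ad}(\partial)$. Given $X\in\mathfrak{b}_{triv}$, decompose $X=X_1+X_{1/2}+X_0$ according to $\mathfrak{b}=\mathfrak{b}(0)\oplus\mathfrak{b}(1/2)\oplus\mathfrak{b}(1)$, so that $X_1^\#=\partial_{X_1}\in\mathfrak{f}(-1)$, $X_{1/2}^\#=\tilde{\partial}_{X_{1/2}}\in\mathfrak{f}(-1/2)$, and $X_0^\#\in\mathfrak{f}(0)$. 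For every $Y\in\mathfrak{k}$, Theorem~\ref{Xie0mathca} provides a graded decomposition $Y^\#=Y_0^\#+\mathcal{Y}_\Phi+\psi_e(\mathcal{Y}_\Phi)+\mathcal{Z}_{a,b}+\varphi_e(\mathcal{Z}_{a,b})$, each summand individually lying in $\mathfrak{f}$; conversely, for every $\mathcal{Y}_\Phi\in\mathfrak{f}(1/2)$ and $\mathcal{Z}_{a,b}\in\mathfrak{f}(1)$, the expressions $\mathcal{Y}_\Phi+\psi_e(\mathcal{Y}_\Phi)$ and $\mathcal{Z}_{a,b}+\varphi_e(\mathcal{Z}_{a,b})$ belong to $\mathfrak{k}^\#$.

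Treating the key case $X=X_1\in\mathfrak{b}(1)$: expanding $[X_1^\#,Y^\#]$ grade by grade using \eqref{XABUAU}, \eqref{partialUYP}, and \eqref{partialUZa}, and imposing $[X_1,Y]\in\mathfrak{k}$, the grading forces the $\mathfrak{X}(1/2)$- and $\mathfrak{X}(1)$-parts of $[X_1,Y]^\#$ to vanish, which in turn yields (i) $\Phi(X_1)=0$ for every $\mathcal{Y}_\Phi\in\mathfrak{f}(1/2)$, equivalently $[\partial_{X_1},\mathfrak{f}(1/2)]=\{0\}$, and (ii) $\mathcal{A}_{X_1}(E)=0$ for every $\mathcal{Z}_{a,b}\in\mathfrak{f}(1)$. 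Proposition~\ref{forasubalg} then guarantees that $\mathcal{Y}_{j\Phi}\in\mathfrak{f}(1/2)$ whenever $\mathcal{Y}_\Phi\in\mathfrak{f}(1/2)$, and the Jacobi identity combined with (i) gives
\begin{equation*}
[\partial_{X_1},[\mathcal{Y}_\Phi,\mathcal{Y}_{j\Phi}]]=[[\partial_{X_1},\mathcal{Y}_\Phi],\mathcal{Y}_{j\Phi}]+[\mathcal{Y}_\Phi,[\partial_{X_1},\mathcal{Y}_{j\Phi}]]=0.
\end{equation*}
As in the proof of Lemma~\ref{Letpartial}, the bracket $[\mathcal{Y}_\Phi,\mathcal{Y}_{j\Phi}]=\mathcal{Z}_{a,b}$ has $a(u,u)=4Q(\Phi(u),\Phi(u))$, so the $\Omega$-positivity of $Q$, combined with (ii) and the structural Lemma~\ref{ForF} that restricts $\mathcal{A}E_k\in\bigoplus_m\mathfrak{b}_{(\alpha_k+\alpha_m)/2}$, propagates $[\partial_{X_1},\mathcal{Z}_{a,b}]=0$ to every $\mathcal{Z}_{a,b}\in\mathfrak{f}(1)$, whence $[X_1,Y]=0$ for all $Y\in\mathfrak{k}$. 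The cases $X_{1/2}\in\mathfrak{b}(1/2)$ and $X_0\in\mathfrak{b}(0)$ are handled by strictly analogous graded bookkeeping, with $X_{1/2}$ using \eqref{XABVBV} and \eqref{tildepartialVYPhiXAB}, and $X_0$ using \eqref{XABABXAABB} together with Theorem~\ref{Forasuitab}.

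The main obstacle is the propagation step at the end of the second paragraph: extending the vanishing $[\partial_{X_1},\mathcal{Z}_{a,b}]=0$ from the ``diagonal'' brackets $[\mathcal{Y}_\Phi,\mathcal{Y}_{j\Phi}]$ to the whole of $\mathfrak{f}(1)$, in the presence of the partial constraint (ii). This hinges on whether such brackets span $\mathfrak{f}(1)$ and how the structural Lemma~\ref{ForF} interacts with the normal $j$-algebra grading; I anticipate that this will require an inductive argument on the rank $r$ in the spirit of the passage from Proposition~\ref{Whenr1fora} to Proposition~\ref{forasubalg}, with the rank-one base case essentially reducing to the positivity argument already present in Lemma~\ref{Letpartial}.
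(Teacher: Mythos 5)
Your proposal is a plan with an explicitly admitted hole at its crux, and the hole is real. First, the reduction to the ``key case'' $X=X_1\in\mathfrak{b}(1)$ is not available to you: $\mathfrak{b}_{triv}$ is defined by the condition $[X,\mathfrak{k}]\subset\mathfrak{k}$ on $X$ as a whole, and the compatibility of $\mathfrak{b}_{triv}$ with the grading $\mathfrak{b}=\mathfrak{b}(0)\oplus\mathfrak{b}(1/2)\oplus\mathfrak{b}(1)$ is precisely Proposition \ref{b0inv}, which the paper \emph{deduces from} Lemma \ref{IfXinmathf}; assuming each graded piece of $X$ again lies in $\mathfrak{b}_{triv}$ begs the question. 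Second, even granting $X=X_1$, the graded bookkeeping only shows that the degree $1$ and $1/2$ components of $[\partial_{X_1},Y^\#]$ vanish and that the degree $0$ component $2\mathcal{X}(\mathcal{A}_{X_1},\mathcal{B}_{X_1})$ lies in $\mathfrak{k}^\#\cap\mathfrak{X}(0)$, i.e.\ $\mathcal{A}_{X_1}E=0$ — not that it is zero, which is what the lemma asserts. Your route from there (``propagates $[\partial_{X_1},\mathcal{Z}_{a,b}]=0$ to every $\mathcal{Z}_{a,b}\in\mathfrak{f}(1)$'') is exactly the step you flag as the main obstacle, and no argument is supplied. Third, the cases $X_{1/2}$ and $X_0$ are not ``strictly analogous'': for $X_0\in\mathfrak{b}(0)$ the bracket with a general element $Y_0^\#+\mathcal{Y}_\Phi+\psi(\mathcal{Y}_\Phi)+\mathcal{Z}_{a,b}+\varphi(\mathcal{Z}_{a,b})$ of $\mathfrak{k}^\#$ can a priori have nonzero components in every degree consistent with membership in $\mathfrak{k}^\#$, so the grading forces nothing at all in that case.

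The paper's proof is of an entirely different and much shorter nature, and you should compare it with your plan. Since $\mathfrak{z}(\mathfrak{f})=0$ (Lemma \ref{mathfrakzm}), the adjoint representation embeds $\mathfrak{g}$ into $\mathfrak{gl}(\mathfrak{g})$, and $\mathfrak{b}$ can be realized inside the lower-triangular matrices, so $\mathrm{ad}(X)$ has only real eigenvalues for $X\in\mathfrak{b}$. If $X\in\mathfrak{b}_{triv}$, then $\mathrm{ad}(X)$ preserves $[\mathfrak{k},\mathfrak{k}]$ and restricts there to a derivation of a compact semisimple algebra, hence is diagonalizable with purely imaginary eigenvalues; combined with reality of the eigenvalues this forces $\mathrm{ad}(X)|_{[\mathfrak{k},\mathfrak{k}]}=0$. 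On the central torus $\mathfrak{t}=\mathfrak{z}(\mathfrak{k})$, the conjugation action $t\mapsto\mathrm{Inn}(e^{tX})$ lands in the discrete automorphism group of $(S^1)^{N''}$ and is therefore constant, giving $\mathrm{ad}(X)|_{\mathfrak{t}}=0$. None of the Siegel-domain structure theory ($\mathfrak{X}(1/2)$, $\mathfrak{X}(1)$, Proposition \ref{forasubalg}, Lemma \ref{ForF}) enters; in the paper that machinery is used \emph{afterwards}, in Proposition \ref{b0inv}, where the present lemma is an input. I would recommend abandoning the graded approach for this particular statement.
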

\begin{proof}
By Lemma \ref{mathfrakzm}, we can regard $\mathfrak{g}$ as a subalgebra of $\mathfrak{gl}(\mathfrak{g})$. We denote the connected Lie subgroup of $GL(\mathfrak{g})$ with Lie algebra $\mathfrak{b}\subset \mathfrak{gl}(\mathfrak{g})$ by $B_0$. We put $N'=\dim \mathfrak{g}$. Choose an Iwasawa subgroup $B'$ of $GL(\mathfrak{g})$ which contains $B_0$. By \cite[Chapter 4, Theorem 4.9]{encyclopedia}, the group $B'$ is realized as the subgroup $L(N')\subset GL(N',\mathbb{R})$ of lower triangular matrices with positive diagonal entries in some basis of $\mathfrak{g}$. Hence $\mathfrak{b}$ is realized as a subalgebra of $\mathfrak{l}(N')$. For $X\in\mathfrak{l}(N')$, the linear map $\mathrm{ad}(X):\mathfrak{gl}(N',\mathbb{R})\rightarrow \mathfrak{gl}(N',\mathbb{R})$ has only real eigenvalues. Let $X\in\mathfrak{b}_{triv}$. Then the linear map $\mathrm{ad}(X):\mathfrak{g}\rightarrow\mathfrak{g}$ also has only real eigenvalues. On the other hand, we have $\mathrm{ad}(X)[\giE,\giE]\subset[\giE,\giE]$, and $\mathrm{ad}(X)|_{[\giE,\giE]}$ has only pure imaginary eigenvalues and is diagonalizable. Hence $\mathrm{ad}(X)|_{[\giE,\giE]}=0$. Put \begin{equation*}
\mathfrak{t}=\mathfrak{z}(\giE),\quad \mathbb{T}=\exp \mathfrak{t}\subset G,\quad N''=\dim \mathfrak{t}. 
\end{equation*} 
Then the Lie group $\mathbb{T}$ is isomorphic to 
\begin{equation*}
(S^1)^{N''}=\{(\zeta_\one,\cdots, \zeta_{N''})\in\mathbb{C}^{N''};|\zeta_l|=1\text{ for all }l=1,\cdots,N''\}. 
\end{equation*}
Let $\iso:(S^1)^{N''}\rightarrow\mathbb{T}$ be an isomorphism, and let $t\in\mathbb{R}$. The map
\begin{equation*}
\mathrm{Inn}(e^{tX}):\mathbb{T}\ni g\mapsto e^{tX}ge^{-tX}\in \mathbb{T}
\end{equation*}
defines an automorphism of $\mathbb{T}$. Thus there exists a map $\mathbb{R}\ni t\mapsto (\mult_1(t),\cdots,m_{N''}(t))\in\mathbb{Z}^{N''}$ such that 
\begin{equation*}
\mathrm{Inn}(e^{tX})\circ\iso(\zeta_\one,\cdots,\zeta_{N''})=\iso(\zeta_\one^{\mult_1(t)},\cdots,\zeta_{N''}^{m_{N''}(t)})
\end{equation*}
for all $(\zeta_\one,\cdots,\zeta_{N''})\in (S^1)^{N''}$. The Lie algebra of $(S^1)^{N''}$ is isomorphic to 
\begin{equation*}
(i\mathbb{R})^{N''}=\{(i\gamma_\one,\cdots,i\gamma_{N''}):\gamma_l\in \mathbb{R}\text{ for }l=1,\cdots,N''\},
\end{equation*}
and we have
\begin{equation*}\begin{split}
\mathrm{Ad}(e^{tX})\circ(\iso_*)_\unit(i\gamma_\one,\cdots, i\gamma_{N''})=(\iso_*)_\unit(im_\one(t)\gamma_\one,\cdots,im_{N''}(t)\gamma_{N''})
\end{split}\end{equation*}
for all $(i\gamma_\one,\cdots,i\gamma_{N''})\in (i\mathbb{R})^{N''}$, where $(F_*)_\unit:(i\mathbb{R})^{N''}\rightarrow\mathfrak{t}$ is the differential of $F$ at $\unit\in (S^1)^{N''}$. Since $\mathbb{Z}^{N''}$ is discrete, we have $\mathrm{ad}(X)|_{\mathfrak{t}}=0$. Thus it follows that $[X,\giE]=\{0\}$.
\end{proof}
Let $\gamma\in\{-1,-1/2,0,1/2,1\}$, and let $\mathfrak{g}(\gamma)\subset \mathfrak{g}$ be the subspace given by $\mathfrak{g}(\gamma)^\#=\mathfrak{f}(\gamma)$. Then the following equalities hold:
\begin{equation*}
\mathfrak{g}(\gamma)=\{X\in\mathfrak{g}; \mathrm{ad}(jE)X=-\gamma X\},
\end{equation*}
\begin{equation*}
\mathfrak{g}=\mathfrak{g}(-1)\oplus\mathfrak{g}(-1/2)\oplus\mathfrak{g}(0)\oplus\mathfrak{g}(1/2)\oplus\mathfrak{g}(1).
\end{equation*}
Note that
\begin{equation*}
\mathfrak{b}(1)=\mathfrak{g}(-1),\quad \mathfrak{b}(1/2)=\mathfrak{g}(-1/2),\quad \mathfrak{b}(0)\subset \mathfrak{g}(0).
\end{equation*}
Let $\mathfrak{n}, \mathfrak{n}'\subset \mathfrak{g}$ be the subalgebras given by $\mathfrak{n}^\#=\mathfrak{m}$ and $\mathfrak{n}'^\#=\mathfrak{m}'$. 
Then we have
\begin{equation*}
(\mathfrak{n}\cap\mathfrak{g})^\#=\mathfrak{m}\cap\mathfrak{f}=\{X+\psi(X);X\in\mathfrak{f}(1/2)\},
\end{equation*}
\begin{equation*}
(\mathfrak{n}'\cap\mathfrak{g})^\#=\mathfrak{m}'\cap\mathfrak{f}=\{X+\varphi(X);X\in\mathfrak{f}(1)\},
\end{equation*}
\begin{equation*}
\giE=(\giE\cap\mathfrak{g}(0))\oplus\mathfrak(\mathfrak{n}\cap\mathfrak{g})\oplus(\mathfrak{n}'\cap\mathfrak{g}).
\end{equation*}
From now on, for $X\in\mathfrak{g}$, let $X_\gamma$ denote the projection of $X$ on $\mathfrak{g}(\gamma)$.
\begin{proposition}\label{b0inv}
The subalgebra $\mathfrak{b}_{triv}\subset\mathfrak{b}$ is $\mathrm{ad}(jE)$-invariant. \end{proposition}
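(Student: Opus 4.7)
By Lemma \ref{IfXinmathf}, $\mathfrak{b}_{triv} = \{X \in \mathfrak{b}; [X, \mathfrak{k}] = 0\}$, so it suffices to fix $X \in \mathfrak{b}_{triv}$ and verify $[[jE, X], Y] = 0$ for every $Y \in \mathfrak{k}$. Since $[X, Y] = 0$, the Jacobi identity collapses to $[[jE, X], Y] = -[X, [jE, Y]]$, and the task becomes showing $[X, [jE, Y]] = 0$. The strategy is to decompose both $X$ and $Y$ along the $\mathrm{ad}(jE)$-grading and inspect $[X, [jE, Y]]$ component by component. Writing $X = X_0 + X_{1/2} + X_1$ with $X_\gamma \in \mathfrak{b}(\gamma) \subset \mathfrak{g}(-\gamma)$ and splitting $Y$ according to $\mathfrak{k} = (\mathfrak{k} \cap \mathfrak{g}(0)) \oplus (\mathfrak{n} \cap \mathfrak{g}) \oplus (\mathfrak{n}' \cap \mathfrak{g})$, the piece in $\mathfrak{k} \cap \mathfrak{g}(0)$ is immediate since $[jE, Y] = 0$ there.

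For $Y = Y_{-1} + Y_1 \in \mathfrak{n}' \cap \mathfrak{g}$ with $Y_{\pm 1} \in \mathfrak{g}(\pm 1)$, reading off each $\mathrm{ad}(jE)$-eigencomponent of $[X, Y] = 0$ forces $[X_0, Y_{\pm 1}] = 0$, $[X_{1/2}, Y_1] = 0$, and $[X_1, Y_1] = 0$ (the brackets $[X_1, Y_{-1}]$ and $[X_{1/2}, Y_{-1}]$ are automatically zero because they would lie in $\mathfrak{g}(-2)$ and $\mathfrak{g}(-3/2)$). Then $[jE, Y] = -Y_1 + Y_{-1}$ combined with $[X, Y_{-1}] = -[X, Y_1]$ gives $[X, [jE, Y]] = -2[X, Y_1] = 0$.

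The crucial case is $Y = Y_{-1/2} + Y_{1/2} \in \mathfrak{n} \cap \mathfrak{g}$ with $Y_{\pm 1/2} \in \mathfrak{g}(\pm 1/2)$. The same eigencomponent analysis of $[X, Y] = 0$ now yields $[X_{1/2}, Y_{\pm 1/2}] = 0$, $[X_0, Y_{1/2}] = 0$, and only the mixed relation $[X_0, Y_{-1/2}] = -[X_1, Y_{1/2}]$ in the $\mathfrak{g}(-1/2)$-slot. Substituting into $[jE, Y] = \tfrac{1}{2}(Y_{-1/2} - Y_{1/2})$ leaves $[X, [jE, Y]] = -[X_1, Y_{1/2}]$, and showing this residue vanishes is the main obstacle, since it does not follow from $X \in \mathfrak{b}_{triv}$ alone.

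To handle it, note that $\mathfrak{m}' \cap \mathfrak{f} = \{W + \varphi(W); W \in \mathfrak{f}(1)\}$ (the $\mathfrak{f}(1)$- and $\mathfrak{f}(-1) = \mathfrak{X}(-1)$-parts land correctly because $\mathfrak{f}$ is graded), so as $Y$ ranges over $\mathfrak{n}' \cap \mathfrak{g}$ the component $Y_1$ exhausts $\mathfrak{g}(1)$. Hence $[X_1, Y_1] = 0$ from the previous paragraph upgrades to $[X_1, \mathfrak{g}(1)] = 0$. Translating to $\mathfrak{X}$ through $X \mapsto X^\#$, this says $[\partial_{X_1}, \mathfrak{f}(1)] = 0$ with $\partial_{X_1} \in \mathfrak{X}(-1)$, so Proposition \ref{Forasubalg2} delivers $[\partial_{X_1}, \mathfrak{f}(1/2)] = 0$, i.e., $[X_1, \mathfrak{g}(1/2)] = 0$. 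In particular $[X_1, Y_{1/2}] = 0$, closing the remaining case and proving $[jE, X] \in \mathfrak{b}_{triv}$.
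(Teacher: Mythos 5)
Your proof is correct and follows essentially the same route as the paper's: both reduce the claim to $[\mathfrak{k},\mathrm{ad}(jE)X]=0$ via Lemma \ref{IfXinmathf}, run through the three pieces of $\giE=(\giE\cap\mathfrak{g}(0))\oplus(\mathfrak{n}\cap\mathfrak{g})\oplus(\mathfrak{n}'\cap\mathfrak{g})$ comparing graded components of $[X,Y]=0$, and invoke Proposition \ref{Forasubalg2} to kill the one surviving cross term $[X_1,Y_{1/2}]$ (the paper's $[Y_{1/2},X_{-1}]$). Your use of the Jacobi identity to trade $[[jE,X],Y]$ for $-[X,[jE,Y]]$ is only a cosmetic reorganization of the paper's direct computation of $\mathrm{ad}(Y)\mathrm{ad}(jE)X$, and your explicit remark that $Y_1$ exhausts $\mathfrak{g}(1)$ as $Y$ ranges over $\mathfrak{n}'\cap\mathfrak{g}$ makes precise a step the paper leaves implicit.
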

\begin{proof}
Let $X\in\mathfrak{b}_{triv}$. By Lemma \ref{IfXinmathf}, for $Y_0\in\giE\cap\mathfrak{g}(0)$, we have
\begin{equation*}\begin{split}
[Y_0,X]=[Y_0,X_{-1}]+[Y_0,X_{-1/2}]+[Y_0,X_{0}]=0
\end{split}\end{equation*}
with $[Y_0,X_{-1}]\in\mathfrak{g}(-1)$, $[Y_0,X_{-1/2}]\in\mathfrak{g}(-1/2)$, and $[Y_0,X_{0}]\in\mathfrak{g}(0)$. Clearly $[Y_0,X_{-1/2}]=0, [Y_0,X_{-1}]=0$. Thus
\begin{equation*}\begin{split}
\mathrm{ad}(Y_0)\mathrm{ad}(jE)(X)&=\mathrm{ad}(Y_0)\left(\frac{1}{2}X_{-1/2}+X_{-1}\right)=0.
\end{split}\end{equation*}
For $Y'=Y'_{-1}+Y'_{1}\in \mathfrak{n}'\cap\mathfrak{g}$, we have
\begin{equation*}
[Y',X]=[Y'_{-1},X_0]+[Y'_\one,X_{-1}]+[Y'_\one,X_{-1/2}]+[Y'_\one,X_0]=0
\end{equation*}
with $[Y'_{-1},X_0]\in\mathfrak{g}(-1), [Y'_\one,X_{-1}]\in\mathfrak{g}(0), [Y'_\one,X_{-1/2}]\in\mathfrak{g}(1/2)$, and $[Y'_\one,X_0]\in\mathfrak{g}(1)$. 
Clearly
\begin{equation*}
[Y'_\one,X_{-1}]=0, [Y'_\one,X_{-1/2}]=0,
\end{equation*}
and we see from Proposition \ref{Forasubalg2} that 
\begin{equation}\label{X12X10quad}
[Y_{1/2},X_{-1}]=0\quad(Y_{1/2}\in\mathfrak{g}(1/2)).
\end{equation}
We have
\begin{equation*}\begin{split}
\mathrm{ad}(Y')\mathrm{ad}(jE)(X)&=\mathrm{ad}(Y')\left(\frac{1}{2}X_{-1/2}+X_{-1}\right)\\&
=\left[Y'_{-1}+Y'_\one,\frac{1}{2}X_{-1/2}+X_{-1}\right]\\&
=\frac{1}{2}[Y'_\one,X_{-1/2}]+[Y'_\one,X_{-1}]=0.
\end{split}\end{equation*}
For $Y=Y_{-1/2}+Y_{1/2}\in\mathfrak{n}\cap\mathfrak{g}$, we have
\begin{equation*}\begin{split}
[&Y,X]\\&=[Y_{-1/2},X_{-1/2}]+[Y_{-1/2},X_0]+[Y_{1/2},X_{-1}]+[Y_{1/2},X_{-1/2}]+[Y_{1/2},X_0]=0
\end{split}\end{equation*}
with $[Y_{-1/2},X_{-1/2}]\in\mathfrak{g}(-1), [Y_{-1/2},X_0]+[Y_{1/2},X_{-1}]\in\mathfrak{g}(-1/2),[Y_{1/2},X_{-1/2}]\in\mathfrak{g}(0)$, and $[Y_{1/2},X_0]\in\mathfrak{g}(1/2)$. By (\ref{X12X10quad}), we have
\begin{equation*}\begin{split}
\mathrm{ad}(Y)\mathrm{ad}(jE)(X)&=\mathrm{ad}(Y)\left(\frac{1}{2}X_{-1/2}+X_{-1}\right)\\&
=\left[Y_{-1/2}+Y_{1/2},\frac{1}{2}X_{-1/2}+X_{-1}\right]\\&
=\frac{1}{2}[Y_{-1/2},X_{-1/2}]+\frac{1}{2}[Y_{1/2},X_{-1/2}]+[Y_{1/2},X_{-1}]=0.
\end{split}\end{equation*}
Thus for any $X\in\mathfrak{b}_{triv}$ and $W\in\giE$, we have
\begin{equation*}
\mathrm{ad}(W)\mathrm{ad}(jE)X=0.
\end{equation*}
This completes the proof.
\end{proof}

\begin{proposition}\label{addinv}
The subspace $\mathfrak{b}_{triv}^\perp\subset\mathfrak{b}$ is $\mathrm{ad}(jE)$-invariant.
\end{proposition}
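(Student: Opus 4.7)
The plan is to show that $\mathrm{ad}(jE)|_{\mathfrak{b}}$ is self-adjoint with respect to the inner product $\langle\cdot,\cdot\rangle$, so that the orthogonal complement of any $\mathrm{ad}(jE)$-invariant subspace is automatically $\mathrm{ad}(jE)$-invariant; combined with Proposition \ref{b0inv}, this immediately gives the claim.

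First I would check that $\mathrm{ad}(jE)$ preserves, and acts as a scalar on, each grading component $\mathfrak{b}(0),\mathfrak{b}(1/2),\mathfrak{b}(1)$. The identifications $\mathfrak{b}(1)=\mathfrak{g}(-1)$, $\mathfrak{b}(1/2)=\mathfrak{g}(-1/2)$, $\mathfrak{b}(0)\subset\mathfrak{g}(0)$ recorded in Subsection \ref{Isotropyre}, together with the eigenspace description $\mathfrak{g}(\gamma)=\{X;\mathrm{ad}(jE)X=-\gamma X\}$, give $\mathrm{ad}(jE)|_{\mathfrak{b}(\gamma)}=\gamma\cdot\mathrm{id}$ for $\gamma\in\{0,1/2,1\}$. (This can also be read off directly from $jE=A_1+\cdots+A_r$ and the root-space description in Theorem \ref{Forasuitab}.)

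Next I would combine D'Atri's theorem (Remark \ref{Bydatrithe}\,(ii)) with the defining property of $\mathfrak{a}$ as the orthogonal complement of $[\mathfrak{b},\mathfrak{b}]$ in $\mathfrak{b}$ to conclude that every root space $\mathfrak{b}_\alpha$ appearing in Theorem \ref{Forasuitab}, together with $\mathfrak{a}$, forms a mutually orthogonal family with respect to $\langle\cdot,\cdot\rangle$. In particular $\mathfrak{b}(0),\mathfrak{b}(1/2),\mathfrak{b}(1)$ are pairwise orthogonal, and since $\mathrm{ad}(jE)$ is scalar on each piece, it is self-adjoint on $(\mathfrak{b},\langle\cdot,\cdot\rangle)$. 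Hence for any $X\in\mathfrak{b}_{triv}^\perp$ and any $Y\in\mathfrak{b}_{triv}$, Proposition \ref{b0inv} gives $[jE,Y]\in\mathfrak{b}_{triv}$, and consequently $\langle[jE,X],Y\rangle=\langle X,[jE,Y]\rangle=0$, so $[jE,X]\in\mathfrak{b}_{triv}^\perp$, which is the desired conclusion.

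I do not anticipate any serious obstacle; the only point requiring a little care is the orthogonality of the grading, which is not stated verbatim in Remark \ref{Bydatrithe}\,(ii) but follows by combining that remark with the definition of $\mathfrak{a}$.
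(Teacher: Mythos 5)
Your proposal is correct and follows essentially the same route as the paper: the paper's proof is exactly the one-line computation $\langle\mathrm{ad}(jE)X,Y\rangle=\langle X,\mathrm{ad}(jE)Y\rangle=0$ for $X\in\mathfrak{b}_{triv}^\perp$, $Y\in\mathfrak{b}_{triv}$, using the self-adjointness of $\mathrm{ad}(jE)$ (which it attributes to Remark \ref{Bydatrithe}, where D'Atri's orthogonality of the root-space decomposition is recorded) together with Proposition \ref{b0inv}. You merely spell out why $\mathrm{ad}(jE)$ is self-adjoint (scalar on each pairwise-orthogonal grading component), a detail the paper leaves implicit.
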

\begin{proof}
By Remark \ref{Bydatrithe} (i), for $X\in\mathfrak{b}_{triv}^\perp$ and $Y\in\mathfrak{b}_{triv}$, we have
\begin{equation*}\begin{split}
\langle\mathrm{ad}(jE)X,Y\rangle=\langle X,\mathrm{ad}(jE)Y\rangle,
\end{split}\end{equation*}
which is equal to $0$ by Proposition \ref{b0inv}. This implies that the assertion holds.
\end{proof}

\subsection{Actions of the isotropy subgroup on holomorphic vector bundles}
\begin{lemma}\label{compa}
Let $G_0$ be a connected compact Lie group, and let $(\pi,\mathcal{V})$ be a finite-dimensional unitary representation of $G_0$. For $X\in\mathfrak{g}_0$ and $\zeta\in\mathbb{C}$, put 
\begin{equation*}
\mathcal{V}(X,\zeta)=\{v\in\mathcal{V}; d\pi(X)v=\zeta v\}.
\end{equation*}
If $\pi$ is irreducible and nontrivial, then 
\begin{equation*}
\mathcal{V}=\sum_{X\in\mathfrak{g}_0,\zeta\in\mathbb{C}\backslash\{0\}}\mathcal{V}(X,\zeta).
\end{equation*}
\end{lemma}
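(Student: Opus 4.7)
The plan is to reduce the assertion to a statement about the space of $G_0$-invariant vectors by exploiting unitarity and the skew-Hermiticity of $d\pi(X)$.

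First I would observe that, since $\pi$ is unitary and finite-dimensional, each $d\pi(X)$ with $X\in\mathfrak{g}_0$ is skew-Hermitian and hence diagonalizable with all eigenvalues in $i\mathbb{R}$. In particular, for each fixed $X$, the space $\mathcal{V}$ decomposes as an orthogonal direct sum $\mathcal{V}=\bigoplus_{\zeta\in\mathbb{C}}\mathcal{V}(X,\zeta)$, and consequently
\begin{equation*}
\sum_{\zeta\neq 0}\mathcal{V}(X,\zeta)=\mathcal{V}(X,0)^{\perp}=(\ker d\pi(X))^{\perp}.
\end{equation*}

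Next, write $\mathcal{W}=\sum_{X\in\mathfrak{g}_0,\zeta\neq 0}\mathcal{V}(X,\zeta)=\sum_{X\in\mathfrak{g}_0}(\ker d\pi(X))^{\perp}$. Taking orthogonal complements and using that the orthogonal complement of a sum of subspaces is the intersection of their orthogonal complements, I obtain
\begin{equation*}
\mathcal{W}^{\perp}=\bigcap_{X\in\mathfrak{g}_0}\ker d\pi(X).
\end{equation*}
A vector $v$ lies in this intersection if and only if $d\pi(X)v=0$ for all $X\in\mathfrak{g}_0$. Since $G_0$ is connected, this is equivalent to $\pi(g)v=v$ for all $g\in G_0$, i.e.\ $v$ is a $G_0$-fixed vector.

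Finally, the space of $G_0$-invariants is a $G_0$-invariant subspace of the irreducible representation $\mathcal{V}$, so it is either $\{0\}$ or all of $\mathcal{V}$. The latter would make $\pi$ trivial, contradicting the hypothesis; therefore the space of invariants is $\{0\}$, giving $\mathcal{W}^{\perp}=\{0\}$ and hence $\mathcal{W}=\mathcal{V}$. There is no real obstacle here; the only subtlety worth stating explicitly is the passage from $d\pi(X)v=0$ for all $X$ to $G_0$-invariance of $v$, which uses connectedness of $G_0$ through the fact that $G_0=\exp(\mathfrak{g}_0)\cdot\exp(\mathfrak{g}_0)\cdots$ (or, more cleanly, that $t\mapsto \pi(\exp(tX))v$ solves an ODE with zero derivative at $0$ and is a one-parameter group).
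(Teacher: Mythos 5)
Your proof is correct, and it takes a genuinely different route from the paper's. The paper argues by contradiction on the subspace $\mathcal{V}_1=\sum_{X,\zeta\neq 0}\mathcal{V}(X,\zeta)$: if $\mathcal{V}_1=0$ then the character $\chi_\pi$ restricted to a maximal torus is identically $\dim\mathcal{V}$, and since characters determine finite-dimensional representations and all maximal tori are conjugate, $\pi$ would be trivial; it then checks explicitly that $\pi(g)\mathcal{V}(X,\zeta)\subset\mathcal{V}(\mathrm{Ad}(g)X,\zeta)$, so $\mathcal{V}_1$ is $G_0$-invariant and irreducibility forces $\mathcal{V}_1=\mathcal{V}$. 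You instead identify $\mathcal{V}_1^{\perp}$ outright: skew-Hermiticity of each $d\pi(X)$ gives the orthogonal eigenspace decomposition, so $\sum_{\zeta\neq 0}\mathcal{V}(X,\zeta)=(\ker d\pi(X))^{\perp}$, and hence $\mathcal{V}_1^{\perp}=\bigcap_X\ker d\pi(X)$ is the space of $G_0$-fixed vectors, which vanishes by irreducibility and nontriviality. Your argument is more elementary and self-contained: it bypasses character theory and the conjugacy of maximal tori entirely, and it does not even need the explicit $G_0$-invariance of $\mathcal{V}_1$, using only unitarity, finite-dimensionality, and connectedness. What the paper's route buys is a slightly more structural picture (the invariance of each $\mathcal{V}(X,\zeta)$ family under the group action), but for the statement at hand your computation of the orthogonal complement is the shorter path.
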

\begin{proof}
Let $\mathcal{V}_1=\sum_{X\in\mathfrak{g}_0,\zeta\in\mathbb{C}\backslash\{0\}}\mathcal{V}(X,\zeta)$, and let $\mathbb{T}$ be a maximal torus of $G_0$. If $\mathcal{V}_1=0$, then the character $\chi_\pi(g)=\mathrm{tr}\,\pi(g)\,(g\in G_0)$ satisfies $\chi_\pi|_\mathbb{T}=\dim \mathcal{V}$ identically. Two finite-dimensional representations of $G_0$ are equivalent if and only if their character are equal (see \cite[Part 2, Corollary 5.3.4]{wolf}), and any two maximal tori of $G_0$ are conjugate (see \cite[Corollary 4.35]{Knapp}), so that $\pi$ is trivial. This contradicts the assumption. Thus $\mathcal{V}_1\neq 0$. Let $v\in \mathcal{V}(X,\zeta)$. For $g\in G_0$, we have
\begin{equation*}\begin{split}
d\pi(\mathrm{Ad}(g)X)\pi(g)v&=\dt\pi(ge^{tX}g^{-1}g)v=\dt\pi(ge^{tX})v\\&=\pi(g)d\pi(X)v=\zeta\pi(g)v,
\end{split}\end{equation*}
so that $\pi(g)v\in \mathcal{V}(\mathrm{Ad}(g)X,\zeta)$.
Hence $\mathcal{V}_1$ is a $G_0$-invariant subspace of $\mathcal{V}$. Since $\pi$ is a finite-dimensional irreducible representation and $\mathcal{V}_1\neq 0$, we have $\mathcal{V}=\mathcal{V}_1$.
\end{proof}

Let $\theta:\mathfrak{g}_-\rightarrow\mathbb{C}$ be a complex representation of $\mathfrak{g}_-$, and let $\chi^\theta:B\rightarrow \mathbb{C}^\times$ be the representation of $B$ given by \eqref{chithetaex}.
\begin{theorem}\label{extension}
Suppose that $\theta(\mathfrak{k})=0$. Extend the representation $d\chi^\theta:
\mathfrak{b}\rightarrow\mathbb{C}$ of $\mathfrak{b}$ to a linear map $d\chi^\theta:
\mathfrak{g}\rightarrow\mathbb{C}$ by the zero-extension along with the decomposition $\mathfrak{g}=\mathfrak{b}\oplus\giE$. Then $d\chi^\theta:\mathfrak{g}\rightarrow \mathbb{C}$ defines a representation of $\mathfrak{g}$.
\end{theorem}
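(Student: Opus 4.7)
The plan is to reduce the theorem to a single computation and then carry it out using the structural decomposition of $\mathfrak{b}$ from Section \ref{Isotropyre}. Since $\mathbb{C}$ is abelian, showing that the extended linear map $d\chi^\theta:\mathfrak{g}\to\mathbb{C}$ is a Lie algebra homomorphism amounts to showing it vanishes on $[\mathfrak{g},\mathfrak{g}]$. Using $\mathfrak{g}=\mathfrak{b}\oplus\mathfrak{k}$, the brackets $[\mathfrak{b},\mathfrak{b}]\subset\mathfrak{b}$ are annihilated because $d\chi^\theta|_\mathfrak{b}$ is already a Lie homomorphism (the differential of the character $\chi^\theta$ of $B$), and brackets $[\mathfrak{k},\mathfrak{k}]\subset\mathfrak{k}$ are annihilated by the zero-extension. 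So the entire content of the theorem is the identity $d\chi^\theta([X,Y]_\mathfrak{b})=0$ for every $X\in\mathfrak{b}$ and $Y\in\mathfrak{k}$, where $[X,Y]_\mathfrak{b}$ denotes the $\mathfrak{b}$-projection along $\mathfrak{k}$.

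I split $X$ according to $\mathfrak{b}=\mathfrak{b}_{triv}\oplus\mathfrak{b}_{triv}^\perp$. When $X\in\mathfrak{b}_{triv}$, Lemma \ref{IfXinmathf} gives $[X,Y]=0$ outright. When $X\in\mathfrak{b}_{triv}^\perp$, one has $[X,Y]_\mathfrak{b}\in\mathfrak{b}_{triv}^\perp$, since $\mathfrak{b}_{triv}^\perp$ is the sum of the nontrivial $K$-isotypic components of $\mathfrak{b}$ and is therefore $\mathrm{ad}(\mathfrak{k})$-stable. Thus it suffices to show $d\chi^\theta$ vanishes on all of $\mathfrak{b}_{triv}^\perp$. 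By Proposition \ref{addinv}, $\mathfrak{b}_{triv}^\perp$ is $\mathrm{ad}(jE)$-invariant and hence splits along the grading as $(\mathfrak{b}_{triv}^\perp\cap\mathfrak{b}(0))\oplus(\mathfrak{b}_{triv}^\perp\cap\mathfrak{b}(1/2))\oplus(\mathfrak{b}_{triv}^\perp\cap\mathfrak{b}(1))$. On the $\mathfrak{b}(1)$ piece, $\tau$ vanishes, so $d\chi^\theta=\theta\circ\tau=0$ automatically. On the pieces in $\mathfrak{b}(0)$ and $\mathfrak{b}(1/2)$, the formula for $\tau$ shows that $\tau(X)$ is a nonzero scalar multiple of $X+ijX$, so the remaining task reduces to the key identity $\theta(X+ijX)=0$ for every $X\in\mathfrak{b}_{triv}^\perp$.

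For this key identity, $\mathfrak{b}_{triv}^\perp$ is spanned by commutators $[Y',X']_\mathfrak{b}$ with $Y'\in\mathfrak{k}$ and $X'\in\mathfrak{b}$, because in a finite-dimensional representation of a compact connected Lie group the non-invariant part is the image of the Lie algebra action (a consequence of Lemma \ref{compa}). Since $K$ acts on $\mathcal{D}$ by biholomorphisms, the isotropy action of $\mathfrak{k}$ commutes with the complex structure $j$ on $\mathfrak{b}$, giving $[Y',jX']_\mathfrak{b}=j[Y',X']_\mathfrak{b}$; consequently, for such a generator $X=[Y',X']_\mathfrak{b}$,
\begin{equation*}
X+ijX=[Y',X']_\mathfrak{b}+i[Y',jX']_\mathfrak{b}=\bigl([Y',\,X'+ijX']\bigr)_{\mathfrak{b}_\mathbb{C}}.
\end{equation*}
Both $X'+ijX'\in\mathfrak{b}_-\subset\mathfrak{g}_-$ and $Y'\in\mathfrak{k}\subset\mathfrak{g}_-$, so the bracket $[Y',X'+ijX']$ lies in $[\mathfrak{g}_-,\mathfrak{g}_-]\subset\ker\theta$; its $\mathfrak{k}_\mathbb{C}$-component is in $\ker\theta$ by the hypothesis $\theta|_\mathfrak{k}=0$, and therefore $\theta$ annihilates the $\mathfrak{b}_\mathbb{C}$-component as well, i.e.\ $\theta(X+ijX)=0$, as required. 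The main subtlety is reconciling $\tau$'s three distinct normalizations, including the outright killing of $\mathfrak{b}(1)$, with the uniform expression $X\mapsto X+ijX$ in the key identity; Proposition \ref{addinv} is precisely what allows the graded-piece-by-piece application that closes the argument.
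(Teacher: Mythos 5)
Your proof is correct and follows essentially the same route as the paper: both reduce to killing $[\mathfrak{b},\mathfrak{k}]$ via the splitting $\mathfrak{b}=\mathfrak{b}_{triv}\oplus\mathfrak{b}_{triv}^\perp$ (Lemma \ref{IfXinmathf}, Proposition \ref{addinv}), and both establish the key identity $\theta(X+ijX)=0$ on $\mathfrak{b}_{triv}^\perp$ from Lemma \ref{compa}, the hypothesis $\theta(\mathfrak{k})=0$, the vanishing of $\theta$ on $[\mathfrak{g}_-,\mathfrak{g}_-]$, and the $j$-equivariance of the isotropy action. The only (equivalent) variation is that the paper runs the key computation on eigenvectors $d\rho(W)X=i\gamma X$ with $\gamma\neq 0$, whereas you run it on the image $d\rho(\mathfrak{k})\mathfrak{b}$; your final step tacitly uses that the $\mathfrak{b}_\mathbb{C}$-component of an element of $\mathfrak{g}_-$ lies in $\mathfrak{b}_-=\mathfrak{g}_-\cap\mathfrak{b}_\mathbb{C}$ (so that $\theta$ applies to it), which holds since $\mathfrak{g}_-=\mathfrak{k}_\mathbb{C}\oplus\mathfrak{b}_-$, and is exactly the implicit step in the paper as well.
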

\begin{proof}
First we show that $\mathfrak{b}_{triv}^\perp=\sum_{W\in\giE,\zeta\in\mathbb{C}\backslash\{0\}}\mathfrak{b}(W,\zeta)$. Since every irreducible subrepresentation of $(\rho,\mathfrak{b}_{triv}^\perp)$ is nontrivial, we see from Lemma \ref{compa} that $\mathfrak{b}_{triv}^\perp\subset\sum_{W\in\giE,\zeta\in\mathbb{C}\backslash\{0\}}\mathfrak{b}(W,\zeta)$. Conversely for $W\in\giE$, $\zeta\in\mathbb{C}\backslash\{0\}$, $X\in\mathfrak{b}(W,\zeta)$, and $X'\in\mathfrak{b}_{triv}$, we have $\langle X,X'\rangle=0$. This shows that $\mathfrak{b}_{triv}^\perp\supset\sum_{W\in\giE,\zeta\in\mathbb{C}\backslash\{0\}}\mathfrak{b}(W,\zeta)$. Thus 
\begin{equation*}
\mathfrak{b}_{triv}^\perp=\sum_{W\in\giE, \zeta\in\mathbb{C}\backslash\{0\}}\mathfrak{b}(W,\zeta)=\sum_{W\in\giE, \gamma\in\mathbb{R}\backslash\{0\}}\mathfrak{b}(W,i\gamma). 
\end{equation*}
Let $\gamma\in\mathbb{R}\backslash\{0\}$, and let $W\in\mathfrak{k}$. Second we show that $\theta (X+ijX)=0$ for all $X\in \mathfrak{b}(W,i\gamma)$. Let $X\in\mathfrak{b}(W,i\gamma)$. Then
\begin{equation*}\begin{split}
0&=[\theta(W),\theta(X+ijX)]=\theta([W,X+ijX])=\theta([W,X]+i[W,jX])
\\&=\theta(d\rho(W)X+id\rho(W)(jX))=\theta(\gamma j X-\gamma i X)=-\gamma i\theta(X+ijX).
\end{split}\end{equation*}
This proves that $\theta(X+ijX)=0$, and hence 
\begin{equation*}
\theta(X+ijX)=0\quad(X\in\mathfrak{b}_{triv}^\perp).
\end{equation*}
Let $X\in\mathfrak{b}_{triv}^\perp$. Then we have $X_{-1/2}, X_0\in\mathfrak{b}_{triv}^\perp$ by Proposition \ref{addinv}. Thus
\begin{equation*}
d\chi^\theta(X)=\theta (\tau(X))=\theta((X_{-1/2}+ijX_{-1/2})/2+X_0+ijX_0)=0.
\end{equation*}
We see from the above equality that $d\chi^\theta([\mathfrak{b},\giE])=0$. Now let $X,X'\in\mathfrak{g}$, and write $X=Y+W,X'=Y'+W'$ with $Y,Y'\in\mathfrak{b}$ and $W,W'\in\mathfrak{k}$. Then we have
\begin{equation*}
d\chi^\theta([X,X'])=d\chi^\theta([Y,Y'])=[d\chi^\theta(Y),d\chi^\theta(Y')]=[d\chi^\theta(X),d\chi^\theta(X')].
\end{equation*}
This completes the proof.
\end{proof}

Let $M,M':G\times\mathcal{D}(\Omega,Q)\rightarrow \mathbb{C}^\times$ be holomorphic multipliers. Put $\theta=\theta_{M}, \theta'=\theta_{M'}$. 
\begin{theorem}\label{Supposethat}
Suppose that $M(\con,(iE,0))=M'(\con,(iE,0))$ for all $\con\in K$. Then $M_{\theta}(\con,(U,V))=M_{\theta'}(\con,(U,V))$ for all $\con\in \GiE$ and $(U,V)\in\mathcal{D}(\Omega,Q)$.
\end{theorem}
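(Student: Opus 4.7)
The plan is to reduce the identity to the triviality of a single ancillary multiplier. Form $\tilde M := M_\theta \cdot M_{\theta'}^{-1}: G\times\mathcal{D}(\Omega,Q)\to\mathbb{C}^\times$ (a holomorphic multiplier, since $\mathbb{C}^\times$-valued multipliers form a group under pointwise product), and set $\tilde\theta := \theta - \theta'\in\mathfrak{g}_-^*$. The defining property of $M_\theta$ and $M_{\theta'}$ gives $\tilde M(b,(U,V)) = \chi^{\tilde\theta}(b)$ for all $b\in B$, independent of $(U,V)$. Combining the identity $M_\theta(\con,(iE,0)) = M(\con,(iE,0))$ from Section \ref{Constructi} (and its primed analogue) with the hypothesis of the theorem yields $\tilde M(\con,(iE,0)) = 1$ for every $\con\in K$; differentiating this along $X\in\mathfrak{k}\subset\mathfrak{g}_-$ forces $\tilde\theta|_\mathfrak{k}=0$.

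I would then apply Theorem \ref{extension} to $\tilde\theta$ to obtain a Lie algebra homomorphism $\lambda:\mathfrak{g}\to\mathbb{C}$ extending $d\chi^{\tilde\theta}|_\mathfrak{b}$ by zero on $\mathfrak{k}$, and integrate $\lambda$ to a holomorphic character $\tilde\chi:G\to\mathbb{C}^\times$ --- lifting first to the universal cover of $G$ if necessary, and noting that $\lambda|_\mathfrak{k}=0$ together with connectedness of $K$ makes the lift trivial on the preimage of $K$, so that it descends cleanly to $G$. The constant-in-$(U,V)$ multiplier $M''(g,(U,V)) := \tilde\chi(g)$ then satisfies $\theta_{M''}=\tilde\theta=\theta_{\tilde M}$: for $Z\in\mathfrak{g}_-$ decomposed as $Z = Z_\mathfrak{b}+Z_\mathfrak{k}$ with $Z_\mathfrak{k}\in\mathfrak{k}_\mathbb{C}\subset\mathfrak{g}_-$ (forcing $Z_\mathfrak{b}\in\mathfrak{b}_-$), one computes $\lambda(Z) = \lambda(Z_\mathfrak{b}) = \tilde\theta(\tau(Z_\mathfrak{b})) = \tilde\theta(Z_\mathfrak{b}) = \tilde\theta(Z)$ using $\tau|_{\mathfrak{b}_-}=\mathrm{id}$ and the $\mathbb{C}$-linear vanishing of $\tilde\theta$ on $\mathfrak{k}_\mathbb{C}$.

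Theorem \ref{fundamentalone} then gives a $G$-equivariant isomorphism $E_{\tilde M}\simeq E_{M''}$, and Lemma \ref{equivalentline} furnishes a holomorphic $f:\mathcal{D}(\Omega,Q)\to\mathbb{C}^\times$ with $\tilde M(g,z) = f(gz)\tilde\chi(g)f(z)^{-1}$. Plugging in $g = b\in B$ and using $\tilde M(b,z) = \chi^{\tilde\theta}(b) = \tilde\chi(b)$ yields $f(bz) = f(z)$ for every $b\in B$; the simple transitivity of $B$ on $\mathcal{D}(\Omega,Q)$ makes $f$ constant, so $\tilde M(g,z) = \tilde\chi(g)$ for all $g$ and $z$. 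Specializing to $g = \con\in K$ produces $\tilde M(\con,(U,V)) = \tilde\chi(\con) = 1$, which is exactly the desired equality $M_\theta(\con,(U,V)) = M_{\theta'}(\con,(U,V))$. The principal technical hurdle is the integration of $\lambda$ to a global character of $G$: the argument hinges on Theorem \ref{extension}, whose applicability is ensured precisely by the vanishing $\tilde\theta|_\mathfrak{k}=0$ extracted from the hypothesis in the first paragraph.
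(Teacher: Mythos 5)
Your proof is correct and follows essentially the paper's own strategy: both arguments hinge on applying Theorem \ref{extension} to $\theta-\theta'$, integrating the resulting Lie algebra character on the universal cover, and descending it to a character of $G$ that is trivial on $K$ and agrees with $\chi^{\theta-\theta'}$ on $B$. The only difference is the endgame: you route through Theorem \ref{fundamentalone} and Lemma \ref{equivalentline} to produce the holomorphic function $f$ and then show it is constant by $B$-transitivity, whereas the paper applies the Lemma \ref{extendm} argument directly to the constant section $1$ of the bundle attached to $M_{\theta}M_{\theta'}^{-1}\chi^{-1}$; the two steps are equivalent.
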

\begin{proof}
By Theorem \ref{extension}, the representation $d\chi^{\theta-\theta'}:\mathfrak{b}\rightarrow \mathbb{C}$ of $\mathfrak{b}$ extends to a representation of $\mathfrak{g}$. Let us use the same symbol $d\chi^{\theta-\theta'}$ to denote the extension of the representation. Let $\widetilde{G}$ be the universal covering group of $G$. We denote the covering homomorphism by $\tilde{p}:\widetilde{G}\rightarrow G$. Then we have $\widetilde{G}=\tilde{p}^{-1}(B)^{o}\tilde{p}^{-1}(\GiE)$. Since the map $\tilde{p}|_{\tilde{p}^{-1}(B)^o}:\tilde{p}^{-1}(B)^o\rightarrow B$ is bijective, we have $\tilde{p}^{-1}(B)^o\cap \tilde{p}^{-1}(K)=\{e\}$. Let $\chi':\tilde{G}\rightarrow \mathbb{C}^\times$ be the lifting of $d\chi^{\theta-\theta'}:\mathfrak{g}\rightarrow\mathbb{C}$ to a representation of $\widetilde{G}$. For $g\in G$, let $g'$ and $g''$ be elements of $\tilde{G}$ such that $\tilde{p}(g')=\tilde{p}(g'')=g$. Then 
\begin{equation*}
g'g''^{-1}\in \tilde{p}^{-1}({e})\subset \tilde{p}^{-1}(K). 
\end{equation*}
Thus it follows that $\chi'(g'g''^{-1})=1$. Hence $\chi'$ descends to $G$, i.e. there exists a representation $\chi:G\rightarrow \mathbb{C}^\times$ such that $\chi'=\chi\circ\tilde{p}$. Now $\chi$ defines a holomorphic multiplier $\chi:G\times\mathcal{D}(\Omega,Q)\rightarrow \mathbb{C}^\times$, and we have \begin{equation*}
M_{\theta}M_{\theta'}^{-1}\chi^{-1}(k,(iE,0))=1\quad(k\in K),
\end{equation*} 
and
\begin{equation*}
M_{\theta}M_{\theta'}^{-1}\chi^{-1}(b,(U,V))=1\quad(b\in B,(U,V)\in\mathcal{D}(\Omega,Q)).
\end{equation*}
By the same arguments in Lemma \ref{extendm}, we have
\begin{equation*}
M_{\theta}M_{\theta'}^{-1}\chi^{-1}(g,(U,V))=1\quad(g\in G,(U,V)\in\mathcal{D}(\Omega,Q)). 
\end{equation*}
This proves the result.
\end{proof}

\begin{corollary}\label{LetEandEbe}
Let $L$ and $L'$ be $G$-equivariant holomorphic line bundles over a bounded homogeneous domain $\mathcal{D}$. Suppose that the actions of $K$ on the fibers $L_p$ and $L'_p$ coincide. Then $L$ and $L'$ are isomorphic as $K$-equivariant holomorphic line bundles.
\end{corollary}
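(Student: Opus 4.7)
The plan is to reduce the statement to Theorem \ref{Supposethat} by passing from the line bundles to their associated holomorphic multipliers. First I would choose $G$-equivariant holomorphic isomorphisms $L \cong E_M$ and $L' \cong E_{M'}$ for holomorphic multipliers $M, M' : G \times \mathcal{D}(\Omega,Q) \to \mathbb{C}^\times$; these exist because $\mathcal{D}$ is a contractible Stein manifold, so every holomorphic line bundle over it is trivial. The induced trivializations identify the fibers $L_p$ and $L'_p$ with $\mathbb{C}$, and after rescaling one of the isomorphisms by a nonzero constant if necessary, the hypothesis that the $K$-actions on $L_p$ and $L'_p$ coincide becomes the equality $M(k, p) = M'(k, p)$ for all $k \in K$, where $p = (iE,0)$.

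Next, setting $\theta = \theta_M$ and $\theta' = \theta_{M'}$, I would pass to the normalized multipliers $M_\theta$ and $M_{\theta'}$ from Section \ref{Constructi}. Each of these is $G$-equivalent to the original multiplier, so $E_M \cong E_{M_\theta}$ and $E_{M'} \cong E_{M_{\theta'}}$ as $G$-equivariant (and therefore $K$-equivariant) holomorphic line bundles. Moreover, the construction of $M_\theta$ is a conjugation by a scalar-valued holomorphic function, and since every $k \in K$ fixes $p$, this conjugation leaves $M(k,p)$ untouched, yielding $M_\theta(k,p) = M(k,p)$ for all $k \in K$; the same applies to $M_{\theta'}$. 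The hypothesis therefore upgrades to $M_\theta(k,p) = M_{\theta'}(k,p)$ for every $k \in K$.

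Applying Theorem \ref{Supposethat} then yields $M_\theta(k,(U,V)) = M_{\theta'}(k,(U,V))$ for all $k \in K$ and $(U,V) \in \mathcal{D}(\Omega,Q)$. Consequently the identity map on the underlying trivial bundle $\mathcal{D}(\Omega,Q) \times \mathbb{C}$ defines a $K$-equivariant holomorphic isomorphism $E_{M_\theta} \to E_{M_{\theta'}}$, and composing with the $K$-equivariant isomorphisms $L \cong E_{M_\theta}$ and $E_{M_{\theta'}} \cong L'$ produces the desired $K$-equivariant holomorphic isomorphism $L \cong L'$. Since Theorem \ref{Supposethat} has already been established, the corollary presents no genuine obstacle; the only point requiring care is the initial bookkeeping to ensure that the trivializations are chosen compatibly so that the multipliers $M$ and $M'$ literally agree on $K \times \{p\}$ rather than being intertwined by a nonzero scalar.
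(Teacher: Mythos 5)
Your proposal is correct and follows exactly the route the paper intends: the paper offers no separate argument for this corollary, treating it as an immediate consequence of Theorem \ref{Supposethat} via trivialization by multipliers, passage to the $G$-equivalent normalized multipliers $M_\theta$, $M_{\theta'}$ (which agree with $M$, $M'$ on $K\times\{p\}$ since $K$ fixes $p$), and the observation that equality of the multipliers on $K\times\mathcal{D}(\Omega,Q)$ gives a $K$-equivariant isomorphism. Your only extra precaution---rescaling the trivializations---is in fact unnecessary, since for a line bundle the character of $K$ on the fiber $L_p$ is independent of the trivialization ($f(kp)M(k,p)f(p)^{-1}=M(k,p)$ because $kp=p$ and scalars commute), but this does no harm.
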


\subsection{Unitary equivalences and the actions of the isotropy subgroup}
We see one formula on the function $\Delta_{\xi,\xi'}$. 
Let $\theta$ and $\theta'$ be one-dimensional complex representations of $\mathfrak{g}_-$. Then we have
\begin{equation}\label{chii}\begin{split}
\Delta_{\xi,\xi'}\left(\tfrac{U(b)-\overline{U'(b)}}{i}-2Q(V(b),V'(b))\right)=\overline{\chi^{\theta-\theta'}(b)}\Delta_{\xi,\xi'}\left(\tfrac{U-\overline{U'}}{i}-2Q(V,V')\right)\\(b\in B,(U,V),(U',V')\in\mathcal{D}(\Omega,Q)).
\end{split}\end{equation} 
Let $M,M':G\times\mathcal{D}(\Omega,Q)\rightarrow \mathbb{C}^\times$ be holomorphic multipliers. 
\begin{proposition}\label{pretheorem}
Suppose that the representations $T_{M}$ and $T_{M'}$ have unitarizations $(T_M, \mathcal{H})$ and $(T_{M'},\mathcal{H}')$ and that they are equivalent as unitary representations of $B$. Then $(T_{M}, \mathcal{H})$ and $(T_{M'}, \mathcal{H}')$ are equivalent as unitary representations of $G$ if and only if 
\begin{equation*}
M(\con,(iE,0))=M'(\con,(iE,0))\quad(\con\in K).
\end{equation*}
\end{proposition}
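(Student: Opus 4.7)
The proof splits along the biconditional, and both directions are handled via the criterion (\ref{diff}) of Proposition \ref{Thefollowingc}. For the forward implication, assuming the unitarizations are equivalent as $G$-representations, (\ref{diff}) holds; specializing at $(U, V) = (iE, 0)$ and using that $\con \in K$ fixes the reference point (so $U(\con^{-1}) = iE$), both arguments of $\Delta_{\xi, \xi'}$ reduce to $2E$, and (\ref{diff}) becomes $\Delta_{\xi, \xi'}(2E) = MM'^{-1}(\con, (iE, 0)) \Delta_{\xi, \xi'}(2E)$. Since $\Delta_{\xi, \xi'}(2E) = \overline{\chi^{i\xi}(t_0) \chi^{-i\xi'}(t_0)} \ne 0$ for any $t_0 \in B(0)$ with $t_0 E = 2E$, the scalar $MM'^{-1}(\con, (iE, 0))$ must equal $1$.

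For the converse direction, Theorem \ref{Supposethat} upgrades the hypothesis $M(\con, (iE, 0)) = M'(\con, (iE, 0))$ to the global identity $M_\theta(\con, \cdot) = M_{\theta'}(\con, \cdot)$ on $K \times \mathcal{D}(\Omega, Q)$. Because $M$ and $M_\theta$ (and $M'$ and $M_{\theta'}$) are $G$-equivalent and both sides of the biconditional are invariant under such a replacement, we assume $M = M_\theta$ and $M' = M_{\theta'}$. Reviewing the proof of Theorem \ref{Supposethat}, the Lie algebra homomorphism $d\chi^{\theta - \theta'}\colon \mathfrak{b} \to \mathbb{C}$ extends to a representation of $\mathfrak{g}$ and lifts to a character $\chi\colon G \to \mathbb{C}^\times$ satisfying $M_\theta M_{\theta'}^{-1}(g, z) = \chi(g)$ (independent of $z$) and $\chi|_K = 1$. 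Under this reduction, (\ref{diff}) simplifies to the required identity
\begin{equation*}
\Delta_{\xi, \xi'}\!\left(\tfrac{U(\con^{-1}) - \overline{iE}}{i}\right) = \Delta_{\xi, \xi'}\!\left(\tfrac{U - \overline{iE}}{i}\right) \quad (\con \in K,\ (U, V) \in \mathcal{D}(\Omega, Q)).
\end{equation*}

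To establish this, introduce the two-variable kernel
\begin{equation*}
S((U, V), (U', V')) := \Delta_{\xi, \xi'}\!\left(\tfrac{U - \overline{U'}}{i} - 2Q(V, V')\right),
\end{equation*}
which is holomorphic in $(U, V)$, antiholomorphic in $(U', V')$, and whose $B$-equivariance $S(bz, bw) = \overline{\chi^{\theta - \theta'}(b)} S(z, w)$ is exactly (\ref{chii}). Writing a diagonal point as $z = b \cdot (iE, 0)$ and decomposing $\con b = b' \con'$ with $b' \in B$, $\con' \in K$, one has $\con z = b' \cdot (iE, 0)$, and the $B$-equivariance yields $S(z, z) = \overline{\chi^{\theta - \theta'}(b)} \Delta_{\xi, \xi'}(2E)$ and $S(\con z, \con z) = \overline{\chi^{\theta - \theta'}(b')} \Delta_{\xi, \xi'}(2E)$. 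Since $\chi$ extends $\chi^{\theta - \theta'}|_B$ and is trivial on $K$, $\chi^{\theta - \theta'}(b') = \chi(\con b {\con'}^{-1}) = \chi(b) = \chi^{\theta - \theta'}(b)$, whence the diagonal identity $S(\con z, \con z) = S(z, z)$ holds at every $z \in \mathcal{D}(\Omega, Q)$.

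The main obstacle is then upgrading this diagonal identity to the full identity $S(\con z, \con w) = S(z, w)$. The difference $H(z, w) := S(\con z, \con w) - S(z, w)$ is holomorphic in $z$, antiholomorphic in $w$, and vanishes on the diagonal; expanding in a local Taylor series $H(z, w) = \sum c_{a, b}(z - z_0)^a \overline{(w - z_0)}^b$, the vanishing of $H(z, z) = \sum c_{a, b}(z - z_0)^a \overline{(z - z_0)}^b$ as a real-analytic function of $z$ forces every $c_{a, b} = 0$ by linear independence of the monomials $u^a \bar u^b$, so $H \equiv 0$ on $\mathcal{D}(\Omega, Q) \times \mathcal{D}(\Omega, Q)$. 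Setting $w = (iE, 0)$ and replacing $\con$ by $\con^{-1}$ then yields $S(\con^{-1} z, (iE, 0)) = S(z, (iE, 0))$, which is precisely the identity needed to verify (\ref{diff}). The entire backward argument thus rests on combining the triviality of $\chi$ on $K$ provided by Theorem \ref{Supposethat} with the identity principle for holomorphic--antiholomorphic functions.
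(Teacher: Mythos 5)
Your proposal is correct and follows essentially the same route as the paper: the forward direction by specializing (\ref{diff}) at $(U,V)=(iE,0)$, and the converse by combining the $B$-equivariance (\ref{chii}) of the diagonal of $\Delta_{\xi,\xi'}$ with Theorem \ref{Supposethat} to get $K$-invariance on the diagonal, then polarizing via the identity principle for sesquiholomorphic functions (the paper's ``analytic continuation''). The only cosmetic difference is that you re-derive the extension from $B$-equivariance to $G$-equivariance inline via the decomposition $\con b=b'\con'$ and the character $\chi$, where the paper simply invokes Lemma \ref{extendm}, and you make the Taylor-coefficient argument behind the polarization step explicit.
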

\begin{proof}
First we show the `only if' part. Putting $(U,V)=(iE,0)$ in (\ref{diff}), we obtain $M(\con,(iE,0))=M'(\con,(iE,0))$ for all $\con\in \GiE$. Second we show the `if' part. Suppose that $M(\con,(iE,0))=M'(\con,(iE,0))$ for all $\con\in \GiE$. Let $\xi$ and $\xi'$ be the linear forms on $\mathfrak{g}$ given by \eqref{xiJKcdotpi}. Then (\ref{chii}) gives
\begin{equation*}\begin{split}
&\Delta_{\xi,\xi'}\left(\tfrac{U(b)-\overline{U(b)}}{i}-2Q(V(b),V(b))\right)
=\overline{\chi^{i\xi-i\xi'}(b)}\Delta_{\xi,\xi'}\left(\tfrac{U-\overline{U}}{i}-2Q(V,V)\right)
\\&=\overline{M_{i\xi}M_{-i\xi'}(b,(U,V))}\Delta_{\xi,\xi'}\left(\tfrac{U-\overline{U}}{i}-2Q(V,V)\right)\quad(b\in B, (U,V)\in\mathcal{D}).
\end{split}\end{equation*}
Then Lemma \ref{extendm} and Theorem \ref{Supposethat} show that
\begin{equation*}\begin{split}
\Delta_{\xi,\xi'}&\left(\tfrac{U(\con)-\overline{U(\con)}}{i}-2Q(V(\con),V(\con))\right)
\\&=\overline{M_{i\xi}M_{-i\xi'}(\con,(U,V))}\Delta_{\xi,\xi'}\left(\tfrac{U-\overline{U}}{i}-2Q(V,V)\right)
\\&=\Delta_{\xi,\xi'}\left(\tfrac{U-\overline{U}}{i}-2Q(V,V)\right)\quad(\con\in \GiE, (U,V)\in\mathcal{D}(\Omega,Q)).
\end{split}\end{equation*}
By the analytic continuation, we have
\begin{equation*}\begin{split}
\Delta_{\xi,\xi'}\left(\tfrac{U(\con)-\overline{U'(\con)}}{i}-2Q(V(\con),V'(\con))\right)=\Delta_{\xi,\xi'}\left(\tfrac{U-\overline{U'}}{i}-2Q(V,V')\right)\\\quad(\con\in \GiE,(U,V),(U',V')\in\mathcal{D}(\Omega,Q)).
\end{split}\end{equation*}
Putting $(U',V')=(iE,0)$ in the above equation, we obtain
\begin{equation*}
\Delta_{\xi,\xi'}\left(\frac{U(\con)-\overline{iE}}{i}\right)=\Delta_{\xi,\xi'}\left(\frac{U-\overline{iE}}{i}\right)\quad(\con\in \GiE,(U,V)\in\mathcal{D}(\Omega,Q)).
\end{equation*}
Thus we get the equation (\ref{diff}), and hence the unitary representations $(T_{M},\mathcal{H})$ and $(T_{M'},\mathcal{H}')$ of $G$ are equivalent by Proposition \ref{Thefollowingc}. The proof is complete.
\end{proof}

\begin{theorem}\label{main}
Let $\mult,\mult':G\times\mathcal{D}\rightarrow \mathbb{C}^\times$ be holomorphic multipliers. Suppose that there exist Hilbert spaces $\mathcal{H}$ and $\mathcal{H}'$ of holomorphic functions on $\mathcal{D}$ which give the unitarizations of $T_m$ and $T_{m'}$, respectively. Then the following two conditions are equivalent:
\begin{enumerate}
\item[$(\mathrm{i})$]
 $(T_{\mult},\mathcal{H})$ and $(T_{\mult'},\mathcal{H}')$ are equivalent as unitary representations of $G$.
\item[$(\mathrm{ii})$]
$(T_{\mult},\mathcal{H})$ and $(T_{\mult'},\mathcal{H}')$ are equivalent as unitary representations of $B$, and $\mult(\con,p)=\mult'(\con,p)$ for all $\con\in K$.
\end{enumerate}
\end{theorem}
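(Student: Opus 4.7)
The plan is to transfer the problem from the bounded domain $\mathcal{D}$ to the Siegel domain $\mathcal{D}(\Omega,Q)$ via the $G$-equivariant Cayley-type biholomorphism $\mathcal{C}:\mathcal{D}(\Omega,Q)\to\mathcal{D}$ from Section \ref{Normaljalg}, which satisfies $\mathcal{C}(iE,0)=p$, and then to invoke Proposition \ref{pretheorem}. The ``only if'' direction is immediate, since unitary equivalence as $G$-representations restricts to unitary equivalence as $B$-representations, and evaluation of the intertwiner at the reproducing kernel based at $p$ forces $m(k,p)=m'(k,p)$ for $k\in K$ by looking at the $K$-action on the fiber. So the task is to establish the ``if'' direction.

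First I would pull back the multipliers: define $M,M':G\times\mathcal{D}(\Omega,Q)\to\mathbb{C}^\times$ by $M(g,w)=m(g,\mathcal{C}(w))$ and $M'(g,w)=m'(g,\mathcal{C}(w))$. The cocycle identity for $M,M'$ is inherited from that of $m,m'$ through the $G$-equivariance of $\mathcal{C}$, and holomorphy is preserved since $\mathcal{C}$ is biholomorphic. Next I would check that the pullback operator $\mathcal{C}^*:\mathcal{O}(\mathcal{D})\to\mathcal{O}(\mathcal{D}(\Omega,Q))$ defined by $(\mathcal{C}^*f)(w)=f(\mathcal{C}(w))$ intertwines $T_m$ with $T_M$: a direct substitution using $g^{-1}\mathcal{C}(w)=\mathcal{C}(g^{-1}w)$ gives $\mathcal{C}^*\circ T_m(g)=T_M(g)\circ\mathcal{C}^*$, and analogously for $m'$ and $M'$. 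Transferring the Hilbert-space structure along $\mathcal{C}^*$ yields Hilbert spaces $\tilde{\mathcal{H}}=\mathcal{C}^*\mathcal{H}$ and $\tilde{\mathcal{H}}'=\mathcal{C}^*\mathcal{H}'$ that give unitarizations of $T_M$ and $T_{M'}$ respectively; continuity with respect to the open-compact topology is preserved because $\mathcal{C}$ is a homeomorphism, and norm-invariance under the group action is preserved because $\mathcal{C}^*$ is a $G$-equivariant unitary.

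Since $\mathcal{C}^*$ is a unitary isomorphism intertwining the representations, $(T_m,\mathcal{H})$ and $(T_{m'},\mathcal{H}')$ are equivalent as unitary representations of $G$ (respectively of $B$) if and only if $(T_M,\tilde{\mathcal{H}})$ and $(T_{M'},\tilde{\mathcal{H}}')$ are. Moreover, the equality of $K$-actions on fibers transfers as
\begin{equation*}
m(k,p)=m'(k,p)\ (k\in K)\quad\Longleftrightarrow\quad M(k,(iE,0))=M'(k,(iE,0))\ (k\in K),
\end{equation*}
using $\mathcal{C}(iE,0)=p$. Given these two equivalences, Proposition \ref{pretheorem} applied to $M,M',\tilde{\mathcal{H}},\tilde{\mathcal{H}}'$ supplies precisely the implication needed to conclude Theorem \ref{main}.

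The main obstacle was already handled upstream: the nontrivial algebraic content sits inside Proposition \ref{pretheorem}, whose proof rests on Theorem \ref{Supposethat} and the kernel computation in Proposition \ref{Thefollowingc}. The present argument is just the translation of that statement from the Siegel realization to the bounded realization, so the only point requiring any verification is the compatibility of the pullback $\mathcal{C}^*$ with the notion of unitarization defined in Section \ref{sec:1.1}, which is straightforward.
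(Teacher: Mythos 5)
Your proposal is correct and follows the paper's proof essentially verbatim: pull everything back to the Siegel realization via $\mathcal{C}^*$, observe that $\mathcal{C}^*$ is a $G$-equivariant unitary between the unitarizations and that $\mathcal{C}(iE,0)=p$ matches the fiber conditions, and then invoke Proposition \ref{pretheorem}. The only caveat is your aside that the ``only if'' direction is ``immediate'' by evaluating the intertwiner at the reproducing kernel $\mathcal{K}(\cdot,p)$ --- this is not actually automatic, since a $G$-intertwiner a priori sends $\mathcal{K}(\cdot,p)$ only to \emph{some} $K$-eigenvector of $\mathcal{H}'$ for the character $m(\cdot,p)$, not necessarily to a multiple of $\mathcal{K}'(\cdot,p)$, so concluding $m(k,p)=m'(k,p)$ would need a lowest-$K$-type argument; but this is harmless, because Proposition \ref{pretheorem} is itself an equivalence, and the same reduction you use for ``if'' (together with the trivial fact that $G$-equivalence restricts to $B$-equivalence) delivers ``only if'' as well, which is exactly how the paper argues.
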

\begin{proof}
We assume that $(T_{\mult},\mathcal{H})$ and $(T_{\mult'},\mathcal{H}')$ are equivalent as unitary representations of $B$. Let $M,M':G\times \mathcal{D}(\Omega, Q)\rightarrow \mathbb{C}^\times$ be holomorphic multipliers given by $M(g,(U,V))={\mult}(g,\mathcal{C}((U,V)))$, $M'(g,(U,V))={\mult'}(g,\mathcal{C}((U,V)))$. We see from Proposition \ref{pretheorem} that the unitarizations of $T_{M}$ and $T_{M'}$ are equivalent as unitary representations of $G$ if and only if $M(\con,(iE,0))=M'(\con,(iE,0))$ for all $\con\in \GiE$. The map 
\begin{equation*}
\mathcal{C}^*:\mathcal{O}(\mathcal{D})\ni f\mapsto f\circ \mathcal{C}\in\mathcal{O}(\mathcal{D}(\Omega,Q)) 
\end{equation*}
intertwines representations $T_{{\mult}}$ and $T_{M}$ of $G$ and also intertwines representations $T_{{\mult'}}$ and $T_{M'}$ of $G$. Thus \text{(i)} holds if and only if $(T_{M},\mathcal{C}^*(\mathcal{H}))$ and $(T_{M'},\mathcal{C}^*(\mathcal{H}'))$ are equivalent as unitary representations of $G$. 
Moreover, by Proposition \ref{pretheorem}, the representations $(T_{M},\mathcal{C}^*(\mathcal{H}))$ and $(T_{M'},\mathcal{C}^*(\mathcal{H}'))$ are equivalent as unitary representations of $G$ if and only if
\begin{equation*}\begin{split}
\mult(\con,p)=M(\con,(iE,0))=M'(\con,(iE,0))=\mult'(\con,p) \quad(\con\in K).
\end{split}\end{equation*}
Thus (i) and (ii) are equivalent.
\end{proof}

\section{Application to a certain bounded homogeneous domain}\label{Applicatio}
In this section, we see an application of Theorem \ref{main}. We consider the following domain:
\begin{equation*}
\mathcal{D}(\Omega_\one)=\left\{U=\left[\begin{array}{ccc}z_\one&0&z_4\\0&z_2&z_5\\z_4&z_5&z_3\end{array}\right]\in \mathrm{Sym}(3,\mathbb{C});\Im U\gg 0 \right\}.
\end{equation*}
Let $\mathcal{U}=\left\{U_0=\left[\begin{array}{ccc}x_\one&0&x_4\\0&x_2&x_5\\x_4&x_5&x_3\end{array}\right];x_1,\cdots ,x_5\in\mathbb{R}\right\}$, and let 
\begin{equation*}
\Omega_\one=\mathcal{U}\cap \mathcal{P}(3,\mathbb{R}),
\end{equation*}
where $\mathcal{P}(3,\mathbb{R})$ denotes the homogeneous convex cone consists of all $3$-by-$3$ real positive-definite symmetric matrices. 
The domain $\mathcal{D}(\Omega_\one)$ is a Siegel domain of tube type, i.e. $\mathcal{D}(\Omega_\one)=\mathcal{U}+i\,\Omega_\one$.
We see the description of the holomorphic automorphism group of $\mathcal{D}(\Omega_\one)$ which is determined by Geatti \cite{geatti 1987}. Let $y_\one,\cdots,y_5\in\mathbb{R}$ and let $y_\one,y_2,y_3>0$. Put \begin{equation*}
T_0=\left[\begin{array}{ccc}y_{1}&0&0\\0&y_{2}&0\\y_4&y_5&y_3\end{array}\right].
\end{equation*} 
Let $x_\one,\cdots, x_5\in\mathbb{R}$, and put
\begin{equation*}
U_0=\left[\begin{array}{ccc}x_\one&0&x_4\\0&x_2&x_5\\x_4&x_5&x_3\end{array}\right].
\end{equation*} 
Let
\begin{equation*}
gl_{T_0}:\mathcal{D}(\Omega_\one)\ni U\mapsto T_0U{}^tT_0\in\mathcal{D}(\Omega_\one),
\end{equation*}
\begin{equation*}
t_{U_0}:\mathcal{D}(\Omega_\one)\ni U\mapsto U+U_0\in\mathcal{D}(\Omega_\one),
\end{equation*}
and for $\vartheta,\gamma\in\mathbb{R}$, and $U\in\mathcal{D}(\Omega_\one)$, let $k_{\vartheta,\gamma}(U)$
\begin{equation*}\begin{split}
=\large\left[\begin{array}{ccc}
\frac{\sin \vartheta+z_\one\cos\vartheta}{\cos\vartheta-z_\one\sin\vartheta}&0&\frac{z_4}{\cos\vartheta-z_\one\sin\vartheta}\\
0&\frac{\sin\gamma+z_2\cos\gamma}{\cos\gamma-z_2\sin\gamma}&\frac{z_5}{\cos\gamma-z_2\sin\gamma}\\\frac{z_4}{\cos\vartheta-z_\one\sin\vartheta}&\frac{z_5}{\cos\gamma-z_2\sin\gamma}&z_3+\frac{\sin\vartheta z_4^2}{\cos\vartheta-z_\one\sin\vartheta}+\frac{\sin\gamma z_5^2}{\cos\gamma-z_2\sin\gamma}\end{array}\right].
\end{split}\end{equation*}
\begin{theorem}[Geatti, \cite{geatti 1987}]\label{Theidentit}
The group $G=\mathrm{Aut}_{hol}(\mathcal{D}(\Omega_\one))^o$ is generated by $gl_{T_0}$, $t_{U_0}$, and $k_{\vartheta,\gamma}$.
\end{theorem}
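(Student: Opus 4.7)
My plan is to identify the Lie algebra $\mathfrak{aut}_{hol}(\mathcal{D}(\Omega_\one))$ explicitly via the Kaup--Matsushima--Ochiai gradation (Theorem~\ref{kaupth}), then match the three claimed families of holomorphic automorphisms to the graded pieces. Since $G = \mathrm{Aut}_{hol}(\mathcal{D}(\Omega_\one))^o$ is connected and each of $gl_{T_0}$, $t_{U_0}$, $k_{\vartheta,\gamma}$ is manifestly a holomorphic automorphism, it suffices to show that the Lie subalgebra generated by the infinitesimal versions of these three families equals all of $\mathfrak{aut}_{hol}(\mathcal{D}(\Omega_\one))$, and then invoke connectedness.

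Because $\mathcal{D}(\Omega_\one) = \mathcal{U} + i\,\Omega_\one$ is a tube domain (so $\mathcal{V}=0$ and $Q=0$ in the notation of Section~\ref{Gradingstr}), Theorem~\ref{kaupth} gives
\[
\mathfrak{aut}_{hol}(\mathcal{D}(\Omega_\one)) = \mathfrak{X}(-1) \oplus \mathfrak{X}(0) \oplus \mathfrak{X}(1),
\]
with $\mathfrak{X}(-1) = \{\partial_{U_0} : U_0 \in \mathcal{U}\}$ of dimension $5$, and $\mathfrak{X}(0) \cong \mathfrak{g}(\Omega_\one)$. The translations $t_{U_0}$ are precisely the integrals of $\mathfrak{X}(-1)$. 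To pin down $\mathfrak{g}(\Omega_\one)$, I would check directly that the congruences $U \mapsto T_0U\,{}^tT_0$ preserve $\mathcal{U}$ exactly when $T_0$ has the stated lower-triangular shape (the $(1,2)$ and $(2,1)$ block conditions force the two indicated zeros), so these give a $5$-parameter Iwasawa subgroup of $G(\Omega_\one)$; a brief check that the isotropy $G_E(\Omega_\one)$ of $E = E_\one+E_2+E_3$ is trivial (the cone $\Omega_\one$ has no nontrivial linear self-symmetries fixing $E$) then shows $\dim\mathfrak{g}(\Omega_\one) = 5$, and $gl_{T_0}$ integrates this entire subalgebra.

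The delicate step is the identification of $\mathfrak{X}(1)$ and the role of the $k_{\vartheta,\gamma}$. By Satake's description (Proposition on $\mathcal{Z}_{a,b}$ with $b=0$), elements of $\mathfrak{X}(1)$ are parametrized by symmetric bilinear $a:\mathcal{U}\times\mathcal{U}\to\mathcal{U}$ such that $\mathcal{A}_{u_0} = a(u_0,\cdot)\in\mathfrak{g}(\Omega_\one)$ for all $u_0\in\mathcal{U}$. Combined with the explicit $\mathfrak{g}(\Omega_\one)$ just computed, condition $(\mathrm{Z1})$ severely restricts $a$, and I expect only the two-dimensional family $a(u,u) = x_\one^2 E_\one + \text{(cross terms forced by symmetry)}$ and its analogue based on $E_2$ to survive. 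I would then Taylor-expand $k_{\vartheta,0}$ at $\vartheta=0$: the calculation
$\frac{d}{d\vartheta}\big|_{0}k_{\vartheta,0}(U)$ yields a vector field whose component in the $z_\one$-direction is $1+z_\one^2$, in the $z_4$-direction is $z_\one z_4$, and in the $z_3$-direction is $z_4^2$, showing a nonzero $\mathfrak{X}(1)$ projection; the symmetric statement holds for $k_{0,\gamma}$. These two vector fields span $\mathfrak{X}(1)$.

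A dimension count then gives $\dim\mathfrak{aut}_{hol}(\mathcal{D}(\Omega_\one)) = 5 + 5 + 2 = 12$, matching the total number of real parameters in the three families; and the three families themselves lie in $G$. Hence the subgroup they generate contains a neighborhood of the identity in $G$, so it is open, closed, and equal to $G$ by connectedness. The main obstacle will be verifying that $\dim\mathfrak{X}(1) = 2$ and not larger: this amounts to showing that Satake's axioms $(\mathrm{Z1})$--$(\mathrm{Z4})$ in the absence of the Jordan-algebra structure (since $\mathcal{U}$ is not closed under the usual symmetric matrix product) admit no further solutions, which requires a careful case analysis of the possible components of $a(u_0,\cdot)$ along each of the five coordinate directions of $\mathcal{U}$.
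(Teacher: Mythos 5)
You should first note that the paper does not prove this statement at all: Theorem~\ref{Theidentit} is imported verbatim from Geatti's computation in \cite{geatti 1987}, so there is no in-paper argument to compare against. Your outline is nevertheless the natural (and essentially Geatti's) route, and it is consistent with the paper's own data: the twelve vector fields $A_k^\#, A_{3,1}^\#, A_{3,2}^\#, E_k^\#, E_{3,1}^\#, E_{3,2}^\#, W_1^\#, W_2^\#$ listed in Section~\ref{Applicatio} realize exactly your decomposition $5+5+2=12$, your Taylor expansion of $k_{\vartheta,0}$ correctly reproduces $-W_1^\#$ (components $1+z_1^2$, $z_1z_4$, $z_4^2$), and the closing step (a generated subgroup whose Lie algebra is everything is open, hence equal to the connected $G$) is sound.

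The gap is that the two computations on which the whole dimension count rests are asserted rather than carried out, and they are precisely where the content of the theorem lies. First, you need $\mathfrak{g}_E(\Omega_1)=0$, i.e.\ that the \emph{identity component} of the linear isotropy of the cone at $E=I_3$ is trivial; note that $G_E(\Omega_1)$ itself is not trivial (conjugation by the permutation matrix swapping indices $1$ and $2$, and by $\mathrm{diag}(\pm1,\pm1,\pm1)$, preserves $\mathcal{U}$ and fixes $E$), so your parenthetical ``no nontrivial linear self-symmetries fixing $E$'' and the claim that congruences preserve $\mathcal{U}$ ``exactly when'' $T_0$ is lower triangular are both false as stated, though harmless for the Lie-algebra count. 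Second, you need $\dim\mathfrak{X}(1)=2$ from condition $(\mathrm{Z1})$ alone (the domain being of tube type, $(\mathrm{Z2})$--$(\mathrm{Z4})$ are vacuous); this is a finite linear-algebra verification once $\mathfrak{g}(\Omega_1)$ is known explicitly, but it cannot be waved through, since for the symmetric cone $\mathcal{P}(3,\mathbb{R})$ the same scheme yields $6+9+6=21$ and a much larger group. Until those two verifications are written out, the proposal is a correct plan rather than a proof.
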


Put $\mathcal{T}=\left\{T_0;y_\one,\cdots,y_5\in\mathbb{R}, y_\one,y_2,y_3>0 \right\}$. Let $B=\langle gl_{T_0},t_{U_0}\rangle_{T_0\in\mathcal{T},U_0\in\mathcal{U}}$ be the subgroup of $G$ generated by $gl_{T_0}$ and $t_{U_0}$. Then $B$ acts on $\mathcal{D}(\Omega_\one)$ simply transitively and is an Iwasawa subgroup of $G$. We take $iI_3\in\mathcal{D}(\Omega_\one)$ as a reference point of $\mathcal{D}(\Omega_\one)$. By Theorem \ref{Theidentit}, we have $\langle k_{\vartheta,\gamma}\rangle_{\vartheta,\gamma\in\mathbb{R}}=K$. Let $j$ be the complex structure on $\mathfrak{b}$ defined in Section \ref{Normaljalg}. The following holomorphic vector fields on $\mathcal{D}(\Omega_\one)$ are given by the action of a one-parameter subgroup of $\langle dl_{T_0}\rangle_{T_0\in\mathcal{T}}$:
\begin{equation*}\begin{split}
 A_\one^\#&=\left[\begin{array}{ccc}{z_1} & 0 & z_4/2\\
0 & 0 & 0\\
z_4/2 & 0 & 0\end{array}\right],\quad A_2^\#=\left[\begin{array}{ccc}0 & 0 & 0\\
0 & {z_2} & z_5/2\\
0 & z_5/2 & 0\end{array}\right],\\
A_3^\#&=\left[\begin{array}{ccc}0 & 0 & z_4/2\\
0 & 0 & z_5/2\\
z_4/2 & z_5/2 & {z_3}\end{array}\right],\quad
A_{3,1}^\#=\left[\begin{array}{ccc}0 & 0 & {z_1}\\
0 & 0 & 0\\
{z_1} & 0 & 2 {z_4}\end{array}\right],\\
A_{3,2}^\#&=\left[\begin{array}{ccc}0 & 0 & 0\\
0 & 0 & {z_2}\\
0 & {z_2} & 2 {z_5}\end{array}\right]. 
\end{split}\end{equation*}
The following holomorphic vector fields on $\mathcal{D}(\Omega_\one)$ are given by the action of a one-parameter subgroup of $\langle t_{U_0}\rangle_{U_0\in\mathcal{U}}$:
\begin{equation*}\begin{split}
E_\one^\#&=\left[\begin{array}{ccc}1 & 0 & 0\\
0 & 0 & 0\\
0 & 0 & 0\end{array}\right],\quad
E_2^\#=\left[\begin{array}{ccc}0 & 0 & 0\\
0 & 1 & 0\\
0 & 0 & 0\end{array}\right],\quad
E_3^\#=\left[\begin{array}{ccc}0 & 0 & 0\\
0 & 0 & 0\\
0 & 0 & 1\end{array}\right],\\
E_{3,1}^\#&=\left[\begin{array}{ccc}0 & 0 & 1\\
0 & 0 & 0\\
1 & 0 & 0\end{array}\right],\quad
E_{3,2}^\#=\left[\begin{array}{ccc}0 & 0 & 0\\
0 & 0 & 1\\
0 & 1 & 0\end{array}\right].
\end{split}\end{equation*}
The following holomorphic vector fields on $\mathcal{D}(\Omega_\one)$ are given by the action of a one-parameter subgroup of $\langle k_{\vartheta,\gamma}\rangle_{\vartheta, \gamma\in\mathbb{R}}$:
\begin{equation*}
W_\one^\#=\left[\begin{array}{ccc}-{{{z_1}}^{2}}-1 & 0 & -{z_1} {z_4}\\
0 & 0 & 0\\
-{z_1} {z_4} & 0 & -{{{z_4}}^{2}}\end{array} \right],\quad
W_2^\#=\left[\begin{array}{ccc}0 & 0 & 0\\
0 & -{{{z_2}}^{2}}-1 & -{z_2} {z_5}\\
0 & -{z_2} {z_5} & -{{{z_5}}^{2}}\end{array}\right]. 
\end{equation*}
Consider the bijection $\mathfrak{g}\ni X\mapsto X^\#\in\mathfrak{X}(\mathcal{D}(\Omega_\one))$,
and let $A_\one, A_2, A_3, A_{3,1}, A_{3,2}$ denote the elements of $\mathfrak{g}$ whose images under the above map are $A_\one^\#, A_2^\#, A_3^\#, A_{3,1}^\#, A_{3,2}^\#$, respectively, and set $E_\one, E_2, E_3, E_{3,1},E_{3,2}, W_\one, W_2$ in the same way. Then $r=3$, $\mathfrak{a}=\langle A_\one,A_2,A_3\rangle$,
\begin{equation*}
\mathfrak{b}_{(\alpha_l-\alpha_k)/2}=\langle A_{l,k}\rangle,\quad \mathfrak{b}_{(\alpha_l+\alpha_k)/2}=\langle E_{l,k}\rangle\quad(1\leq k< l\leq 3),
\end{equation*}
and
\begin{equation*}
\mathfrak{b}_{\alpha_k}=\langle E_k \rangle\quad(1\leq k\leq 3)
\end{equation*}
in Theorem \ref{Forasuitab}. We have
\begin{equation*}
\mathfrak{b}_-=\langle E_\one+iA_\one, E_2+iA_2, E_3+iA_3, E_{3,1}+iA_{3,1}, E_{3,2}+iA_{3,2} \rangle,
\end{equation*}
and since $[\mathfrak{b}_-,\mathfrak{b}_-]=\mathfrak{b}_-\cap[\mathfrak{b},\mathfrak{b}]_\mathbb{C}$, we have
\begin{equation*}
[\mathfrak{b}_-,\mathfrak{b}_-]=\mathfrak{b}_-\cap(\sideset{}{^\oplus}\sum_{1\leq k<l\leq r}\mathfrak{b}_{(\alpha_l-\alpha_k)/2}\oplus\sideset{}{^\oplus}\sum_{1\leq k<l\leq r}\mathfrak{b}_{(\alpha_l+\alpha_k)/2})_\mathbb{C}.
\end{equation*}
The subspace $[\mathfrak{k},\mathfrak{b}_-]$ is generated by the following elements: 
\begin{equation*}\begin{split}
&[W_\one,E_\one+iA_\one]=-2A_\one+i(2E_\one+W_\one), \,[W_\one,E_{3,1}+iA_{3,1}]=iE_{3,1}-A_{3,1},\\& [W_2,E_2+iA_2]=-2A_2+i(W_2+2E_2),\, [W_2,E_{3,2}+iA_{3,2}]=iE_{3,2}-A_{3,2}.
\end{split}\end{equation*}
Clearly, $[\mathfrak{k},\mathfrak{k}]=0$. 
Thus every $\xi\in\mathfrak{g}^*$ satisfying $\xi([\mathfrak{g}_-,\mathfrak{g}_-])=0$ can be written as
\begin{equation*}\begin{split}
\xi=\xi(x, y, n, n')=xE_{3}^*+yA_{3}^*+\frac{n}{2}(2W_\one^*-E_{1}^*)+\frac{n'}{2}(2W_2^*-E_{2}^*)\\\quad (x,y,n,n' \in\mathbb{R}).
\end{split}\end{equation*}
If the representation $i\xi|_{\mathfrak{k}}:\mathfrak{k}\rightarrow\mathbb{C}$ lifts to a representation of $K$, then
$n,n'\in\mathbb{Z}$. 

Let $x, y\in\mathbb{R}$ and let $n,n'\in \mathbb{Z}$. We shall apply Theorem 13 in \cite{ishi 2011} to the representation $T_{\chi^{i\xi}}$ with $\xi=\xi(x,y,n,n')$. The theorem gives the set of all parameters $(x,y,n,n')$ such that the representation $T_{\chi^{i\xi}}$ of $B$ is unitarizable, and defines the equivalence relation on the set which corresponds to the unitary equivalence among the unitarizable representations. We see from Theorem 13(i) in \cite{ishi 2011} that the representation $T_{\chi^{i\xi}}$ has a unitarization if
\begin{equation*}
x<0,\quad n>0,\quad n'>0
\end{equation*} 
or
\begin{equation*}
x=0,\quad n\geq 0,\quad n'\geq 0.
\end{equation*}
Put
\begin{equation*}
\Theta(G)=\left\{\theta\in\mathfrak{g}_-^*;\begin{array}{c}\theta \text{ is a one-dimensional representation of }\mathfrak{g}_-\text{ such that }\\ \text{ its restriction to } \mathfrak{k}\text{ lifts to a representation of }K\text{ and}\\\text{the representation }T_{M_\theta}\text{ of }G\text{ is unitarizable}\end{array} \right\}.
\end{equation*}
By Theorem \ref{LetmGtimes}, it follows that
\begin{equation*}\begin{split}
\Theta(G)=&\{i\xi(x, y, n, n') ;x<0, y\in\mathbb{R}, n,n'\in\mathbb{Z}_{>0} \}
\\&\bigsqcup\{i\xi(0, y, n, n'); y\in\mathbb{R}, n,n'\in\mathbb{Z}_{\geq0} \}. 
\end{split}\end{equation*}
We see from Theorem \ref{fundamentalone} and Theorem \ref{fundamentaltwo} that $\Theta(G)$ parametrizes the following set:
\begin{equation*}
\left\{[L];\begin{array}{c}L\text{ is a }G\text{-equivariant holomorphic line bundle over }\mathcal{D}(\Omega_\one)\text { such that}\\\text{ the representation }l \text{ of } G\text{ is unitarizable}\end{array}\right\},
\end{equation*}
where $[L]$ denotes the equivalence class of $L$ of $G$-equivariant holomorphic line bundles over $\mathcal{D}(\Omega_\one)$. Let
\begin{equation*}
\Theta_{B,-}=\{\xi(x, y, n, n'); x<0, y\in\mathbb{R}, n,n'\in\mathbb{Z}_{>0}\}
\end{equation*}
and
\begin{equation*}
\Theta_{B,0,y,n,n'}=\{\xi(0, y, n, n')\}\quad(y\in\mathbb{R}, n, n'\in\mathbb{Z}_{\geq 0}).
\end{equation*}
We see from Theorem 13(iii) in \cite{ishi 2011} that the partition of $\Theta(G)$ corresponding to the unitary equivalence classes of representations of $B$ is described as follows:
\begin{equation*}
\Theta(G)=\Theta_{B,-}\large{\bigsqcup}\bigsqcup_{y\in\mathbb{R}, n, n'\in\mathbb{Z}_{\geq 0}}\Theta_{B,0,y,n,n'}.
\end{equation*}
Let
\begin{equation*}\begin{split}
\Theta_{G,-,n,n'}=\{\xi(x, y, n, n'); x<0, y\in\mathbb{R}\}\quad(n, n'\in\mathbb{Z}_{>0})
\end{split}\end{equation*}
and
\begin{equation*}
\Theta_{G,0,n,n'}=\Theta_{B,0,n,n'} =\{\xi(0,y,n,n')\}\quad(y\in\mathbb{R},n,n'\in\mathbb{Z}_{\geq 0}).
\end{equation*}
By Theorem \ref{main}, it follows that the partition of $\Theta(G)$ corresponding to the unitary equivalence classes of representations of $G$ is described as follows:
\begin{equation*}
\Theta(G)=\bigsqcup_{n,n'\in\mathbb{Z}_{>0}}\Theta_{G,-,n,n'}\displaystyle\bigsqcup\bigsqcup_{y\in\mathbb{R}, n, n'\in\mathbb{Z}_{\geq 0}}\Theta_{G,0,y,n,n'}.
\end{equation*}

\section*{Acknowledgements}
The author would like to thank Professor H. Ishi for a lot of helpful advice on this paper. The author shows his greatest appreciation to Professor T. Uzawa for his insightful comments, and Professor M. Pevzner for valuable discussions.


\begin{thebibliography}{99}
\bibitem{Auslander}
L. Auslander and B. Kostant, Polarization and unitary representations of solvable Lie groups. Invent. Math. 14 (1971), 255--354.
\bibitem{Bargmann}
V. Bargmann, On a Hilbert space of analytic functions and an associated integral transform. Comm. Pure Appl. Math. 14 (1961), 187--214. 
\bibitem{Cap}
A. \v{C}ap and J. Slov\'{a}k, Parabolic geometries. I. Background and general theory. Mathematical Surveys and Monographs, 154. American Mathematical Society, Providence, RI, 2009.
\bibitem{datri}
J. E. D'Atri, The curvature of homogeneous Siegel domains. J. Differential Geom. 15 (1980), no. 1, 61--70.
\bibitem{dorfmeister 1979}
J. Dorfmeister, Algebraic description of homogeneous cones. Trans. Amer. Math. Soc. 255 (1979), 61--89. 
\bibitem{dorfmeister 1982}
J. Dorfmeister, Homogeneous Siegel domains. Nagoya Math. J. 86 (1982), 39--83.
\bibitem{fujiwara}
H. Fujiwara and J. Ludwig, Harmonic analysis on exponential solvable Lie groups. Springer Monographs in Mathematics. Springer, Tokyo, 2015.
\bibitem{geatti 1987}
L. Geattie, Holomorphic automorphisms of some tube domains over nonselfadjoint cones. Rend. Circ. Mat. Palermo (2) 36 (1987), no. 2, 281--331.
\bibitem{CIME}
S. Gindikin, I. I. Pjatecki\u{i}-\v{S}apiro, and \`{E}. B. Vinberg, Homogeneous K\"{a}hler manifolds. (German) 1968 Geometry of Homogeneous Bounded Domains (C.I.M.E., 3 Ciclo, Urbino, 1967) pp. 3--87.
\bibitem{harishV}
Harish-Chandra, Representations of semisimple Lie groups. V. Amer. J. Math. 78 (1956), 1--41.
\bibitem{harishVI}
Harish-Chandra, Representations of semisimple Lie groups. IV. Amer. J. Math. 77 (1955), 743--777.
\bibitem{ishi 1999}
H. Ishi, Representations of the affine transformation groups acting simply transitively on Siegel domains. J. Funct. Anal. 167 (1999), no. 2, 425--462. 
\bibitem{ishi 2000}
H. Ishi, Positive Riesz distributions on homogeneous cones. J. Math. Soc. Japan 52 (2000), no. 1, 161--186.
\bibitem{ishi 2001}
H. Ishi, Determinant type differential operators on homogeneous Siegel domains. J. Funct. Anal. 183 (2001), no. 2, 526--546. 
\bibitem{ishi 2011}
H. Ishi, Unitary holomorphic multiplier representations over a bounded homogeneous domain. Adv. Pure Appl. Math. 2 (2011), no. 3-4, 405--419.
\bibitem{ishi 2013}
H. Ishi, The unitary representations parametrized by the Wallach set for a bounded homogeneous domain. Adv. Pure Appl. Math. 4 (2013), no. 1, 93--102.
\bibitem{kaneyuki}
S. Kaneyuki, On the automorphism groups of homogeneuous bounded domains. J. Fac. Sci. Univ. Tokyo Sect. I 14 (1967), 89--130 (1967).
\bibitem{kaup}
V. Kaup, W. Matsushima, and T. Ochiai, 
On the automorphisms and equivalences of generalized Siegel domains. 
Amer. J. Math. 92 1970 475--498. 
\bibitem{Knapp}
A. V. Knapp, Lie groups beyond an introduction. Second edition. Progress in Mathematics, 140. Birkhäuser Boston, Inc., Boston, MA, 2002.
\bibitem{kobayashi}
S. Kobayashi, Irreducibility of certain unitary representations. J. Math. Soc. Japan 20 (1968), 638--642.
\bibitem{kunze}
R. A. Kunze, On the irreducibility of certain multiplier representations. 
Bull. Amer. Math. Soc. 68 (1962), 93--94.
\bibitem{lisiecki 1990}
V. Lisiecki, Kaehler coherent state orbits for representations of semisimple Lie groups. Ann. Inst. H. Poincar\'{e} Phys. Th\'{e}or. 53 (1990), no. 2, 245--258.
\bibitem{lisiecki 1991}
V. Lisiecki, A classification of coherent state representations of unimodular Lie groups. Bull. Amer. Math. Soc. (N.S.) 25 (1991), no. 1, 37--43.
\bibitem{lisiecki 1993}
V. Lisiecki, Coherent state representations. A survey. Mathematics as language and art (Bia\l{}owie\.{z}a, 1993). Rep. Math. Phys. 35 (1995), no. 2--3, 327--358.
\bibitem{miatello}
I. D. Miatello, Rigidity of invariant complex structures. Trans. Amer. Math. Soc. 338 (1993), no. 1, 159--172.
\bibitem{neeb}
K. -H. Neeb, Holomorphy and convexity in Lie theory. De Gruyter Expositions in Mathematics, 28. Walter de Gruyter Co., Berlin, 2000.
\bibitem{encyclopedia}
A. L. Onishchik and \`{E}. B. Vinberg, Lie groups and Lie algebras, III. Structure of Lie groups and Lie algebras. Encyclopaedia of Mathematical Sciences, 41. Springer-Verlag, Berlin, 1994.
\bibitem{pyatetskii}
I. I. Pjatecki\u{i}-\v{S}apiro, Automorphic functions and the geometry of classical domains. Translated from the Russian. Mathematics and Its Applications, Vol. 8 Gordon and Breach Science Publishers, New York-London-Paris 1969.
\bibitem{Rossi}
H. Rossi and M. Vergne, Representations of certain solvable Lie groups on Hilbert spaces of holomorphic functions and the application to the holomorphic discrete series of a semisimple Lie group. J. Functional Analysis 13 (1973), 324--389.
a semi-simple Lie group, Acta Math. 136 (1976), 1--59.
\bibitem{satake}
I. Satake, Algebraic structures of symmetric domains. Kanô Memorial Lectures, 4. Iwanami Shoten, Tokyo; Princeton University Press, Princeton, N.J., 1980.
\bibitem{tirao}
J. A. Tirao and J. A. Wolf, Homogeneous holomorphic vector bundles. Indiana Univ. Math. J. 20 (1970/1971), 15--31.
\bibitem{vergne}
M. Vergne and H. Rossi, Analytic continuation of the holomorphic discrete series of
\bibitem{Vinberg}
\`{E}. B. Vinberg, S. G. Gindikin, and I. I. Pjatecki\u{i}-\v{S}apiro, Classification and canonical realization of complex homogeneous bounded domains.(Russian) Trudy Moskov. Mat. Ob\v{s}\v{c}. 12 1963 359--388. 
\bibitem{wallach}
N. Wallach, The analytic continuation of the discrete series. I, II, Trans. Amer. Math.
Soc. 251 (1979), 1--17, 19--37.
\bibitem{wolf}
J. A. Wolf, Harmonic analysis on commutative spaces. Mathematical Surveys and Monographs, 142. American Mathematical Society, Providence, RI, 2007.
\end{thebibliography}
\end{document}